\documentclass[10pt]{article}
\usepackage[english]{babel}
\usepackage[latin1]{inputenc}
\usepackage{amsmath}
\usepackage{amsfonts}
\usepackage{amssymb}
\usepackage{amsthm}
\usepackage{mathrsfs}

\newtheorem{thm}{Theorem}
\newtheorem{defn}[thm]{Definition}
\newtheorem{rem}[thm]{Remark}
\newtheorem{cor}[thm]{Corollary}
\newtheorem{prop}[thm]{Proposition}
\newtheorem{lem}[thm]{Lemma}

\oddsidemargin=0pt \evensidemargin=0pt \textwidth=165mm
\textheight=24cm \voffset=-20mm \pretolerance=3000

\begin{document}
\title{ {\large Random Walks in the Quarter Plane Absorbed at the Boundary : Exact and Asymptotic}}
\author{{\large Kilian Raschel\footnote{Laboratoire de Probabilit\'es et Mod\`eles Al\'eatoires, Universit\'e Pierre et Marie Curie,
4 Place Jussieu 75252 Paris Cedex 05, France. E-mail : \texttt{kilian.raschel@upmc.fr}}}}
\date{\today}
\maketitle
%\tableofcontents
\begin{abstract}
Nearest neighbor random walks in the quarter plane
that are absorbed when reaching the boundary are studied.
The cases of positive and zero drift are considered.
Absorption probabilities at a given time 
and at a given site are made explicit. The following asymptotics
for these random walks 
starting from a given point $(n_0,m_0)$ 
are computed : that
of probabilities of being absorbed at a given site $(i,0)$
[resp. $(0,j)$] as $i\to \infty$
[resp. $j \to \infty$], that of the distribution's tail 
of absorption time at $x$-axis
[resp. $y$-axis], that of the Green functions 
at site $(i,j)$ when $i,j\to \infty$ and $j/i \to \tan \gamma$ for 
$\gamma \in [0, \pi/2]$. These results give 
the Martin boundary of the process and in particular
the suitable Doob $h$-transform in order to condition the process
never to reach the boundary. They also show that this $h$-transformed
process is equal in distribution to the limit as $n\to \infty$
of the process conditioned by not being absorbed at time $n$.
The main tool used here is complex analysis.    
\end{abstract}

\noindent {\it Keywords : random walk, Green functions, absorption probabilities,
hitting times, Martin boundary, Doob $h$-transform, boundary value problems, integral representations, 
%singularities of complex functions, elliptic integrals,
steepest descent method.}

\noindent {\it AMS $2000$ Subject Classification : primary 60G50, 60G40, 31C35 ; secondary 30E20, 30E25.}

\section{Introduction}\label{Intro}

 The interest in random processes in open domains of $\mathbb{Z}^{2}$
 conditioned in the sense of Doob $h$-transform
 never to reach the boundary dates back to Dyson~\cite{Dy62}.
 He looked at a process
 version of the famous Gaussian Unitary Ensemble (GUE) and observed
 that the process of  vectors of eigenvalues of that matrix process
 is equal in distribution to a family of standard Brownian motions
 conditioned on never colliding.

 After quiet years there was renewed interest in the 90s.
 An important class of such processes is since then studied,
 the so called ``non-colliding'' random
 walks, also called  ``vicious walkers'' or ``non-intersecting paths''.
 These walks are the processes $Z(n)=(Z_{1}(n), \ldots, Z_{k}(n))_{n\geq 0}$
 composed of $k$ independent and identically distributed
 random walks that never leave the Weyl chamber
 $W=\{z\in \mathbb{R}^{k} : z_{1}<\cdots <z_{k} \}$.
 The distances between these random walks 
 $(Z_{2}(n)-Z_{1}(n),\ldots, Z_{k}(n)-Z_{k-1}(n))_{n\geq 0}$
 give a $k-1$-dimensional random process whose
 components are positive.
 It turned out that these processes
 appear in the eigenvalues description of interesting
 matrix-valued stochastic processes (see
 e.g.~\cite{Bru91},~\cite{KO01},~\cite{KT4},~\cite{Gr99},~\cite{HW96})
 and in the analysis of corner-growth model
 (see~\cite{Jo00} and~\cite{Jo02}).
 Moreover, interesting connections between non-colliding walks, random
 matrices and queues in tandem are the subject of \cite{OC03}.
 Chapter~4 of~\cite{K05} gives besides a survey on this topic.

 It turns out that it is possible to construct
 these processes thanks to a suitable Doob
 $h$-transform.
 Paper~\cite{KP} reveals the general mechanism
 of this construction~:
 the authors find there --under rather general assumptions--
%on transition probabilities%
 a positive regular function $h$, namely 
 $h(z)=\prod_{1\leq i<j\leq k}(z_{j}-z_{i})$, such that the Doob $h$-transformed of $Z$,
 defined by $\hat{P}_{u}^{h}(Z(n)\in \text{d}v)=\mathbb{P}_{u}(\tau>n , Z(n) \in
 \text{d}v)h(v)/h(u)$,  where $\tau=\inf\{n>0 : Z(n) \notin W \} $,
 is equal to the conditional version of $Z$ given
 never exiting the Weyl chamber $W$.
 \textit{Prima facie}, it is not only the existence
 of such functions $h$ that is far from clear, but also
 the fact that the corresponding process $\hat{P}^{h}$
 has anything to do with the limit as $n\to \infty$
 of the conditional version of $Z$ given $\{\tau>n\}$.
 To prove these results, the authors compute the asymptotic behavior of the probabilities
 $\mathbb{P}_{u}(\tau>n)$.
 In their paper, they also show that the rescaled conditional process
 $(n^{-1/2}Z(\lfloor t n\rfloor ))_{t\geq 0}$ converges towards Dyson's Brownian motion.

 Most of the previous results concern only distances between independent
 random walks.
 %In the elementary case of
 %three independent random walks on $\mathbb{Z}$ jumping
 %to the nearest-neighbor sites to the left
 %and  right with equal probabilities $1/2$,
 %we obtain $(U(n))_{n\geq 0}=(U_1(n), U_2(n))_{n \geq 0}$ in $(\mathbb{Z}_{+})^2$
 %such that $\mathbb{P}(U(n+1)=U(n))=1/4 $,
 %$\mathbb{P}(U(n+1)=U(n))=\mathbb{P}(U(n+1)=U(n)+(2,0))=
 %\mathbb{P}(U(n+1)=U(n)-(2,0))=\mathbb{P}(U(n+1)=U(n)+(0,2)) =
 %\mathbb{P}(U(n+1)=U(n)-(0,2))=\mathbb{P}(U(n+1)=U(n)+(2,-2))=
 %\mathbb{P}(U(n+1)=U(n)-(2,-2))=1/8$.
   In~\cite{KOR02}, random walks with exchangeable
   increments and conditioned never to exit the Weyl chamber are
   considered. In~\cite{OCY},
   the authors study a certain class of random
   walks, namely $(X_{i}(n))_{1\leq i\leq k}=
   (|\{1\leq m \leq n : \xi_{m}=i\}|)_{1\leq i\leq k}$, where
   $(\xi_{m}, m\geq 1)$ is a sequence of independent 
   and identically distributed random variables with
   common distribution on $\{1,2,\ldots, k\}$, and
   identify in law their conditional version with a certain
   path-transformation of the initial process.
   In~\cite{ORSK1} and~\cite{ORSK2}, O'Connell relates
   these objects to the Robinson-Schensted algorithm.

 Another important area where random processes
 in angles of $\mathbb{Z}^{d}$
 conditioned never to reach the boundary
 appear is the domain of ``quantum random walks''. In~\cite{Bi2}, Biane
 constructs a quantum Markov chain on the von Neumann
 algebra of ${\rm SU}(n)$ and interprets the restriction
 of this quantum Markov chain to the algebra of a maximal
 torus of ${\rm SU}(n)$ as a random walk on the lattice of integral forms on ${\rm SU}(n)$
 with respect to this maximal torus. He proves that the restriction of the quantum
 Markov chain to the center of the von Neumann algebra
 is a Markov chain on the same lattice  obtained from the
 preceding by conditioning it in Doob's sense
 to exit a Weyl chamber at infinity.
 In case $n=3$, the Weyl chamber
 of the corresponding Lie algebra $\mathfrak{sl}_{3}(\mathbb{C})$
 is the domain of $(\mathbb{R}_{+})^{2}$ delimited on the one hand
 by the $x$-axis and on the other by the axis making an
 angle equal to $\pi/3$ with the $x$-axis.
 One gets a spatially homogeneous random walk
 in the interior of the weights lattice,
 with three transition probabilities $1/3$
 as in the left side of Figure~\ref{Flatto_Lie}~;
 random walk which can be
 of course thought as a walk in $(\mathbb{Z}_{+})^2$
 with transition probabilities
 described in the second picture of Figure~\ref{Flatto_Lie}.
 Biane shows that for this walk, a proper $h$-transform
 $h(x,y)$ is the dimension of the representation
 of $\mathfrak{sl}_{3}(\mathbb{C})$ with highest weight $(x-1,y-1)$,
 equal to $x y (x+y)/2$.

 \begin{figure}[!h]
 \begin{picture}(100.00,80.00)
 \includegraphics{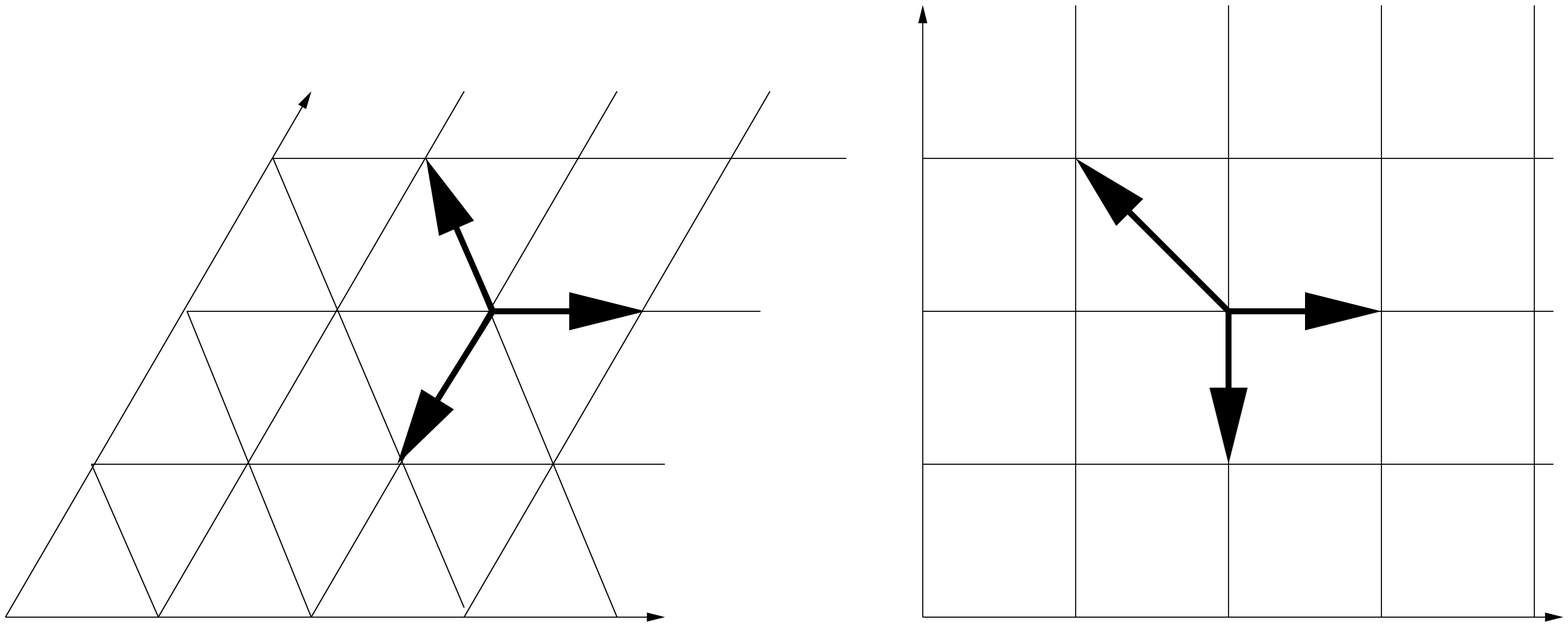}\hspace{82mm}
 \includegraphics{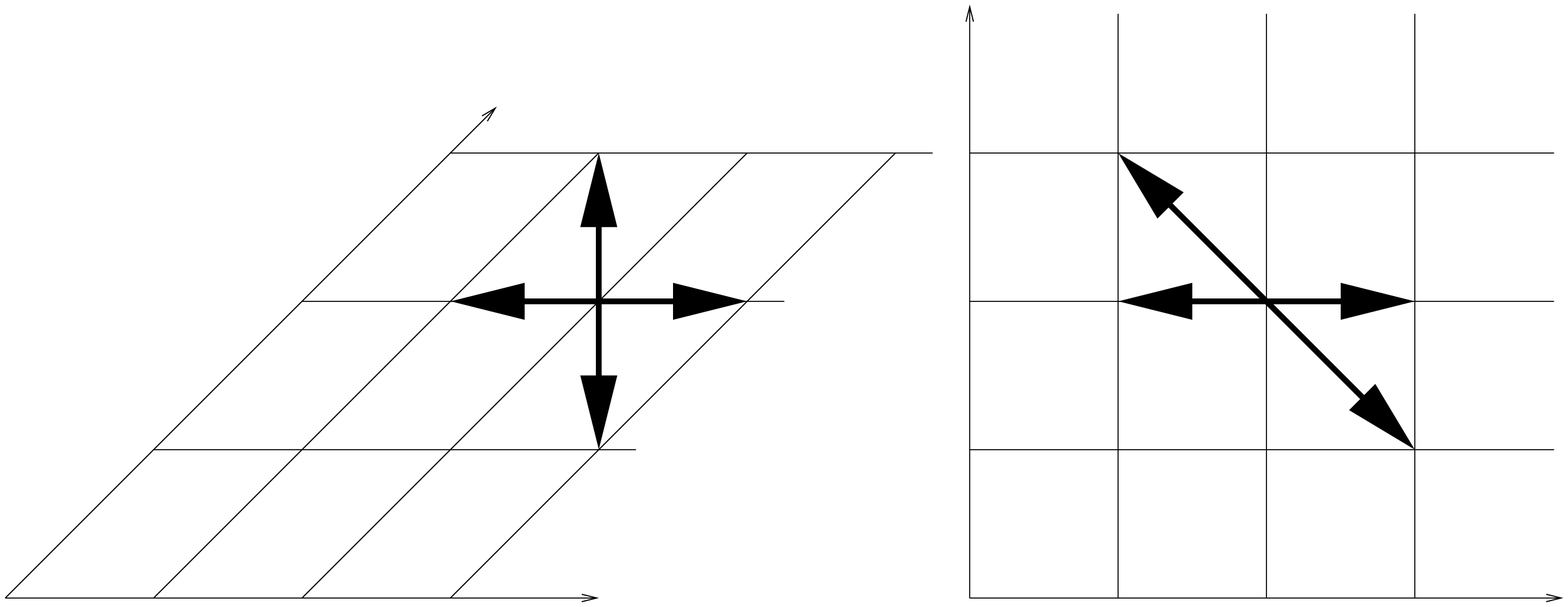}
 \end{picture}
 \caption{Walks in Weyl chambers of weights lattices of Lie
 algebras --above, $\mathfrak{sl}_{3}(\mathbb{C})$ and
 $\mathfrak{sp}_{4}(\mathbb{C})$-- can be viewed
 as random walks on $({\mathbb{Z}_{+}})^{d}$.}
 \label{Flatto_Lie}
 \end{figure}

 Then, in~\cite{Bi3}, Biane extends these results
 to the case of general semi-simple connected and simply connected
 compact Lie groups, the basic notion being that of minuscule weight.
 The corresponding random walk on the weights lattice in the
 interior of the Weyl chamber can be obtained as follows~:
 one draws the vector corresponding to the minuscule weight
 and its images under
 the Weyl group~; then one translates these  vectors
 to each point of the weight lattice in the interior of
 the Weyl chamber and we assign to them equal probabilities
 $2/l$, $l$ being the order of the Weyl group,
 these probabilities are the transitions of the walk.
  For example, in case of the Lie algebras
  $\mathfrak{sp}_{4}(\mathbb{C})$
  or $\mathfrak{so}_{5}(\mathbb{C})$,
  the associated Weyl chamber and
  random walks are drawn in the right side of Figure~\ref{Flatto_Lie}.
  %\begin{figure}[!h]
  %\begin{center}
  %\begin{picture}(240.00,80.00)
  %\special{psfile=XwalksLie.eps hscale=30.00 vscale=30.00}
  %\end{picture}
  %\end{center}
  %\caption{The X walk} \label{X_walks_Lie}
  %\end{figure}
  In~\cite{Bi3}, Biane also makes some generalizations
  to non-centered random walks.
  Nevertheless these algebraic methods
  do not allow the computation of the distribution of random
  time $\tau$ to reach the boundary~;  they neither allow
  to relate the limit as $n \to \infty$ of
  the process conditioned by $\{\tau>n\}$ to a
  $h$-transformed process.

  In~\cite{Bi1}, Biane also
  computes the asymptotic of the Green functions
  $G_{x,y}$ for the first random walk on Figure~\ref{Flatto_Lie},
  asymptotic as $x,y \to \infty$ and $y/x \to \tan( \gamma)$, for
  $\gamma $ in $[\epsilon, \pi/2-\epsilon]$,
  $\epsilon>0$.
  The description of the Martin boundary for this random
  walk could not be completed since
  the asymptotic of the Green functions
  as $y/x \to 0$ or $y/x \to \infty$ could not be found.

  Once again with a view to applying to Lie algebras,
  Varopoulos studies in~\cite{VA1} and~\cite{VA2}
  random walks in general conical domains of $\mathbb{Z}^{d}$,
  inside of which the processes are supposed to be spatially homogeneous
  and to have non-correlated components.
  He estimates the distribution of time
  $\tau$ to reach the boundary, he shows more precisely that
  $\mathbb{P}(\tau >n)$ is bounded
  from above and below by $n^{-\alpha}$
  --up to some multiplicative constant-- with a proper
  exponent $\alpha$ depending on the conical domain
  and on the dimension $d$.

   In~\cite{ASP} and~\cite{ASP2}, the authors are interested in
   passage-times moments in centered balls for reflected random walks 
   in a quadrant homogeneous and with zero drift in the interior. 
   They find a critical exponent, depending only 
   on the transition probabilities,
   under (resp.\ above) which the passage-time moments 
   are finite (resp.\ infinite). They also give
   lower bounds for the tails of the distributions of the first-passage 
   times in centered balls for these walks,
   in terms of the same critical exponent as before.

  As for non-homogeneous random walks in $\mathbb{Z}^{d}$,
  a recent paper by~\cite{MMcPhW} studies
  the exit time moments of cones
  in case of an asymptotically zero drift.

   In~\cite{I}, Ignatiuk obtains, under general assumptions and
   for all $d\geq 2$, the Martin boundary of some
   random walks in the half-space $\mathbb{Z}^{d-1} \times \mathbb{Z}_{+}$
   killed on the boundary.  Her method can unfortunately not be
   generalized to random walks on $(\mathbb{Z}_{+})^{d}$, even
   for $d=2$.

   In this paper we restrict ourselves
   to spatially homogeneous random walks  $(X(n), Y(n))_{n\geq 0}$
   in $(\mathbb{Z}_{+})^2$ with jumps at distance at most $1$. We denote by
   $\mathbb{P}(X(n+1)=i_{0}+i, Y(n+1)=j_{0}+j \mid X(n)=i_{0},
   Y(n)=j_{0})=p_{(i_{0},j_{0}), (i+i_{0}, j+j_{0})}$
   the transition probabilities.
   So we do the hypothesis~:

\begin{itemize}
     \item[(H1)] {\it For all $\left(i_{0},j_{0}\right)$ such that $i_0>0,
                 j_0>0$, $p_{\left(i_{0},j_{0}\right) ,\left(i_{0},j_{0}\right)+(i,j)}$
                 does not depend on $\left(i_{0},j_{0}\right)$
                 and can thus be denoted by $p_{i j}$.}
     \item[(H2)] {\it  $p_{i,j}=0$ if $|i|>1$ or $|j|>1$.}
     \item[(H3)] {\it The boundary
                 $\left\{\left(0,0\right)\right\} \cup \left\{ \left(i,0\right):
                 \hspace{1mm}i\geq 1 \right\}
                 \cup \left\{ \left(0,j\right):\,j\geq 1 \right\}$
                 is absorbing.}
\end{itemize}
Let
     \begin{equation}\label{absorption_probabilities}
           h_{i,n}^{n_{0},m_{0}}  =  \mathbb{P}_{\left(n_{0} , m_{0}\right)}
               \left(\text{to hit}\, \left(i,0\right)\,\text{at time $n$}\right),\hspace{5mm}
               \widetilde{h}_{j,n}^{n_{0},m_{0}}  = \mathbb{P}_{\left(n_{0} , m_{0}\right)}
               \left(\text{to hit}\, \left(0,j\right)\,\text{at time $n$}\right)
     \end{equation}          
 be the probabilities of being absorbed at points $(i,0)$ and $(0,j)$ at time $n$.
 Let $h(x,z)$ and $\tilde{h}(y,z)$
 be their generating functions, initially defined for $|x|,|z|\leq 1$~:
     \begin{equation}\label{def_generating_functions}
               h\left(x,z\right)  = 
               \sum_{i\geq 1,n\geq0 } h_{i,n}^{n_{0},m_{0}} x^{i} z^{n} ,\hspace{5mm}
               \widetilde{h}\left(y,z\right)  = 
               \sum_{j \geq 1 ,n\geq 0} \widetilde{h}_{j,n}^{n_{0},m_{0}} y^{j} z^{n}.
     \end{equation}
Book~\cite{FIM} studies the random walks in
$(\mathbb{Z}_{+})^{2}$ under assumptions (H1) and (H2) but not (H3)~:
the jump probabilities from the boundaries to the interior of
$(\mathbb{Z}_{+})^2$ are there not zero and the $x$-axis, the $y$-axis and
$(0,0)$ are three other domains of spatial homogeneity. Moreover, the jumps
from the boundaries are supposed such that the Markov chain is
ergodic. The authors elaborate in this book
a profound and ingenious analytic
approach to compute the generating functions of
stationary probabilities of these random walks.
This approach serves as a starting point for our investigation
and therefore plays a crucial role~:
Subsections~\ref{Introduction}--\ref{Riemann_Carleman_problem} of this paper
leading to the first integral representation of the functions
$h(x,z)$ and $\tilde h(y,z)$ are inspired from~\cite{FIM}.
Indeed, we reduce, as there,
the computation of these functions to the resolve of a Riemann boundary value
problem with shift. Then, we use the classical way to study this kind of problem,
namely we transform it into a Riemann-Hilbert problem for which there exists
a suitable and complete theory~; the conversion between Riemann
problems with shift and Riemann-Hilbert problem being done thanks to the
use of conformal gluing function. On closer analysis we observed that
the conformal gluing function has an explicit and particularly nice
form under the simplifying hypothesis
\begin{itemize}
\item[(H2')] $p_{01}+p_{10}+p_{-10}+p_{0-1}=1$.
\end{itemize}
  Then we found it instructive to carry out first our analysis
  under this simplifying hypothesis (H2') that
  makes  all investigations much more transparent. Therefore in this paper
  we restrict ourselves to the random walks under
  hypothesis (H1), (H2'), (H3) above and (H2''), (H4) below~:
\begin{itemize}
\item[(H2'')] $p_{01}, p_{10}, p_{0-1},p_{-10}\neq 0$.
\item[(H4)] The drifts are non negative~:
\begin{equation}
\label{drift} M_{x}=p_{10}-p_{-10}\geq 0 ,
 \ \ \ M_{y}=p_{01}-p_{0-1} \geq 0.
\end{equation}
\end{itemize}
 At the end of this paper 
 %(see Section~\ref{Extension_results})
 we consider some extensions of the hypothesis (H2') that
 keep the conformal gluing function in the same nice form
 as for (H2').

 We will be interested here in the following questions.

\begin{itemize}
     \item[(1)] What  are $h(x,z)$ and $\tilde{h}(y,z)$ starting from $(n_{0},m_{0})$~?
     \item[(2)] Let $S$ (resp.\ $T$) be the first time of reaching the $x$-axis (resp.\ $y$-axis).
                What are the distributions' tails of $S$ and $T$
                starting from $(n_{0},m_{0})$~?
                Let $\tau= T \wedge S$ be the time of absorption on the boundary
                --the absorption at $(0,0)$ starting from $(n_0,m_0)\neq (0,0)$
                under hypothesis (H2') is impossible--.
                What is the distribution's tail of $\tau$
                starting from $(n_{0},m_{0})$ ?
     \item[(3)] What are the absorption probabilities
                $h_i^{n_{0},m_{0}}=\sum_{n=0}^{\infty} h_{i,n}^{n_{0},m_{0}}$
                and $\tilde{h}_j^{n_{0},m_{0}}=\sum_{n=0}^{\infty} \tilde{h}_{j,n}^{n_{0},m_{0}}$
                at points $(i,0)$ and $(0,j)$~?  What are their asymptotic
                as $i$ and $j$ go to infinity~?
                What is the probability of absorption
                $h(1,1)+\tilde{h}(1,1)=\sum_{i \geq 1}h_{i}^{n_{0},m_{0}}+
                \sum_{j \geq 1}\tilde{h}_{j}^{n_{0},m_{0}}$~?
     \item[(4)] What is the suitable Doob $h$-transform to condition the process
                never to touch the boundary~? Is this Doob $h$-transformed
                process equal in distribution to the limit as $n \to \infty$ of the conditional
                process given $\{\tau>n\}$~?
     \item[(5)] What are the asymptotic
                of the Green functions $G_{x,y}^{n_{0},m_{0}}$
                of the mean number of visits to $(x,y)$ starting from
                $(n_{0},m_{0})$ as $x,y \to \infty$, $y/x \to \tan(\gamma)$ where
                $\gamma \in [0, \pi/2]$~?
                What is the Martin boundary of this random walk~?
\end{itemize}

In Section~\ref{h_x_z} we find $h(x,z)$ and $\tilde{h}(y,z)$ under four
different forms, all of which having an own interest
and also an usefulness in the sequel~; what answers to Question~(1).

 The analysis of Question~(2) is performed
 in Section~\ref{h_1_z},
 using $h(x,z)$ and $\tilde{h}(y,z)$ with $x=1$ and $y=1$.
 The behavior of the process is of course notably different
 in cases $M_{x}>0,M_{y}>0$ and
 $M_{x}=M_{y}=0$.
 In first case the process is not absorbed
 with positive probability and
     \begin{equation}
          \label{tailt}
          \mathbb{P}_{\left(n_{0},m_{0}\right)}\left(\tau>n\right)
          \to 1-h\left(1,1\right)-\widetilde{h}\left(1,1\right),\
          \ n\to \infty.
     \end{equation}
 In second case
 the process is  almost surely absorbed and we will find that~:
     \begin{equation}
          \label{tailtt}
          \mathbb{P}_{\left(n_{0},m_{0}\right)}\left(\tau>n\right)  \sim
          \frac{n_{0}m_{0}}{\pi \sqrt{p_{10}p_{01}}}\frac{1}{n},\ \
          n\to \infty.
     \end{equation}

   Question~(3) is studied in Section~\ref{h_1_x} using
   $h(x,z)$ and $\tilde{h}(y,z)$ with $z=1$.
   In particular we get the probability of non absorption on the
   boundary that we denote by $A(n_0,m_0)$~:
          \begin{equation}
               \label{zzz}
               A\left(n_{0},m_{0}\right)=1-h\left(1,1\right)-\widetilde{h}\left(1,1\right)=
               \left(1-\left(\frac{p_{-10}}{p_{10}}\right)^{n_{0}}\right)
               \left(1-\left(\frac{p_{0-1}}{p_{01}}\right)^{m_{0}}\right) .
          \end{equation}

   We can then reply to Question~(4).
   If $M_{x}>0$ and $M_{y}>0$, the harmonic function in order to condition the
   process never to reach the boundary in Doob's sense is of course
   $A(n_{0},m_{0})$. If $M_{x}=M_{y}=0$, then the harmonic 
   function is $n_{0}m_{0}$. Moreover for
   all $x,y>0$ and $n>m>0$,
          \begin{equation*}
               \label{zlz}
               \mathbb{P}_{\left(n_{0},m_{0}\right)}
               \left(\left(X\left(m\right),Y\left(m\right)\right)=
               \left(x,y\right) \mid \tau>n\right)=
               \frac{\mathbb{P}_{\left(n_{0},m_{0}\right)}
               \left(\left(X\left(m\right),Y\left(m\right)\right)=
               \left(x,y\right)\right)
               \mathbb{P}_{\left(x,y\right)}\left(\tau>n-m\right)}
              {\mathbb{P}_{\left(n_{0},m_{0}\right)}\left(\tau>n\right)}.
          \end{equation*}
  If $M_{x}>0$ and $M_{y}>0$, this quantity converges as $n \to \infty$ to
  $\mathbb{P}_{n_{0},m_{0}}((X(m),Y(m))=(x,y))A(x,y)/A(n_{0},m_{0})$,
  thanks to~(\ref{tailt}) and~(\ref{zzz}).
  If $M_{x}=M_{y}=0$ it converges as $n \to \infty$ to
  $\mathbb{P}_{n_{0},m_{0}}((X(m),Y(m))=(x,y))x y/(n_{0}m_{0})$ by~(\ref{tailtt}).
  Consequently, the Doob $h$-transformed
  process is equal in distribution to the limit as $n \to \infty$
  of the process conditioned by $\{\tau>n\}$.

  The harmonic function $A(n_{0},m_{0})$ in case
  $M_{x}>0$, $M_{y}>0$ (resp. $n_{0}m_{0}$ in case $M_{x}=M_{y}=0$)
  provides us with a point of the
  Martin boundary. To complete the study of the Martin boundary, we
  should find the asymptotic of the Martin kernel
  along all different infinite paths of the random walk.
  We analyze for that the asymptotic of the Green functions.
  In Section~\ref{h_1_x} we find the asymptotic of 
  $h_{x}^{n_{0},m_{0}}$ and  of $
  \tilde{h}_{y}^{n_{0},m_{0}}$ as $x \to \infty$ and $y \to \infty$ and
  in Section~\ref{Martin_boundary} we compute the asymptotic of
  $G_{x,y}^{n_{0},m_{0}}$ as $x,y>0$, $y/x \to \tan( \gamma)$,
  where $\gamma$ is a given angle in $[0, \pi/2]$.
  In~\cite{KV}, using the approach of~\cite{MA1},
  it has already been done when $M_{x}>0$ and $M_{y}>0$ but
  in case of non-zero jump probabilities from the boundaries to the interior of
  $(\mathbb{Z}_{+})^2$ --so that the interior of the quadrant, the
  $x$-axis, the $y$-axis
  and $(0,0)$ are four domains of spatial homogeneity--~;
  also, in~\cite{KS}, this approach has been successfully applied
  to the analysis of JS-queues.
  In our case of an absorbing boundary, it can be done by exactly
  the same methods, and even easier,
  using the explicit representations of
  functions $h(x,1)$ and $\tilde{h}(y,1)$ obtained in Section~\ref{h_x_z}.
  In particular we will deduce that in case $M_{x}>0$ and $M_{y}>0$,
  all angles $\gamma \in [0, \pi/2]$ will correspond to
  different points of the Martin boundary, which will be
  therefore homeomorphic to the segment $[0, \pi/2]$.
  Note that for the angle $\gamma $ of the drift
  (i.e.\ $\tan( \gamma)=M_{y}/M_{x})$
  the asymptotic of the Martin kernel is proportional to $A(n_{0},m_{0})$.
  As for case $M_{x}=M_{y}=0$, we prove in
  Section~\ref{Martin_boundary} that
  $G_{x,y}^{n_0,m_0}\sim 4\sqrt{p_{10}p_{01}} n_{0} m_{0} x y/(\pi (p_{01}x^2+p_{10}y^2)^2)$
  for any angle $\gamma \in [0,
  \pi/2]$. In other words, the function $n_{0}m_{0}$ is in this case
  the unique point of the Martin boundary.

In a next work, we will answer Questions~(1)--(5)
without making hypothesis (H2') but only under (H2)
and supposing that
$M_{x}=\sum_{i,j}i p_{i j}>0$ and
$M_{y}=\sum_{i,j}j p_{i j}>0$.
Furthermore, motivated by
Biane's works on ``quantum random walks'', we
will also, in an other work, analyze these questions
for the walks with
transition probabilities drawn in Figure~\ref{Flatto_Lie},
that is to say random walks in the Weyl chambers
of $\mathfrak{sl}_{3}(\mathbb{C})$ and
$\mathfrak{sp}_{4}(\mathbb{C})$ verifying
$M_{x}=M_{y}=0$.
One of the main difference between these walks and those
studied here is the fact that
the underlying conformal gluing functions
become quite elaborated.

\medskip

\noindent{\bf Acknowledgments.}

This project would never have taken shape 
without the advice of Professor Bougerol.
I thank him very much for his encouragements and
strong commitment all along, 

I also would like to thank M.\ Defosseux and F.\ Chapon for 
the interesting discussions we had together 
concerning the topic of this article.

I am also very grateful to 
Professor Bidaut-V\'eron for having initiated me to the depth of complex analysis.

Finally, I must thank Professor Kurkova who introduced me to this field of 
research and also for her constant help and support during the elaboration of this project.

\section{Generating functions of absorption probabilities}\label{h_x_z}

\subsection{A functional equation}\label{Introduction}

We start here by establishing a functional equation that the
generating functions of the absorption probabilities verify. Let~:
     \begin{equation}
          G\left(x,y,z\right) = \sum_{i,j\geq 1,n\geq 0}
          \mathbb{P}_{\left(n_{0},m_{0}\right)}
          \left((X(n), Y(n))=\left(i,j\right)\right) x^{i-1} y^{j-1} z^{n}.
     \end{equation}
We write now the following functional equation~(\ref{functional_equation}),
on which all our study is based~:
     \begin{equation}\label{functional_equation}
          Q\left(x,y,z\right)G\left(x,y,z\right) =
          h\left(x,z\right)+\widetilde{h}\left( y ,z\right) -x^{n_{0}} y^{m_{0}},
     \end{equation}
where $h$ and $\tilde{h}$ are defined in~(\ref{def_generating_functions})
and $Q$ is the following polynomial, depending only on the walk's
transition probabilities~:
     \begin{equation}\label{def_Q}
          Q\left(x,y,z\right)=
          x y z\Big( p_{10}x+p_{-10}x^{-1}+p_{01}y+p_{0-1}y^{-1} -z^{-1}
          \Big).
     \end{equation}
The functions $h(x,z)$, $\tilde{h}(y,z)$ for $|x|,|y|, |z|\leq
1$, and $G(x,y,z)$ for $|x|, |y|<1$, $|z|\leq 1$, are unknown.
Equation~(\ref{functional_equation}) has a meaning in
(at least) $\left\{x,y\in \mathbb{C} : \left|x\right|<1,
\left|y\right|<1,\left|z\right|\leq 1 \right\}$.
Note that the proof of~(\ref{functional_equation}) comes from
writing that for $k,l,p\in (\mathbb{Z}_{+})^{2}$~:
               \begin{eqnarray*}
                    \lefteqn{\mathbb{P}_{\left(n_{0},m_{0}\right)}
                    \left(\left(X\left(p+1\right), Y\left(p+1\right)\right)=\left(k,l\right) \right)=
                    \sum_{i,j\geq 1} \mathbb{P}_{\left(n_{0},m_{0}\right)}
                    \left(\left(X\left(p\right), Y\left(p\right)\right)=\left(i,j\right) \right)
                    p_{\left(i,j\right),\left(k,l\right)}} \\&+&
                    \sum_{i\geq 1}\mathbb{P}_{\left(n_{0},m_{0}\right)}
                    \left(\left(X\left(p\right), Y\left(p\right)\right)=
                    \left(i,0\right) \right)
                    \delta_{\left(k,l\right)}^{\left(i,0\right)}+
                    \sum_{j\geq 1}\mathbb{P}_{\left(n_{0},m_{0}\right)}
                    \left(\left(X\left(p\right), Y\left(p\right)\right)=\left(0,j\right) \right)
                    \delta_{\left(k,l\right)}^{\left(0,j\right)},
               \end{eqnarray*}
          where $\delta_{(a,b)}^{(c,d)}=1$ if $a=c$ and $b=d$,
          otherwise $0$. It remains to multiply by
          $x^{k} y^{l} z^{p+1}$ and then to sum with respect to $k,l,p$.

\subsection{The algebraic curve $Q\left(x,y,z\right)=0$.}\label{The_algebraic_curve_Q}

The polynomial~(\ref{def_Q}) can be written alternatively~:
     \begin{equation*}
          Q\left(x,y,z\right) = a\left(x,z\right) y^{2}+ b\left(x,z\right) y
          + c\left(x,z\right) = \widetilde{a}\left(y,z\right) x^{2}+
          \widetilde{b}\left(y,z\right) x + \widetilde{c}\left(y,z\right),
     \end{equation*}
where
     \begin{equation*}
          \left.\begin{array}{cccccc}
               a\left(x,z\right) &=& z p_{01} x,&
               \widetilde{a}\left(y,z\right)&=&z p_{10} y, \\
               b\left(x,z\right) &=& z p_{10} x^2-x+z p_{-10}, &
               \widetilde{b}\left(y,z\right)&=&z p_{01} y^2-y+z p_{0-1}, \\
               c\left(x,z\right) &=& z p_{0-1} x, &
               \widetilde{c}\left(y,z\right) &=& z p_{-10} y.
          \end{array}\right.
     \end{equation*}

With these notations, building the algebraic function
$Y\left(x,z\right)$ defined by $Q\left(x,y,z\right)=0$ is tantamount
to the construction of the square root of the four degree polynomial
$d\left(x,z\right) = b\left(x,z\right)^2-4a\left(x,z\right)
c\left(x,z\right)$. Indeed, $Q\left(x,y,z\right)=0$ is equivalent to
$\left(b\left(x,z\right)+2a\left(x,z\right)y\right)^{2}=d\left(x,z\right)$.
As for any non zero polynomial, there are two branches of the square
root of $d$. Each determination leads to a well defined (i.e.\ single
valued) and meromorphic function on the complex plane $\mathbb{C}$
appropriately cut. As usual, these cuts are constructed using the
roots of $d$, called the branched points. In our case, we have an
explicit expression for these branched points, some properties of
which are collected in the following~:

     \begin{lem}\label{lemma_branched_points}
          Define $z_{1}=1/(2(p_{10}p_{-10})^{1/2}+2(p_{01}p_{0-1})^{1/2})$
          (and note that $z_{1}\geq 1$,
          with equality if and only if the two drifts are equal to zero).
          For $z\in [0,z_{1}]$, the four roots of $d\left(x,z\right)=0$
          are real and non negative.
          For $z\in ]0,z_{1}[$, these roots are mutually different and positive.
          We call them in such a way that for $z\in ]0,z_{1}[$,
          $0<x_{1}(z)<x_{2}(z)<x_{3}(z)<x_{4}(z)$.
          Their explicit expressions are~:
               \begin{eqnarray*}
                    x_{2,3}\left(z\right)&=&\frac{z^{-1}-2\sqrt{p_{01}p_{0-1}}}{2p_{10}}\pm
                    \sqrt{\left(\frac{z^{-1}-2\sqrt{p_{01}p_{0-1}}}{2p_{10}}\right)^{2}-\frac{p_{-10}}{p_{10}}},\\
                    x_{1,4}\left(z\right)&=&\frac{z^{-1}+2\sqrt{p_{01}p_{0-1}}}{2p_{10}}\pm
                    \sqrt{\left(\frac{z^{-1}+2\sqrt{p_{01}p_{0-1}}}{2p_{10}}\right)^{2}-\frac{p_{-10}}{p_{10}}}.
               \end{eqnarray*}
          They are tied together by $x_{1}(z)x_{4}(z)=
          x_{2}(z)x_{3}(z)=p_{-10}/p_{10}$
          and verify $x_{1}(z),x_{2}(z)\in ]0,(p_{-10}/p_{10})^{1/2}[$,
          $x_{3}(z),x_{4}(z)\in ](p_{-10}/p_{10})^{1/2},+\infty [$
          for $z\in ]0,z_{1}[$. Also, $x_{1}(0)=x_{2}(0)=0$
          and $x_{3}(0)=x_{4}(0)=\infty $. Moreover, if $M_{y}>0$ then
          $0<x_{1}(1)<x_{2}(1)<1< x_{3}(1)<x_{4}(1)$
          and if $M_{y}=0$ then $0<x_{1}(1)<x_{2}(1)=1=x_{3}(1)<x_{4}(1)$.
          At last,
          the $x_{i}$ vary continuously and monotonously with respect to $z$.
          %on pourrait aussi remarquer que $x_3(z_1)=x_2(z_1)$ (ça c'est la définition de $z_1$)
          %et qu'en plus $x_3(z_1)=x_2(z_1)=\sqrt{p_{-10}/p_{10}}$.
          % On pourrait aussi se demander quelle est la plus petite valeur de $z$
          % pour laquelle $x_3(z)=1$. La réponse est $z=z_2$ où $z_2<z_1$ est égal à
          %$ 1/(p_{10}+p_{-10}+2\sqrt{p_{0-1}p_{01}})$.
     \end{lem}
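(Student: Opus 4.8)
The plan is to reduce everything to the analysis of two explicit quadratics. Since $4a(x,z)c(x,z)=\bigl(2z\sqrt{p_{01}p_{0-1}}\,x\bigr)^{2}$ is a perfect square, the quartic $d(x,z)=b(x,z)^{2}-4a(x,z)c(x,z)$ factors as a difference of squares, hence as
\[
d(x,z)=\bigl(b(x,z)-2z\sqrt{p_{01}p_{0-1}}\,x\bigr)\bigl(b(x,z)+2z\sqrt{p_{01}p_{0-1}}\,x\bigr)=P_{1}(x,z)\,P_{2}(x,z),
\]
with $P_{1}(x,z)=zp_{10}x^{2}-(1+2z\sqrt{p_{01}p_{0-1}})x+zp_{-10}$ and $P_{2}(x,z)=zp_{10}x^{2}-(1-2z\sqrt{p_{01}p_{0-1}})x+zp_{-10}$. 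Thus every branched point is a root of $P_{1}$ or of $P_{2}$. Applying the quadratic formula to $P_{1}=0$ and $P_{2}=0$ and dividing through by $zp_{10}$ produces exactly the announced closed forms, $x_{1}(z),x_{4}(z)$ being the two roots of $P_{1}$ and $x_{2}(z),x_{3}(z)$ those of $P_{2}$; Vieta's formulas for each quadratic give $x_{1}(z)x_{4}(z)=x_{2}(z)x_{3}(z)=p_{-10}/p_{10}$.

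I would next settle reality and positivity. The discriminants $\Delta_{1}(z)=(1+2z\sqrt{p_{01}p_{0-1}})^{2}-4z^{2}p_{10}p_{-10}$ and $\Delta_{2}(z)=(1-2z\sqrt{p_{01}p_{0-1}})^{2}-4z^{2}p_{10}p_{-10}$ each factor over $\mathbb{R}$ into two affine functions of $z$,
\[
\Delta_{i}(z)=\bigl(1\mp 2z\sqrt{p_{01}p_{0-1}}-2z\sqrt{p_{10}p_{-10}}\bigr)\bigl(1\mp 2z\sqrt{p_{01}p_{0-1}}+2z\sqrt{p_{10}p_{-10}}\bigr),
\]
the sign $-$ corresponding to $i=1$. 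The factor $1-2z\sqrt{p_{01}p_{0-1}}-2z\sqrt{p_{10}p_{-10}}$ occurring in $\Delta_{2}$ is $1-z/z_{1}$; moreover (H2') together with $2\sqrt{p_{10}p_{-10}}\leq p_{10}+p_{-10}$ and $2\sqrt{p_{01}p_{0-1}}\leq p_{01}+p_{0-1}$ yields $1/z_{1}\leq 1$, i.e.\ $z_{1}\geq 1$, with equality exactly when $p_{10}=p_{-10}$ and $p_{01}=p_{0-1}$, that is $M_{x}=M_{y}=0$. Since $z_{1}$ is strictly smaller than the zero of every other affine factor above (using (H2'') and (H2')), elementary sign considerations give $\Delta_{1}>0$ on all of $[0,z_{1}]$ and $\Delta_{2}\geq 0$ on $[0,z_{1}]$ with $\Delta_{2}=0$ only at $z=z_{1}$; this is precisely what makes $x_{2}$ and $x_{3}$ coincide at $z_{1}$. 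Positivity of all four roots on $[0,z_{1}]$ follows because each quadratic has product of roots $p_{-10}/p_{10}>0$ and sum of roots $(1\mp 2z\sqrt{p_{01}p_{0-1}})/(zp_{10})>0$ there.

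For the ordering and the ranges, $x_{1}x_{4}=x_{2}x_{3}=p_{-10}/p_{10}$ with $x_{1}<x_{4}$ and $x_{2}<x_{3}$ already forces $x_{1},x_{2}\in\,]0,\sqrt{p_{-10}/p_{10}}[$ and $x_{3},x_{4}\in\,]\sqrt{p_{-10}/p_{10}},+\infty[\,$, separating the two smaller roots from the two larger ones. To interleave them within each pair I would use the pointwise comparison $P_{2}(x,z)-P_{1}(x,z)=4z\sqrt{p_{01}p_{0-1}}\,x>0$ for $x>0$: it forces each root of $P_{2}$ to lie strictly between the roots $x_{1}$ and $x_{4}$ of $P_{1}$, hence $x_{1}<x_{2}$ and $x_{3}<x_{4}$, and combined with the separation above $0<x_{1}(z)<x_{2}(z)<x_{3}(z)<x_{4}(z)$ for $z\in\,]0,z_{1}[\,$. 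The boundary values follow from the closed forms: with $u_{\pm}(z)=(z^{-1}\pm 2\sqrt{p_{01}p_{0-1}})/(2p_{10})$ one has $x_{1},x_{4}=u_{+}\mp\sqrt{u_{+}^{2}-p_{-10}/p_{10}}$ and $x_{2},x_{3}=u_{-}\mp\sqrt{u_{-}^{2}-p_{-10}/p_{10}}$; letting $z\to 0$ gives $x_{1}(0)=x_{2}(0)=0$ and $x_{3}(0)=x_{4}(0)=\infty$, while at $z=1$ the evaluations $P_{1}(1,1)=-(\sqrt{p_{01}}+\sqrt{p_{0-1}})^{2}<0$ and $P_{2}(1,1)=-(\sqrt{p_{01}}-\sqrt{p_{0-1}})^{2}$ locate the value $1$ with respect to the roots (the latter being $<0$ when $M_{y}>0$ and $=0$ when $M_{y}=0$).

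Finally, continuity in $z$ is immediate from the closed forms on $\,]0,z_{1}]$ — the radicands $u_{\pm}^{2}-p_{-10}/p_{10}$ being nonnegative there thanks to $\Delta_{i}\geq 0$ — and monotonicity follows from the observation that $u_{\pm}$ are strictly decreasing in $z$ while $u\mapsto u-\sqrt{u^{2}-p_{-10}/p_{10}}$ is decreasing and $u\mapsto u+\sqrt{u^{2}-p_{-10}/p_{10}}$ increasing on $\{u\geq\sqrt{p_{-10}/p_{10}}\}$, so that $x_{1},x_{2}$ increase and $x_{3},x_{4}$ decrease with $z$. Most of the argument is bookkeeping; the single step that is not purely mechanical is the pointwise comparison $P_{2}-P_{1}>0$ used to interleave the roots, and the points that genuinely require care are the degenerate and boundary regimes $z=0$, $z=z_{1}$, $z=1$ and the distinction between zero and positive drift.
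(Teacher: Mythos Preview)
Your argument is correct and follows exactly the route the paper takes: factor $d(x,z)$ as a difference of squares $(b-2zx\sqrt{p_{01}p_{0-1}})(b+2zx\sqrt{p_{01}p_{0-1}})$ and then read everything off the two resulting quadratics---the paper's proof is in fact just this two-line sketch, leaving all the verifications you carry out (sign of the discriminants, the interleaving via $P_{2}-P_{1}>0$, monotonicity through $u\mapsto u\pm\sqrt{u^{2}-p_{-10}/p_{10}}$) to the reader. One harmless slip: in your displayed factorization of $\Delta_{i}$ the parenthetical ``the sign $-$ corresponding to $i=1$'' should read $i=2$, consistently with your own definitions $\Delta_{1}=(1+2z\sqrt{p_{01}p_{0-1}})^{2}-4z^{2}p_{10}p_{-10}$ and $\Delta_{2}=(1-2z\sqrt{p_{01}p_{0-1}})^{2}-4z^{2}p_{10}p_{-10}$.
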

%Indeed, $x_{2}(1)=1$ if and only if
%(after calculations) $p_{-10}+p_{10}+2\sqrt{p_{0-1}p_{01}}=1$
%in other words if $2\sqrt{p_{0-1}p_{01}}=p_{01}+p_{0-1}$ i.e.\ if $p_{01}=p_{0-1}$.

\begin{proof}
All the facts described in Lemma~\ref{lemma_branched_points} are
based on the explicit expression of the branched points  that we get
by solving $d(x,z)=0$. Here $d$ is a four degree polynomial that we
can split in two polynomials of degree two~:
$d(x,z)=(b(x,z)-2z x (p_{01}p_{0-1})^{1/2})
(b(x,z)+2z x(p_{01}p_{0-1})^{1/2})$, since $a$ and $c$ are
proportional.
\end{proof}

This precise knowledge of the branched points allows us to complete
the construction of the algebraic function $Y$~: this function has
two branches, each of them being well defined and meromorphic on
$\mathbb{C}\setminus
[x_{1}(z),x_{2}(z)] \cup
[x_{3}(z),x_{4}(z)]$. We can write
the analytic expression of these two branches $Y_{0}$ and $Y_{1}$ of
$Y$~: $Y_{0}\left(x,z\right)=Y_{-}\left(x,z\right)$ and
$Y_{1}\left(x,z\right)=Y_{+}\left(x,z\right)$ where~:
     \begin{equation*}
          Y_{\pm }\left(x,z\right) = \frac{ -b\left(x,z\right)\pm
          \sqrt{d\left(x,z\right)}}{2 a\left(x,z\right) }.
     \end{equation*}

Note that just above, and in fact throughout
the whole paper, we chose the principal determination of
the logarithm as soon as we use the complex logarithm~;
in this case to define the square root.

For more details about the
construction of algebraic functions, see for instance 
Book~\cite{SG2}.

In a similar way, the functional
equation~(\ref{functional_equation}) defines also an algebraic
function $X\left(y,z\right)$. All the results concerning
$X\left(y,z\right)$ can be deduced from those obtained for
$Y\left(x,z\right)$ after a proper change of the parameters, namely
$p_{-10}$ (resp.\ $p_{10},p_{0-1},p_{01}$) in $p_{0-1}$ 
(resp.\ $p_{01},p_{-10},p_{10}$).

To conclude this part, we give a lemma that clarifies some
properties of the functions $X$ and $Y$, that will be useful in the
sequel. This lemma is an adaptation of results of~\cite{FIM}, so we
refer to this book for the proof.

\begin{lem}\label{properties_X_Y}
     The two following equalities hold~: $Y_{1}(1,1)=1$ and $Y_{0}(1,1)=p_{0-1}/p_{01}$.
     Moreover, the next properties are valid for all $z\in ]0,z_{1}]$.

     {\rm (i)}  For all $x\in \mathbb{C}\setminus
                [x_{1}(z),x_{2}(z)] \cup
                [x_{3}(z),x_{4}(z)]$, $Y_{0}(x,z)Y_{1}(x,z)=p_{0-1}/p_{01}$,
                $|Y_{0}(x,z)|\leq (p_{0-1}/p_{01})^{1/2}\leq |Y_{1}(x,z)|$.
       
     {\rm (ii)} If $|x|<(p_{-10}/p_{10})^{1/2}$, then
                $X_{0}(Y_{0}(x,z),z)=X_{0}(Y_{1}(x,z),z)=x$ and
                $X_{1}(Y_{0}(x,z),z)=X_{1}(Y_{1}(x,z),z)=p_{-10}/(p_{10} x)$.
       
     {\rm (iii)} If $|x|>(p_{-10}/p_{10})^{1/2}$, then
                 $X_{0}(Y_{0}(x,z),z)=X_{0}(Y_{1}(x,z),z)=p_{-10}/(p_{10} x)$ and
                 $X_{1}(Y_{0}(x,z),z)=X_{1}(Y_{1}(x,z),z)=x$.

%     {\rm (ii)} If $\left|y\right|<(p_{0-1}/p_{01})^{1/2}$, then
%                $Y_{0}(X_{0}(y,z),z)=Y_{0}(X_{1}(y,z),z)=y$ and
%                $Y_{1}(X_{0}(y,z),z)=Y_{1}(X_{1}(y,z),z)=p_{0-1}/(p_{01} y)$.
%
%     {\rm (iii)} If $\left|y\right|>(p_{0-1}/p_{01})^{1/2}$, then
%                 $Y_{0}(X_{0}(y,z),z)=Y_{0}(X_{1}(y,z),z)=p_{0-1}/(p_{01} y)$ and
%                 $Y_{1}(X_{0}(y,z),z)=Y_{1}(X_{1}(y,z),z)=y$.

     {\rm  (iv)} As $x\to \infty $, $Y_{0}\left(x,z\right)=-p_{0-1}/(p_{10}x)-p_{0-1}/(p_{10}x)^{2}
                 +\mathcal{O}(1/x^{3})$ and $Y_{1}\left(x,z\right)=
                 -p_{10}x/p_{01}+1/p_{01}+\mathcal{O}(1/x)$.
     \end{lem}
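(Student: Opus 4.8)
The argument I have in mind rests on Vieta's formulas for $Q$ --- read once as a quadratic in $y$, $Q=a(x,z)y^{2}+b(x,z)y+c(x,z)$, and once as a quadratic in $x$, $Q=\widetilde a(y,z)x^{2}+\widetilde b(y,z)x+\widetilde c(y,z)$ --- together with a single analytic ingredient: the maximum modulus principle on the Riemann sphere with the two slits $[x_{1}(z),x_{2}(z)]$ and $[x_{3}(z),x_{4}(z)]$ removed. Throughout I write $r_{y}=(p_{0-1}/p_{01})^{1/2}$ and $r_{x}=(p_{-10}/p_{10})^{1/2}$.

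I would begin with (i). Vieta gives at once $Y_{0}(x,z)Y_{1}(x,z)=c(x,z)/a(x,z)=p_{0-1}/p_{01}$ for every $x\neq 0$ in the cut plane (and, in the limit, at $x=0$ and $x=\infty$, one branch tending to $0$ and the other to $\infty$). It then suffices to prove $|Y_{0}(x,z)|\leq r_{y}$, for then $|Y_{1}|=(p_{0-1}/p_{01})/|Y_{0}|\geq r_{y}$. First I would check that $Y_{0}$ --- the branch staying bounded at $x=0$ and at $x=\infty$, as a one-line computation with the quadratic formula shows at both points --- is holomorphic on the whole slit sphere: its only possible poles are the zeros $x=0,\infty$ of $a$, and there $Y_{0}$ vanishes, so these singularities are removable. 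Next, on each slit one has $d(x,z)<0$ by Lemma~\ref{lemma_branched_points}, so $\sqrt{d}$ is purely imaginary while $a,b,c$ are real with $a=zp_{01}x>0$ and $c=zp_{0-1}x>0$, whence from either side of the slit
\begin{equation*}
  |Y_{0}(x,z)|^{2}=\frac{b(x,z)^{2}+|d(x,z)|}{4a(x,z)^{2}}=\frac{b(x,z)^{2}-d(x,z)}{4a(x,z)^{2}}=\frac{c(x,z)}{a(x,z)}=\frac{p_{0-1}}{p_{01}}.
\end{equation*}
Thus $|Y_{0}|$ extends continuously to the boundary of the slit sphere with boundary value $r_{y}$, and the maximum modulus principle gives $|Y_{0}|\leq r_{y}$ everywhere --- which is (i). The same argument with $(p_{-10},p_{10},p_{0-1},p_{01})$ replaced by $(p_{0-1},p_{01},p_{-10},p_{10})$, as in the text, yields the twin facts $X_{0}(y,z)X_{1}(y,z)=p_{-10}/p_{10}$ and $|X_{0}|\leq r_{x}\leq|X_{1}|$, which I use below.

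The equalities at $x=z=1$ are available because $z_{1}\geq 1$. Using (H2') I would factor $Q(1,y,1)=p_{01}y^{2}-(p_{01}+p_{0-1})y+p_{0-1}=(y-1)(p_{01}y-p_{0-1})$, so $\{Y_{0}(1,1),Y_{1}(1,1)\}=\{1,\,p_{0-1}/p_{01}\}$; since $M_{y}\geq 0$ forces $p_{0-1}/p_{01}\leq r_{y}\leq 1$, part (i) selects $Y_{0}(1,1)=p_{0-1}/p_{01}$ and $Y_{1}(1,1)=1$ when $M_{y}>0$ (where $x=1$ lies strictly between the slits, by Lemma~\ref{lemma_branched_points}), and when $M_{y}=0$ the two roots coincide at $-b(1,1)/(2a(1,1))=1=p_{0-1}/p_{01}$, so the claim still holds. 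For (ii)--(iii): fix $x$ in the cut plane with $|x|\neq r_{x}$ and put $y_{0}=Y_{j}(x,z)$, $j\in\{0,1\}$; then $Q(x,y_{0},z)=0$, and reading this as a quadratic in its first variable, Vieta shows its two roots multiply to $\widetilde c(y_{0},z)/\widetilde a(y_{0},z)=p_{-10}/p_{10}$, so --- $x$ being one of them --- they are $x$ and $(p_{-10}/p_{10})/x$, of moduli on opposite sides of $r_{x}$. The $X$-version of (i) labels the smaller-modulus one $X_{0}(y_{0},z)$ and the other $X_{1}(y_{0},z)$; splitting into the cases $|x|<r_{x}$ and $|x|>r_{x}$ and noting the answer is independent of $j$ gives exactly (ii) and (iii). (One also needs that $y_{0}=Y_{j}(x,z)$ avoids the $y$-slits so that $X_{0},X_{1}$ are defined there --- part of the construction recalled from~\cite{FIM}.)

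Finally (iv) is a Taylor expansion at $x=\infty$: the bounded root of $a y^{2}+by+c=0$ is $-c/b-ac^{2}/b^{3}-\cdots$, and substituting $a=zp_{01}x$, $b=zp_{10}x^{2}-x+zp_{-10}$, $c=zp_{0-1}x$ and expanding in $1/x$ gives the stated expansion of $Y_{0}$ (for $z=1$; in general the coefficient of $x^{-2}$ carries a factor $z^{-1}$), after which $Y_{1}=(p_{0-1}/p_{01})/Y_{0}$ gives the expansion of $Y_{1}$. The step I expect to cost the most work is the first one inside (i): verifying that $Y_{0}$ genuinely extends to a pole-free holomorphic function on the slit sphere that is continuous up to the slits, so that the maximum modulus principle applies --- here the simplicity of the branch points and their location in $]0,\infty[$ (Lemma~\ref{lemma_branched_points}), together with the behaviour of $Y_{0}$ at $x=0$ and $x=\infty$, are exactly what is needed. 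Everything else is routine bookkeeping with Vieta's formulas and reproduces the corresponding statements of~\cite{FIM}.
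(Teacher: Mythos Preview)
Your argument is correct and is precisely the standard route taken in~\cite{FIM}, which is exactly what the paper defers to (the paper gives no self-contained proof of this lemma). The Vieta identities $Y_{0}Y_{1}=c/a$ and $X_{0}X_{1}=\widetilde c/\widetilde a$, the boundary computation $|Y_{0,1}|^{2}=c/a$ on the slits, and the maximum-modulus step are the ingredients used there; your handling of (ii)--(iii) via the $x$-quadratic and of the special values at $(1,1)$ via the factorisation of $Q(1,y,1)$ is also the expected one.

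Two small remarks. First, the one point that genuinely requires care---and which you rightly single out---is that a \emph{single} holomorphic branch on the doubly-slit sphere is bounded at \emph{both} $x=0$ and $x=\infty$ (so that the maximum principle applies to it); this is where the specific branch conventions of~\cite{FIM} enter, and the paper's formula $Y_{0}=Y_{-}$ with ``principal square root'' should be read as fixing a branch of $\sqrt{d(\cdot,z)}$ on the cut plane rather than applying $\sqrt{\,\cdot\,}$ pointwise. Second, your parenthetical about~(iv) is well taken: the $1/x^{2}$ coefficient in the expansion of $Y_{0}$ carries a factor $z^{-1}$ in general, so the displayed expansion in the lemma is literally correct only at $z=1$---which is the only value at which~(iv) is actually used later (Remark~\ref{rem_principal_part} and Section~\ref{h_1_x}).
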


%The proof of this lemma (lemma that corresponds
%to the lemma 5.3.5 page 118 of \cite{FIM}) is simple :
%first, the fact that $\left|Y_{0}\left(x,z\right)\right|\leq (p_{0-1}/p_{01})^{1/2}$
%follows from the application of the maximum modulus principle. Also,
%the "automorphy relations" follow from : we know a priori that
%{(Y_{0}(X_{0}(y,z),z),Y_{1}(X_{0}(y,z),z)}={y,p_{0-1}/(p_{01} y)}$.
%Il reste a identifier qui est qui, mais d'apres ce qui precede, on a bien
%ce qui est annonce.

\subsection{Riemann boundary problem and conformal gluing functions}
\label{Riemann_Carleman_problem}

Throughout all Subsection~\ref{Riemann_Carleman_problem}, 
$z$ lies in $]0,z_{1}]$, unless otherwise specified.
Using the notations of Subsections~\ref{Introduction}
and~\ref{The_algebraic_curve_Q}, we define the two following curves~:
     \begin{equation}\label{def_curves_L_M}
          \mathcal{L}_{z} = Y_{0}\left(\left[\overrightarrow{\underleftarrow{x_{1}\left(z\right)
          ,x_{2}\left(z\right)}}\right],z\right),\hspace{5mm} \mathcal{M}_{z} =
          X_{0}\left(\left[\overrightarrow{\underleftarrow{y_{1}\left(z\right)
          ,y_{2}\left(z\right)}}\right],z\right).
     \end{equation}
Just above, we use the notation
$[\overrightarrow{\underleftarrow{u,v}}]$
for the contour
$[u,v]$ traversed from $u$ to $v$ along the upper edge of
the slit $[u,v]$ and then back to $u$ along the lower
edge of the slit.

A worthwhile sight is that under the hypothesis (H2'),
these two curves are quite
simple since they are in fact just two circles, centered at the
origin and of radius $\tilde{r}=(p_{0-1}/p_{01})^{1/2}
\leq 1$ and $r=(p_{-10}/p_{10})^{1/2} \leq 1$
respectively.
One verifies these facts directly~: if $t\in [x_{1}\left(z\right),
x_{2}\left(z\right)]$, then $d\left(t,z\right)\in \mathbb{R}_{-}$
and so $|-b(t,z)\pm d(t,z)^{1/2}|^{2}=4a(t,z)c(t,z)$. Thus,
$|Y_{0,1}\left(t,z\right)|^2=c(t,z)/a(t,z)=p_{0-1}/p_{01}$
and $\mathcal{L}_{z}=\mathcal{C}(0,\tilde{r})$~;
likewise, we prove that $\mathcal{M}_{z}=\mathcal{C}(0,r)$.

The reason why we have introduced these curves appears now~:
the functions $h$ (of the argument $x$) and $\tilde{h}$ (of the
argument $y$), defined in~(\ref{def_generating_functions}), verify
the following boundary conditions on
$\mathcal{M}_{z}=\mathcal{C}\left(0,r\right)$ and
$\mathcal{L}_{z}=\mathcal{C}\left(0,\tilde{r}\right)$~:
     \begin{eqnarray}
          \forall t \in \mathcal{C}\left(0,r\right) :\hspace{5mm}
          h\left( t ,z\right) - h\left( \overline{t},z \right)&=&
          t^{n_{0}}Y_{0}\left(t,z\right)^{m_{0}}-\overline{t}^{n_{0}}
          Y_{0}\left(\overline{t},z\right)^{m_{0}},\label{SR_problem_h}\\
          \forall t \in \mathcal{C}\left(0,\widetilde{r}\right):\hspace{5mm}
          \widetilde{h}\left( t ,z\right) - \widetilde{h}\left( \overline{t} ,z\right)&=&
          X_{0}\left(t,z\right)^{n_{0}}t^{m_{0}}-
          X_{0}\left(\overline{t},z\right)^{n_{0}}\overline{t}^{m_{0}}.\nonumber
     \end{eqnarray}

The way to obtain~(\ref{SR_problem_h}) and the analogue for
$\tilde{h}$ is described in~\cite{FIM}, so we refer to this book for
the details~; nevertheless, we recall here briefly the
explanations~: taking $|y|\leq 1$ and
$x=X_{0}(y,z)$ (whose modulus is less than one thanks to
Lemma~\ref{properties_X_Y}) in~(\ref{functional_equation}) leads to~:
     \begin{equation*}
          h\left(X_{0}\left(y,z\right),z\right)
          +\widetilde{h}\left(y,z\right)-
          X_{0}\left(y,z\right)^{n_{0}}y^{m_{0}}=0.
     \end{equation*}

We let now $y$ go successively to the upper and lower edge of
$[y_{1}(z),y_{2}(z)]$ and we make
the difference of these two equations so obtained. Since the 
slit $[y_{1}(z),y_{2}(z)]$ is included in
the unit disc where $\tilde{h}$ is holomorphic, $\tilde{h}$ vanishes
and we find that for $y\in [y_{1}(z),y_{2}(z)]$,
     \begin{equation*}
          h\left(X_{0}\left(y,z\right),z\right)-
          h\left(X_{1}\left(y,z\right),z\right)=
          X_{0}\left(y,z\right)^{n_{0}}y^{m_{0}}-
          X_{1}\left(y,z\right)^{n_{0}}y^{m_{0}}.
     \end{equation*}
According to Lemma~\ref{properties_X_Y}, for any $y\in
[y_{1}(z),y_{2}(z)]$ we have $Y_{0}(X_{0}(y,z),z)=y$~; so we obtain
that for any $t\in \mathcal{M}_{z}$,
$h(t,z)-h(\overline{t},z)=Y_{0}\left(t,z\right)^{m_{0}}(t^{n_{0}}-
\overline{t}^{n_{0}})$. To complete the proof of~(\ref{SR_problem_h}),
it remains to show that $Y_{0}(t,z)=Y_{0}(\overline{t},z)$ for $t\in
\mathcal{M}_{z}$~; but this is once again a consequence of
Lemma~\ref{properties_X_Y} since we proved there that for $y\in
[y_{1}(z),y_{2}(z)]$, $Y_{0}(X_{1}(y,z),z)=y$.

For any $z \in [0,1]$, the function $h$ of the argument $x$, as a
generating function of probabilities,
is well defined on the closed unit disc  $|x|\leq 1$, holomorphic
inside it and continuous up to its boundary.
With Lemma~\ref{properties_X_Y}, the curve $\mathcal{M}_{z}$
is included in the closed unit disc. Now we have the problem
{\it to find $h$ holomorphic inside $\mathcal{M}_{z}$,
continuous up to the boundary and verifying
the boundary condition~(\ref{SR_problem_h}).
In addition $h(0,z)=0$ for all $z \in [0,1]$.}

Problems with boundary conditions like~(\ref{SR_problem_h}) are
called Riemann boundary value problems with shift. The classical way
to study this kind of problems is to reduce them to Riemann-Hilbert
problems, for which there exists a suitable and complete theory. The
conversion between Riemann problems with shift and Riemann-Hilbert
problems is done thanks to the use of conformal gluing functions,
notion defined just below. For details about boundary value
problems, we refer to~\cite{LU}.

     \begin{defn}\label{def_CGF}
     Let $\mathcal{C}$ be a simple closed curve, symmetrical
     with respect to the real axis.
     Denote by $\mathcal{G}_{\mathcal{C}}$
     the interior of the bounded domain delimited by $\mathcal{C}$.
     $w$ is called a conformal gluing function (CGF) for
     the curve $\mathcal{C}$ if {\rm (i)} $w$ is meromorphic in
     $\mathcal{G}_{\mathcal{C}}$, continuous up
     to its boundary {\rm (ii)} $w$
     establishes a conformal mapping of
     $\mathcal{G}_{\mathcal{C}}$ onto the complex plane
     cut along a smooth arc $U$ {\rm (iii)}
     for all $t\in \mathcal{C}$,
     $w\left(t\right)=w( \overline{t})$.
     \end{defn}

In the general case, finding a CGF associated to some curve
without strong hypothesis on this curve is a quite difficult problem,
we will besides discuss this fact in Section~\ref{Extension_results}~;
%There are general theorems concerning
%the existence of CGF, but we are here not
%interested in.
however, in our case, the curves $\mathcal{M}_{z}$
and $\mathcal{L}_{z}$ are circles and
we have therefore an explicit expression of possible CGF~:

     \begin{prop}\label{explicit_form_CGF}
     CGF for the curves $\mathcal{M}_{z}=\mathcal{C}\left(0,r\right)$ and
     $\mathcal{L}_{z}=\mathcal{C}\left(0,\tilde{r}\right)$ are equal to~:
          \begin{equation*}
               w\left(t\right)=\frac{1}{2}\left(t+\frac{r^{2}}{t}\right),
               \hspace{5mm}  \widetilde{w}\left(t\right)=
               \frac{1}{2}\left(t+\frac{\widetilde{r}^{2}}{t}\right).
          \end{equation*}
     \end{prop}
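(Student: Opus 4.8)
The plan is to verify directly that $w(t) = \frac{1}{2}(t + r^2/t)$ satisfies the three defining properties of a conformal gluing function for $\mathcal{C}(0,r)$, the argument for $\widetilde w$ being verbatim the same with $r$ replaced by $\widetilde r$. Property (i), meromorphy in $\mathcal{G}_{\mathcal{C}(0,r)} = \{|t| < r\}$ and continuity up to the boundary, is immediate: $w$ is rational with its only pole at $t = 0$, which lies inside the disc, and it is clearly continuous on $\{0 < |t| \le r\}$; the pole at the origin is harmless since Definition~\ref{def_CGF}(i) asks only for meromorphy, not holomorphy. Property (iii) is the short computation that for $t \in \mathcal{C}(0,r)$ one has $\overline{t} = r^2/t$ (since $t\overline{t} = |t|^2 = r^2$), hence
     \begin{equation*}
          w(\overline{t}) = \frac{1}{2}\left(\overline{t} + \frac{r^2}{\overline{t}}\right)
          = \frac{1}{2}\left(\frac{r^2}{t} + t\right) = w(t).
     \end{equation*}

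The substantive point is property (ii): that $w$ maps $\{|t| < r\}$ conformally (i.e.\ bijectively and biholomorphically off the pole) onto the complex plane cut along a smooth arc. I would argue this via the classical Joukowski map. Setting $t = r s$ normalizes the disc to $\{|s| < 1\}$ and turns $w$ into $\tfrac{r}{2}(s + 1/s)$, i.e.\ $r$ times the standard Joukowski function $J(s) = \tfrac12(s+1/s)$. It is a standard fact that $J$ is a biholomorphism of the punctured unit disc $\{0 < |s| < 1\}$ onto $\mathbb{C} \setminus [-1,1]$, sending the unit circle (doubly) onto the segment $[-1,1]$ and the puncture to $\infty$; injectivity follows because $J(s_1) = J(s_2)$ forces $s_1 = s_2$ or $s_1 s_2 = 1$, and the latter is impossible inside the disc, while $J' = \tfrac12(1 - 1/s^2)$ vanishes only at $s = \pm 1$ on the boundary. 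Consequently $w$ maps $\{|t|<r\}$ biholomorphically onto $\mathbb{C} \setminus [-r, r]$, and $U = [-r,r]$ is the required smooth arc (here the ``arc'' degenerates to a straight segment, which is certainly smooth). One should note that the conformal map extended to the whole Riemann sphere sends $t=0$ to $\infty$, consistent with $w$ having a simple pole there.

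I do not expect a serious obstacle here, since everything reduces to known properties of the Joukowski transformation; the only thing to be careful about is the bookkeeping for the pole at the origin (which is why Definition~\ref{def_CGF} is phrased with ``meromorphic'' rather than ``holomorphic'') and the observation that the image ``cut'' $[-r,r]$ is traversed twice by the boundary circle, exactly as required so that the gluing condition (iii) identifies the two edges of the slit. If one wishes, the argument can be shortened by simply citing the Joukowski example from a reference such as~\cite{LU} or~\cite{FIM}, but the direct verification above is self-contained.
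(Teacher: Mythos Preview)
Your proposal is correct and follows essentially the same approach as the paper: direct verification of items (i)--(iii) of Definition~\ref{def_CGF}, identifying the image slit as $U=[-r,r]$. The only cosmetic differences are that the paper checks (iii) via the polar form $w(re^{i\theta})=r\cos\theta=w(re^{-i\theta})$ rather than your $\overline{t}=r^2/t$, and that you spell out the Joukowski injectivity argument for (ii) where the paper simply asserts the conformal mapping property.
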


\begin{proof}
We verify easily that $w$ and $\tilde{w}$ are indeed CGF,
following the different \textit{item} of Definition~\ref{def_CGF}. First,
$w$ is manifestly holomorphic on $\mathbb{C}\setminus \{0\}$ and
has a simple pole at $0$, that proves {\rm (i)}. Moreover, $w$ is a
conformal mapping from $\mathcal{D}(0,r)$ onto $\mathbb{C}\setminus
U$, where $U$ is the segment $[-r,r]$, hence {\rm (ii)}. At last,~{\rm (iii)}
comes from remarking that $w(re^{i \theta
})=r\cos(\theta)=w(re^{-i \theta })$. Of course, the proof is
similar for $\tilde{w}$, so we omit it.
\end{proof}

\subsection{Integral representations of the generating functions}
\label{Integral_representation}

In Subsection~\ref{Riemann_Carleman_problem}, we have showed that
$h$ verifies a Riemann problem with shift with boundary
condition~(\ref{SR_problem_h})~; we will now see how
we can deduce from this an explicit expression for $h$.
In fact, we will obtain four different explicit formulations for
$h$, in Propositions~\ref{explicit_h(x,z)},~\ref{explicit_h(x,z)_second}
,~\ref{explicit_h(x,z)_third} and~\ref{lemma_final_form}~; each of
these expressions will have an own interest and will serve in the
sequel --for instance Proposition~\ref{explicit_h(x,z)_third} will be
the starting point of Section~\ref{h_1_x},
Proposition~\ref{lemma_final_form} the one of Section~\ref{h_1_z}--.

%We will start, in result~\ref{explicit_h(x,z)}, by expressing $h$ as
%an integral on the closed contour $\mathcal{C}(0,r)$. These
%formulation will come from solving the boundary value
%problem~(\ref{SR_problem_h}).

%Then, applying the residue theorem to this integral and using the
%algebricity of $Y$, we will express, in
%proposition~\ref{explicit_h(x,z)_second}, $h$ as the sum of
%$x^{n_{0}}Y_{0}(x,z)^{m_{0}}$ and an integral on the segment
%$[x_{1}(z),x_{2}(z)]$.

%With respect to the first formulation, this one has the advantage of
%being an integral on a segment, more useful than an integral on a
%closed contour. But a negative aspect of it is that $h$, which is
%known to be holomorphic in the neighborhood of $[x_{1}(z),x_{2}(z)]$
%(since it is a generating function of probabilities and
%$[x_{1}(z),x_{2}(z)]$ is included in the unit disc), appears as the
%sum of two functions which are not holomorphic in
%$[x_{1}(z),x_{2}(z)]$.

%We will remedy this fact in proposition~\ref{explicit_h(x,z)_third},
%by a new application of the residue theorem.

%At last, we will remark that some quantities present in the integral
%expression of $h$ express themselves in a more natural way with the
%Chebyshev polynomials of the second kind~; so in
%proposition~\ref{lemma_final_form} we formulate again the expression
%of $h$ taking account of this fact.

%Before considering and proving these different formulations, we have
%to find the first one, which is, as already said, a direct
%consequence of the boundary problem~(\ref{SR_problem_h}).

     \begin{prop}\label{explicit_h(x,z)}
     Let $\mathcal{C}(0,r)$ be the circle of the radius
     $r=(p_{-10}/p_{10})^{1/2}$.
     The function $h$ admits the following integral expression~:
          \begin{equation*}
               h\left(x,z\right) = \frac{x}{2\pi i}
               \int_{\mathcal{C}\left(0,r\right)}
               t^{n_{0}}Y_{0}\left(t,z\right)^{m_{0}}
               \left( \frac{1}{t\left(t-x\right)}+
               \frac{1}{x t-r^{2}}\right) \textnormal{d}t.
          \end{equation*}
     Above, $x$ belongs to the open centered disc of radius $r$,
     and $z$ to $]0,z_{1}]$,
     $z_{1}$ being defined in Lemma~\ref{lemma_branched_points}.
     \end{prop}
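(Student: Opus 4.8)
The plan is to solve the Riemann boundary value problem with shift satisfied by $h$, which was set up in Subsection~\ref{Riemann_Carleman_problem}, by transporting it through the conformal gluing function $w$ of Proposition~\ref{explicit_form_CGF} to an ordinary additive (Riemann-Hilbert) jump problem on the segment $[-r,r]$, solving that by the Sokhotski-Plemelj formula, and finally changing variables back to recover an integral over $\mathcal{C}(0,r)$. Fix $z\in\,]0,z_{1}]$. By Subsection~\ref{Riemann_Carleman_problem}, $x\mapsto h(x,z)$ is holomorphic in the disc $\mathcal{D}(0,r)$, continuous up to $\mathcal{C}(0,r)=\mathcal{M}_{z}$, satisfies $h(0,z)=0$, and fulfils~(\ref{SR_problem_h}), which reads $h(t,z)-h(\overline{t},z)=g(t)-g(\overline{t})$ for $t\in\mathcal{C}(0,r)$, where I abbreviate $g(t)=t^{n_{0}}Y_{0}(t,z)^{m_{0}}$; by Lemma~\ref{lemma_branched_points} the circle $\mathcal{C}(0,r)$ separates the two cuts $[x_{1}(z),x_{2}(z)]$ and $[x_{3}(z),x_{4}(z)]$ and meets neither, so $g$ is holomorphic in a neighbourhood of $\mathcal{C}(0,r)$.

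First I would put $H=h(\cdot,z)\circ v$, where $v$ is the inverse of $w$, a conformal map of $\mathbb{C}\setminus[-r,r]$ onto $\mathcal{D}(0,r)$ with $v(\infty)=0$; thus $H$ is holomorphic on $\mathbb{C}\setminus[-r,r]$ with $H(\infty)=h(0,z)=0$. Since $w(re^{i\theta})=r\cos\theta=w(re^{-i\theta})$, each $s\in(-r,r)$ is the image of a conjugate pair $t,\overline{t}\in\mathcal{C}(0,r)$ lying on the two edges of the slit, so~(\ref{SR_problem_h}) becomes the scalar jump condition $H^{+}(s)-H^{-}(s)=k(s)$ on $(-r,r)$, with $k(s)=g(\overline{t})-g(t)$ for $w(t)=s$. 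Because $w$ has simple critical points at $\pm r$, the map $v$ has square-root branch points there, but these are annihilated by the continuity of $h$ on the closed disc, so $H$ is continuous up to the closed slit and $k$ is H\"older on $[-r,r]$. The normalisation $H(\infty)=0$ makes the solution unique: two solutions would differ by a function with vanishing jump across $(-r,r)$, hence holomorphic on $\mathbb{C}\setminus\{-r,r\}$, bounded near $\pm r$ (so these are removable), and vanishing at infinity, hence identically zero. Therefore, by Sokhotski-Plemelj,
\[
H(u)=\frac{1}{2\pi i}\int_{-r}^{r}\frac{k(s)}{s-u}\,\textnormal{d}s .
\]

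Next I would set $u=w(x)$ for $x\in\mathcal{D}(0,r)$ and rewrite this as a contour integral over $\mathcal{C}(0,r)$, positively oriented. Parametrising $\mathcal{C}(0,r)$ by $t=re^{i\theta}$, the arcs $\theta\in[0,\pi]$ and $\theta\in[\pi,2\pi]$ each cover the slit once, with conjugate points falling on opposite edges, so their contributions combine to turn $g(\overline{t})-g(t)$ into a full contour integral of $g(t)$; the substitution $s=w(t)$ then gives
\[
h(x,z)=\frac{1}{2\pi i}\int_{\mathcal{C}(0,r)}\frac{g(t)\,w'(t)}{w(t)-w(x)}\,\textnormal{d}t .
\]
Finally, the elementary identities $w(t)-w(x)=(t-x)(xt-r^{2})/(2xt)$ and $w'(t)=(t^{2}-r^{2})/(2t^{2})$ give
\[
\frac{w'(t)}{w(t)-w(x)}=\frac{x(t^{2}-r^{2})}{t(t-x)(xt-r^{2})}=x\Bigl(\frac{1}{t(t-x)}+\frac{1}{xt-r^{2}}\Bigr),
\]
the partial-fraction step being legitimate on $\mathcal{C}(0,r)$ since the pole $t=r^{2}/x$ lies outside it when $|x|<r$; substituting $g(t)=t^{n_{0}}Y_{0}(t,z)^{m_{0}}$ then produces the announced formula.

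The main obstacle will be the middle step: carrying out precisely the reduction to the scalar jump problem on $[-r,r]$ — pairing the two edges of the slit with the conjugate arcs of $\mathcal{C}(0,r)$, controlling the regularity of $H$ and $k$ at the branch points $\pm r$, proving uniqueness under the normalisation at infinity, and keeping the signs and orientations straight through the change of variables. By contrast the algebraic manipulations of $w(t)-w(x)$, $w'(t)$ and the partial fractions are routine. An essentially equivalent alternative would be to take the right-hand side of the Proposition as an ansatz, verify via Plemelj that it is holomorphic in $\mathcal{D}(0,r)$, vanishes at $x=0$ and satisfies~(\ref{SR_problem_h}), and conclude by the uniqueness of the boundary value problem.
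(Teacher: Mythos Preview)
Your proposal is correct and follows essentially the same route as the paper: transport the shift problem through the gluing function $w$ to an additive Riemann--Hilbert problem on $[-r,r]$, solve by the Plemelj formula, and change variables back to $\mathcal{C}(0,r)$ before computing the partial fractions of $w'(t)/(w(t)-w(x))$. The only cosmetic difference is that the paper normalises by subtracting the value at $x=0$ (writing $\phi(t,x)=w'(t)/(w(t)-w(x))-w'(t)/(w(t)-w(0))$), whereas you note directly that $v(\infty)=0$ so $H(\infty)=h(0,z)=0$ kills the additive constant; since $w(0)=\infty$ the subtracted term vanishes and the two formulations coincide.
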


\begin{proof}

This proposition corresponds to the standard way to obtain a 
Riemann-Hilbert problem starting from a Riemann problem with shift.

We recall
from Definition~\ref{def_CGF} and
Proposition~\ref{explicit_form_CGF} that $w$ is a conformal
mapping from
%the bounded domain delimited by $\mathcal{M}_{z}$ (equal here to
the open disc $D(0,r)$
%)
onto $\mathbb{C}\setminus U$, where $U$ is the segment
$[w(X(y_{1}(z),z)), w(X(y_{2}(z),z))]=[-r,r]$.
Therefore, the function $w$ admits an inverse from
$\mathbb{C}\setminus [-r,r]$ onto $D(0,r)$,
inverse that we call $v$.
We can here give the explicit expression of $v$~:
it is equal to $v(w)=w-(w^{2}-r^{2})^{1/2}$.
% --we have
%taken the principal determination of the logarithm
%to define $v$--.

If we denote by $v^{+}(w)$ (resp.\ $v^{-}(w)$)
the limit value of $v(y)$ when
$y\to w$ from the upper half plane $\{s\in \mathbb{C} :
\text{Im} (s)>0 \}$ (resp.\ lower half plane $\{s\in \mathbb{C} :
\text{Im}(s)<0 \}$), then
$v^{+}(U)=\mathcal{C}(0,r)\cap \{t\in \mathbb{C} :
\text{Im} (t)<0\}$ and
$v^{-}(U)=\mathcal{C}(0,r)\cap \{t\in \mathbb{C} :
\text{Im} (t)>0\}$.
%Moreover, this inverse verifies
%$v^{+}\left(w\right)=\overline{t}$ and
%$v^{-}\left(w\right)=t$ for $w\in U$ and $t\in
%{\mathcal{M}_{z}}_{l}$~; where~:
%
%     \begin{enumerate}
%
%     \item for $w\in U$, $v^{+}\left(w\right)$ (resp. $v^{-}\left(w\right)$)
%     represents the limit value of $v(y)$ when
%     $y\to w$ from the lower half plane $\{s\in \mathbb{C} :
%     \Im (s)<0 \}$ (resp. upper half plane $\{s\in \mathbb{C} :
%     \Im (s)>0 \}$).
%
%     \item ${\mathcal{M}_{z}}_{l}$ (resp. ${\mathcal{M}_{z}}_{u}$) stands for
%     the lower (resp. upper) part of $\mathcal{M}_{z}$ described in
%     the positive direction.
%
%     \end{enumerate}
This is why we can,
thanks to the function $v$, rewrite the boundary
condition~(\ref{SR_problem_h}) in terms of $\phi=h\circ v$ as follows~:
     \begin{equation}\label{RH_problem_h}
          \phi^{+}\left(w\right)-\phi^{-}\left(w\right)=
          v^{+}\left(w\right)^{n_{0}}Y_{0}
          \left(v^{+}\left(w\right),z\right)^{m_{0}}-
          v^{-}\left(w\right)^{n_{0}}Y_{0}
          \left(v^{-}\left(w\right),z\right)^{m_{0}},
          \;\;\;\;w\in U,
     \end{equation}
the advantage of this new formulation being that we have now to
solve a more classical Riemann-Hilbert problem. The properties of
$h$ (as a generating function of probabilities) and $v$ are such
that $\phi$ has to be sought among the functions 
holomorphic on $\mathbb{C}\setminus U$ with a finite
limit at infinity and bounded near the ends of $U$. Thus, the 
index (see e.g.\ \cite{LU})
of this Riemann-Hilbert problem is equal to zero, what in concrete
terms means that two solutions of the boundary value
problem with boundary condition~(\ref{RH_problem_h}), or
equivalently~(\ref{SR_problem_h}), differ by a constant~; the
constant will be fixed by using the fact that $h\left(0,z\right)=0$.

Using the theory of Riemann-Hilbert
problems developed for instance in~\cite{LU}, we obtain that $h$ is
equal, up to an additive constant, to~:
     \begin{equation*}
          \frac{1}{2\pi i} \int_{U}
          \left(v^{+}\left(w\right)^{n_{0}}Y_{0}
          \left(v^{+}\left(w\right),z\right)^{m_{0}}-
          v^{-}\left(w\right)^{n_{0}}Y_{0}
          \left(v^{-}\left(w\right),z\right)^{m_{0}}\right)
          \frac{1}{w-w\left(x\right)}
          \text{d}w.
     \end{equation*}
So that, taking account
of the equality $h(0,z)=0$, we obtain that $h$ is equal to~:
\begin{equation}\label{solutions_RH_problem_h}
          \frac{1}{2\pi i} \int_{U}
          \left(v^{+}\left(w\right)^{n_{0}}Y_{0}
          \left(v^{+}\left(w\right),z\right)^{m_{0}}-
          v^{-}\left(w\right)^{n_{0}}Y_{0}
          \left(v^{-}\left(w\right),z\right)^{m_{0}}\right)
          \left(\frac{1}{w-w\left(x\right)}-\frac{1}{w-w\left(0\right)}\right)
          \text{d}w.
     \end{equation}
Then, we take the notation
$\phi(t,x)=w'(t)/(w(t)-w(x))
-w'(t)/(w(t)-w(0))$ and we
make the change of variable
$w=w\left(t\right)$ in~(\ref{solutions_RH_problem_h})~:
     \begin{eqnarray*}
          &&\frac{1}{2\pi i} \int_{U}
          \left(v^{+}\left(w\right)^{n_{0}}Y_{0}\left(v^{+}
          \left(w\right),z\right)^{m_{0}}-
          v^{-}\left(w\right)^{n_{0}}Y_{0}
          \left(v^{-}\left(w\right),z\right)^{m_{0}}\right)
          \left(\frac{1}{w-w\left(x\right)}-
          \frac{1}{w-w\left(0\right)}\right)\text{d}w\\&=&\frac{1}{2\pi i}
          \int_{v^{+}\left(U\right)}t^{n_{0}}
          Y_{0}\left(t,z\right)^{m_{0}}\phi\left(t,x\right)
          \text{d}t-
          \left(- \frac{1}{2\pi i}\int_{v^{-}\left(U\right)}
          t^{n_{0}}Y_{0}\left(t,z\right)^{m_{0}}
          \phi\left(t,x\right)\text{d}t\right)\\&=&\frac{1}{2\pi i}
          \int_{\mathcal{C}(0,r)} t^{n_{0}}
          Y_{0}\left(t,z\right)^{m_{0}}\phi\left(t,x\right)\text{d}t,
     \end{eqnarray*}
since $v^{+}(U)\cup v^{-}(U)=\mathcal{C}(0,r)$, as written at 
the beginning of the proof.
To close the proof of Proposition~\ref{explicit_h(x,z)}, it suffices
to write the partial fraction expansion of $\phi$, namely
$\phi(t,x)=x/(t(t-x))+x/(t x-r^{2})$.
\end{proof}

We transform now the integral on $\mathcal{M}_{z}=\mathcal{C}(0,r)$ obtained in
Proposition~\ref{explicit_h(x,z)} into an integral on the cut
$\left[x_{1}\left(z\right),x_{2}\left(z\right)\right]$. We start by
giving the definition~:
     \begin{equation}\label{def_mu}
          \mu_{m_{0}}\left(t,z\right) = \frac{1}{\left(2 a\left(t,z\right)\right)^{m_{0}}}
          \sum_{k=0}^{\left\lfloor \left(m_{0}-1\right)/2
          \right\rfloor}{\genfrac(){0cm}{0}{m_{0}}{2k+1}  }
          d\left(t,z\right)^{k} \left( - b\left(t,z\right) \right)^{m_{0}-\left(2k+1\right)}.
     \end{equation}
The function $\mu_{m_{0}}$ is such that for all $t$ in $
\left[x_{1}\left(z\right),x_{2}\left(z\right)\right]\pm 0 \cdot i$,
${Y_{0}(t,z)}^{m_{0}}-\overline{Y_{0}(t,z)}^{m_{0}}= \mp 2
i (-d(t,z))^{1/2} \mu_{m_{0}}(t,z)$.

An application of residue theorem in
Proposition~\ref{explicit_h(x,z)}
and the use of the definition~(\ref{def_mu})
of $\mu_{m_{0}}$ allow
to obtain~:

     \begin{prop}\label{explicit_h(x,z)_second}
     The function $h$ admits the following integral expression~:
          \begin{equation}\label{new_integral_form_h}
               h\left(x,z\right) = x^{n_{0}}Y_{0}\left(x,z\right)^{m_{0}}
               +\frac{x}{\pi }\int_{x_{1}\left(z\right)}^{x_{2}\left(z\right)}
               t^{n_{0}}\left(\frac{1}{t\left(t-x\right)}+\frac{1}{x t-r^{2}}\right)
               \mu_{m_{0}}\left(t,z\right)\sqrt{-d\left(t,z\right)}\textnormal{d}t.
          \end{equation}
     Above, $x$ belongs to the open centered disc of radius $r$,
     and $z\in ]0,z_{1}]$,
     $z_{1}$ being defined in Lemma~\ref{lemma_branched_points}.
     \end{prop}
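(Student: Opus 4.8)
The plan is to start from the representation over the circle $\mathcal{C}(0,r)$ furnished by Proposition~\ref{explicit_h(x,z)} and to evaluate it by the residue theorem, after contracting the contour onto the branch cut $[x_1(z),x_2(z)]$. Write $f(t)=t^{n_0}Y_0(t,z)^{m_0}\bigl(\tfrac{1}{t(t-x)}+\tfrac{1}{xt-r^2}\bigr)$, so that Proposition~\ref{explicit_h(x,z)} reads $h(x,z)=\tfrac{x}{2\pi i}\int_{\mathcal{C}(0,r)}f(t)\,\textnormal{d}t$. The first step is to locate the singularities of $f$ inside the open disc $D(0,r)$. By Lemma~\ref{lemma_branched_points} one has $[x_1(z),x_2(z)]\subset\,]0,r[$ whereas $[x_3(z),x_4(z)]\subset\,]r,+\infty[$, so of the two slits carrying the branch points of $Y_0$ only $[x_1(z),x_2(z)]$ meets $D(0,r)$; by Lemma~\ref{properties_X_Y}(i) the branch $Y_0(\cdot,z)$ is bounded off its slits, hence in fact holomorphic on $D(0,r)\setminus[x_1(z),x_2(z)]$ (in particular at $t=0$, where besides $t^{n_0}$ with $n_0\geq 1$ already kills the pole of $\tfrac{1}{t(t-x)}$). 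Thus $f$ is holomorphic on $D(0,r)$ apart from the slit $[x_1(z),x_2(z)]$ and a simple pole at $t=x$; the remaining candidate $t=r^2/x$ has modulus $r^2/|x|>r$ and therefore lies outside the disc.

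The second step is the residue theorem. Contracting $\mathcal{C}(0,r)$ (oriented counterclockwise) within $D(0,r)$, Cauchy's theorem gives $\int_{\mathcal{C}(0,r)}f=2\pi i\operatorname{Res}_{t=x}f+\oint_{K}f$, where $K$ is a keyhole contour hugging $[x_1(z),x_2(z)]$. Since $\tfrac{1}{t(t-x)}=\tfrac{1}{x(t-x)}+\mathcal{O}(1)$ near $t=x$, one gets $\operatorname{Res}_{t=x}f=x^{n_0-1}Y_0(x,z)^{m_0}$, which contributes $\tfrac{x}{2\pi i}\cdot 2\pi i\operatorname{Res}_{t=x}f=x^{n_0}Y_0(x,z)^{m_0}$: the first term of~\eqref{new_integral_form_h}. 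For the keyhole part, the four branch points being simple zeros of $d(\cdot,z)$ for $z\in\,]0,z_1[$, the factor $\sqrt{-d(t,z)}$ is integrable and the small arcs around $x_1(z)$ and $x_2(z)$ vanish in the limit, so $\oint_{K}f=\int_{x_1(z)}^{x_2(z)}\bigl(Y_0^{-}(t,z)^{m_0}-Y_0^{+}(t,z)^{m_0}\bigr)\,t^{n_0}\bigl(\tfrac{1}{t(t-x)}+\tfrac{1}{xt-r^2}\bigr)\textnormal{d}t$, with $\pm$ the boundary values from the upper and the lower edge. On $[x_1(z),x_2(z)]$ one has $d(t,z)<0$, so, $a(t,z)$ and $b(t,z)$ being real there, $Y_0^{-}(t,z)=\overline{Y_0^{+}(t,z)}$; the identity recalled just after~\eqref{def_mu} then gives $Y_0^{-}(t,z)^{m_0}-Y_0^{+}(t,z)^{m_0}=2i\sqrt{-d(t,z)}\,\mu_{m_0}(t,z)$. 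Substituting this and using $\tfrac{x}{2\pi i}\cdot 2i=\tfrac{x}{\pi}$ yields exactly the integral term of~\eqref{new_integral_form_h}.

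I expect the genuine work, beyond this bookkeeping, to lie in two places. First, the orientation of the keyhole relative to the counterclockwise circle must be tracked carefully to obtain the stated sign of the integral term; a direct computation with a thin rectangle around the slit, letting its height tend to $0$, settles this. Second, the computation above is carried out for $x\in D(0,r)\setminus[x_1(z),x_2(z)]$, and one must argue that~\eqref{new_integral_form_h} persists for every $x$ in the open disc: the left-hand side is analytic on $D(0,r)$ (being a generating function), while on the right-hand side the jump of $x^{n_0}Y_0(x,z)^{m_0}$ across $[x_1(z),x_2(z)]$ is, by the $\mu_{m_0}$ identity, cancelled exactly by the Plemelj--Sokhotski jump of the Cauchy-type integral (the term $\tfrac{1}{xt-r^2}$ being regular on the slit, since $r^2/x\notin D(0,r)$), which reflects the boundary condition~\eqref{SR_problem_h}; hence the right-hand side is analytic on $D(0,r)$ as well, and the equality extends by analytic continuation. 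The endpoint $z=z_1$ is then recovered by continuity, or is the degenerate case where the slit collapses to a point.
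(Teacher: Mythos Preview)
Your proof is correct and follows essentially the same route as the paper: both collapse the contour integral of Proposition~\ref{explicit_h(x,z)} from the circle $\mathcal{C}(0,r)$ onto a keyhole around the slit $[x_1(z),x_2(z)]$, picking up the residue at $t=x$ and expressing the jump across the cut through the identity defining $\mu_{m_0}$. Your additional remarks on extending the formula to $x$ on the slit (via Plemelj--Sokhotski cancellation) and to the endpoint $z=z_1$ are not spelled out in the paper and are a welcome bit of extra care.
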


\begin{proof}

We take here, as in the proof of
Proposition~\ref{explicit_h(x,z)}, the notation $\phi(t,x)=
x/(t(t-x))+x/(t x-r^{2})$.
Consider the contour $\mathcal{H}_{\epsilon }= \mathcal{M}_{\epsilon}
\cup \mathcal{S}^{1}_{\epsilon } \cup \mathcal{S}^{2}_{\epsilon }
\cup \mathcal{C}^{1}_{\epsilon } \cup \mathcal{C}^{2}_{\epsilon }
\cup \mathcal{D}^{1}_{\epsilon } \cup \mathcal{D}^{2}_{\epsilon }$,
drawn in Figure~\ref{pacman}.
\begin{figure}[!h]
\begin{center}
\begin{picture}(200.00,180.00)
\includegraphics{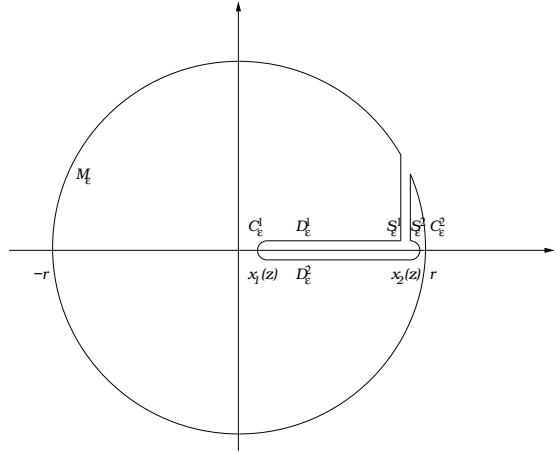}
\end{picture}
\end{center}
\caption{Contour of integration} \label{pacman}
\end{figure}
The following facts hold~:

     \begin{enumerate}

     \item $\int_{\mathcal{C}(0,r)}t^{n_{0}}Y_{0}(t,z)^{m_{0}}
     \phi(t,x)\text{d}t
     =\lim_{\epsilon \to 0}\int_{\mathcal{M}_{\epsilon }}
     t^{n_{0}}Y_{0}(t,z)^{m_{0}}
     \phi(t,x)\text{d}t$, thanks to the continuity of the integrand
     on the circle $\mathcal{C}(0,r)$,

     \item the residue theorem gives that for all $\epsilon >0$ sufficiently
     small and all
     $x\in D(0,r)\setminus [x_{1}(z),x_{2}(z)]$~:
     $\int_{\mathcal{H_{\epsilon }}}(t^{n_{0}-1}Y_{0}(t,z)^{m_{0}})/(t-x)
     \text{d}t
     =2\pi ix^{n_{0}-1}Y_{0}(x,z)^{m_{0}}$,

     \item the integral on
     $\mathcal{S}^{1}_{\epsilon }\cup \mathcal{S}^{2}_{\epsilon }$ 
     goes to zero as $\epsilon $ goes to zero.
     Indeed, the integrand is holomorphic in the neighborhood
     of $\mathcal{S}^{1}_{\epsilon }\cup \mathcal{S}^{2}_{\epsilon }$ and for this reason,
     $\lim_{\epsilon \to 0}\int_{\mathcal{S}^{1}_{\epsilon }}
     t^{n_{0}}Y_{0}(t,z)^{m_{0}}
     \phi(t,x)\text{d}t   =  -
     \lim_{\epsilon \to 0}\int_{\mathcal{S}^{2}_{\epsilon }}
     t^{n_{0}}Y_{0}(t,z)^{m_{0}}
     \phi(t,x)\text{d}t$. Also,
     for $k=1,2$, $\lim_{\epsilon \rightarrow 0}\int_{\mathcal{C}^{k}_{\epsilon }}
     t^{n_{0}}Y_{0}(t,z)^{m_{0}}
     \phi(t,x)\text{d}t=0$ since
     the integrand is integrable in the neighborhood of the
     branched points $x_{1}(z)$ and $x_{2}(z)$.

     \item $\lim_{\epsilon \to 0}
     \int_{\mathcal{D}^{1}_{\epsilon }\cup \mathcal{D}^{2}_{\epsilon }}
     t^{n_{0}}Y_{0}(t,z)^{m_{0}}
     \phi(t,x)\text{d}t =
     \int_{x_{1}(z)}^{x_{2}(z)}
     t^{n_{0}}( Y_{0}(t,z)^{m_{0}}-
     \overline{Y_{0}(t,z)}^{m_{0}})
     \phi(t,x)\text{d}t$, thanks to
     the algebricity of the function $Y_{0}$.

     \end{enumerate}
If we bring together all these facts, we obtain the equality~:
     \begin{equation*}
          \int_{\mathcal{C}(0,r)}
          t^{n_{0}}Y_{0}\left(t,z\right)^{m_{0}}
          \phi\left(t,x\right)\text{d}t
          = x^{n_{0}}Y_{0}\left(x,z\right)^{m_{0}}
          -\frac{1}{2\pi i}
          \int_{x_{1}\left(z\right)}^{x_{2}\left(z\right)}
          t^{n_{0}}\left( Y_{0}\left(t,z\right)^{m_{0}}-
          \overline{Y_{0}\left(t,z\right)}^{m_{0}}\right)
          \phi\left(t,x\right)\text{d}t,
     \end{equation*}
from which Proposition~\ref{explicit_h(x,z)_second} follows immediately,
using the definitions of $\mu_{m_{0}}$ and $\phi$.
\end{proof}

We carry on with the simplifications of the explicit expression of
the function $h$. The formulation~(\ref{new_integral_form_h}) is
nearly satisfactory but has yet a defect~: $h$ is a function
holomorphic in the neighborhood of $[x_{1}(z),x_{2}(z)]$, but is
written in~(\ref{new_integral_form_h}) as the sum of two functions
which are not holomorphic but algebraic in the neighborhood of
$[x_{1}(z),x_{2}(z)]$. The next lemma overcomes this fact~:

\begin{lem}\label{simplification_principal_part}
For $x\in \mathbb{C}\setminus [x_{1}(z),x_{2}(z)]\cup
[x_{3}(z),x_{4}(z)]$ and $z\in ]0,z_{1}]$, the following equality holds~:
     \begin{eqnarray*}
          x^{n_{0}}Y_{0}\left(x,z\right)^{m_{0}}+\frac{x}{\pi }
          \int_{x_{1}\left(z\right)}^{x_{2}\left(z\right)}
          \frac{t^{n_{0}-1}\mu_{m_{0}}\left(t,z\right)}
          {t-x}\sqrt{-d\left(t,z\right)}\textnormal{d}t&=&
          \frac{x}{\pi}\int_{x_{3}\left(z\right)}^{x_{4}\left(z\right)}
          \frac{t^{n_{0}-1}\mu_{m_{0}}\left(t,z\right)
          \sqrt{-d\left(t,z\right)}}{t-x}\textnormal{d}t\\&+&xP_{\infty }
          \left(x\mapsto x^{n_{0}-1}Y_{0}\left(x,z\right)^{m_{0}}\right)\left(x\right).
     \end{eqnarray*}
Above, $P_{\infty }(x\mapsto x^{n_{0}-1}Y_{0}(x,z)^{m_{0}})$
denotes the principal part at infinity
of the meromorphic function at infinity
$x\mapsto x^{n_{0}-1}Y_{0}(x,z)^{m_{0}}$~;
in other words, the polynomial
%non regular
part of the Laurent expansion at infinity of this function.
In particular, $(x,z)\mapsto xP_{\infty }(x\mapsto x^{n_{0}-1}Y_{0}(x,z)^{m_{0}})(x)$ is a
polynomial in the two variables $(x,z)$. For more comments about
this quantity, see Remark~\ref{rem_principal_part}.
\end{lem}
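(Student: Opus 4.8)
The plan is to recognize the left-hand side as the Cauchy representation of the function $\Psi_z(x):=x^{n_0-1}Y_0(x,z)^{m_0}$ once its polynomial part at infinity has been stripped off. First I would record the analytic features of $\Psi_z$. By Lemma~\ref{properties_X_Y}, $Y_0(\cdot,z)$ is holomorphic on $\Omega_z:=\mathbb{C}\setminus\big([x_1(z),x_2(z)]\cup[x_3(z),x_4(z)]\big)$ and bounded there by $(p_{0-1}/p_{01})^{1/2}$, so $\Psi_z$ is holomorphic on $\Omega_z$ — in particular at $0$, since $n_0\geq 1$ — its only singularity being a pole at infinity. From the expansion $Y_0(x,z)=-p_{0-1}/(p_{10}x)+\mathcal{O}(1/x^2)$ of Lemma~\ref{properties_X_Y}(iv), the Laurent series of $\Psi_z$ at infinity is $\sum_{k\geq m_0}c_k(z)\,x^{n_0-1-k}$; its polynomial part is exactly $P_\infty(x\mapsto x^{n_0-1}Y_0(x,z)^{m_0})$, and $\Psi_z-P_\infty(\cdots)$ is $\mathcal{O}(1/x)$ at infinity (the surviving exponents $n_0-1-k$ have $k\geq\max(m_0,n_0)$, hence are $\leq-1$).

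Next I would run the contour argument already used for Proposition~\ref{explicit_h(x,z)_second}. Fix $x\in\Omega_z\setminus\{0\}$ (the case $x=0$ being trivial, both sides then vanishing) and integrate $t\mapsto\big(\Psi_z(t)-P_\infty(\cdots)(t)\big)/(t-x)$ over a large circle $\mathcal{C}(0,R)$. The integrand is $\mathcal{O}(1/t^2)$, so this integral tends to $0$ as $R\to\infty$; on the other hand the circle can be collapsed onto the simple pole at $t=x$ and onto two loops hugging the slits $[x_1(z),x_2(z)]$ and $[x_3(z),x_4(z)]$ (the polynomial $P_\infty(\cdots)$ being entire, it only enters through the residue at $x$). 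Shrinking the slit-loops onto the cuts gives
\[
\Psi_z(x)-P_\infty(\cdots)(x)=\frac{1}{2\pi i}\Big(\int_{x_1(z)}^{x_2(z)}+\int_{x_3(z)}^{x_4(z)}\Big)\frac{t^{n_0-1}\big(Y_0(t+i0,z)^{m_0}-Y_0(t-i0,z)^{m_0}\big)}{t-x}\,\text{d}t .
\]

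It remains to compute the jump of $Y_0^{m_0}$ across each cut. Writing $Y_0=(-b-\sqrt{d})/(2a)$ and separating the even and odd powers of $\sqrt{d}$, one gets $Y_0(x,z)^{m_0}=R(x,z)-\sqrt{d(x,z)}\,\mu_{m_0}(x,z)$, where $R$ is rational in $(x,z)$ (no branch cut) and $\mu_{m_0}$ is the function~(\ref{def_mu}); hence the jump equals $-\big(\sqrt{d(t+i0,z)}-\sqrt{d(t-i0,z)}\big)\mu_{m_0}(t,z)$. On $[x_1(z),x_2(z)]$ the determination of $\sqrt{d}$ underlying~(\ref{def_mu}) satisfies $\sqrt{d(t\pm i0,z)}=\pm i\sqrt{-d(t,z)}$, so the jump is $-2i\sqrt{-d(t,z)}\,\mu_{m_0}(t,z)$, which is precisely the defining relation of $\mu_{m_0}$. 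On $[x_3(z),x_4(z)]$ the same algebraic identity holds, but analytically continuing this branch of $\sqrt{d}$ through the upper half-plane (over the branched points $x_2(z)$ and $x_3(z)$ of Lemma~\ref{lemma_branched_points}) shows that $\sqrt{d}$ changes sign, so there $\sqrt{d(t\pm i0,z)}=\mp i\sqrt{-d(t,z)}$ and the jump is $+2i\sqrt{-d(t,z)}\,\mu_{m_0}(t,z)$. Substituting these two expressions into the displayed identity, carrying the integral over $[x_1(z),x_2(z)]$ to the left-hand side, and multiplying throughout by $x$ yields exactly the claimed equality.

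The main obstacle is this last sign bookkeeping: it is precisely the reversal of the sign of $\sqrt{d}$ between the two cuts that makes the $[x_1(z),x_2(z)]$-integral appear additively next to $x^{n_0}Y_0(x,z)^{m_0}$ while the $[x_3(z),x_4(z)]$-integral lands on the other side, so the branch-tracking of $\sqrt{d}$ — consistently with the convention built into~(\ref{def_mu}) — must be carried out carefully. Everything else is the standard ``deform the contour and use Liouville'' argument already invoked for Proposition~\ref{explicit_h(x,z)_second}, together with the decay estimate for $\Psi_z-P_\infty(\cdots)$ coming from Lemma~\ref{properties_X_Y}(iv).
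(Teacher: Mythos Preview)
Your proof is correct and follows essentially the same route as the paper's: both apply the residue theorem at infinity to $t\mapsto t^{n_0-1}Y_0(t,z)^{m_0}/(t-x)$ on a contour surrounding the two cuts, and then collapse onto the slits using the jump relation encoded in $\mu_{m_0}$. The only cosmetic difference is that you first subtract $P_\infty(\cdots)$ so that the large-circle integral vanishes, whereas the paper invokes the residue-at-infinity formulation directly and lets that residue produce the $P_\infty$ term; your more explicit tracking of the sign reversal of $\sqrt{d}$ between the two cuts is a welcome clarification of a point the paper leaves implicit.
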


\begin{proof}
Consider the contour $\mathcal{C}_{\epsilon }$ drawn in
Figure~\ref{contour_simplification}
     \begin{figure}[!h]
     \begin{center}
     \begin{picture}(300.00,50.00)
     \includegraphics{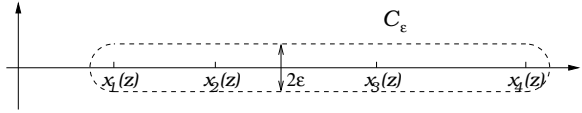}
     \end{picture}
     \end{center}
     \caption{Contour of residue theorem}
     \label{contour_simplification}
     \end{figure}
and apply on it the residue theorem at infinity (a precise statement of
this theorem
can be found e.g.\ in~\cite{Chat}) to the function $t\mapsto
t^{n_{0}-1}Y_{0} (t,z)^{m_{0}}$~; we obtain that  for all $x$ 
in the unbounded domain delimited by $\mathcal{C}_{\epsilon}$,
     \begin{equation}\label{residue_infinity}
          \frac{x}{2\pi i} \int_{\mathcal{C}_{\epsilon}}\frac{t^{n_{0}-1}Y_{0}
          \left(t,z\right)^{m_{0}}}{t-x}\text{d}t=x^{n_{0}}
          Y_{0}\left(x,z\right)^{m_{0}}-xP_{\infty }
          \left(x\mapsto x^{n_{0}-1}Y_{0}\left(x,z\right)^{m_{0}}\right)\left(x\right),
     \end{equation}
where, if $f$ is a function meromorphic at infinity,
$P_{\infty}\left(f\right)$ denotes its principal part at infinity. On the
other hand, using the algebricity of $Y_{0}$ and the definition of
$\mu_{m_{0}}$, we get~:
     \begin{eqnarray*}
          \lim_{\epsilon \to 0}\frac{x}{2\pi i}
          \int_{\mathcal{C}_{\epsilon}}\frac{t^{n_{0}-1}Y_{0}\left(t,z\right)^{m_{0}}}
          {t-x}\text{d}t = &-&\frac{x}{\pi}\int_{x_{1}\left(z\right)}^{x_{2}\left(z\right)}
          \frac{t^{n_{0}-1}\mu_{m_{0}}\left(t,z\right)\sqrt{-d\left(t,z\right)}}{t-x}\text{d}t
          \\&+&\frac{x}{\pi}\int_{x_{3}\left(z\right)}^{x_{4}\left(z\right)}
          \frac{t^{n_{0}-1}\mu_{m_{0}}\left(t,z\right)\sqrt{-d\left(t,z\right)}}{t-x}\text{d}t.
     \end{eqnarray*}
We conclude by taking the limit when $\epsilon$ goes to zero
in~(\ref{residue_infinity}).
\end{proof}

\begin{rem}\label{rem_principal_part}
{\rm Lemma~\ref{properties_X_Y} gives that
$x^{n_{0}-1}Y_{0}\left(x,z\right)^{m_{0}}\sim c x^{n_{0}-m_{0}-1}$
as $x\to \infty$,
where $c$ is a non zero constant.
Thus, if $n_0\leq m_0$, then the principal part is equal to zero, whereas if
$n_{0}>m_{0}$, then the degree of the principal part is
$n_{0}-m_{0}-1$ and the degree of $xP_{\infty }(x\mapsto
x^{n_{0}-1}Y_{0}\left(x,z\right)^{m_{0}})\left(x\right)$ equal to
$n_{0}-m_{0}$. For briefness, we set $P(x,z)=xP_{\infty}(x\mapsto
x^{n_{0}-1}Y_{0}\left(x,z\right)^{m_{0}})\left(x\right)$.}
\end{rem}
The Lemma~\ref{simplification_principal_part} allows to write, in
accordance with~(\ref{new_integral_form_h}),
     \begin{equation*}
               h\left(x,z\right)=\frac{x}{\pi }\left( \int_{x_{3}\left(z\right)}^{x_{4}\left(z\right)}
               \frac{t^{n_{0}}\mu_{m_{0}}\left(t,z\right)\sqrt{-d\left(t,z\right)}}{t\left(t-x\right)}
               \text{d}t-\int_{x_{1}\left(z\right)}^{x_{2}\left(z\right)} \frac{t^{n_{0}}\mu_{m_{0}}
               \left(t,z\right)\sqrt{-d\left(t,z\right)}}{r^{2}-t x}\text{d}t\right)+P(x,z),
     \end{equation*}
where the polynomial $P(x,z)$ is defined in
Remark~\ref{rem_principal_part}. Instead of the two integrals above,
we can write just one, making a simple change of variable based on the
two properties described
below~:
     \begin{enumerate}

          \item In conformity with Lemma~\ref{lemma_branched_points} ,
          the branched points verify
          $x_{2}\left(z\right)x_{3}\left(z\right)=x_{1}\left(z\right)x_{4}\left(z\right)=r^2$.
          Thus, by the change of variable $r^{2}/t$ in the second integral above, we
          can express the integral part of $h$ as a
          single integral between $x_{3}(z)$ and $x_{4}(z)$.
          \item The polynomials $a,b,c,d$ verify an interesting relationship
          with respect to the transformation $t\mapsto r^2/t$. Indeed,
          if $f$ stands for $a,b$ or $c$, we easily verify that $f(r^{2}/t,z)=
          (r^{2}/t^{2})f(t,z)$,
          so that we also have $d(r^2/t,z)=(r^4/t^4)d(t,z)$. In particular,
          we get $\mu_{m_{0}}(r^2/t,z)=(t^{2}/r^{2})\mu_{m_{0}}(t,z)$.

     \end{enumerate}
With these remarks we obtain~:
\begin{prop}\label{explicit_h(x,z)_third}
The function $h$ admits the following integral expression~:
     \begin{equation*}
          h\left(x,z\right)=\frac{x}{\pi } \int_{x_{3}\left(z\right)}^{x_{4}\left(z\right)}
          \left(t^{n_{0}}-\left(\frac{r^{2}}{t}\right)^{n_{0}}\right)
          \frac{\mu_{m_{0}}\left(t,z\right)\sqrt{-d\left(t,z\right)}}
          {t\left(t-x\right)}\textnormal{d}t+xP_{\infty}\left(x\mapsto
          x^{n_{0}-1}Y_{0}\left(x,z\right)^{m_{0}}\right)\left(x\right).
     \end{equation*}
     Above, $x$ belongs to the open centered disc of radius $r$,
     $z$ to $]0,z_{1}]$,
     $z_{1}$ being defined in Lemma~\ref{lemma_branched_points},
     and $P_{\infty}$ is the principal part at infinity, defined in
     Lemma~\ref{simplification_principal_part} and Remark~\ref{rem_principal_part}.
\end{prop}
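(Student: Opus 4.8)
The plan is to start from the expression for $h$ obtained just above by feeding the identity of Lemma~\ref{simplification_principal_part} into formula~(\ref{new_integral_form_h}) of Proposition~\ref{explicit_h(x,z)_second}, namely
\[
h(x,z)=\frac{x}{\pi}\left(\int_{x_{3}(z)}^{x_{4}(z)}\frac{t^{n_{0}}\mu_{m_{0}}(t,z)\sqrt{-d(t,z)}}{t(t-x)}\,\text{d}t-\int_{x_{1}(z)}^{x_{2}(z)}\frac{t^{n_{0}}\mu_{m_{0}}(t,z)\sqrt{-d(t,z)}}{r^{2}-tx}\,\text{d}t\right)+P(x,z),
\]
valid for $x$ in the open disc $D(0,r)$ and $z\in\,]0,z_{1}]$. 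Here, by Lemma~\ref{lemma_branched_points} one has $x_{1}(z),x_{2}(z)\in\,]0,r]$ and $x_{3}(z),x_{4}(z)\in\,[r,+\infty[$, so $|x|<r$ makes $t(t-x)$ non-vanishing on $[x_{3}(z),x_{4}(z)]$ and $r^{2}-tx$ non-vanishing on $[x_{1}(z),x_{2}(z)]$, and both integrals are well defined. All that remains is to fold the second integral, over $[x_{1}(z),x_{2}(z)]$, onto the interval $[x_{3}(z),x_{4}(z)]$ via the substitution $t\mapsto r^{2}/t$, and to check that it combines with the first integral into the single integrand appearing in the statement; the polynomial term $P(x,z)=xP_{\infty}(x\mapsto x^{n_{0}-1}Y_{0}(x,z)^{m_{0}})(x)$ plays no role and is simply carried along.

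First I would record how the ingredients transform under the involution $t\mapsto r^{2}/t$. Since $x_{1}(z)x_{4}(z)=x_{2}(z)x_{3}(z)=r^{2}$ by Lemma~\ref{lemma_branched_points}, this map sends $[x_{1}(z),x_{2}(z)]$ onto $[x_{3}(z),x_{4}(z)]$ while reversing orientation. From the homogeneity relations noted before the statement, $f(r^{2}/t,z)=(r^{2}/t^{2})f(t,z)$ for $f\in\{a,b,c\}$, hence $d(r^{2}/t,z)=(r^{4}/t^{4})d(t,z)$ and $\mu_{m_{0}}(r^{2}/t,z)=(t^{2}/r^{2})\mu_{m_{0}}(t,z)$; moreover, $d$ being negative on both slits and the positive branch of the square root being used there, $\sqrt{-d(r^{2}/t,z)}=(r^{2}/t^{2})\sqrt{-d(t,z)}$. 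Writing $s=r^{2}/t$ (so $t=r^{2}/s$, $\text{d}t=-(r^{2}/s^{2})\,\text{d}s$ and $r^{2}-tx=r^{2}(s-x)/s$), the bookkeeping of powers of $r$ and $s$ produces clean cancellations and turns the second integral into
\[
-\int_{x_{1}(z)}^{x_{2}(z)}\frac{t^{n_{0}}\mu_{m_{0}}(t,z)\sqrt{-d(t,z)}}{r^{2}-tx}\,\text{d}t=-\int_{x_{3}(z)}^{x_{4}(z)}\left(\frac{r^{2}}{s}\right)^{n_{0}}\frac{\mu_{m_{0}}(s,z)\sqrt{-d(s,z)}}{s(s-x)}\,\text{d}s.
\]
Adding this to the first integral, whose integrand is $t^{n_{0}}\mu_{m_{0}}(t,z)\sqrt{-d(t,z)}/(t(t-x))$ over the same interval $[x_{3}(z),x_{4}(z)]$, and relabelling $s$ by $t$, yields exactly the formula of Proposition~\ref{explicit_h(x,z)_third}.

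I do not expect a genuine obstacle: the argument is a single change of variable, and the only points demanding care are (a) the orientation reversal of the slit together with the sign in $\text{d}t=-(r^{2}/s^{2})\,\text{d}s$, which must be tracked so that the overall sign in front of the transformed integral is correct, and (b) the algebra of the powers of $r$ and $s$, which must cancel so as to leave precisely the factor $(r^{2}/s)^{n_{0}}/(s(s-x))$; both are routine given the explicit relations for $a,b,c,d,\mu_{m_{0}}$ quoted just before the statement. Finally, the admissibility ranges $x\in D(0,r)$ and $z\in\,]0,z_{1}]$ are inherited verbatim from Proposition~\ref{explicit_h(x,z)_second} and Lemma~\ref{simplification_principal_part}, since neither the substitution nor the combination of integrals affects them.
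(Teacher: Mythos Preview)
Your proof is correct and follows essentially the same route as the paper: start from the intermediate two-integral expression obtained by combining Proposition~\ref{explicit_h(x,z)_second} with Lemma~\ref{simplification_principal_part}, then fold the integral over $[x_{1}(z),x_{2}(z)]$ onto $[x_{3}(z),x_{4}(z)]$ via $t\mapsto r^{2}/t$, using $x_{1}x_{4}=x_{2}x_{3}=r^{2}$ and the homogeneity $f(r^{2}/t,z)=(r^{2}/t^{2})f(t,z)$ for $f\in\{a,b,c\}$. The paper merely sketches this change of variable, whereas you carry out the bookkeeping explicitly; the arguments are otherwise identical.
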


\subsection{Chebyshev polynomials}\label{Chebychev_polynomials}

We close the study of the explicit expressions of $h$ by making a
--last but-- quite natural change of variable in the
integral~(\ref{new_integral_form_h}). Define
$\hat{b}(t,z)=b(t,z)/
(4a(t,z)c(t,z))^{1/2}$. Then $t\mapsto
\hat{b}(t,z)$ is clearly a diffeomorphism between
$]x_{1}(z),x_{2}(z)[$ (resp.\ $]x_{3}(z),x_{4}(z)[$) and
$]-1,1[$. Moreover, $\mu_{m_{0}}$ expresses oneself with
a more natural way in the variable $\hat{b}$ since the following
equality holds~:
     \begin{equation}\label{natural_expression_mu}
          \mu_{m_0}\left(t,z\right)\sqrt{-d\left(t,z\right)}=
          \left(\frac{c\left(t,z\right)}{a\left(t,z\right)}\right)^{m_{0}/2}
          U_{m_{0}-1}\left(-\widehat {b}\left(t,z\right)\right)
          \sqrt{1-\widehat{b}\left(t,z\right)^2},
     \end{equation}
where the $U_{n}$, $n\in \mathbb{N}$, are the Chebyshev polynomials
of the second kind. We recall that they are the orthogonal
polynomials associated to the weight $t\mapsto
(1-t^{2})^{1/2}1_{]-1,1[}(t)$ and that their
explicit expression is~:
     \begin{equation}\label{Chebyshev_second_kind}
          U_{n}\left(t\right)=
          \frac{ \left(t+\sqrt{t^2-1}\right)^{n+1}-\left(t-\sqrt{t^2-1}\right)^{n+1}}
          {2\sqrt{t^2-1}}=\sum_{k=0}^{\left\lfloor n/2\right\rfloor }{\genfrac(){0cm}{0}{n+1}{2k+1}  }
          \left(t^2-1\right)^k t^{n-2 k},\;\;\;\; n\in \mathbb{N}.
     \end{equation}
We also recall two properties of the Chebyshev polynomials of the second
kind that we will especially use~: first that they have the parity
of their order~: for all $n$ in $\mathbb{N}$ and all $u$ in 
$\mathbb{C}$, $U_{n}\left(-u\right)=
\left(-1\right)^{n}U_{n}\left(u\right)$~; second their expansion in
the neighborhood of $1$~: $U_{n}(u)=(n+1)(1+n(n+2)(u-1)/3
+\mathcal{O}((u-1)^2))$. For all the facts concerning the
Chebyshev polynomials used here, we refer to~\cite{SZ}.

We are now ready to make the change of variable mentioned above~:
to set $\hat{b}\left(t,z\right)=u$. But $\hat{b}(t,z)=u$
if and only if $b(t,z)-2 u r p_{01} z t =0$, in other
words if and only if $t=t_{1}(u,z)$ or
$t=t_{2}(u,z)$ where $t_{1}=t_{-}$, $t_{2}=t_{+}$ and~:
     \begin{equation}
     \label{tttt}
          t_{\pm}\left(u,z\right) = \frac{1+2\sqrt{p_{01}p_{0-1}}u z\pm
          \sqrt{\left(1+2\sqrt{p_{01}p_{0-1}}u z\right)^2-4p_{10}p_{-10}z^2}}{2p_{10}z}.
     \end{equation}
Of course, we find again the explicit expression of
the branched points as the values of $t_{1}$ and $t_{2}$
at $u=\pm 1$, more precisely
$t_{1}(1,z)=x_{1}(z)$,
$t_{1}(-1,z)=x_{2}(z)$,
$t_{2}(-1,z)=x_{3}(z)$ and
$t_{2}(1,z)=x_{4}(z)$, in accordance with
Lemma~\ref{lemma_branched_points}. This change of variable allows
us to write the following integral representation for the function $h$~:
\begin{prop}\label{lemma_final_form}
The function $h$ admits the following integral expression~:
     \begin{eqnarray}\label{final_form_SRW}
          h\left(x,z\right)&=&xP_{\infty}\left(x\mapsto
          x^{n_{0}-1}Y_{0}\left(x,z\right)\right)^{m_{0}}\left(x\right)\\&+&
          \frac{x}{\pi }\left(\frac{p_{0-1}}{p_{01}}\right)^{m_{0}/2}
          \int_{-1}^{1} \left(t_{2}\left(u,z\right)^{n_{0}}-\left(\frac{r^{2}}
          {t_{2}\left(u,z\right)}\right)^{n_{0}}\right)\frac{\partial_{u}
          t_{2}\left(u,z\right)U_{m_{0}-1}\left(-u\right)}
          {t_{2}\left(u,z\right)\left(t_{2}\left(u,z\right)-x\right)}
          \sqrt{1-u^{2}}\textnormal{d}u.\nonumber
    \end{eqnarray}
Above, $x$ belongs to the open centered disc of radius $r$,
and $z\in ]0,z_{1}]$,
$z_{1}$ being defined in Lemma~\ref{lemma_branched_points}.
\end{prop}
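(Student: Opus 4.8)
The plan is to derive the identity~(\ref{final_form_SRW}) from the single-integral representation of Proposition~\ref{explicit_h(x,z)_third},
\[
h(x,z)=\frac{x}{\pi}\int_{x_{3}(z)}^{x_{4}(z)}\left(t^{n_{0}}-\left(\frac{r^{2}}{t}\right)^{n_{0}}\right)\frac{\mu_{m_{0}}(t,z)\sqrt{-d(t,z)}}{t(t-x)}\,\text{d}t+P(x,z),
\]
$P(x,z)$ being the polynomial introduced in Remark~\ref{rem_principal_part}, simply by performing in the integral the change of variable $t=t_{2}(u,z)$ introduced just before the statement. (Equivalently one could start from~(\ref{new_integral_form_h}) after applying Lemma~\ref{simplification_principal_part}, but the third form is the cleanest starting point.) Since $t\mapsto\widehat b(t,z)$ is a diffeomorphism of $]x_{3}(z),x_{4}(z)[$ onto $]-1,1[$ with inverse $u\mapsto t_{2}(u,z)$ and with $t_{2}(-1,z)=x_{3}(z)$, $t_{2}(1,z)=x_{4}(z)$, the map $u\mapsto t_{2}(u,z)$ is an orientation-preserving diffeomorphism of $]-1,1[$ onto $]x_{3}(z),x_{4}(z)[$, continuous on $[-1,1]$, so that $\text{d}t=\partial_{u}t_{2}(u,z)\,\text{d}u$ and the limits $x_{3}(z),x_{4}(z)$ are carried to $-1,1$. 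I would then note that the substitution is legitimate up to and including the endpoints: near $x_{3}(z),x_{4}(z)$ the factor $\sqrt{-d(t,z)}$ vanishes like a square root and becomes $\sqrt{1-u^{2}}$, so the transformed integrand stays integrable on $[-1,1]$; and because $|x|<r\leq x_{3}(z)\leq t_{2}(u,z)$ for all $u\in[-1,1]$, the denominator $t_{2}(u,z)(t_{2}(u,z)-x)$ never vanishes.

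The substantive step is to evaluate $\mu_{m_{0}}(t,z)\sqrt{-d(t,z)}$ along the curve $t=t_{2}(u,z)$, which is exactly identity~(\ref{natural_expression_mu}). I would prove the latter by substituting $\widehat b(t,z)=b(t,z)/\sqrt{4a(t,z)c(t,z)}$ and $1-\widehat b(t,z)^{2}=-d(t,z)/(4a(t,z)c(t,z))$ into the explicit sum~(\ref{Chebyshev_second_kind}) for $U_{m_{0}-1}$ and comparing term by term with the definition~(\ref{def_mu}) of $\mu_{m_{0}}$: the $k$-th term of $U_{m_{0}-1}(-\widehat b)$ contributes a factor $(4a(t,z)c(t,z))^{-(m_{0}-1)/2}$ independent of $k$, and multiplying by $\sqrt{1-\widehat b^{2}}=(4a(t,z)c(t,z))^{-1/2}\sqrt{-d(t,z)}$ and by $(c(t,z)/a(t,z))^{m_{0}/2}$ turns the overall prefactor into $(2a(t,z))^{-m_{0}}$, which is precisely the prefactor in~(\ref{def_mu}); the remaining sums coincide.

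Having~(\ref{natural_expression_mu}), I would then use that $\widehat b(t_{2}(u,z),z)=u$ by construction of $t_{2}$, and that $c(t,z)/a(t,z)=p_{0-1}/p_{01}$ is a constant (immediate from $a(t,z)=zp_{01}t$, $c(t,z)=zp_{0-1}t$), to get, along the curve,
\[
\mu_{m_{0}}(t_{2}(u,z),z)\sqrt{-d(t_{2}(u,z),z)}=\left(\frac{p_{0-1}}{p_{01}}\right)^{m_{0}/2}U_{m_{0}-1}(-u)\sqrt{1-u^{2}}.
\]
Plugging this, together with $\text{d}t=\partial_{u}t_{2}(u,z)\,\text{d}u$ and the new limits $\pm1$, into the integral of Proposition~\ref{explicit_h(x,z)_third}, and pulling the constant $(p_{0-1}/p_{01})^{m_{0}/2}$ out of the integral, produces exactly~(\ref{final_form_SRW}), the term $P(x,z)$ being left untouched.

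I do not expect a genuine obstacle in this proposition, which is why it is obtained from the earlier ones by a single change of variable. The two points that need a little care are the verification of the Chebyshev identity~(\ref{natural_expression_mu}) — a purely computational matching of~(\ref{def_mu}) with~(\ref{Chebyshev_second_kind}) — and the justification that the substitution $t=t_{2}(u,z)$ is valid on the closed interval $[-1,1]$, where the integrand is only Hölder (not smooth) at the branch points; both are handled as sketched above.
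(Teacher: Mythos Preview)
Your proposal is correct and follows exactly the paper's approach: the paper obtains Proposition~\ref{lemma_final_form} from Proposition~\ref{explicit_h(x,z)_third} by the single change of variable $t=t_{2}(u,z)$ (equivalently $\widehat b(t,z)=u$), invoking identity~(\ref{natural_expression_mu}) and the constancy of $c/a=p_{0-1}/p_{01}$. If anything, your write-up is more detailed than the paper's, which simply states that ``this change of variable allows us to write'' the formula without spelling out the verification of~(\ref{natural_expression_mu}) or the endpoint integrability.
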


We take now some notations that will be useful in the sequel,
notably in Section~\ref{h_1_z}~: we set
$k_{1}(u) = -2(p_{01}p_{0-1})^{1/2}u+2(p_{10}p_{-10})^{1/2}$ and
$k_{2}(u) = -2(p_{01}p_{0-1})^{1/2}u-2(p_{10}p_{-10})^{1/2}$~; so that
for instance $t_{\pm}(u,z) = (1+2(p_{01}p_{0-1})^{1/2}u z\pm ((1-k_{1}(u)z)
(1-k_{2}(u)z))^{1/2})/(2p_{10}z)$. Moreover, we easily show the two
following facts~:
     \begin{equation}
     \label{kk}
          \left\{\begin{array}{ccc}
          \displaystyle  \partial_{u} t_{\pm}\left(u,z\right) & = &
          \displaystyle \pm t_{\pm}\left(u,z\right)
          \frac{2\sqrt{p_{01}p_{0-1}}z}
          {\sqrt{\left(1-k_{1}\left(u\right)z\right)
          \left(1-k_{2}\left(u\right)z\right)}},\\
          \displaystyle  \frac{1}{t_{2}\left(u,z\right)-x} & = &
          \displaystyle \frac{1}{2}
          \frac{ \sqrt{\left(1-k_{1}\left(u\right)z\right)
          \left(1-k_{2}\left(u\right)z\right)}-
          \left(1+\left(2\sqrt{p_{01}p_{0-1}}u-2p_{10}x\right)z\right)}
          {x+\left(2\sqrt{p_{01}p_{0-1}}u x -
          \left(p_{10}x^2+p_{-10}\right)\right)z}.
          \end{array}\right.
     \end{equation}

\begin{rem}[the gambler ruin]\label{remark_gambler_ruin}
{\rm Consider the explicit
expression~(\ref{final_form_SRW}), in which we do $p_{10},p_{-10}\to
0$, $x=1$. Moreover,  we take $n_{0}=1$ to lighten the technical details. This
expression becomes~:
     \begin{equation}\label{h1zruin}
      h\left(1,z\right)=\frac{1}{\pi }\left( \frac{p_{0-1}}{p_{01}}\right)^{m_{0}/2}
       \int_{-1}^{1}\frac{2\sqrt{p_{01}p_{0-1}}z}{1+2\sqrt{p_{01}p_{0-1}}z u}
        U_{m_{0}-1}\left(-u\right)\sqrt{1-u^{2}}\text{d}u.
     \end{equation}
Not unexpectedly this quantity is equal to~:
\begin{equation*}
          \lambda_{m_{0}}\left(z\right)=
          \left( \frac{1-\sqrt{1-4p_{01}p_{0-1}z^2}}{2p_{01}z}\right)^{m_{0}},
     \end{equation*}
which is the generating function of the ruin probabilities for the
gambler ruin problem~:
$\lambda_{m_{0}}(z)=\sum_{k=0}^{+\infty }
\mathbb{P}_{m_{0}}($to be ruined in a time $ k)z^{k}$, 
in accordance with~\cite{FEL}.
Let us sketch the proof of this fact. We start by
remarking that for $n\in \mathbb{N}$ and $t\in \mathbb{C}\setminus
[-1,1]$, we have~:
     \begin{equation}\label{application_residue_theorem_principal_part}
          \frac{1}{\pi }\int_{-1}^{1}
          \frac{u^{n}\sqrt{1-u^{2}}}{u-t}\text{d}u =
          \left(t^{k}\sqrt{t^{2}-1}-P_{\infty}
          \left(t\mapsto t^{k}\sqrt{t^{2}-1}\right)\left(t\right)\right),
     \end{equation}
where, if $f$ is a function meromorphic at infinity,
$P_{\infty}(f)$ denotes its principal part at infinity.
To get (\ref{application_residue_theorem_principal_part}), we
consider the function $u^n (1-u^2)^{1/2}/(u-t)$ and
we integer it  on a closed
contour that surrounds at a distance equal to $\epsilon$ the segment
$[-1,1]$, like the contour drawn in
Figure~\ref{contour_simplification} of
Subsection~\ref{Integral_representation}, then we apply the residue
theorem at infinity and
at last we do $\epsilon$ going to zero.

Next, we apply~(\ref{application_residue_theorem_principal_part}) at every
monomial that composes the polynomial $U_{m_{0}-1}$. Using the
linearity of the principal part, integral~(\ref{h1zruin}) becomes~:
     \begin{equation}\label{application_residue_theorem_principal_part_h_1_z}
          h\left(1,z\right) = -\left( \frac{p_{0-1}}{p_{01}}\right)^{m_{0}/2}
          \left.\left(  U_{m_{0}-1}\left( u\right)\sqrt{u^{2}-1}-P_{\infty }\left(u\mapsto U_{m_{0}-1}
          \left(u\right)\sqrt{u^{2}-1}\right)\left(u\right)\right)
          \right|_{u=\frac{1}{2\sqrt{p_{01}p_{0-1}}z} }.
     \end{equation}
Introduce now $(T_{n})_{n\in \mathbb{N}}$, the Chebyshev polynomials
of the first kind~; we recall that they are the orthogonal polynomials associated to
the weight $t\mapsto
1_{\left]-1,1\right[}\left(t\right)/(1-t^{2})^{1/2}$. As for the
Chebyshev polynomials of the second kind $U_{n}$ (defined
in~(\ref{Chebyshev_second_kind})), there exists an explicit
formulation for the Chebyshev polynomials of the first kind~:
     \begin{equation}\label{Chebyshev_first_kind}
          T_{n}\left(t\right)=\frac{1}{2}\left( \left(t+\sqrt{t^{2}-1}\right)^{n}
          +\left(t-\sqrt{t^{2}-1}\right)^n\right),\;\;\;\;n\in \mathbb{N}.
     \end{equation}
Moreover, as it is proved in~\cite{SZ}, there exists 
--among others-- the following link between the Chebyshev polynomials
of the first and second kind~:
     \begin{equation*}
          P_{\infty }\left(t\mapsto U_{n}\left(t\right)\sqrt{t^{2}-1}\right) =
          T_{n+1},\hspace{5mm}
          P_{\infty}\left(t\mapsto T_{n+1}\left(t\right)/\sqrt{t^{2}-1}\right) =
          U_{n},\hspace{5mm}n\in \mathbb{N}.
     \end{equation*}
These relations allow to
simplify~(\ref{application_residue_theorem_principal_part_h_1_z}).
We find~:
     \begin{equation*}
          h\left(1,z\right) = -\left( \frac{p_{0-1}}{p_{01}}\right)^{m_{0}/2}
          \left.\left(  U_{m_{0}-1}\left( u\right)
          \sqrt{u^{2}-1}-T_{m_{0}}\left(u\right)\right)
          \right|_{u=1/\left(2\sqrt{p_{01}p_{0-1}}z\right) }.
     \end{equation*}
But with~(\ref{Chebyshev_second_kind})
and~(\ref{Chebyshev_first_kind}),
 $U_{m_{0}-1}\left(u\right)(u^{2}-1)^{1/2}-T_{m_{0}}\left(u\right)
=-(u-(u^{2}-1)^{1/2})^{m_{0}}$ so that we find that
$h(1,z)=\lambda_{m_{0}}(z)$.}
\end{rem}

\subsection{Analytic continuation}
\label{Analytic_continuation}
We recall from Equation~(\ref{def_generating_functions})
of Section~\ref{Intro} that
$h$ is initially defined
on the closed unit disc,
inside of which it is holomorphic.

\begin{prop}
\label{proposition_continuation}
The function $h$ admits an
analytic continuation on $\mathbb{C}\setminus [x_{3}(z),x_{4}(z)]$.
Moreover, the set $\mathbb{C}\setminus [x_{3}(z),x_{4}(z)]$
is the biggest one where $h$ can be continued
into a single valued function.
\end{prop}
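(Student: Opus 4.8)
The plan is to read off both assertions from the integral representation of Proposition~\ref{explicit_h(x,z)_third}. By Lemma~\ref{lemma_branched_points} the endpoints satisfy $x_{3}(z),x_{4}(z)>(p_{-10}/p_{10})^{1/2}=r$, so the segment $[x_{3}(z),x_{4}(z)]$ is disjoint from the closed disc $\overline{D(0,r)}$, and $D(0,r)$ is a nonempty open subset of the connected open set $\mathbb{C}\setminus[x_{3}(z),x_{4}(z)]$. I would then check that the right-hand side of Proposition~\ref{explicit_h(x,z)_third} defines, for $x\in\mathbb{C}\setminus[x_{3}(z),x_{4}(z)]$, a single-valued holomorphic function of $x$: the term $xP_{\infty}(\cdots)(x)$ is a polynomial, hence entire, and in the integral the only $x$-dependence that could create a singularity is the factor $1/(t-x)$, which stays bounded whenever $x\notin[x_{3}(z),x_{4}(z)]$ --- the integrand being then continuous in $t$ on the compact segment (indeed bounded near $x_{3}(z),x_{4}(z)$, where $\sqrt{-d(t,z)}$ vanishes) and holomorphic in $x$, so Morera's theorem (or differentiation under the integral sign) applies. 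Since Proposition~\ref{explicit_h(x,z)_third} asserts the equality with $h$ on $D(0,r)$, and $h$ is holomorphic on the open unit disc, the identity theorem propagates this equality throughout the unit disc; hence this right-hand side is the analytic continuation of $h$ to $\mathbb{C}\setminus[x_{3}(z),x_{4}(z)]$. (This is exactly where Lemma~\ref{simplification_principal_part} is used: the cut $[x_{1}(z),x_{2}(z)]$ present in Proposition~\ref{explicit_h(x,z)_second} has been absorbed, leaving only $[x_{3}(z),x_{4}(z)]$.) From here on $h(\cdot,z)$ denotes this continuation.

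For the maximality part I would compute the jump of $h(\cdot,z)$ across the open slit $]x_{3}(z),x_{4}(z)[$, writing $h^{+}(\cdot,z)$ and $h^{-}(\cdot,z)$ for the boundary values from $\{\mathrm{Im}>0\}$ and $\{\mathrm{Im}<0\}$ respectively. The polynomial part contributes nothing; the integral is of Cauchy type with density $t\mapsto(t^{n_{0}}-(r^{2}/t)^{n_{0}})\,\mu_{m_{0}}(t,z)\sqrt{-d(t,z)}/t$, which is H\"older-continuous on the interior of the slit, so the Sokhotski--Plemelj formula gives, for every $t_{0}\in\,]x_{3}(z),x_{4}(z)[$,
\[
h^{+}(t_{0},z)-h^{-}(t_{0},z)=2i\Bigl(t_{0}^{\,n_{0}}-(r^{2}/t_{0})^{n_{0}}\Bigr)\,\mu_{m_{0}}(t_{0},z)\,\sqrt{-d(t_{0},z)}.
\]
I would then show this jump vanishes at only finitely many $t_{0}$: on the open slit $-d(t_{0},z)>0$ strictly (the branched points being mutually distinct, Lemma~\ref{lemma_branched_points}); the inequalities $t_{0}>x_{3}(z)>r>0$ force $t_{0}^{\,n_{0}}\neq(r^{2}/t_{0})^{n_{0}}$; and by~(\ref{natural_expression_mu}) the zeros of $t\mapsto\mu_{m_{0}}(t,z)$ on the slit coincide with those of $u\mapsto U_{m_{0}-1}(-\widehat{b}(u,z))$, a nonzero polynomial composed with a diffeomorphism, so there are at most $m_{0}-1$ of them.

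It remains to conclude. If $h$ extended to a single-valued holomorphic function $H$ on an open set strictly larger than $\mathbb{C}\setminus[x_{3}(z),x_{4}(z)]$, that set would contain some $t_{0}\in[x_{3}(z),x_{4}(z)]$ together with a disc $N$ centred at $t_{0}$, and $N$ would meet $]x_{3}(z),x_{4}(z)[$ in an open interval $I$. On $N\setminus[x_{3}(z),x_{4}(z)]$ one would have $H=h$, so continuity of $H$ across $I$ --- together with the continuity of the one-sided boundary values of $h$ up to the interior of the slit --- would force $h^{+}(\cdot,z)=h^{-}(\cdot,z)$ on all of $I$, contradicting the previous paragraph. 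Hence $\mathbb{C}\setminus[x_{3}(z),x_{4}(z)]$ is the largest domain on which $h$ is single-valued. The main point requiring care is thus the non-degeneracy of the jump, that is, that the density in Proposition~\ref{explicit_h(x,z)_third} is not identically zero on the slit; but the Chebyshev form~(\ref{natural_expression_mu}) of $\mu_{m_{0}}\sqrt{-d}$ makes this transparent. The remaining technical points --- boundedness of the integrand up to $x_{3}(z),x_{4}(z)$, the H\"older-regularity needed for Sokhotski--Plemelj, the continuity of boundary values up to the interior of the slit --- are entirely standard, the substantive work having been done already in Lemma~\ref{simplification_principal_part} and Proposition~\ref{explicit_h(x,z)_third}.
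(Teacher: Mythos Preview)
Your proof is correct and follows exactly the approach the paper indicates: use the integral representation of Proposition~\ref{explicit_h(x,z)_third} together with the analytic properties of Cauchy-type integrals (the paper cites \cite{LU} for these, and your use of the Sokhotski--Plemelj formula for the jump is precisely what is meant). Your write-up simply makes explicit what the paper leaves as a one-line reference; the only cosmetic point is that the mutual distinctness of the branched points in Lemma~\ref{lemma_branched_points} is stated for $z\in\,]0,z_{1}[$, so at $z=z_{1}$ one should note directly that $-d(\cdot,z_{1})$ still does not vanish on the open slit $]x_{3}(z_{1}),x_{4}(z_{1})[$.
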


\begin{proof}
We could equally use one or the other
of the formulations obtained in
Propositions~\ref{explicit_h(x,z)},~\ref{explicit_h(x,z)_second},
\ref{explicit_h(x,z)_third} and~\ref{lemma_final_form},
but the simplest is perhaps to
make the use of Proposition~\ref{explicit_h(x,z)_third},
as well as the analytic properties of Cauchy integrals,
that can be found for instance in~\cite{LU}.
\end{proof}

\begin{rem}
  {\rm A nice property peculiar to the random walks we are studying
  is that the curve $\mathcal{M}_{z}=\mathcal{C}(0,r)$
  belongs to the closed unit disc.
  We can thus solve the boundary value problem
  with boundary condition~(\ref{SR_problem_h}) and then
  continue $h$ using its explicit expression.
  In Section~\ref{Extension_results},
  %and in a next work,
  we will see that for
  walks under general hypothesis (H2) of the introduction, it is
  quite possible that a part --or even the whole--
  of the associated curve $\mathcal{M}_{z}$ belongs to the exterior of the
  closed unit disc. There we will have \emph{first} to continue $h$
  as a holomorphic function up to $\mathcal{M}_{z}$
  and \emph{after} to solve the boundary
  value problem. We will do this continuation using Galois
  automorphisms (notion that will be defined here in the
  proof of Proposition~\ref{prop_continuation_Delta_zero}),
  following the procedure which in the heart of
  Book~\cite{FIM}.}
\end{rem}

% But we could also obtain the existence of
%this continuation independently of this explicit formulation, in
%fact independently also of the boundary value
%problem~(\ref{SR_problem_h}). This work is done
%in~\cite{FIM},~\cite{KV},~\cite{KS}. There, the authors build a
%procedure, presented as an algorithm in~\cite{KS}, using Galois
%automorphisms (which are automorphisms naturally associated to the
%algebraic curve $Q\left(x,y,z\right)=0$) and the functional
%equation~(\ref{functional_equation}), to obtain the continuation.

%Advantage of this method has been in substance already mentioned at
%the beginning of subsection~\ref{Integral_representation}~: the fact
%that $h$ is well defined on the curve on which it satisfies the
%Riemann boundary problem~(\ref{SR_problem_h}) goes not without
%saying, this is a property peculiar to the walk we are studying.
%Besides, in section~\ref{Extension_results}, we will see that for
%other walks, the associated curves $\mathcal{M}_{z}$ and
%$\mathcal{L}_{z}$ will belong to the exterior of the closed unit
%disc, and there we will have first to continue $h$, using the method
%with Galois automorphisms cited above.

\section{Probability of being absorbed with a fixed time}\label{h_1_z}

\subsection{Absorption probabilities in the case of
a drift zero}\label{Simple_walk_drift_zero}

\begin{prop}\label{main_result_simple_walk_drift_zero}
We suppose here that the two drifts~(\ref{drift}) are equal to zero,
in other words that $p_{-10}=p_{10}$ and $p_{0-1}=p_{01}$.
Define $S=\inf\left\{n\in \mathbb{N}:\,
(X(n),Y(n))\,\textnormal{hits the $x$-axis}\right\}$
the hitting time of the $x$-axis.
The following asymptotic holds~:
     \begin{equation}\label{asymptotic_hitting_times_real_axis_drift_zero}
          \mathbb{P}_{\left(n_{0},m_{0}\right)}\left(S = k \right)
          \sim \frac{n_{0} m_{0}}{2 \pi\sqrt{p_{10}p_{01}}}
          \frac{1}{k^{2}}, \ \  k \to \infty.
     \end{equation}
\end{prop}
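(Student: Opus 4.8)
The plan is to read off the asymptotics of $\mathbb{P}_{(n_0,m_0)}(S=k)$ from the behaviour near $z=1$ of the generating function $h(1,z)=\sum_{k\geq 0}\mathbb{P}_{(n_0,m_0)}(S=k)z^k$ — this identity being immediate from~(\ref{def_generating_functions}) taken at $x=1$, since hitting some $(i,0)$ at time $n$ is exactly the event $\{S=n\}$ — and then to invoke the transfer theorems of singularity analysis. In the zero-drift case $r=(p_{-10}/p_{10})^{1/2}=1$ and $z_1=1$ (Lemma~\ref{lemma_branched_points}), so the relevant singularity of $h(1,\cdot)$ sits at $z=1$; moreover $z=1$ is the \emph{only} singularity on $|z|=1$, because the support of $S$ contains $\{m_0,m_0+1,m_0+2,\dots\}$ (go right $j$ times staying in the interior, then straight down to the $x$-axis, so $\mathbb{P}_{(n_0,m_0)}(S=m_0+j)>0$). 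I would work in a slit neighbourhood of $1$, as needed for singularity analysis, using the analytic continuation of Proposition~\ref{proposition_continuation}.

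I would start from the explicit representation of Proposition~\ref{explicit_h(x,z)_third} at $x=1$: for $z\in\,]0,1[$ one has $x_3(z)>1$, so $1$ does not lie on the cut $[x_3(z),x_4(z)]$ and
\[
h(1,z)=P(1,z)+\frac{1}{\pi}\int_{x_3(z)}^{x_4(z)}\bigl(t^{n_0}-t^{-n_0}\bigr)\,\frac{\mu_{m_0}(t,z)\sqrt{-d(t,z)}}{t(t-1)}\,\mathrm{d}t ,
\]
with $P(1,z)$ a polynomial in $z$. The heart of the matter is that as $z\to 1$ the branch points $x_2(z),x_3(z)$ both tend to $1=x$: by Lemma~\ref{lemma_branched_points}, writing $\beta(z)=(z^{-1}-2p_{01})/(2p_{10})$ one has $x_{2,3}(z)=\beta(z)\mp\sqrt{\beta(z)^2-1}$, with $\beta(1)=1$ and $\beta'(1)=-1/(2p_{10})$, hence $x_3(z)-1\sim\sqrt{(1-z)/p_{10}}$. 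I would split the integral at a fixed $1+\delta$: the piece over $[1+\delta,x_4(z)]$ is analytic at $z=1$; on the piece near $x_3(z)$ one has $(t^{n_0}-t^{-n_0})/(t(t-1))\to 2n_0$ as $t\to 1$, $\mu_{m_0}(t,z)\to\mu_{m_0}(1,1)=m_0/(2p_{01})$ (only the $k=0$ term of~(\ref{def_mu}) survives, since $d(1,1)=0$), and $\sqrt{-d(t,z)}=zp_{10}\sqrt{(t-x_1(z))(x_4(z)-t)}\;\sqrt{(t-x_2(z))(t-x_3(z))}$, where the first square root is analytic near $(t,z)=(1,1)$ and equals there $2\sqrt{p_{01}p_{10}}$, because $x_1(1)x_4(1)=p_{-10}/p_{10}=1$ forces $(1-x_1(1))(x_4(1)-1)=4p_{01}/p_{10}$.

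Next, setting $\epsilon(z)=(x_3(z)-x_2(z))/2$ (so that $\epsilon(z)^2=\beta(z)^2-1$ is analytic in $z$, vanishes simply at $z=1$, and $\epsilon(z)^2\sim(1-z)/p_{10}$) and using $(t-x_2(z))(t-x_3(z))=s^2-\epsilon(z)^2$ in the shifted variable $s=t-\beta(z)$, the local integral becomes $\int_{\epsilon(z)}^{\delta'}g(s,z)\sqrt{s^2-\epsilon(z)^2}\,\mathrm{d}s$ with $g$ analytic near $(0,1)$ and $g(0,1)=2n_0\cdot\frac{m_0}{2p_{01}}\cdot 2\sqrt{p_{01}p_{10}}=2n_0m_0\sqrt{p_{10}/p_{01}}$. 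Expanding (e.g.\ via $s=\epsilon\cosh\theta$) one gets $\int_{\epsilon}^{\delta'}g(s,z)\sqrt{s^2-\epsilon^2}\,\mathrm{d}s=(\text{analytic in }z)+\tfrac12 g(0,1)\,\epsilon(z)^2\log\epsilon(z)+O\bigl((1-z)^2\log\tfrac{1}{1-z}\bigr)$, whose genuinely non-analytic leading term is $\frac{g(0,1)}{4p_{10}}(1-z)\log(1-z)$. Collecting all contributions, $h(1,z)=A(z)+B(z)\log(1-z)$ with $A,B$ analytic at $z=1$, $B(1)=0$, and the non-analytic part of $h(1,z)$ equal to $\frac{n_0m_0}{2\pi\sqrt{p_{10}p_{01}}}(1-z)\log(1-z)$ (the constant collapses since $\frac{\sqrt{p_{10}/p_{01}}}{p_{10}}=\frac{1}{\sqrt{p_{10}p_{01}}}$). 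Since $h(1,\cdot)$ is analytic in the unit disc with $z=1$ as its only boundary singularity, the transfer theorem applies: using $[z^k]\bigl((1-z)\log(1-z)\bigr)=\frac{1}{k(k-1)}\sim k^{-2}$ and that all remaining terms transfer to $o(k^{-2})$, one obtains $\mathbb{P}_{(n_0,m_0)}(S=k)\sim\frac{n_0m_0}{2\pi\sqrt{p_{10}p_{01}}}\,k^{-2}$, which is~(\ref{asymptotic_hitting_times_real_axis_drift_zero}).

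I expect the main obstacle to be precisely this local analysis at $z=1$: because the dominant singularity is only of the weak type $(1-z)\log(1-z)$, one must track the expansion of the boundary integral to exactly this order — confirming that the branch points $x_2(z),x_3(z)$ pinch the contour at $x=1$ precisely when $z=1$, pinning down the constant $g(0,1)$ through the relations $x_1 x_4=p_{-10}/p_{10}$ and $\mu_{m_0}(1,1)=m_0/(2p_{01})$, and carefully separating the genuinely non-analytic part from the harmless analytic one — after which the application of singularity analysis is routine. (One could alternatively run the same computation from Proposition~\ref{lemma_final_form}, where the singularity is localized at the endpoint $u=-1$ of the parametrization, via formulas~(\ref{kk}); the bookkeeping is essentially the same.)
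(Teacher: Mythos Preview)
Your computation is correct and the constant checks out, but your route differs from the paper's. Both extract the $(1-z)\log(1-z)$ singularity of $h(1,z)$ at $z=1$ and then transfer, but the paper works from the Chebyshev representation of Proposition~\ref{lemma_final_form}: it expands the integrand in powers of $(u+1)^i(z-1)^j$ and isolates the logarithm via Lemma~\ref{lem_log_singularity}, which expresses the resulting one-variable integrals $\int_{-1}^{1}(1-u)^i(1-u^2)^{1/2}(1-k_1(-u)z)^{-1/2}\,\mathrm{d}u$ in terms of the complete elliptic integrals $K$ and $E$ and reads the $\log$ off their classical expansion at $1$. Your direct local analysis of the pinching $x_2(z),x_3(z)\to 1$ in the representation of Proposition~\ref{explicit_h(x,z)_third} is more elementary---no special functions---and reaches the constant faster; the paper's route has the advantage that Lemma~\ref{lem_log_singularity} packages the singular structure once and for all, and the fixed integration interval $[-1,1]$ makes the continuation in $z$ transparent.

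One point does need repair. The claim that $z=1$ is the only singularity on $|z|=1$ ``because the support of $S$ contains $\{m_0,m_0+1,\dots\}$'' is not a valid argument: aperiodic nonnegative coefficients do not by themselves exclude other boundary singularities (take $\tfrac{1}{1-z}+\tfrac{\varepsilon}{1+z^2}$ with small $\varepsilon>0$). The correct justification---which is what the paper does---comes from the integral representation itself: in the form of Proposition~\ref{lemma_final_form} the integrand is analytic in $z$ whenever $1-k_1(u)z$ and $1-k_2(u)z$ stay away from $(-\infty,0]$ for $u\in[-1,1]$, and since $k_1(-1)=1$ is the maximum of $k_1,k_2$ on $[-1,1]$ this fails only at $z=1$ on the unit circle. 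Your own integral over $[x_3(z),x_4(z)]$ gives the same conclusion after continuing the contour for complex $z$, but the fixed-interval form is cleaner for this purpose. Relatedly, your appeal to Proposition~\ref{proposition_continuation} is misplaced: that result continues $h$ in $x$ for fixed $z$, not in $z$; it is the integral formula for $h(1,z)$ that supplies the $\Delta$-analyticity you need.
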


\begin{proof}
Setting $x=1$, $p_{-10}=p_{10}$, $p_{0-1}=p_{01}$
in~(\ref{final_form_SRW}) leads to~:
     \begin{equation*}
          h\left(1,z\right)=\frac{p_{01}z}{\pi}\int_{-1}^{1}
          \frac{t_{2}\left(u,z\right)^{n_{0}}
          -t_{1}\left(u,z\right)^{n_{0}}}{\sqrt{
          \left(1-k_{1}\left(u\right)z\right)
          \left(1-k_{1}\left(u\right)z\right)}}
          U_{m_{0}-1}\left(-u\right)
          \left(\sqrt{\frac{1-k_{2}\left(u\right)z}
          {1-k_{1}\left(u\right)z}}-1\right)
          \sqrt{1-u^{2}}\text{d}u.
     \end{equation*}
Using the explicit expressions of $t_{1}$ and $t_{2}$
given in~(\ref{tttt}), we immediately
notice that the function
     \begin{equation*}
          F\left(u,z\right)=\frac{p_{01}z}{\pi}
          \frac{t_{2}\left(u,z\right)^{n_{0}}
          -t_{1}\left(u,z\right)^{n_{0}}}{\sqrt{
          \left(1-k_{1}\left(u\right)z\right)
          \left(1-k_{1}\left(u\right)z\right)}}
          U_{m_{0}-1}\left(-u\right)
     \end{equation*}
is a polynomial in the two variables $(u,z)$.
So we can write $F$ as the
following finite sum~: $F(u,z)=\sum_{i,j}F_{i j}(u+1)^{i}(z-1)^{j}$
with coefficients $F_{i j}$ that can of course be computed explicitly, for
example $F_{0 0}=n_{0} m_{0} p_{01}/(\pi p_{10})$. Since adding a
polynomial does not change the asymptotic of the function's coefficients,
the coefficients of $h\left(1,z\right)$ have the same asymptotic
as those of the following function~:
     \begin{equation*}
          l\left(z\right)=\int_{-1}^{1}
          F\left(u,z\right)\sqrt{\frac{1-k_{2}\left(u\right)z}
          {1-k_{1}\left(u\right)z}}\sqrt{1-u^{2}}\text{d}u.
     \end{equation*}
Consider now the function
$G(u,z)=F(u,z)(1-k_{2}(u)z)^{1/2}$. Since
$k_{2}(-1)=2(p_{01}-p_{10})<1$, the function of two variables
$G$ is holomorphic in $\mathcal{D}(0,1+\epsilon )^2$,
where $\epsilon >0$ is sufficiently small.
For this reason, $G$ can be expanded
according to the powers $(u+1)^{i}(z-1)^{j}$~: $G(u,z)=
\sum_{i,j}G_{i j}(u+1)^{i}(z-1)^{j}$. As for $F$, all the
coefficients $G_{i j}$ can be explicited~; for instance $G_{0
0}=2n_{0}m_{0}p_{01}/(\pi\sqrt{p_{10}})$. With these notations, the
function $l$ becomes
     \begin{equation*}
          l\left(z\right)=\sum_{i,j}G_{i j}\left(z-1\right)^{j}
          \int_{-1}^{1}\left(1-u\right)^{i}\frac{\left(1-u^{2}\right)^{1/2}}
          {\left(1-k_{1}\left(-u\right)z\right)^{1/2}}\text{d}u.
     \end{equation*}
Thanks to Lemma~\ref{lem_log_singularity} below, we obtain that
$l(z)$ (and therefore $h(1,z)$) is a
function (i) holomorphic in the unit disc (ii) having a holomorphic
continuation up to every point of the unit circle except $1$ (iii)
having a logarithmic singularity at $1$.

As concerns the logarithmic singularity,
we can  be more precise~:
Lemma~\ref{lem_log_singularity}
asserts the existence of
$f(z)=\sum_{i,j}G_{i j}f_{i}(z)(z-1)^{i+j}$ and
$g(z)=\sum_{i,j}G_{i j}g_{i}(z)(z-1)^{j}$
such that $l(z)=f(z)(z-1)\ln(1-z)+g(z)$.

Moreover, using the fact that
$G_{0 0}=2n_{0}m_{0}p_{01}/(\pi\sqrt{p_{10}})$ and once
again with Lemma~\ref{lem_log_singularity},
we find $f(1)=-n_{0} m_{0}/(2\pi \sqrt{p_{01}p_{10}})$.

We can now easily find the asymptotic of the coefficients of the
Taylor series at $0$ of $l(z)$, thus also of $h\left(1,z\right)$,
following the principle explained hereunder~:
if $F(z)=\sum_{k}c_{k}z^k$ is a function (i) holomorphic
in the open unit disc (ii) having
a holomorphic continuation at every point
of the unit circle except $1$ (iii) having at $1$
a logarithmic singularity in the sense
that in the neighborhood of $1$,
$F$ can be written as $F(z)=F_{1}(z)+F_{2}(z)\ln(1-z)$
where $F_{1}$ and $F_{2}$ are holomorphic functions
at $1$, then the asymptotic of the coefficients
of the Taylor series can easily be calculated~:
if $q=\inf\{p\in \mathbb{N} : {F_{2}}^{(p)}(1)\neq 0\}$,
then $c_{k}\sim (-1)^q{F_{2}}^{(q)}(1)/k^{q+1}$ as $k\to +\infty $.

We use this result with
$q=1$ and $F_{2}'(1)=f(1)=-n_{0}m_{0}/(2\pi \sqrt{p_{01}p_{10}})$,
the
asymptotic~(\ref{asymptotic_hitting_times_real_axis_drift_zero})
comes immediately.

\end{proof}

\begin{lem}\label{lem_log_singularity}
Let $i$ be a non negative integer. The function $F_{i}$, defined by
     \begin{equation}\label{int_log_singularity}
          F_{i}\left(z\right)=\int_{-1}^{1}\left(1-u\right)^{i}
          \frac{\left(1-u^{2}\right)^{1/2}}
          {\left(1-k_{1}\left(-u\right)z\right)^{1/2}}\textnormal{d}u,
     \end{equation}
is holomorphic in the open unit disc. Moreover, it
can be continued into a holomorphic
function in the neighborhood
of any point of the unit circle except $1$.
At $z=1$, the function has a logarithmic singularity~; 
more precisely
there exist two
functions $f_{i}$ and $g_{i}$ holomorphic at $z=1$,
$f_{i}\left(1\right)\neq 0$, such that
$F_{i}(z)=(z-1)^{i+1}\ln(1-z)f_{i}(z)+g_{i}(z)$.
Moreover, $f_{0}(1)=-1/(4{p_{01}}^{3/2})$.
\end{lem}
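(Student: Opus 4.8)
The plan is to exploit that, under the zero-drift hypothesis, $k_{1}(-u)$ is \emph{affine} in $u$: since $p_{-10}=p_{10}$ and $p_{0-1}=p_{01}$ one has $\sqrt{p_{01}p_{0-1}}=p_{01}$, $\sqrt{p_{10}p_{-10}}=p_{10}$, $2(p_{10}+p_{01})=1$, and $k_{1}(-u)=2p_{01}u+2p_{10}$. Performing in~(\ref{int_log_singularity}) the substitution $s=1-u$ turns $1-k_{1}(-u)z$ into $2p_{01}zs+(1-z)$ and $1-u^{2}$ into $s(2-s)$, so that with $w=1-z$ and $\beta=\beta(z)=2p_{01}z$,
\[
     F_{i}(z)=\int_{0}^{2}\frac{s^{i+1/2}\sqrt{2-s}}{\sqrt{\beta s+w}}\,\mathrm{d}s .
\]
All three assertions of the lemma will be read off this representation. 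For holomorphy in the open unit disc: writing $c:=1-k_{1}(-u)=1-2p_{01}s$, one has $c\in[1-4p_{01},1]\subset(-1,1]$, hence $|c|\leq 1$, so for $|z|<1$ the quantity $\beta s+w=1-cz$ has $\mathrm{Re}(1-cz)\geq 1-|z|>0$; the integrand thus stays in the half-plane where the principal square root is holomorphic, and differentiation under the integral sign gives holomorphy of $F_{i}$ on $\{|z|<1\}$.

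For the continuation across the unit circle off $1$: if $|z_{0}|=1$, $z_{0}\neq 1$ and $u\in[-1,1]$, then $1-cz_{0}\in(-\infty,0]$ would force $cz_{0}$ real and $\geq 1$, hence $|c|=1$ and $cz_{0}=1$, i.e. $z_{0}=\pm 1$ with the matching $c=\pm 1$; but $c=-1$ is impossible ($c\geq 1-4p_{01}>-1$) and $c=1$ only at $u=1$ forcing $z_{0}=1$, excluded. By compactness of $[-1,1]$ the same holds with $z_{0}$ replaced by any $z$ in a neighborhood, so the integrand, hence $F_{i}$, continues holomorphically there.

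The real work is the behavior at $z=1$. Split $\int_{0}^{2}=\int_{0}^{1}+\int_{1}^{2}$; on $[1,2]$ the denominator is bounded away from the branch cut for $z$ near $1$, so that piece is holomorphic at $z=1$. On $[0,1]$ one expands $\sqrt{2-s}=\sum_{k\geq 0}a_{k}s^{k}$ (convergent for $|s|<2$, $a_{0}=\sqrt 2$), interchanges sum and integral (dominated convergence, using that near $z=1$ the integrand is dominated by an integrable function), and reduces to the model integrals $I_{m}(z)=\int_{0}^{1}s^{m+1/2}(\beta s+w)^{-1/2}\,\mathrm{d}s$. The substitution $\beta s+w=t^{2}$ gives $I_{m}(z)=2\beta^{-m-3/2}\int_{\sqrt w}^{\sqrt{\beta+w}}(t^{2}-w)^{m+1/2}\,\mathrm{d}t$, and by the standard reduction formula an antiderivative of $(t^{2}-w)^{m+1/2}$ has the form $A_{m}(t)\sqrt{t^{2}-w}+B_{m}\,w^{m+1}\ln(t+\sqrt{t^{2}-w})$, with $A_{m}$ an odd polynomial in $t$ (coefficients polynomial in $w$) and $B_{m}$ a nonzero constant, $B_{0}=-\tfrac12$. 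At the upper limit $\sqrt{t^{2}-w}=\sqrt{\beta}=\sqrt{2p_{01}z}$ and $t=\sqrt{\beta+w}=\sqrt{1-2p_{10}z}$ are holomorphic and nonzero at $z=1$, while at the lower limit $t=\sqrt w$ the factor $\sqrt{t^{2}-w}$ vanishes, killing the algebraic term and leaving $-B_{m}w^{m+1}\ln\sqrt w=-\tfrac12 B_{m}(1-z)^{m+1}\ln(1-z)$. Hence $I_{m}(z)=G_{m}(z)+H_{m}(z)(1-z)^{m+1}\ln(1-z)$ with $G_{m},H_{m}$ holomorphic near $z=1$ and $H_{m}(1)=-B_{m}/(2p_{01})^{m+3/2}\neq 0$. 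Summing over $k$ and factoring $(1-z)^{i+1}\ln(1-z)$,
\[
     F_{i}(z)=g_{i}(z)+(1-z)^{i+1}\ln(1-z)\,\widehat f_{i}(z),\qquad
     \widehat f_{i}(z)=\sum_{k\geq 0}a_{k}H_{i+k}(z)(1-z)^{k}.
\]
Writing $(z-1)^{i+1}=(-1)^{i+1}(1-z)^{i+1}$ identifies $f_{i}=(-1)^{i+1}\widehat f_{i}$ and the holomorphic remainder $g_{i}$, and $\widehat f_{i}(1)=a_{0}H_{i}(1)=-\sqrt 2\,B_{i}/(2p_{01})^{i+3/2}\neq 0$ gives $f_{i}(1)\neq 0$; for $i=0$, $B_{0}=-\tfrac12$ yields $f_{0}(1)=-\widehat f_{0}(1)=-\sqrt 2\cdot\tfrac{1}{2(2p_{01})^{3/2}}=-1/(4p_{01}^{3/2})$.

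The main obstacle is the bookkeeping around $z=1$: cleanly isolating the endpoint $s=0$, extracting the exact logarithmic coefficient from the reduction formula (equivalently, the coefficient of $t^{-1}$ in the large-$t$ expansion of $t^{2j}\sqrt{t^{2}-w}$, which is a nonzero binomial coefficient $\binom{-1/2}{j+1}$), and checking that the series defining $\widehat f_{i}$ converges to a function holomorphic near $z=1$ — this last point needs the estimates $|a_{k}|=O((1/2+\epsilon)^{k})$ and at-most-polynomial growth of $B_{i+k}$, which make $\sum a_{k}H_{i+k}(z)(1-z)^{k}$ geometric with ratio tending to $0$ as $z\to 1$. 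Everything else reduces to holomorphy under the integral sign and the location of the branch cut of the square root.
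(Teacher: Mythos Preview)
Your proof is correct and takes a genuinely different route from the paper's. The paper changes variables to $v^{2}=k_{1}(-u)$, expands the resulting algebraic factor in powers of $(1-v^{2})$, and then recognizes each of the integrals $\int_{0}^{1}(1-v^{2})^{1/2+k}(1-zv^{2})^{-1/2}\,\mathrm{d}v$ as a polynomial combination of the complete elliptic integrals $K(z)$ and $E(z)$, invoking their classical logarithmic expansions at $z=1$ (Abel's identity). Your argument stays entirely elementary: after the substitution $s=1-u$ you reduce to the model integrals $I_{m}(z)=\int_{0}^{1}s^{m+1/2}(\beta s+w)^{-1/2}\,\mathrm{d}s$ and evaluate these with the explicit antiderivative $\int(t^{2}-w)^{m+1/2}\,\mathrm{d}t=A_{m}(t)\sqrt{t^{2}-w}+B_{m}w^{m+1}\ln(t+\sqrt{t^{2}-w})$, where the coefficients $B_{m}=(-1)^{m+1}(2m+1)!!/(2m+2)!!$ come straight from the reduction formula. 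This buys you two things: no appeal to special-function theory, and a clean proof that $f_{i}(1)\neq 0$ for \emph{every} $i$ (since each $B_{i}\neq 0$), whereas the paper only carries out that verification for $i=0$ and explicitly defers the general case as ``somewhat tedious''.

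One point deserves a line more than you gave it: you check convergence of the series defining $\widehat f_{i}$ but not of the companion series $\sum_{k}a_{k}G_{i+k}$ that makes up $g_{i}$. The cleanest way to close this is not to estimate the $G_{m}$ directly (the factor $\beta^{-m-3/2}$ is awkward) but to observe that $\sum_{k}a_{k}I_{i+k}(z)$ equals the original integral by uniform convergence of $\sqrt{2-s}=\sum a_{k}s^{k}$ on $[0,1]$, and that once $\widehat f_{i}$ is known to be holomorphic near $1$ the difference $F_{i}(z)-(1-z)^{i+1}\ln(1-z)\,\widehat f_{i}(z)$ is single-valued on a punctured neighborhood of $1$ (the monodromies cancel by construction) and bounded there, hence extends holomorphically by Riemann's removable singularity theorem. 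This avoids termwise control of the $G_{m}$ altogether.
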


\begin{proof}
The two facts that the integrals considered in
Lemma~\ref{lem_log_singularity} are holomorphic in the unit disc
and also that they can be continued into holomorphic functions
trough every point of the unit circle except $1$ come immediately
from the theory of integrals with parameters. We will therefore
concentrate the proof on the logarithmic singularity.

First, we replace
the lower bound $-1$ in the integrals~(\ref{int_log_singularity}) by
$-p_{10}/p_{01}$, which does not
change the singularity in the
neighborhood of $1$ of the functions
$F_{i}(z)$
since doing this is equivalent to add to
$F_{i}(z)$ a function with a radius of convergence
strictly larger
than $1$.
% function that
%has no singularity at $1$.
Then, the change of variable
$v^{2}=k_{1}(-u) =2(p_{01}u+p_{10})$ gives
     \begin{equation*}
          \int_{-p_{10}/p_{01}}^{1}\frac{
          \left(1-u\right)^{i}\left(1-u^{2}\right)^{1/2}}
          {\left(1-k_{1}\left(-u\right)z\right)^{1/2}}\text{d}u=
          \frac{2}{\left(2p_{01}\right)^{2+i}}
          \int_{0}^{1}\frac{\left(1-v^{2}\right)^{1/2+i}}{\left(1-z v^{2}\right)^{1/2}}
          v\sqrt{v^{2}+2\left(p_{01}-p_{10}\right)}\text{d}v.
     \end{equation*}
Next, using the expansion of $\sqrt{v}$ at $v=1$, we can expand the
function $v(v^{2}+2\left(p_{01}-p_{10}\right))^{1/2}$ according to
the powers of $(1-v^{2})$ :
$v(v^{2}+2(p_{01}-p_{10}))^{1/2}=\sum_{i}c_{i}(1-v^{2})^{i}$
with $c_{0}=2\sqrt{p_{01}}$, $c_{1}=(1+4p_{01})/(4\sqrt{p_{01}})$, etc.

We will now explain why there exist functions $\phi_{k}$ and
$\psi_{k}$ holomorphic in the neighborhood of $1$,
$\phi_{k}(1)\neq 0$ such that
     \begin{equation}\label{expression_terms_K_E}
          \int_{0}^{1}\frac{\left(1-v^{2}\right)^{1/2+k}}
          {\left(1-z v^{2}\right)^{1/2}}\text{d}v
          =\left(z-1\right)^{k+1}\ln\left(1-z\right)\phi_{k}\left(z\right)
          +\psi_{k}\left(z\right).
     \end{equation}
But before, we show how~(\ref{expression_terms_K_E})
allows to complete the proof of
Lemma~\ref{lem_log_singularity}~:
with the notations of~(\ref{expression_terms_K_E}) we set
$\tilde{g}_{i}(z)=2/(2p_{01})^{i+2}\sum_{k}c_{k}\psi_{k+i}(z)$ and
$f_{i}(z)=2/(2p_{01})^{i+2}\sum_{k}c_{k}\phi_{k+i}(z)(z-1)^{k}$,
we obtain that~:
     \begin{equation}
     \label{zdf}
          \int_{-p_{10}/p_{01}}^{1}\frac{\left(1-u\right)^{i}\left(1-u^{2}\right)^{1/2}}
          {\left(1-k_{1}\left(-u\right)z\right)^{1/2}}\text{d}u=
          \left(z-1\right)^{i+1}\ln\left(1-z\right)
          f_{i}\left(z\right)+\tilde{g}_{i}\left(z\right).
     \end{equation}
Then, we replace the lower bound
$-p_{10}/p_{01}$ by $-1$,
what changes $\tilde{g}_{i}$ in
a new function holomorphic in the
neighborhood of $1$, that we call $g_{i}$,
but what does not change the function $f_{i}$,
for the reasons already explained at the beginning of the proof.

So, it remains to prove~(\ref{expression_terms_K_E}).
The proof consists in expressing the
integrals~(\ref{expression_terms_K_E}) in terms of $K$ and $E$,
the two classical Legendre's complete elliptic integrals of the
first and second kind, defined by~:
     \begin{equation*}
          K\left(z\right)=\int_{0}^{1}\frac{\text{d}v}
          {\left(\left(1-v^{2}\right)\left(1-z v^{2}\right)
          \right)^{1/2}},\hspace{5mm}
          E\left(z\right)=\int_{0}^{1}\frac{\left(
          1-z v^{2}\right)^{1/2}}{\left(1-v^{2}\right)^{1/2}}
          \text{d}v,
     \end{equation*}
and next in using the well known results concerning these elliptic integrals,
notably their behavior in the neighborhood
of $1$ and in particular their --logarithmic-- singularity at $1$~;
all these properties can be
found e.g.\ in~\cite{SG2}.

The functions $K$ and $E$ are
manifestly holomorphic
in the open unit disc, continuable
trough any point of the unit circle except $1$,
and from
the so called Abel's identity (see~\cite{SG2})
it can be deduced that the functions
$K$ and $E$
have at $1$ a logarithmic singularity as follows~:
$K(z)=\rho_{K}(z)+\sigma_{K}(z)\ln(1-z)$ and
$E(z)=\rho_{E}(z)+\sigma_{E}(z)(z-1)\ln(1-z)$,
where the functions $\rho$ and $\sigma$ are holomorphic
in the neighborhood of $1$ and
$\sigma_{K}(1)=-1/2$ and $\sigma_{E}(1)=1/4$.

Moreover, for any non negative integer $k$,
we can find two polynomials
$P_{k}$ and $Q_{k}$ such that~:
     \begin{equation*}
          \int_{0}^{1}\frac{\left(1-v^{2}\right)^{1/2+k}}
          {\left(1-z v^{2}\right)^{1/2}}\text{d}v=\frac{
          P_{k}\left(z\right)E\left(z\right)+
          Q_{k}\left(z\right)K\left(z\right)}{z^{k+1}}.
     \end{equation*}
These polynomials could be
explicitly calculated, for instance, $P_{0}(z)=1$
and $Q_{0}(z)=z-1$.
Therefore, setting
$\psi_{k}(z)=(P_{k}(z)\rho_{E}(z)+Q_{k}(z)\rho_{K}(z))/z^{k+1}$ and
$\tilde{\phi}_{k}(z)=(P_{k}(z)\sigma_{E}(z)(z-1)+Q_{k}(z)\sigma_{K}(z))/z^{k+1}$,
we obtain that
     \begin{equation*}
          \int_{0}^{1}\frac{\left(1-v^{2}\right)^{1/2+k}}
          {\left(1-z v^{2}\right)^{1/2}}\text{d}v
          =\ln\left(1-z\right)\widetilde{\phi}_{k}\left(z\right)
          +\psi_{k}\left(z\right).
     \end{equation*}
Then, to prove~(\ref{expression_terms_K_E}) it suffices to verify that
we can write $\tilde{\phi}_{k}(z)$ as $(z-1)^{k+1}\phi_{k}(z)$, where
$\phi$ is holomorphic at $1$. We don't make this verification
in the general case, because the calculations are somewhat tedious
--the expressions of
the polynomials $P_{k}$ and $Q_{k}$ are rather complicated--,
but do it in case $k=0$~: thanks to
the explicit expression of $P_{0}$ and $Q_{0}$
given above we have  $\tilde{\phi}_{0}=(z-1)(\sigma_{E}(z)+\sigma_{K}(z))/z$,
hence the result by setting $\phi_{0}(z)=(\sigma_{E}(z)+\sigma_{K}(z))/z$.
%In~\cite{SG2} is proved Abel's identity~: there exists a function
%$\rho $, holomorphic in $0$, such that
%$K\left(z\right)=-\frac{1}{\pi}K\left(1-z\right)
%\ln\left(1-z\right)+\rho\left(1-z\right)$. Since $K$ is holomorphic
%at $0$ (besides the following explicit expansion at $0$ holds~:
%$2K(z)=\sum_{k=0}^{+\infty }z^{k}\Gamma(k+1/2)^{2}/\Gamma(k+1)^{2}$)
%we have the existence of two functions $s_{1}$ and $s_{2}$
%holomorphic in $1$ and such that $K(z)=-1/2
%\ln(1-z)(1+(z-1)s_{1}(z))+s_{2}(z)$. Now, we show the existence of
%two other functions $s_{3}$ and $s_{4}$ holomorphic in $1$ such that
%$E(z)=(z-1)\ln(1-z)/4(1+(z-1)s_{3}(z))+s_{4}(z)$. To prove this
%fact, we start by using the following relationship between $K$ and
%$E$, proved in~\cite{SG2}~:
%$E\left(z\right)=\left(1-z\right)K\left(z\right)
%-2z\left(1-z\right)K'\left(1-z\right)$. So, $E$ has also a
%logarithmic singularity at $1$ and there exists a function $\sigma$
%holomorphic at $0$ such that~:
%$E\left(z\right)=\left(1-z\right)\ln\left(1-z\right)
%\left(2zK'\left(1-z\right)-K\left(1-z\right)\right)/\pi
%          +\sigma\left(1-z\right)$.
%So, we have
%$(E(z)+(z-1)K(z))/z=(1-z)\ln(1-z)/4(1+(z-1)\tau(z))+\omega(z)$,
%where $\tau$ and $\omega$ are two functions holomorphic at $1$. This
%concludes the proof of~(\ref{expression_terms_K_E}) is the special
%case $k=0$.
%   Moreover, we see from this proof that
% $\phi_{0}(1)=-1/4$, so that in (\ref{zdf})
%$\phi(1)=c_{0}\phi_{0}(1)=-\sqrt{p_{01}}/2$. This  proves  the last
%fact of the lemma.

To prove the last fact claimed in
Lemma~\ref{lem_log_singularity}, namely that
$f_{0}(1)=-1/(4{p_{01}}^{3/2})$,
we use the fact that $f_{0}(1)=2c_{0}(\sigma_{E}(1)+\sigma_{K}(1))/(2{p_{01}})^2
=-1/(4{p_{01}}^{3/2})$.
\end{proof}

\begin{cor}\label{hitting_times_definitions}

Take the following notations~:
     \begin{equation}\label{def_hitting_times}
          \left\{ \begin{array}{ccccc}
          S&=&\inf\left\{n\in \mathbb{N}:\, \left(X\left(n\right),Y\left(n\right)\right)
          \,\textnormal{hits the $x$-axis}\right\}, \\
          T&=&\inf\left\{n\in \mathbb{N}:\, \left(X\left(n\right),Y\left(n\right)\right)
          \,\textnormal{hits the $y$-axis}\right\}, \\
          \tau&=&\inf\left\{n\in \mathbb{N}:\, \left(X\left(n\right),Y\left(n\right)\right)
           \,\textnormal{hits the boundary}\right\}
          =S\wedge T.\end{array}\right.
     \end{equation}
Then the following equivalents hold for $\mathbb{P}_{(n_{0},m_{0})}(T = k ) $
and for the probability
of not being absorbed at time $k$~:
     \begin{equation*}\label{not_absorb_time_k}
          \mathbb{P}_{\left(n_{0},m_{0}\right)}\left(T = k \right)
          \sim \frac{n_{0} m_{0}}{2\pi \sqrt{p_{10}p_{01}}}\frac{1}{k^{2}}
          ,\hspace{5mm}
          \mathbb{P}_{\left(n_{0},m_{0}\right)}\left(\tau \geq k \right)
          \sim \frac{n_{0} m_{0}}{\pi \sqrt{p_{10}p_{01}}}\frac{1}{k},\hspace{5mm}k\to \infty.
     \end{equation*}
\end{cor}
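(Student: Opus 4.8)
The plan is to reduce everything to Proposition~\ref{main_result_simple_walk_drift_zero} by exploiting two ingredients: the symmetry of the model under interchange of the two axes, and the special structure of $\tau$ in the zero-drift regime. First I would observe that, directly from~(\ref{absorption_probabilities})--(\ref{def_generating_functions}), $h(1,z)=\sum_{n\geq 0}\mathbb{P}_{(n_0,m_0)}(S=n)z^n$ is the generating function of the law of $S$ and, likewise, $\widetilde h(1,z)$ is the generating function of the law of $T$. As explained in Subsection~\ref{The_algebraic_curve_Q}, every object attached to $\widetilde h$ is obtained from the corresponding one for $h$ by the parameter exchange $p_{10}\leftrightarrow p_{01}$, $p_{-10}\leftrightarrow p_{0-1}$, that is, by swapping the $x$- and $y$-axes and hence also $n_0\leftrightarrow m_0$. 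Consequently the entire argument of Proposition~\ref{main_result_simple_walk_drift_zero} (polynomiality of the integrand after removing a polynomial part, the holomorphy domain, and the logarithmic singularity at $1$ furnished by Lemma~\ref{lem_log_singularity}) transfers verbatim to $\widetilde h(1,z)$ and yields $\mathbb{P}_{(n_0,m_0)}(T=k)\sim m_0 n_0/(2\pi\sqrt{p_{01}p_{10}})\,k^{-2}$; since the right-hand side is invariant under this exchange, it equals $n_0 m_0/(2\pi\sqrt{p_{10}p_{01}})\,k^{-2}$, which is the claimed equivalent for $T$.

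Next I would treat $\tau=S\wedge T$. In the zero-drift case the walk is absorbed almost surely, and under (H2') the corner $(0,0)$ cannot be reached from $(n_0,m_0)$: its only interior neighbour is $(1,1)$, and the jump $(1,1)\to(0,0)$ has probability $p_{-1,-1}=0$, while $(1,0)$ and $(0,1)$ are already absorbing. Hence, almost surely, exactly one of $S$ and $T$ is finite; on $\{S<\infty\}$ one has $T=\infty$ and $\tau=S$, on $\{T<\infty\}$ one has $S=\infty$ and $\tau=T$, and the events $\{S=k\}$ and $\{T=k\}$ are disjoint. Therefore $\mathbb{P}_{(n_0,m_0)}(\tau=k)=\mathbb{P}_{(n_0,m_0)}(S=k)+\mathbb{P}_{(n_0,m_0)}(T=k)$ for every $k$, which by Proposition~\ref{main_result_simple_walk_drift_zero} and the first part of the corollary is $n_0 m_0/(\pi\sqrt{p_{10}p_{01}})\,k^{-2}+o(k^{-2})$.

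To pass from the local asymptotic to the tail, I would write $\mathbb{P}_{(n_0,m_0)}(\tau\geq k)=\sum_{j\geq k}\mathbb{P}_{(n_0,m_0)}(\tau=j)$ (valid since $\tau<\infty$ almost surely) and invoke the elementary fact that a nonnegative sequence $a_j\sim c\,j^{-2}$ satisfies $\sum_{j\geq k}a_j\sim c\,k^{-1}$, because $\sum_{j\geq k}j^{-2}=k^{-1}+O(k^{-2})$ and an asymptotic equivalence between nonnegative sequences is preserved under tail summation. Taking $c=n_0 m_0/(\pi\sqrt{p_{10}p_{01}})$ gives the announced equivalent for $\mathbb{P}_{(n_0,m_0)}(\tau\geq k)$, consistently with~(\ref{tailtt}).

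The only step I expect to require genuine care is the claim that the proof of Proposition~\ref{main_result_simple_walk_drift_zero} goes through \emph{unchanged} for $\widetilde h(1,z)$ under the axis swap: concretely, that the substituted analogues of the functions $F$ and $G$ and of the functions of Lemma~\ref{lem_log_singularity} retain their stated regularity and their logarithmic behaviour at $1$, and in particular that the resulting leading coefficient is again $n_0 m_0/(2\pi\sqrt{p_{10}p_{01}})$ rather than some non-symmetric expression. Everything else — the structure of $\tau$, the additivity $\mathbb{P}(\tau=k)=\mathbb{P}(S=k)+\mathbb{P}(T=k)$, and the tail estimate — is routine.
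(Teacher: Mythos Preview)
Your proof is correct and follows essentially the same approach as the paper: the asymptotic for $T$ comes from Proposition~\ref{main_result_simple_walk_drift_zero} via the parameter exchange $p_{10}\leftrightarrow p_{01}$, $p_{-10}\leftrightarrow p_{0-1}$, $n_0\leftrightarrow m_0$, and the asymptotic for $\tau$ follows from the observation that, since the walk cannot hit $(0,0)$ and is absorbed almost surely, the events $\{S<\infty\}$ and $\{T<\infty\}$ partition the probability space. The only cosmetic difference is that the paper decomposes directly at the tail level, writing $\mathbb{P}(\tau\geq k)=\mathbb{P}(k\leq S<\infty)+\mathbb{P}(k\leq T<\infty)$, whereas you decompose $\{\tau=k\}$ and then sum; the tail-summation step you spell out is left implicit in the paper.
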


\begin{proof}

We immediately obtain the first part of
Corollary~\ref{hitting_times_definitions} from
Proposition~\ref{main_result_simple_walk_drift_zero} by exchanging
the parameters $p_{10},p_{-10}$ and $p_{01},p_{0-1}$.
%Next, we remark that
%$$\mathbb{P}_{\left(n_{0},m_{0}\right)}\left(\tau \geq k \right) =
%\mathbb{P}_{\left(n_{0},m_{0}\right)}\left(k\leq S <+\infty \right)+
%\mathbb{P}_{\left(n_{0},m_{0}\right)}\left(S=\infty\right)$$ $${}+
% \mathbb{P}_{\left(n_{0},m_{0}\right)}\left(k\leq
%T <+\infty  \right)+
%\mathbb{P}_{\left(n_{0},m_{0}\right)}\left(T=\infty\right)-
%\mathbb{P}_{\left(n_{0},m_{0}\right)}\left((S\geq k) \cup (T \geq
%k)\right).$$
Moreover, since the random walk can be 
absorbed by at most one of the axes, we get
%$$\mathbb{P}_{\left(n_{0},m_{0}\right)}\left(S=\infty\right) +
%\mathbb{P}_{\left(n_{0},m_{0}\right)}\left(T=\infty\right) =
%\mathbb{P}_{\left(n_{0},m_{0}\right)}\left((S=\infty) \cup (T=\infty
%)\right)+\mathbb{P}_{\left(n_{0},m_{0}\right)}\left((S=\infty) \cap
%(T=\infty )\right)$$
%$$=1+ \mathbb{P}_{\left(n_{0},m_{0}\right)}\left((S=\infty) \cap
%(T=\infty )\right)$$ and also for all
% $k >0$ $\mathbb{P}_{\left(n_{0},m_{0}\right)}\left((S\geq k) \cup (T \geq
%k)\right)=1$.
     \begin{equation}
          \label{kkk}
          \mathbb{P}_{\left(n_{0},m_{0}\right)}\left(\tau \geq k \right)=
          \mathbb{P}_{\left(n_{0},m_{0}\right)}\left(k\leq S <\infty
          \right)+\mathbb{P}_{\left(n_{0},m_{0}\right)}\left(k\leq
          T <\infty  \right)+
          \mathbb{P}_{\left(n_{0},m_{0}\right)}\left((S=\infty)
          \cap (T=\infty)\right).
     \end{equation}
This random walk being absorbed almost surely
(we recall that we have supposed $p_{-10}=p_{10}$
and $p_{0-1}=p_{01}$), we have
$\mathbb{P}_{\left(n_{0},m_{0}\right)}\left((S=\infty) \cap
(T=\infty )\right)=0$ and
Corollary~\ref{hitting_times_definitions} is immediate.
\end{proof}

%We now generalize the result of
%proposition~\ref{main_result_simple_walk_drift_zero},
%and this in two directions : first,
%in subsection~\ref{simple_walk_drift_non_zero},
%we suppose again that the walk is simple but
%we allow the drifts $M_{x}$ and $M_{y}$
%of being no more zero but positive ;
%then, in subsection~\ref{Delta_zero_walk_drift_zero},
%we take the hypothesis that
%the drifts are zero and that the walk is
%no more simple, but verifies
%$\Delta\left(z\right)=0$ for all $z$.

\subsection{Absorption probabilities in the case of a non zero drift}
\label{simple_walk_drift_non_zero}

In Subsection~\ref{Simple_walk_drift_zero},
we were interested in the hitting time of the
boundary of $(\mathbb{Z}_{+})^{2}$ in the case of the two drifts
$M_{x}$ and $M_{y}$ equal to zero. Now, we state analogous results
when one (Proposition~\ref{zek}) or two (Proposition~\ref{zer}) of
$M_{x}$ and $M_{y}$ are not zero.

%\subsubsection{The two drifts are positive}
%\label{Absorption_probabilities_in_the_case_of_a_non_zero_drift_two_drifts_positive}

\begin{prop}
\label{zek}
Suppose that $M_{x}>0$, $M_{y}>0$ and
let $S$ be the hitting time of the $x$-axis, defined
in~(\ref{def_hitting_times}). Then
$\mathbb{P}_{(n_{0},m_{0})}(S = k)$, the
probability of being absorbed in the $x$-axis at time $k$,
admits the asymptotic as $k$ goes to infinity~:
     \begin{equation}\label{asymptotic_hitting_times_real_axis_two_drift_positive}
          \frac{m_{0}}{2\sqrt{\pi}}\sqrt{\frac{p_{10}+p_{-10}+
          2\sqrt{p_{01}p_{0-1}}}{\sqrt{p_{01}p_{0-1}}}}
          \left(\frac{p_{0-1}}{p_{01}}\right)^{m_{0}/2}
          \left(1-\left(\frac{p_{-10}}{p_{10}}\right)^{n_{0}}\right)
          \frac{\left(p_{10}+p_{-10}+2\sqrt{p_{01}p_{0-1}}\right)^{k}}{k^{3/2}}.
     \end{equation}
\end{prop}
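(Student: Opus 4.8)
The plan is to proceed exactly as in the proof of Proposition~\ref{main_result_simple_walk_drift_zero}: read off the asymptotics of $\mathbb{P}_{(n_0,m_0)}(S=k)$ from the dominant singularity of its generating function. By the definitions~\eqref{absorption_probabilities} and~\eqref{def_generating_functions}, $h(1,z)=\sum_{k\ge 0}\mathbb{P}_{(n_0,m_0)}(S=k)z^k$, a series of radius of convergence at least $1$ since $\sum_k\mathbb{P}_{(n_0,m_0)}(S=k)\le 1$. I would start from the integral representation~\eqref{final_form_SRW} taken at $x=1$; the term $P_\infty(\cdots)(1)$ is a polynomial in $z$ and may be discarded, so everything reduces to locating and describing the singularity, nearest to the origin, of the Cauchy-type integral over $u\in[-1,1]$.

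First I would pin down that singularity. The integrand contains the factor $1/(t_2(u,z)-1)$; by the second identity in~\eqref{kk} with $x=1$, the quantity $t_2(u,z)-1$ has, as a function of $u$, a single simple zero at $u^\ast(z)=\big((p_{10}+p_{-10})z-1\big)/\big(2\sqrt{p_{01}p_{0-1}}\,z\big)$, so the integrand has a simple pole there. Put $\rho=p_{10}+p_{-10}+2\sqrt{p_{01}p_{0-1}}$; then $u^\ast$ is increasing in $z$, equals $-1$ at $z=1/\rho$, and stays below $-1$ for $z\in(0,1/\rho)$, so $h(1,z)$ extends analytically along $(0,1/\rho)$. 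Using~\eqref{tttt} and Lemmas~\ref{lemma_branched_points}--\ref{properties_X_Y}, one should also check that the branch points of $t_2(\cdot,z)$ in the $u$-plane stay off $[-1,1]$ there and that the associated square-root singularities in $z$ lie further out — the one issuing from $u=-1$ sits at $1/k_1(-1)$, and $\rho-k_1(-1)=(\sqrt{p_{10}}-\sqrt{p_{-10}})^2>0$ since $M_x>0$. Hence $1/\rho$ is the dominant singularity, and it is produced by the confluence, as $z\uparrow 1/\rho$, of the simple pole $u^\ast(z)$ with the endpoint $u=-1$ of the contour, where $\sqrt{1-u^2}=\sqrt{(1-u)(1+u)}$ vanishes like $\sqrt2\,\sqrt{1+u}$; a pole hitting such a square-root endpoint generates a $\sqrt{1-\rho z}$ singularity.

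To make this quantitative I would localize the integral near $u=-1$: with $v=1+u$ and $a=-(1+u^\ast(z))>0$, the non-analytic part of $\int_0^\varepsilon \sqrt v/(v+a)\,dv=2\sqrt\varepsilon-2\sqrt a\arctan\sqrt{\varepsilon/a}$ is $-\pi\sqrt a+o(1)$ as $a\to 0^+$, while $a\sim\frac{\rho}{2\sqrt{p_{01}p_{0-1}}}(1-\rho z)$ because $\tfrac{d}{dz}u^\ast(1/\rho)=\rho^2/(2\sqrt{p_{01}p_{0-1}})$. The regular coefficient of $\sqrt{1+u}/(u-u^\ast(z))$ at the confluence point is read from~\eqref{final_form_SRW}: the numerator $\partial_u t_2(u,z)$ cancels the residue factor $1/\partial_u t_2(u^\ast,z)$; at $u=-1$, $z=1/\rho$ one has $t_2=1$, hence $t_2^{n_0}-(r^2/t_2)^{n_0}=1-(p_{-10}/p_{10})^{n_0}$ (as $r^2=p_{-10}/p_{10}$), $U_{m_0-1}(-u)=U_{m_0-1}(1)=m_0$, and a factor $\sqrt2$ from $\sqrt{1-u}\,|_{u=-1}$. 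Collecting, $h(1,z)=\psi(z)-\big(\tfrac{p_{0-1}}{p_{01}}\big)^{m_0/2}\big(1-(p_{-10}/p_{10})^{n_0}\big)\,m_0\,\sqrt{\rho/\sqrt{p_{01}p_{0-1}}}\,\sqrt{1-\rho z}+o(\sqrt{1-\rho z})$ near $1/\rho$, with $\psi$ holomorphic at $1/\rho$.

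It remains to transfer this to the coefficients. Using $[z^k](1-\rho z)^{1/2}\sim-\tfrac1{2\sqrt\pi}\,\rho^k k^{-3/2}$ — obtained either from the transfer theorem for algebraic singularities or by the same Darboux-type contour argument used for the logarithmic case in the proof of Proposition~\ref{main_result_simple_walk_drift_zero} — gives that $\mathbb{P}_{(n_0,m_0)}(S=k)=[z^k]h(1,z)$ is equivalent to the right-hand side of~\eqref{asymptotic_hitting_times_real_axis_two_drift_positive}. The main obstacle is the rigorous handling of the pole–endpoint confluence: one must show that near $z=1/\rho$ the integral is genuinely of the form ``holomorphic $+\,c\,\sqrt{1-\rho z}$'' with a remainder analytic across $1/\rho$, uniformly enough to legitimate the transfer, and that no other singularity of modulus $\le 1/\rho$ lurks among the square-root branch points of $t_2(\cdot,z)$; the bookkeeping of those branch points — which coalesce with the pole precisely in the degenerate zero-drift case, changing the singularity type — is where the present argument departs from Proposition~\ref{main_result_simple_walk_drift_zero} and needs care.
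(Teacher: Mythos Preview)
Your approach is correct and essentially the same as the paper's: singularity analysis of $h(1,z)$ at $z=1/\rho$ with $\rho=p_{10}+p_{-10}+2\sqrt{p_{01}p_{0-1}}$, the singularity arising from the pole $u^\ast(z)$ of the integrand meeting the endpoint $u=-1$, followed by transfer. The paper handles the ``main obstacle'' you flag at the end by first using identity~\eqref{kk} to rewrite the integrand as $F(u,z)\sqrt{1-u^2}/(1-k_3(u)z)$ with $k_3(u)=-2\sqrt{p_{01}p_{0-1}}\,u+p_{10}+p_{-10}$ and $F$ holomorphic in a bidisc around $(-1,1/\rho)$, expanding $F$ in powers of $(1+u)$, and then invoking Lemma~\ref{lemma_some_asymptotic}, which evaluates each basis integral $\int_{-1}^{1}(1+u)^i\sqrt{1-u^2}/(1-k_3(u)z)\,du$ in closed form via a residue at infinity and thereby exhibits the exact decomposition $(1-\rho z)^{i+1/2}f_i(z)+g_i(z)$, making the transfer rigorous.
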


%Before proving this proposition, note that
%the symmetry (with respect to the parameters)
%of the hypothesis done for this
%result (the fact that the two drifts are positive) implies
%at once the asymptotic of
%$\mathbb{P}_{\left(n_{0},m_{0}\right)}\left(T = k \right)$,
%the hitting time $T$ being defined
%in~(\ref{def_hitting_times}), by a becoming
%change of the parameters.

\begin{proof}
Lemma~\ref{lemma_final_form} gives that
$h\left(1,z\right)$ is, up to a polynomial, equal to~:
     \begin{eqnarray}
          \frac{1}{\pi}\left( \frac{p_{0-1}}{p_{01}}\right)^{m_{0}/2}
          \int_{-1}^{1} \left(t_{2}\left(u,z\right)^{n_{0}}-
          \left(\frac{r^2}{t_{2}\left(u,z\right)}\right)^{n_{0}}\right)
          \frac{2\sqrt{p_{01}p_{0-1}}z}
          {\sqrt{\left(1-k_{1}\left(u\right)z\right)
          \left(1-k_{2}\left(u\right)z\right)}}
          \times \nonumber \\
          \times \frac{1}{2}
          \frac{\sqrt{\left(1-k_{1}\left(u\right)z\right)
          \left(1-k_{2}\left(u\right)z\right)}
          -\left(1-k_{4}\left(u\right)z\right)}
          {1-k_{3}\left(u\right)z}
          U_{m_{0}-1}\left(-u\right)\sqrt{1-u^{2}}\text{d}u,
          \label{k1_k2_k3_k4}
     \end{eqnarray}
where we have set
$k_{1}(u) = -2(p_{01}p_{0-1})^{1/2}u+2(p_{10}p_{-10})^{1/2}$,
$k_{2}(u) = -2(p_{01}p_{0-1})^{1/2}u-2(p_{10}p_{-10})^{1/2}$,
$k_{3}(u)=
-2(p_{01}p_{0-1})^{1/2}u+p_{10}+p_{-10}$ and
$k_{4}(u)=
-2(p_{01}p_{0-1})^{1/2}u+2p_{10}$.
%If $M_{x}=0$, in other words if
%$p_{-10}=p_{10}$ then $p_{10}+p_{-10}=2p_{10}=2\sqrt{p_{-10}p_{10}}$
%and $k_{2}\left(u\right)=k_{3}\left(u\right)=
%k_{4}\left(u\right)$~; this is why
%in this particular case (considered in parts~\ref{Simple_walk_drift_zero}
%and~\ref{One_drift_positive})
%we can simplify significantly the integrand of~(\ref{k1_k2_k3_k4}),
%as we did for instance at the beginning
%of the proof of proposition~\ref{main_result_simple_walk_drift_zero}.
Due to the inequalities $2(p_{10}p_{-10})^{1/2}<p_{10}+p_{-10}<2p_{10}$, the
%first positive singularity of the
integral~(\ref{k1_k2_k3_k4}) is holomorphic in
the open disc $\mathcal{D}(0,k_{3}(-1)^{-1})$,
continuable at every point of the boundary
$\mathcal{C}(0,k_{3}(-1)^{-1})$ except at
$k_{3}(-1)^{-1}$.
Now we set $F(u,z)=
(t_{2}(u,z)^{n_{0}}-(r^2/t_{2}(u,z))^{n_{0}})(p_{01}p_{0-1})^{1/2}z
U_{m_{0}-1}(-u)(((1-k_{1}u)z)((1-k_{2}u)z))^{1/2}-(1-k_{4}(u)z))/
((1-k_{1}u)z)((1-k_{2}u)z))^{1/2}$ in such a way that the
function~(\ref{k1_k2_k3_k4}) can be expressed as the integral~:
     \begin{equation}
     \label{int_Cauchy_k3}
          \frac{1}{\pi}\left( \frac{p_{0-1}}{p_{01}}\right)^{m_{0}/2}
          \int_{-1}^{1}\frac{F\left(u,z\right)}
          {1-k_{3}\left(u\right)z}\sqrt{1-u^{2}}\text{d}u.
     \end{equation}
The function of two variables $F$ is certainly not holomorphic on the whole
$\mathbb{C}^{2}$ but is holomorphic on
$\mathcal{D}(0,k_{3}(-1)^{-1}+\epsilon)\times \mathcal{D}(0,1+\epsilon )$,
where $\epsilon $, which depends on the $p_{i j}$, is sufficiently small~:
indeed, thanks to --once again--
the obvious inequalities
$2(p_{10}p_{-10})^{1/2}<p_{10}+p_{-10}<2p_{10}$,
we immediately notice that
$(u,z)\mapsto (1-k_{i}(u)z)^{1/2}$
is, for $i\in\{1,2,4\}$, holomorphic
in $\mathcal{D}(0,k_{3}(-1)^{-1}+\epsilon)\times \mathcal{D}(0,1+\epsilon )$,
for sufficiently small values of $\epsilon$.

Therefore, we can write the expansion of
$F(u,z)$ in the neighborhood of $(-1,k_{3}(-1)^{-1})$,
say
$F(u,z)=\sum_{i,j}F_{i j}(1+u)^{i}(1-k_{3}(-1)^{-1}z)^{j}$.
The coefficients of this expansion could be explicitly
calculated, for instance, using that
$(k_{3}(-1)-k_{1}(-1))(k_{3}(-1)-k_{2}(-1))=(p_{10}-p_{-10})^{2}$ and
$t_{2}(-1,k_{3}(-1))=1$ we find
$F_{00}=2m_{0}(1-(p_{-10}/p_{10})^{n_{0}})(p_{01}p_{0-1})^{1/2}/k_{3}(-1)$.

Then, in accordance with
Lemma~\ref{lemma_some_asymptotic} below we set
$f(z)=\sum_{i,j}F_{i j}
(1-k_{3}(-1)^{-1})^{i+j}f_{i}(z)$
$(p_{0-1}/p_{01})^{m_{0}/2}/\pi$
and $g(z)=\sum_{i,j}F_{i j}
(1-k_{3}(-1)^{-1})^{j}g_{i}(z)(p_{0-1}/p_{01})^{m_{0}/2}/\pi$.
With these notations,
the function defined in~(\ref{int_Cauchy_k3}) is
equal to $g(z)+f(z)(1-k_{3}(-1)^{-1})^{1/2}$.

We can now easily find the asymptotic of the coefficients of the
Taylor series at $0$ of 
function~(\ref{int_Cauchy_k3}), or equivalently
of $h(1,z)$,
following a similar principle as the one explained
in the proof of Proposition~\ref{main_result_simple_walk_drift_zero},
and summarized below~:
if $F(z)=\sum_{k}c_{k}z^{k}$ is a function (i) holomorphic
in the open disc of radius $r$ (ii) having
a holomorphic continuation at every point
of the circle of radius $r$ except $r$ (iii) having at $r$
an algebraic singularity in the sense
that in the neighborhood of $r$,
$F$ can be written as $F(z)=F_{0}(z)+
\sum_{i=1}^{d}F_{i}(z)(1-z/r)^{\theta_{i}}$
where the $F_{i}$, $i\geq 0$, are holomorphic functions
in the
neighborhood of $r$,
not vanishing at $r$ for $i\geq 1$, the $\theta_{1}<\cdots <\theta_{d}$ are rational
but not integer,
then the asymptotic of the coefficients
of the Taylor series at $0$ can easily be calculated~:
$c_{k}\sim F_{1}(r)r^{k}/(\Gamma(-\theta_{1})k^{\theta_{1}+1})$ as $k\to +\infty $.
This principle is known as Pringsheim theorem.

With the last part
of Lemma~\ref{lemma_some_asymptotic}, we obtain
     \begin{equation*}
          F_{1}(r)=F_{00}f_{0}\left(k_{3}\left(-1\right)^{-1}\right)=
          -m_{0}\sqrt{\frac{p_{10}+p_{-10}+
          2\sqrt{p_{01}p_{0-1}}}{\sqrt{p_{01}p_{0-1}}}}
          \left(\frac{p_{0-1}}{p_{01}}\right)^{m_{0}/2}
          \left(1-\left(\frac{p_{-10}}{p_{10}}\right)^{n_{0}}\right),
     \end{equation*}
so that, using Pringsheim result with
this value of $F_{1}(r)$,
$\theta_{1}=1/2$, $r=k_{3}(-1)^{-1}$
and using the fact that $\Gamma(-1/2)=-2\sqrt{\pi}$, we get
immediately the announced
asymptotic~(\ref{asymptotic_hitting_times_real_axis_two_drift_positive}).
\end{proof}

\begin{lem}\label{lemma_some_asymptotic}
Let $i$ be a non negative integer. The function $G_{i}$, defined by
     \begin{equation*}
          G_{i}\left(z\right)=\int_{-1}^{1}\left(1-u\right)^{i}
          \frac{\left(1-u^{2}\right)^{1/2}}
          {1-k_{3}\left(-u\right)z}\textnormal{d}u,
     \end{equation*}
where $k_{3}(u)=-2(p_{01}p_{0-1})^{1/2}u+p_{10}+p_{-10}$,
is holomorphic in the open disc $\mathcal{D}(0,k_{3}(-1)^{-1})$. Moreover, it
can be continued into a holomorphic
function in the neighborhood
of any point of the circle $\mathcal{C}(0,k_{3}(-1)^{-1})$,
except $k_{3}(-1)^{-1}$.
At $z=k_{3}(-1)^{-1}$, the function has an algebraic singularity~; 
more precisely
there exist two
functions $f_{i}$ and $g_{i}$ holomorphic at $z=k_{3}(-1)^{-1}$,
$f_{i}(k_{3}(-1)^{-1})\neq 0$, such that
$G_{i}(z)=(1-k_{3}(-1)z)^{i+1/2}f_{i}(z)+g_{i}(z)$.
Moreover,
%$g_{0}(k_{3}(-1)^{-1})=\pi k_{3}(-1)/(2\sqrt{p_{01}p_{0-1}})$ and
$f_{0}(k_{3}(-1)^{-1})=-(\pi/2)(k_{3}(-1)/(p_{01}p_{0-1})^{1/2})^{3/2}$.
%     \begin{equation*}
%          \int_{-1}^{1}\frac{\sqrt{1-u^{2}}}
%          {1-k_{3}\left(u\right)z}\textnormal{d}u=
%          \frac{\pi k_{3}\left(-1\right)}{2\sqrt{p_{01}p_{0-1}}}
%          \left(1+F\left(z\right)\right)-
%          \frac{\pi}{2}\left(\frac{k_{3}\left(-1\right)}
%          {\sqrt{p_{01}p_{0-1}}}\right)^{3/2}\sqrt{1-z k_{3}
%          \left(-1\right)}\left(1+G\left(z\right)\right)
%     \end{equation*}
     \end{lem}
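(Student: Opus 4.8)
The plan is to follow the strategy of the proof of Lemma~\ref{lem_log_singularity}, localising the singularity by a change of variable; the situation here is in fact easier, because the denominator $1-k_3(-u)z$ is affine in $u$, so the one-dimensional integral that carries the singularity is elementary and no elliptic integrals enter. For the two ``soft'' assertions --- holomorphy in $\mathcal D(0,k_3(-1)^{-1})$ and holomorphic continuation across the circle minus $k_3(-1)^{-1}$ --- I would argue as follows: for fixed $u\in[-1,1]$ the map $z\mapsto(1-u)^i(1-u^2)^{1/2}/(1-k_3(-u)z)$ is holomorphic except at the real point $z=1/k_3(-u)$, and since $k_3(-u)=2\sqrt{p_{01}p_{0-1}}\,u+p_{10}+p_{-10}$ satisfies $|k_3(-u)|\le k_3(-1)$ on $[-1,1]$ (using $p_{10}+p_{-10}>0$), the only zero of $(u,z)\mapsto 1-k_3(-u)z$ with $u\in[-1,1]$ and $|z|\le k_3(-1)^{-1}$ is $(u,z)=(1,k_3(-1)^{-1})$. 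The standard theory of integrals depending holomorphically on a parameter then gives that $G_i$ is holomorphic in the open disc and extends holomorphically over every arc of $\mathcal C(0,k_3(-1)^{-1})$ avoiding $k_3(-1)^{-1}$.

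Next I would carry out the change of variable $u=1-t$, $t\in[0,2]$: using the identity $1-k_3(-u)z=(1-k_3(-1)z)+2\sqrt{p_{01}p_{0-1}}\,z\,t$ this turns $G_i$ into
\[
  G_i(z)=\int_0^2\frac{t^{i+1/2}(2-t)^{1/2}}{Z+ct}\,\mathrm dt,\qquad Z:=1-k_3(-1)z,\quad c:=2\sqrt{p_{01}p_{0-1}}\,z.
\]
Dividing the polynomial $t^i$ by $ct+Z$ yields $t^i=Q(t)(ct+Z)+(-Z/c)^i$, where $Q$ is a polynomial in $t$ whose coefficients are rational in $z$ with a pole only at $z=0$, hence holomorphic near $k_3(-1)^{-1}$. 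Consequently
\[
  G_i(z)=\int_0^2 Q(t)\sqrt{t(2-t)}\,\mathrm dt+\Bigl(-\tfrac Zc\Bigr)^{\!i}\int_0^2\frac{\sqrt{t(2-t)}}{Z+ct}\,\mathrm dt,
\]
and, since $\int_0^2 t^j\sqrt{t(2-t)}\,\mathrm dt$ is a constant, the first term is holomorphic in $z$ near $k_3(-1)^{-1}$ and will be absorbed into $g_i$.

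It then remains to evaluate $\int_0^2\sqrt{t(2-t)}/(Z+ct)\,\mathrm dt$. Putting $t=1-\cos\theta$, $\theta\in[0,\pi]$, it equals $\int_0^\pi\sin^2\theta/\big((Z+c)-c\cos\theta\big)\,\mathrm d\theta$, which I would reduce to $\int_0^\pi\mathrm d\theta/(A-B\cos\theta)=\pi/\sqrt{A^2-B^2}$ and its first two moments in $\cos\theta$ (with $A=Z+c$, $B=c$), obtaining the value $(\pi/c^2)\big((Z+c)-\sqrt{Z(Z+2c)}\big)$. Substituting back, the contribution $(-Z/c)^i\cdot\pi(Z+c)/c^2$ is rational in $z$ with a pole only at $z=0$, hence holomorphic near $k_3(-1)^{-1}$ and again absorbed into $g_i$, whereas, with the principal determinations of the roots,
\[
  \Bigl(-\tfrac Zc\Bigr)^{\!i}\Bigl(-\tfrac{\pi}{c^2}\sqrt Z\,\sqrt{Z+2c}\Bigr)=(-1)^{i+1}\frac{\pi}{c^{\,i+2}}\,Z^{\,i+1/2}\sqrt{Z+2c}=(1-k_3(-1)z)^{i+1/2}f_i(z),
\]
where $f_i(z)=(-1)^{i+1}\pi\,c^{-(i+2)}\sqrt{Z+2c}$. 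At $z=k_3(-1)^{-1}$ one has $Z=0$ and $c=c_0:=2\sqrt{p_{01}p_{0-1}}/k_3(-1)>0$ by~(H2''), so $Z+2c=2c_0>0$ and $f_i$ is holomorphic and non-zero there; this is exactly the claimed decomposition $G_i(z)=(1-k_3(-1)z)^{i+1/2}f_i(z)+g_i(z)$. Finally $f_0(k_3(-1)^{-1})=-\pi\sqrt{2c_0}/c_0^2=-\pi\sqrt2\,c_0^{-3/2}=-\tfrac\pi2\big(k_3(-1)/(p_{01}p_{0-1})^{1/2}\big)^{3/2}$, as asserted.

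The only genuine computation is the evaluation of the trigonometric integral together with the bookkeeping that separates what is holomorphic at $k_3(-1)^{-1}$ from what merely has a pole at $z=0$; both are routine, and as a consistency check one may note that at $z=0$ the closed form above must reduce to $G_0(0)=\int_0^2\sqrt{t(2-t)}\,\mathrm dt=\pi/2$, the area of a half-disc of radius one.
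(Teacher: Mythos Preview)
Your argument is correct and complete; the computations check out, including the final value of $f_0(k_3(-1)^{-1})$.

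The paper proceeds differently. Rather than localising near $u=1$, it writes $1-k_3(-u)z=-2(p_{01}p_{0-1})^{1/2}z\,(u-\mathcal Z)$ with $\mathcal Z=(1-(p_{10}+p_{-10})z)/(2(p_{01}p_{0-1})^{1/2}z)$ and applies the residue theorem at infinity to $(1-u)^i\sqrt{u^2-1}/(u-\mathcal Z)$ on a contour shrinking to $[-1,1]$, exactly as in Remark~\ref{remark_gambler_ruin}. This yields the closed form
\[
G_i(z)=\frac{-\pi}{2(p_{01}p_{0-1})^{1/2}z}\Big((1-\mathcal Z)^i\sqrt{\mathcal Z^2-1}-P_\infty\big((1-\mathcal Z)^i\sqrt{\mathcal Z^2-1}\big)\Big),
\]
from which the decomposition and the value of $f_0$ are read off in one line: the principal-part term is the polynomial (hence holomorphic) piece $g_i$, and $(1-\mathcal Z)^i\sqrt{\mathcal Z^2-1}$ carries the branch point. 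Since $\mathcal Z-1=Z/c$ and $\mathcal Z+1=(Z+2c)/c$ in your notation, the two closed forms coincide.

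What each approach buys: your route is entirely real-variable (a change of variable, polynomial division, and the elementary integral $\int_0^\pi\sin^2\theta/(A-B\cos\theta)\,\mathrm d\theta$), so it is self-contained and does not invoke contour integration. The paper's route is shorter and conceptually uniform with the rest of Section~\ref{h_x_z}, reusing the residue-at-infinity trick already exhibited in the gambler's-ruin remark; it also makes the split into $g_i$ (the principal part at infinity) and the singular factor $\sqrt{\mathcal Z^2-1}$ appear automatically rather than through the polynomial division you perform by hand.
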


\begin{proof}
The proofs of all assertions
of Lemma~\ref{lemma_some_asymptotic}
are based on the fact that the functions
$G_{i}$ can be explicitly calculated~:
     \begin{equation}\label{explicit_calculation_G_i}
          G_{i}\left(z\right)=\frac{-\pi}{2\sqrt{p_{01}p_{0-1}}z}
          \left.\left(\left(1-Z\right)^{i}\sqrt{Z^{2}-1}-
          P_{\infty}\left(\left(1-Z\right)^{i}\sqrt{Z^{2}-1}\right)
          \right)\right|_{Z=\frac{1-z\left(p_{10}+p_{-10}\right)}
          {2\sqrt{p_{01}p_{0-1}}z}},
     \end{equation}
where $P_{\infty}$ is the principal part defined in
Lemma~\ref{simplification_principal_part}.

To prove~(\ref{explicit_calculation_G_i}), we
start by remarking that
$1-k_{3}(-u)z=-2(p_{01}p_{0-1})^{1/2}(u-Z)$,
where $Z=(1-z(p_{10}+p_{-10}))/(2(p_{01}p_{0-1})^{1/2}z)$.
Then, we consider the function
$(1-u)^{i}(u^{2}-1)^{1/2}/(u-Z)$, well defined
on $\mathbb{C}\setminus ]-1,1[\cup \{Z\}$,
at which we apply the residue theorem
at infinity, on the same contour
as the one used in the Remark~\ref{remark_gambler_ruin},
namely a closed
contour that surrounds at a distance equal to $\epsilon$ the segment
$[-1,1]$. After that $\epsilon$ has gone to zero, we get~:
     \begin{equation*}
          \int_{-1}^{1}\left(1-u\right)^{i}
          \frac{\left(1-u^{2}\right)^{1/2}}{u-Z}\text{d}u=
          \left(\left(1-Z\right)^{i}\sqrt{Z^{2}-1}-
          P_{\infty}\left(\left(1-Z\right)^{i}\sqrt{Z^{2}-1}\right)
          \right),
     \end{equation*}
from which~(\ref{explicit_calculation_G_i}) and thus
Lemma~\ref{lemma_some_asymptotic} are immediate consequences.
\end{proof}
%\subsubsection{Only one drift is positive}\label{One_drift_positive}
%\label{Absorption_probabilities_in_the_case_of_a_non_zero_drift_one_drift_positive}
%Suppose now that only one of the two
%$M_{x}$ and $M_{y}$ is equal to zero.
%For instance, we will suppose that $M_{x}=p_{10}-p_{-10}=0$ and
%$M_{y}=p_{01}-p_{0-1}>0$ and we will state our results in this case.
%By a proper change of parameters (namely $p_{i j}$ in $p_{j i}$) we
%deduce the analogous result $M_{x}>0$ and $M_{y}=0$.

\begin{prop}
\label{zer}
Suppose that $M_{x}=0$, $M_{y}>0$ and
let $S$ and $T$ be the hitting times of the $x$
and $y$-axis, defined in~(\ref{def_hitting_times}). Then
$\mathbb{P}_{(n_{0},m_{0})}(S = k)$ and
$\mathbb{P}_{(n_{0},m_{0})}(T = k )$ admit the following asymptotic
as $k$ goes to infinity~:
     \begin{eqnarray}
          \mathbb{P}_{\left(n_{0},m_{0}\right)}\left(S = k \right)
          &\sim &\frac{n_{0}m_{0}}{2 \pi \sqrt{p_{10}}\left( p_{01}p_{0-1}\right)^{1/4}}
          \left(\frac{p_{0-1}}{p_{01}}\right)^{m_{0}/2}
          \frac{ \left(2\left(p_{10}+\sqrt{p_{01}p_{0-1}}\right)\right)^{k}}{k^2},
          \label{asymptotic_hitting_times_real_axis_one_drift_positive}\\
          \mathbb{P}_{\left(n_{0},m_{0}\right)}\left(T = k \right)
          &\sim &\frac{n_{0}}{\sqrt{\pi p_{10}}}
          \left(1-\left(\frac{p_{0-1}}{p_{01}}\right)^{m_{0}}\right)
          \frac{1}{\left(2p_{01}\right)^{m_{0}}}
          \frac{1}{k^{3/2}}.
          \label{asymptotic_hitting_times_imaginary_axis_one_drift_positive}
     \end{eqnarray}
\end{prop}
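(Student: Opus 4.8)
The plan is to treat the two asymptotics separately, following the two templates already in place in the paper: the zero-drift analysis of Proposition~\ref{main_result_simple_walk_drift_zero} for the $x$-axis, and the positive-drift analysis of Proposition~\ref{zek} for the $y$-axis. The guiding principle is that under $M_x=0$, $M_y>0$ the walk behaves like the former in the $x$-direction and like the latter in the $y$-direction, so $\mathbb P(S=k)$ should have a logarithmic-type singularity (hence a $k^{-2}$ tail, but now with an exponential factor because the singularity sits strictly inside the unit disc), while $\mathbb P(T=k)$ should have a $(1-z)^{1/2}$-type singularity at $z=1$ (hence a $k^{-3/2}$ tail, no exponential).

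\textbf{The $x$-axis.} First I would set $x=1$ and $p_{-10}=p_{10}$ in~(\ref{final_form_SRW}). Since then $2\sqrt{p_{10}p_{-10}}=p_{10}+p_{-10}=2p_{10}$, the functions $k_1,k_3,k_4$ occurring in the proof of Proposition~\ref{zek} collapse to the single function $k(u)=-2\sqrt{p_{01}p_{0-1}}u+2p_{10}$, and after the ensuing cancellations $h(1,z)$ equals, up to a polynomial in $z$, $\frac1\pi(p_{0-1}/p_{01})^{m_0/2}\int_{-1}^1 F(u,z)\bigl(\sqrt{(1-k_2(u)z)/(1-k(u)z)}-1\bigr)\sqrt{1-u^2}\,\text{d}u$, where $F(u,z)=\sqrt{p_{01}p_{0-1}}\,z\,U_{m_0-1}(-u)\,(t_2(u,z)^{n_0}-t_1(u,z)^{n_0})/\sqrt{(1-k(u)z)(1-k_2(u)z)}$ is a polynomial in $(u,z)$ --- exactly the shape met in the proof of Proposition~\ref{main_result_simple_walk_drift_zero}, the only difference being that $k(-1)=2p_{10}+2\sqrt{p_{01}p_{0-1}}$ is now \emph{strictly} less than $1$, so that the singular point is $z=r:=k(-1)^{-1}=1/(2(p_{10}+\sqrt{p_{01}p_{0-1}}))\in(0,1)$ rather than $z=1$. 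I would then discard the polynomial ``$-1$'' part, set $G=F\sqrt{1-k_2 z}$, expand $G$ about $(-1,r)$ and, after $u\mapsto-u$, reduce everything to the functions $\mathcal F_i(z)=\int_{-1}^1(1-u)^i(1-u^2)^{1/2}/(1-(2\sqrt{p_{01}p_{0-1}}u+2p_{10})z)^{1/2}\,\text{d}u$. The key input is a lemma, proved exactly as Lemma~\ref{lem_log_singularity} (substitution $v^2=2\sqrt{p_{01}p_{0-1}}u+2p_{10}$, then reduction to Legendre's complete elliptic integrals $K$ and $E$ and use of their logarithmic singularity at $1$), stating that $\mathcal F_i$ is holomorphic on $\mathcal D(0,r)$, continuable through $\mathcal C(0,r)\setminus\{r\}$, and equal near $z=r$ to $(z-r)^{i+1}\ln(1-z/r)f_i(z)+g_i(z)$ with $f_0(r)=-1/(4(p_{01}p_{0-1})^{3/4}r^{3/2})$ (which recovers $-1/(4p_{01}^{3/2})$ when $p_{01}=p_{0-1}$, $r=1$). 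Finally I would compute $G_{00}=F(-1,r)\sqrt{1-k_2(-1)r}$: at $(-1,r)$, a branch point of $t_2$, one has $t_1=t_2=t_2(-1,r)=1$, whence $F(-1,r)=n_0m_0\sqrt{p_{01}p_{0-1}}/p_{10}$ (using $\lim(t_2^{n_0}-t_1^{n_0})/\sqrt{(1-kz)(1-k_2z)}=n_0/(p_{10}r)$ and $U_{m_0-1}(1)=m_0$) and $1-k_2(-1)r=4p_{10}r$, so that $G_{00}=2n_0m_0(p_{01}p_{0-1})^{1/2}r^{1/2}/p_{10}^{1/2}$. Since the $k$-th Taylor coefficient of $(z-r)\ln(1-z/r)$ is asymptotic to $-r^{1-k}/k^2$, the transfer principle used in Proposition~\ref{main_result_simple_walk_drift_zero} gives $\mathbb P_{(n_0,m_0)}(S=k)\sim-\frac1\pi(p_{0-1}/p_{01})^{m_0/2}G_{00}f_0(r)\,r\,r^{-k}/k^2$, which is exactly~(\ref{asymptotic_hitting_times_real_axis_one_drift_positive}).

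\textbf{The $y$-axis.} For the second asymptotic I would use the symmetry between $X(y,z)$ and $Y(x,z)$ recalled in Subsection~\ref{The_algebraic_curve_Q}: $\widetilde h(1,z)$ for our walk equals $h(1,z)$ for the walk obtained by exchanging $(p_{10},p_{-10})$ with $(p_{01},p_{0-1})$ and $n_0$ with $m_0$, a walk whose drifts are $M_x'=M_y>0$ and $M_y'=M_x=0$. For this walk one runs the argument of Proposition~\ref{zek} verbatim: the relevant ``$k_3$'' function takes the value $1$ at $u=-1$ (because $M_x=0$ forces $2\sqrt{p_{10}p_{-10}}=2p_{10}$, so $2p_{10}+p_{01}+p_{0-1}=1$), while the inequalities $2\sqrt{p_{01}p_{0-1}}<p_{01}+p_{0-1}<2p_{01}$ --- valid precisely because $M_y>0$ --- guarantee that the Cauchy-type integral representing this $h(1,z)$ is holomorphic in $\mathcal D(0,1)$ and continuable through $\mathcal C(0,1)\setminus\{1\}$; this simultaneously furnishes, as in Proposition~\ref{zek}, the analytic continuation of the integral formula past $z_1$ up to $z=1$. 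An analog of Lemma~\ref{lemma_some_asymptotic} then shows that the singularity at $z=1$ is algebraic of exponent $1/2$, so that Pringsheim's theorem yields a $k^{-3/2}$ decay with no exponential factor, the constant being read off by evaluating the corresponding Taylor coefficient at the singular point $(u,z)=(-1,1)$, where $t_2=1$; a direct computation produces~(\ref{asymptotic_hitting_times_imaginary_axis_one_drift_positive}).

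\textbf{The main obstacle.} The delicate points are the two singularity lemmas --- the analog of Lemma~\ref{lem_log_singularity} giving the $(z-r)\ln(1-z/r)$ behaviour of $\mathcal F_i$ at the \emph{interior} point $z=r$, where the reduction to $K$ and $E$ must be carried out with the shifted modulus $2\sqrt{p_{01}p_{0-1}}u+2p_{10}$ in place of $2(p_{01}u+p_{10})$, and the analog of Lemma~\ref{lemma_some_asymptotic} for the second part --- together with the bookkeeping near the branch point $(-1,r)$ of $t_2$ in the first part, where $t_2^{n_0}-t_1^{n_0}$ vanishes and $F(-1,r)$ must be recovered as a limit. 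Once these are established, the algebraic simplifications stemming from $M_x=0$ and the extraction of the coefficient asymptotics are routine.
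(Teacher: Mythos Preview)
Your proposal is correct and follows exactly the approach the paper indicates: the paper's own proof consists of the two sentences ``the proof of~(\ref{asymptotic_hitting_times_real_axis_one_drift_positive}) is quite similar to the one of~(\ref{asymptotic_hitting_times_real_axis_drift_zero}) and the proof of~(\ref{asymptotic_hitting_times_imaginary_axis_one_drift_positive}) is quite similar to the one of~(\ref{asymptotic_hitting_times_real_axis_two_drift_positive}). We omit the details,'' and you have supplied precisely those details. Your handling of the two subtle points --- that under $M_x=0$ the functions $k_1,k_3,k_4$ coalesce so that the logarithmic singularity of Lemma~\ref{lem_log_singularity} reappears at the interior point $r=1/(2p_{10}+2\sqrt{p_{01}p_{0-1}})$, and that after the parameter exchange the inequalities $2\sqrt{p_{01}p_{0-1}}<p_{01}+p_{0-1}<2p_{01}$ (guaranteed by $M_y>0$) are exactly what makes the argument of Proposition~\ref{zek} go through even though the exchanged walk has one zero drift --- is accurate.
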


\begin{proof}
The proof
of~(\ref{asymptotic_hitting_times_real_axis_one_drift_positive})
is quite similar to the one
of~(\ref{asymptotic_hitting_times_real_axis_drift_zero})
and the proof
of~(\ref{asymptotic_hitting_times_imaginary_axis_one_drift_positive})
is quite similar to the one
of~(\ref{asymptotic_hitting_times_real_axis_two_drift_positive}).
We omit the details.
\end{proof}

\begin{rem}
{\rm Note that equation~(\ref{asymptotic_hitting_times_real_axis_one_drift_positive})
formally implies~(\ref{asymptotic_hitting_times_real_axis_drift_zero}).
Also,~(\ref{asymptotic_hitting_times_imaginary_axis_one_drift_positive}) formally
follows from~(\ref{asymptotic_hitting_times_real_axis_two_drift_positive}) after
a proper change of the parameters. But one can not
obtain~(\ref{asymptotic_hitting_times_real_axis_one_drift_positive})
starting from~(\ref{asymptotic_hitting_times_real_axis_two_drift_positive})
and then making the drift go to zero.}
\end{rem}

\begin{rem}
{\rm Let $ \tau=\inf\left\{n\in \mathbb{N}:\,
(X\left(n\right),Y\left(n\right))\,\textnormal{hits the boundary}\right\}$
be the hitting time of the boundary of $(\mathbb{Z}_{+})^{2}$.
To find the tail's asymptotic of $\tau$, we can now
apply~(\ref{kkk}).
If at least one of the two drifts~(\ref{drift}) is zero,
then the last term in~(\ref{kkk})
is zero and the result comes immediately.
If both drifts are positive,
then we have to compute the probability of non absorption,
that will be done in
Proposition~\ref{probability_being_absorb} of
Subsection~\ref{Probability_being_absorbed}.}
\end{rem}

\section{Probability of being absorbed in a fixed site}\label{h_1_x}

\subsection{Explicit form and asymptotic}
\label{Explicit_form_and_asymptotic}

We recall from the very beginning of this paper
that taking $z=1$ in $h(x,z)$
(see~(\ref{def_generating_functions})),
leads to
$h(x,1)=\sum_{i=1}^{+\infty}\mathbb{P}_{(n_{0},m_{0})}
(\text{to be absorbed at}\, (i,0))x^{i}$.
In addition, putting $z=1$ in the
explicit expression of $h(x,z)$ obtained
in Proposition~\ref{explicit_h(x,z)_third}
yields
     \begin{equation}\label{explicit_h(x,z)_fourth}
          h\left(x,1\right)=\frac{x}{\pi }
          \int_{x_{3}\left(1\right)}^{x_{4}\left(1\right)}
          \left(t^{n_{0}}-\left(\frac{r^{2}}{t}\right)^{n_{0}}\right)
          \frac{\mu_{m_{0}}\left(t,1\right)\sqrt{-d\left(t,1\right)}}
          {t\left(t-x\right)}\textnormal{d}t+xP_{\infty}\left(x\mapsto
          x^{n_{0}-1}Y_{0}\left(x,1\right)^{m_{0}}\right)\left(x\right).
     \end{equation}
Above, $x_{3}(1)$ and $x_{4}(1)$ are defined in
Lemma~\ref{lemma_branched_points},
$\mu_{m_{0}}$ in~(\ref{def_mu}) and
$P_{\infty}$ in Lemma~\ref{simplification_principal_part}.
We recall about $xP_{\infty}(x\mapsto
x^{n_{0}-1}Y_{0}(x,1)^{m_{0}})(x)$
that it is simply a polynomial,
the null polynomial
if $n_{0}\leq m_{0}$,
of degree $n_{0}-m_{0}$ if
$n_{0}>m_{0}$.
Note that the equality~(\ref{explicit_h(x,z)_fourth})
is viable equally in the cases
$M_{x}>0$, $M_{x}=0$, $M_{y}>0$, $M_{y}=0$.
In particular, we immediately
deduce the explicit expression of the
coefficients $h_{i}=\mathbb{P}_{(n_{0},m_{0})}
(\text{to be absorbed at}\, (i,0))$~:

\begin{prop}
Suppose that $M_{x}\geq 0$, $M_{y} \geq 0$.
Then, for $i\geq \max (n_{0}-m_{0},1)$, the following equality holds~:
     \begin{equation}\label{explicit_probability_absorb_each_site_simple_walk}
          h_{i}=\frac{1}{\pi }
          \int_{x_{3}\left(1\right)}^{x_{4}\left(1\right)}
          \left(t^{n_{0}}-\left(\frac{r^{2}}{t}\right)^{n_{0}}\right)
          \frac{\mu_{m_{0}}\left(t,1\right)\sqrt{-d\left(t,1\right)}}
          {t^{i+1}}\textnormal{d}t.
     \end{equation}
For $i\in \{1,\max\left(n_{0}-m_{0},0\right)\}$, the
equality~(\ref{explicit_probability_absorb_each_site_simple_walk})
is still true if we add
the contribution of
the polynomial
$xP_{\infty}(x\mapsto
x^{n_{0}-1}Y_{0}(x,1)^{m_{0}})(x)$,
defined in
Lemma~\ref{simplification_principal_part}.
\end{prop}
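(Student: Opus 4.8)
The plan is to read off the coefficient of $x^{i}$ directly from the explicit representation~(\ref{explicit_h(x,z)_fourth}) of $h(x,1)$. First I would note that~(\ref{explicit_h(x,z)_fourth}) is valid for $x$ in the open disc of radius $r=(p_{-10}/p_{10})^{1/2}$, and that by Lemma~\ref{lemma_branched_points} together with hypothesis (H4) (which gives $r\leq 1$) one has $r\leq x_{3}(1)$, so this open disc is disjoint from the slit $[x_{3}(1),x_{4}(1)]$ and hence lies in the domain $\mathbb{C}\setminus[x_{3}(1),x_{4}(1)]$ on which $h(\cdot,1)$ is holomorphic by Proposition~\ref{proposition_continuation}. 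Consequently $h(x,1)=\sum_{i\geq 1}h_{i}x^{i}$ is the Taylor expansion at $0$, and it suffices to expand the right-hand side of~(\ref{explicit_h(x,z)_fourth}) in powers of $x$.

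Next I would treat the integral term. For $t\in[x_{3}(1),x_{4}(1)]$ and $|x|<r\leq x_{3}(1)$ one has $\frac{x}{t(t-x)}=\sum_{k\geq 0}\frac{x^{k+1}}{t^{k+2}}$, a geometric series converging uniformly in $t$ (the ratio being at most $|x|/x_{3}(1)<1$), while the remaining factor $(t^{n_{0}}-(r^{2}/t)^{n_{0}})\,\mu_{m_{0}}(t,1)\sqrt{-d(t,1)}$ is continuous, hence bounded, on the compact interval $[x_{3}(1),x_{4}(1)]$: by~(\ref{natural_expression_mu}) the product $\mu_{m_{0}}(t,1)\sqrt{-d(t,1)}$ equals $(c(t,1)/a(t,1))^{m_{0}/2}U_{m_{0}-1}(-\widehat{b}(t,1))\sqrt{1-\widehat{b}(t,1)^{2}}$, which stays bounded because $\widehat{b}(t,1)\in[-1,1]$ there and, under (H2'), $c/a=p_{0-1}/p_{01}$ is constant. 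Term-by-term integration is then legitimate, and it yields that for every $i\geq 1$ the coefficient of $x^{i}$ coming from the integral term is exactly $\frac{1}{\pi}\int_{x_{3}(1)}^{x_{4}(1)}(t^{n_{0}}-(r^{2}/t)^{n_{0}})\,\mu_{m_{0}}(t,1)\sqrt{-d(t,1)}\,t^{-i-1}\,\mathrm{d}t$, which is the right-hand side of~(\ref{explicit_probability_absorb_each_site_simple_walk}).

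It then remains to account for the polynomial $P(x,1)=xP_{\infty}(x\mapsto x^{n_{0}-1}Y_{0}(x,1)^{m_{0}})(x)$ appearing in~(\ref{explicit_h(x,z)_fourth}). By Remark~\ref{rem_principal_part} this polynomial vanishes identically when $n_{0}\leq m_{0}$, and when $n_{0}>m_{0}$ it has no constant term and degree $n_{0}-m_{0}$; in particular its coefficient of $x^{i}$ is zero once $i$ exceeds $\max(n_{0}-m_{0},0)$. Adding the two contributions gives~(\ref{explicit_probability_absorb_each_site_simple_walk}) for the indices $i$ beyond the degree of $P(\cdot,1)$, in particular for $i\geq\max(n_{0}-m_{0},1)$, while for the finitely many small indices $1\leq i\leq\max(n_{0}-m_{0},0)$ the coefficient of $x^{i}$ in $P(x,1)$ must be added, as claimed. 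I would also remark that the interval $[x_{3}(1),x_{4}(1)]$ and all the functions involved are the same in the four cases $M_{x}>0$, $M_{x}=0$, $M_{y}>0$, $M_{y}=0$, so the argument is uniform in them.

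The argument is essentially bookkeeping, so I do not expect a serious obstacle; the one step that needs genuine care is the term-by-term integration, i.e.\ justifying the uniform convergence of the geometric series along the integration path and the boundedness of the integrand there. Both follow at once from the strict inequality $|x|<r\leq x_{3}(1)$ and from the closed form~(\ref{natural_expression_mu}) for $\mu_{m_{0}}\sqrt{-d}$.
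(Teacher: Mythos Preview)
Your approach is correct and is exactly what the paper intends: it presents this proposition as an immediate consequence of the representation~(\ref{explicit_h(x,z)_fourth}) (``In particular, we immediately deduce the explicit expression of the coefficients''), and your expansion of the Cauchy kernel $x/(t(t-x))$ as a uniformly convergent geometric series on $[x_{3}(1),x_{4}(1)]$ is the natural way to make that deduction explicit. One small bookkeeping point: since $xP_{\infty}(\cdot)$ has degree exactly $n_{0}-m_{0}$ with nonzero leading coefficient (Remark~\ref{rem_principal_part}), the polynomial still contributes at $i=n_{0}-m_{0}$, so strictly speaking the clean formula holds for $i>n_{0}-m_{0}$ rather than $i\geq n_{0}-m_{0}$; this off-by-one is already present in the statement you are proving, so it is not a flaw in your argument.
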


We will now study the asymptotic of $h_{i}$, first
in case of a zero drift, then in case of a
non zero drift. We will see that the decrease of these probabilities
is respectively polynomial and exponential, with an exponential rate
equal to $1/x_{3}(1)$,
what we would have been able to anticipate
from Proposition~\ref{proposition_continuation} of
Subsection~\ref{Analytic_continuation},
where we have seen that $x_{3}(1)$ is the
first positive singularity of $h(x,1)$.

Among other things, we will see that
the asymptotic of $h_{i}$ in case of a drift zero
is not the limit,
when the drift goes to zero, of the
asymptotic in case of
a non zero drift,
thought $x_{3}(1)=1$.

Of course, the calculation of the asymptotic can be deduced
from the explicit
expression~(\ref{explicit_probability_absorb_each_site_simple_walk}),
using e.g. Laplace's method.
However, and since it will be useful later, we prefer,
like in Section~\ref{h_1_z}, deduce this
asymptotic from the study of singularities of the function $h$~;
singularities that will be of
two different types,
namely logarithmic and algebraic,
according to the drift
is zero or positive, see
Propositions~\ref{singularity_h_ln}
and~\ref{singularity_h_square}.

\begin{prop}\label{singularity_h_ln}
Suppose that $M_{y}=0$ and $M_x\geq 0$. The function $h\left(x,1\right)$
admits a singularity of a logarithmic type at $x=1$, where its
development is~:
     \begin{equation*}
          h\left(x,1\right)=h\left(1,1\right)+n_{0}\left(x-1\right)
          \left(1+\left(x-1\right)f\left(x\right)\right)-
          \frac{2n_{0}m_{0}}{\pi}\sqrt{\frac{p_{10}}{p_{01}}}
          \left(x-1\right)^{2}\ln\left(1-x\right)\left(1+\left(x-1\right)
          g\left(x\right)\right),
     \end{equation*}
where $f$ and $g$ are holomorphic in
the neighborhood of $1$, and could
be made explicit from the proof.
\end{prop}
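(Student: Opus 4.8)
The plan is to extract the singularity directly from the integral representation~(\ref{explicit_h(x,z)_fourth}) of $h(x,1)$. Since $M_{y}=0$, Lemma~\ref{lemma_branched_points} gives $x_{3}(1)=1$, so in~(\ref{explicit_h(x,z)_fourth}) the point $x=1$ is exactly the left endpoint of the interval of integration; the term $xP_{\infty}(x\mapsto x^{n_{0}-1}Y_{0}(x,1)^{m_{0}})(x)$ is a polynomial (Remark~\ref{rem_principal_part}), so it contributes only to the holomorphic part and may be set aside. Writing the regular part of the integrand as $\Phi(t)=(t^{n_{0}}-(r^{2}/t)^{n_{0}})\mu_{m_{0}}(t,1)\sqrt{-d(t,1)}/t$, the integral equals $\int_{1}^{x_{4}(1)}\Phi(t)(t-x)^{-1}\text{d}t$. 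Away from $t=1$ (in particular near the other endpoint $t=x_{4}(1)$) the integrand stays bounded away from its pole as $x\to 1$, so that portion of the integral is holomorphic in $x$ near $1$; hence the entire singular behaviour of $h(\cdot,1)$ at $x=1$ is produced by the endpoint $t=1$.

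The crux is to show that $\Phi(t)=(t-1)^{2}\psi(t)$ in a neighbourhood of $t=1$, with $\psi$ holomorphic there and $\psi(1)=2n_{0}m_{0}\sqrt{p_{10}/p_{01}}\neq 0$. Since $t=1=x_{3}(1)$ is a branch point of $d(\cdot,1)$, the factor $\sqrt{-d(t,1)}$ vanishes at $t=1$; using the splitting $d(t,1)=(b(t,1)-2p_{01}t)(b(t,1)+2p_{01}t)$ from the proof of Lemma~\ref{lemma_branched_points}, the identity $b(1,1)=p_{10}+p_{-10}-1=-2p_{01}$, and the fact that one of the two quadratic factors acquires a double root at $t=1$ while the other equals $-4p_{01}$ there, one gets $\sqrt{-d(t,1)}=(t-1)B(t)$ with $B$ holomorphic near $1$ and $B(1)=2\sqrt{p_{10}p_{01}}$. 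Likewise $t^{n_{0}}-(r^{2}/t)^{n_{0}}=(t-1)A(t)$ with $A$ holomorphic and $A(1)=2n_{0}$, and $\mu_{m_{0}}$ is holomorphic near $t=1$ with, from~(\ref{def_mu}) and $d(1,1)=0$, $a(1,1)=p_{01}$, $b(1,1)=-2p_{01}$, the value $\mu_{m_{0}}(1,1)=m_{0}/(2p_{01})$. Multiplying, $\psi(1)=A(1)\mu_{m_{0}}(1,1)B(1)=2n_{0}\cdot\tfrac{m_{0}}{2p_{01}}\cdot 2\sqrt{p_{10}p_{01}}=2n_{0}m_{0}\sqrt{p_{10}/p_{01}}$.

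Finally I would use the elementary fact that for every integer $k\geq 0$ and every $b>1$, as $x\to 1$ one has $\int_{1}^{b}(t-1)^{k}(t-x)^{-1}\text{d}t=(\text{a function holomorphic near }x=1)-(x-1)^{k}\ln(1-x)$, obtained by dividing $(t-1)^{k}$ by $(t-1)-(x-1)$ and integrating the remainder $(x-1)^{k}((t-1)-(x-1))^{-1}$. Expanding $\psi(t)=\sum_{j\geq 0}c_{j}(t-1)^{j}$ and summing these contributions gives $\int_{1}^{x_{4}(1)}\Phi(t)(t-x)^{-1}\text{d}t=(\text{holomorphic near }1)-(x-1)^{2}\ln(1-x)\,\widetilde{\psi}(x)$ with $\widetilde{\psi}$ holomorphic near $1$ and $\widetilde{\psi}(1)=\psi(1)$. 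Multiplying by $x/\pi$ and restoring the polynomial, $h(x,1)$ equals a function holomorphic near $1$ minus $\tfrac{1}{\pi}x(x-1)^{2}\ln(1-x)\widetilde{\psi}(x)$; the coefficient of $(x-1)^{2}\ln(1-x)$ at $x=1$ is $-\psi(1)/\pi=-\tfrac{2n_{0}m_{0}}{\pi}\sqrt{p_{10}/p_{01}}$, and factoring $(x-1)$ out of $x\widetilde{\psi}(x)/\psi(1)-1$ defines $g$. The holomorphic part has value $h(1,1)$ at $x=1$ trivially; its derivative at $1$ equals $n_{0}$ — the one point needing a genuine argument — which follows e.g. from optional stopping applied to the martingale $(X(n\wedge\tau))_{n}$ in the zero-drift regime, giving $h_{x}(1,1)=\sum_{i\geq 1}i\,h_{i}=n_{0}$; factoring $(x-1)^{2}$ out of $(\text{holomorphic part})-h(1,1)-n_{0}(x-1)$ defines $f$ and yields the stated expansion. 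The main obstacle is the second step — pinning down the double zero of $\Phi$ at $t=1$ and the exact constant $\psi(1)$ — together with the identification of the linear coefficient as $n_{0}$.
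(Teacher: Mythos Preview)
Your strategy mirrors the paper's: start from the integral on $[x_3(1),x_4(1)]$, factor the integrand near the left endpoint, and read off the logarithm from the elementary identity for $\int_1^{b}(t-1)^k(t-x)^{-1}\,\textnormal{d}t$. There is, however, a genuine gap in the factorization step. Your claim that one quadratic factor of $d(t,1)$ has a \emph{double} root at $t=1$ is false unless $M_x=0$: in fact $b(t,1)+2p_{01}t=p_{10}t^2-(p_{10}+p_{-10})t+p_{-10}=(t-1)(p_{10}t-p_{-10})$, with roots $1$ and $r^2=p_{-10}/p_{10}$, which coincide only when $p_{10}=p_{-10}$. When $M_x>0$ one has $x_2(1)=r^2<1=x_3(1)$, so $t=1$ is a \emph{simple} zero of $d(\cdot,1)$; then $\sqrt{-d(t,1)}\sim c\sqrt{t-1}$ and $1^{n_0}-(r^{2})^{n_0}\neq 0$, so your $\Phi$ behaves like $\sqrt{t-1}$, not $(t-1)^{2}$, and the resulting singularity of $h(\cdot,1)$ at $x=1$ is of type $\sqrt{1-x}$ rather than logarithmic. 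The paper's own proof makes the same tacit assumption (it silently writes $t^{-n_0}$ in place of $(r^2/t)^{n_0}$), so the stated hypothesis $M_x\geq 0$ should really read $M_x=0$; under that restriction your extraction of the logarithmic term is correct and essentially identical to the paper's.

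For the linear coefficient $n_0$ you appeal to optional stopping for $(X(n\wedge\tau))_n$, which is a different and more conceptual route than the paper's: the paper computes $\partial_x h(1,1)$ directly from the integral representation via a Chebyshev orthogonality identity. Two caveats on your approach. First, $(X(n\wedge\tau))$ is a martingale only when $M_x=0$, consistent with the restriction above. Second, the optional stopping theorem does not apply out of the box, since $\mathbb{E}_{(n_0,m_0)}[\tau]=\infty$ in the zero-drift regime (Corollary~\ref{hitting_times_definitions}); you must supply a uniform-integrability argument, for instance by localizing at the hitting time of level $N$ and showing $N\,\mathbb{P}_{(n_0,m_0)}(X\text{ reaches }N\text{ before }\tau)\to 0$. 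The paper's purely analytic computation sidesteps this issue.
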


\begin{proof}
The proof is lightly different
according to $n_{0}\leq m_{0}$
or $n_{0}> m_{0}$~;
indeed, as said in
Remark~\ref{rem_principal_part},
in first case the polynomial
$xP_{\infty}(x\mapsto
x^{n_{0}-1}Y_{0}(x,1)^{m_{0}})(x)$
is zero, whereas in second
it is of degree $n_{0}-m_{0}$.
We choose to do the proof in
case $n_{0}\leq m_{0}$,
knowing that in the other case,
it suffices to do
an induction on $n_{0}-m_{0}$
to show that the
Proposition~\ref{singularity_h_ln}
is still valid.

Under this assumption the expression of $h(x,1)$ written
in~(\ref{explicit_h(x,z)_fourth}),
Subsection~\ref{Explicit_form_and_asymptotic}, becomes
     \begin{equation}\label{assumption_n0<=m0}
          h\left(x,1\right)=\frac{x}{\pi}\int_{1}^{x_{4}\left(1\right)}
          \frac{t^{n_{0}}-t^{-n_{0}}}{t\left(t-x\right)}\mu_{m_{0}}\left(t,1\right)
          \sqrt{-d\left(t,1\right)}\text{d}t,
     \end{equation}
so that, using twice that $1/(t-x)=1/(t-1)
+(x-1)/((t-x)(t-1))$,
we get $h(x,1)/x=h(1,1)+(x-1)H_{1}+(x-1)^{2}H_{2}(x)$, where
     \begin{equation*}
          \left\{\begin{array}{ccc}
          \displaystyle H_{1}&=&\displaystyle
          \frac{1}{\pi}\int_{1}^{x_{4}\left(1\right)}
          \left(t^{n_{0}}-\frac{1}{t^{n_{0}}}\right)
          \frac{\mu_{m_{0}}\left(t,1\right)}{t\left(t-1\right)^{2}}
          \sqrt{-d\left(t,1\right)}\text{d}t, \\
          \displaystyle H_{2}\left(x\right)&=&\displaystyle
          \frac{1}{\pi}\int_{1}^{x_{4}\left(1\right)}
          \left(t^{n_{0}}-\frac{1}{t^{n_{0}}}\right)
          \frac{\mu_{m_{0}}\left(t,1\right)}{t\left(t-1\right)^{2}\left(t-x\right)}
          \sqrt{-d\left(t,1\right)}\text{d}t.
          \end{array}\right.
     \end{equation*}
The function $l(t)$, that we define by
$l(t)=(t^{n_{0}}-t^{-n_{0}})\mu_{m_{0}}(t,1)(-d(t,1))^{1/2}/(t(t-1)^{2})$,
which appears in $H_{1}$ and $H_{2}(x)$,
is continuable into a holomorphic
function in the neighborhood of $1$.
Indeed, we recall from
Lemma~\ref{lemma_branched_points}
that $x_{2}(1)=x_{3}(1)=1$, since $M_{y}=0$.
We still note $l(t)$ this continuation and write
$l(t)=\sum_{k=0}^{+\infty} l_{k}(t-1)^k$.
The $l_{k}$ could of course
be calculated, for instance
$l_{0}=2n_{0}\mu_{m_{0}}(1,1)p_{10}((x_{4}(1)
-1)(1-x_{1}(1)))^{1/2}$, that we can simplify by using that
$\mu_{m_{0}}(1,1)=m_{0}/ (2p_{01})$ and
$(x_{4}(1)-1)(1-x_{1}(1))=4p_{01}/p_{10}$,
we finally find $l_{0}=2n_{0}m_{0}(p_{10}/p_{01})^{1/2}$.

We will now study successively
$H_{2}(x)$ and $H_{1}$,
start with $H_{2}(x)$.
We split the integral
$H_{2}(x)$ in two terms~:
$\int_{1}^{1+\epsilon }l(t)/(t-x)\text{d}t+
\int_{1+\epsilon}^{x_{4}(1)}l(t)/(t-x)\text{d}t$,
where $\epsilon\in [0,x_{4}(1)-1]$. The fact that the second
term in the last sum is, as a function of $x$,
holomorphic on the open disc
$\mathcal{D}(0,1+\epsilon )$ is
clear. In addition, it is easily shown that
     \begin{equation}\label{equality_log_binom}
          \int_{1}^{1+\epsilon}\frac{\left(t-1\right)^{k}}
          {t-x}\text{d}t=P_{k}\left(x\right)+\left(x-1\right)^{k}
          \ln\left(\frac{1+\epsilon-x}{1-x}\right),
     \end{equation}
where $P_{0}$ is the null polynomial, and for $k\geq 1$,
$\deg(P_{k})=k-1$ --of course, $P_{k}$ could be
calculated in an explicit way--.
This leads to
     \begin{equation}\label{recap}
          \int_{1}^{1+\epsilon}
          \frac{l\left(t\right)}{t-x}\text{d}t=
          \sum_{k=0}^{+\infty}l_{k}P_{k}\left(x\right)+
          \ln\left(\frac{1+\epsilon-x}{1-x}\right)l\left(x\right).
     \end{equation}
This is here that having split the integral in two terms turns out
to be useful~: if we had left $x_{4}(1)$ as the upper bound of the
integral, it would have been quite possible that the function 
$\sum_{k}l_{k}P_{k}$ does not exist~: indeed, the radius of convergence of
$l$ is equal to $\inf\{1-x_{1}(1),x_{4}(1)-1\}$ and for $k\geq 1$,
$P_{k}(1)=\epsilon^k/k$ --as we show by taking $x=1$
in~(\ref{equality_log_binom}) for $k\geq 1$--. However, for
sufficiently small values of $\epsilon$, the function 
$\sum_{k}l_{k}P_{k}$ exists well and truly.

We have thus showed
that $H_{2}(x)$ is the sum of a function
holomorphic at $1$ and of a function having
at $1$ a logarithmic singularity,
see~(\ref{equality_log_binom})
and~(\ref{recap}).

To complete the proof of Lemma~\ref{singularity_h_ln},
it remains to study the term $H_{1}$, and in
particular to show that $H_{1}+h(1,1)=n_{0}$.
We recall that we have supposed $n_{0}\leq m_{0}$, so that
differentiating~(\ref{assumption_n0<=m0}) and taking $x=1$
yields~:
     \begin{equation*}
          \partial_{x}h\left(1,1\right)=H_{1}+h\left(1,1\right)=
          \frac{1}{\pi}\int_{1}^{x_{4}\left(1\right)}
          \frac{t^{n_{0}}-t^{-n_{0}}}{\left(t-1\right)^{2}}\mu_{m_{0}}\left(t,1\right)
          \sqrt{-d\left(t,1\right)}\text{d}t.
     \end{equation*}
After having made the change of variable $t=t_{2}\left(u,1\right)$,
see~(\ref{natural_expression_mu}),~(\ref{tttt})
and~(\ref{kk}), and after some simplifications, we
find~:
     \begin{equation*}
          \partial_{x}h\left(1,1\right)=\frac{p_{10}}{\pi}\int_{-1}^{1}
          \frac{t_{2}\left(u,1\right)^{n_{0}}-t_{1}\left(u,1\right)^{n_{0}}}
          {\sqrt{\left(1-k_{1}\left(u\right)\right)\left(1-k_{2}\left(u\right)\right)}}
          U_{m_{0}-1}\left(-u\right)\sqrt{\frac{1-u}{1+u}}\text{d}u.
     \end{equation*}
Using the explicit expressions of $t_{1}$ and $t_{2}=1/t_{1}$
written in~(\ref{tttt}), we notice that
$(t_{2}(u,1)^{n_{0}}-t_{1}(u,1)^{n_{0}})/
((1-k_{1}(u))(1-k_{2}(u)))^{1/2}$ is in fact a polynomial of degree
$n_{0}-1$, that we note $P_{n_{0}-1}(u)$. Moreover, it
turns out that $P_{n_{0}-1}(-1)=n_{0}/p_{10}$. Define now
$Q_{n_{0}-2}(u)$ the $n_{0}-2$ degree polynomial defined
by $P_{n_{0}-1}(u)=
P_{n_{0}-1}(-1)+(u+1)Q_{n_{0}-2}(u)$.
With these notations,
     \begin{equation*}
          \partial_{x}h\left(1,1\right)=\frac{n_{0}}{\pi}\int_{-1}^{1}
          U_{m_{0}-1}\left(-u\right)\sqrt{\frac{1-u}{1+u}}\text{d}u+
          \frac{p_{10}}{\pi} \int_{-1}^{1}Q_{n_{0}-2}\left(u\right)
          U_{m_{0}-1}\left(-u\right)\sqrt{1-u^{2}}\text{d}u.
     \end{equation*}
The second term in the sum above is null. Indeed, being the
$(m_{0}-1)$-th orthogonal polynomial associated to the weight
$1_{]-1,1[}(u)(1-u^{2})^{1/2}$, $U_{m_{0}-1}$ is such that
$\int_{-1}^{1}U_{m_{0}-1}(u)
P(u)(1-u^{2})^{1/2}\text{d}u=0$ for all polynomial $P$ whose
the degree is less or equal than $m_{0}-2$, that is actually the
case for $Q_{n_{0}-2}$ since we have supposed that $n_{0}\leq
m_{0}$.

As for the first term in the sum above, we show, using induction
and the recurrence relationship verified by the
Chebyshev polynomials, namely $U_{m_{0}+1}(u)=
2uU_{m_{0}}(u)-U_{m_{0}-1}(u)$, see~\cite{SZ},
that for all $m_{0}\in \mathbb{N}^{*}$,
$\int_{-1}^{1}U_{m_{0}-1}(-u)((1-u)/(1+u))^{1/2}\text{d}u=\pi$.
\end{proof}

\begin{prop}\label{singularity_h_square}
Suppose that $M_{y}>0$ and $M_x \geq 0$. The function $h\left(x,1\right)$ admits
a singularity of an algebraic type at $x=x_{3}\left(1\right)$, where
its development is~:
     \begin{equation*}
          h\left(x,1\right)=f\left(x\right)
          +\sqrt{1-x/x_{3}\left(1\right)}g\left(x\right),
     \end{equation*}
where $f$ and $g$ are holomorphic in
the neighborhood of $x_{3}(1)$, and could
be made explicit from the proof~; in particular,
\begin{equation*}
          g\left(x_{3}\left(1\right)\right)=-\left(x_{3}\left(1\right)^{n_{0}}-
          x_{2}\left(1\right)^{n_{0}}\right)
          \left(p_{10}\left(x_{3}\left(1\right)-x_{2}\left(1\right)\right)\right)^{1/2} m_{0}
          \left(\frac{p_{0-1}}{p_{01}}\right)^{m_{0}/2}\frac{1}
          {\left(p_{01}p_{0-1}\right)^{1/4}}.
\end{equation*}
\end{prop}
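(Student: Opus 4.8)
The plan is to read off the singularity directly from the integral representation~(\ref{explicit_h(x,z)_fourth}) of $h(x,1)$, as announced just before the statement. The polynomial term $xP_{\infty}(x\mapsto x^{n_{0}-1}Y_{0}(x,1)^{m_{0}})(x)$ is entire by Lemma~\ref{simplification_principal_part} and Remark~\ref{rem_principal_part}, hence contributes only to the holomorphic part $f$; in particular, contrary to the proof of Proposition~\ref{singularity_h_ln}, no distinction between $n_{0}\le m_{0}$ and $n_{0}>m_{0}$ is needed here. Since $M_{y}>0$, Lemma~\ref{lemma_branched_points} gives $0<x_{1}(1)<x_{2}(1)<1<x_{3}(1)<x_{4}(1)$, so on a one-sided neighbourhood $[x_{3}(1),x_{3}(1)+\epsilon]$ of the lower endpoint of the path of integration the only factor of $\sqrt{-d(t,1)}=p_{10}\sqrt{(t-x_{1}(1))(t-x_{2}(1))(t-x_{3}(1))(x_{4}(1)-t)}$ which fails to be holomorphic and non-vanishing is $\sqrt{t-x_{3}(1)}$. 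Pulling it out and absorbing everything else into the single function $\Phi_{0}(t):=\frac{p_{10}}{t}\big(t^{n_{0}}-(r^{2}/t)^{n_{0}}\big)\mu_{m_{0}}(t,1)\sqrt{(t-x_{1}(1))(t-x_{2}(1))(x_{4}(1)-t)}$, which is holomorphic and non-vanishing near $x_{3}(1)$, reduces the matter to the Cauchy-type integral $\frac{x}{\pi}\int_{x_{3}(1)}^{x_{4}(1)}\Phi_{0}(t)\sqrt{t-x_{3}(1)}\,(t-x)^{-1}\mathrm{d}t$, whose only singularity near $x_{3}(1)$ comes from $x$ reaching the endpoint of the contour.

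Next I would localise: split $\int_{x_{3}(1)}^{x_{4}(1)}=\int_{x_{3}(1)}^{x_{3}(1)+\epsilon}+\int_{x_{3}(1)+\epsilon}^{x_{4}(1)}$, the second integral being holomorphic in $x$ on a neighbourhood of $x_{3}(1)$. In the first I would use the substitution $t=x_{3}(1)+s^{2}$, which is the half-integer analogue of~(\ref{equality_log_binom}): the elementary computation $\int_{x_{3}(1)}^{x_{3}(1)+\epsilon}\sqrt{t-x_{3}(1)}\,(t-x)^{-1}\mathrm{d}t=2\sqrt{\epsilon}-\pi\sqrt{x_{3}(1)-x}+\cdots$ for $x<x_{3}(1)$ (obtained via $s=\sqrt{t-x_{3}(1)}$ and $\int_{0}^{\sqrt{\epsilon}}(s^{2}-(x-x_{3}(1)))^{-1}\mathrm{d}s$), and more generally, for $k\ge 1$, $\int_{x_{3}(1)}^{x_{3}(1)+\epsilon}(t-x_{3}(1))^{k+1/2}(t-x)^{-1}\mathrm{d}t$ equals a polynomial in $x$ plus $(x_{3}(1)-x)^{1/2}$ times a function holomorphic near $x_{3}(1)$ and vanishing there to order $k$. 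Expanding $\Phi_{0}(x_{3}(1)+s^{2})$ in powers of $s^{2}$ (for $\epsilon$ small enough that the resulting series of polynomials converges, exactly as in the discussion after~(\ref{recap})) and summing, one obtains $h(x,1)=f(x)+g(x)\sqrt{1-x/x_{3}(1)}$ with $f,g$ holomorphic near $x_{3}(1)$ — here $\sqrt{1-x/x_{3}(1)}$ denotes the branch positive on $[0,x_{3}(1))$, compatible with the continuation domain $\mathbb{C}\setminus[x_{3}(1),x_{4}(1)]$ of Proposition~\ref{proposition_continuation} — and only the $k=0$ term contributes to $g(x_{3}(1))$, giving $g(x_{3}(1))=-x_{3}(1)^{3/2}\Phi_{0}(x_{3}(1))$.

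Finally I would evaluate $\Phi_{0}(x_{3}(1))$ in closed form using three facts. First, $x_{2}(1)x_{3}(1)=p_{-10}/p_{10}=r^{2}$ by Lemma~\ref{lemma_branched_points}, so $(r^{2}/x_{3}(1))^{n_{0}}=x_{2}(1)^{n_{0}}$. Second, since $x_{3}(1)$ is a branch point, $d(x_{3}(1),1)=0$, so only the $k=0$ term of~(\ref{def_mu}) survives and $\mu_{m_{0}}(x_{3}(1),1)=m_{0}(-b(x_{3}(1),1))^{m_{0}-1}/(2a(x_{3}(1),1))^{m_{0}}$; moreover $b(x_{3}(1),1)^{2}=4a(x_{3}(1),1)c(x_{3}(1),1)=4p_{01}p_{0-1}x_{3}(1)^{2}$ with $b(x_{3}(1),1)<0$ — indeed, using the factorisation $d(x,1)=(b(x,1)-2\sqrt{p_{01}p_{0-1}}x)(b(x,1)+2\sqrt{p_{01}p_{0-1}}x)$ of the proof of Lemma~\ref{lemma_branched_points} together with the root formulas there, $b(x,1)-2\sqrt{p_{01}p_{0-1}}x=p_{10}(x-x_{1}(1))(x-x_{4}(1))$ and $b(x,1)+2\sqrt{p_{01}p_{0-1}}x=p_{10}(x-x_{2}(1))(x-x_{3}(1))$, so $b(x_{3}(1),1)=\tfrac{p_{10}}{2}(x_{3}(1)-x_{1}(1))(x_{3}(1)-x_{4}(1))<0$ — whence $-b(x_{3}(1),1)=2\sqrt{p_{01}p_{0-1}}x_{3}(1)$ and, with $a(x_{3}(1),1)=p_{01}x_{3}(1)$, $\mu_{m_{0}}(x_{3}(1),1)=\frac{m_{0}}{2x_{3}(1)\sqrt{p_{01}p_{0-1}}}(p_{0-1}/p_{01})^{m_{0}/2}$. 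Third, comparing the two expressions for $b(x_{3}(1),1)$ gives $p_{10}(x_{3}(1)-x_{1}(1))(x_{4}(1)-x_{3}(1))=4\sqrt{p_{01}p_{0-1}}x_{3}(1)$. Substituting these into $g(x_{3}(1))=-x_{3}(1)^{1/2}\big(x_{3}(1)^{n_{0}}-x_{2}(1)^{n_{0}}\big)\,p_{10}\,\mu_{m_{0}}(x_{3}(1),1)\sqrt{(x_{3}(1)-x_{1}(1))(x_{3}(1)-x_{2}(1))(x_{4}(1)-x_{3}(1))}$ and simplifying yields exactly the announced value of $g(x_{3}(1))$. (Alternatively, the Chebyshev form~(\ref{natural_expression_mu}) together with $\widehat{b}(x_{3}(1),1)=-1$, $U_{m_{0}-1}(1)=m_{0}$ and the direct computation $\partial_{t}\widehat{b}(x_{3}(1),1)=\frac{p_{10}(x_{3}(1)-x_{2}(1))}{2\sqrt{p_{01}p_{0-1}}x_{3}(1)}$ produces $\mu_{m_{0}}(x_{3}(1),1)$ just as well.)

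I expect the only genuinely delicate point to be the local analysis at the endpoint: checking that the expansion near $x_{3}(1)$ contains no fractional powers other than those packaged in a single $\sqrt{1-x/x_{3}(1)}$ times a holomorphic function (this is automatic once $\Phi_{0}$ is expanded in $s^{2}=t-x_{3}(1)$), getting the branch of the square root right so that the sign of $g(x_{3}(1))$ is correct, and carrying the numerical constant cleanly through (the $-\pi$ from the $k=0$ integral, the $x/\pi$ prefactor, and the $\sqrt{x_{3}(1)}$ from $\sqrt{x_{3}(1)-x}=\sqrt{x_{3}(1)}\sqrt{1-x/x_{3}(1)}$); the remainder is the routine algebra of Lemma~\ref{lemma_branched_points}.
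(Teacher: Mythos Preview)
Your proof is correct and follows essentially the same approach as the paper: both isolate the factor $\sqrt{t-x_{3}(1)}$ from $\sqrt{-d(t,1)}$, localise the Cauchy integral at the endpoint $x_{3}(1)$, and read off the leading coefficient $g(x_{3}(1))=-x_{3}(1)^{3/2}\Phi_{0}(x_{3}(1))$ before simplifying via the identities $x_{2}(1)x_{3}(1)=r^{2}$, $\mu_{m_{0}}(x_{3}(1),1)=\frac{m_{0}}{2x_{3}(1)\sqrt{p_{01}p_{0-1}}}(p_{0-1}/p_{01})^{m_{0}/2}$ and $(x_{3}(1)-x_{1}(1))(x_{4}(1)-x_{3}(1))=4\sqrt{p_{01}p_{0-1}}\,x_{3}(1)/p_{10}$. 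The only cosmetic difference is that the paper first applies the partial fraction $\frac{1}{t-x}=\frac{1}{t-x_{3}(1)}+\frac{x-x_{3}(1)}{(t-x)(t-x_{3}(1))}$ and then invokes the closed form $\int_{x_{3}(1)}^{x_{4}(1)}\frac{\mathrm{d}t}{(t-x)\sqrt{t-x_{3}(1)}}=\frac{\pi}{\sqrt{x_{3}(1)-x}}(1+\cdots)$, whereas you substitute $t=x_{3}(1)+s^{2}$ and expand $\Phi_{0}$ in powers of $s^{2}$; the resulting coefficient is identical.
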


\begin{proof}
Using the equality $1/(t-x)=1/(t-x_{3}(1))+(x-x_{3}(1))/((t-x)(t-x_{3}(1)))$
in~(\ref{explicit_h(x,z)_fourth}) and setting
temporarily $l(t)=(t^{n_{0}}-(r^{2}/t)^{n_{0}})\mu_{m_{0}}(t,1)(-p_{10}^{2}(t-x_{1}(1))
(t-x_{2}(1))(t-x_{3}(1)))^{1/2}/t$ we obtain~:
     \begin{equation*}
          h\left(x,1\right)=\frac{x}{x_{3}\left(1\right)}
          h\left(x_{3}\left(1\right),1\right)+
          \frac{x\left(x-x_{3}\left(1\right)\right)}{\pi}
          \int_{x_{3}\left(1\right)}^{x_{4}\left(1\right)}
          \frac{l\left(t\right)}{\left(t-x\right)\sqrt{t-x_{3}\left(1\right)}}
          \text{d}t+P\left(x\right),
     \end{equation*}
where $P(x)$ is a polynomial, null at $x_{3}(1)$,
obtained from $xP_{\infty}(x\mapsto x^{n_{0}-1}Y_{0}(x,1)^{m_{0}})(x)$.
But we can easily find the singularities of the following
Cauchy type integral, see~\cite{LU}~:
     \begin{equation*}
          \int_{x_{3}\left(1\right)}^{x_{4}\left(1\right)}
          \frac{1}{\left(t-x\right)\sqrt{t-x_{3}\left(1\right)}}\text{d}t=
          \frac{\pi}{\sqrt{x_{3}\left(1\right)-x}}\left(1+
          \left(x-x_{3}\left(1\right)\right)u\left(x\right)\right),
     \end{equation*}
where $u$ is a function
holomorphic in the neighborhood of $x_{3}(1)$.
Making an expansion of $l(t)-l(x_{3}(1))$ in the
neighborhood of $x_{3}(1)$ and with a repeated use
of $1/(t-x)=1/(t-x_{3}(1))+(x-x_{3}(1))/((t-x)(t-x_{3}(1)))$, we get~:
     \begin{equation*}
          \int_{x_{3}\left(1\right)}^{x_{4}\left(1\right)}
          \frac{l\left(t\right)-l\left(x_{3}\left(1\right)\right)}
          {\left(t-x\right)\sqrt{t-x_{3}\left(1\right)}}\text{d}t=
          c+v\left(x\right)\sqrt{x_{3}\left(1\right)-x},
     \end{equation*}
where $c$ is some constant, $v$ some function
holomorphic at $x_{3}(1)$.
Thus, Proposition~\ref{singularity_h_square}
will be proved as soon as we will have made
explicit $g(x_{3}(1))$.
Before any simplifications,
we have $g(x_{3}(1))=l(x_{3}(1))x_{3}(1)^{3/2}$.
To simplify this quantity,
note that $(x_{3}(1)-x_{1}(1))(x_{4}(1)-x_{3}(1))=4(p_{01}p_{0-1})^{1/2}x_{3}(1)/p_{10}$
and that $\mu_{m_{0}}(x_{3}(1),1)=(p_{0-1}/p_{01})^{(m_{0}-1)/2}m_{0}/(2p_{01}x_{3}(1))$,
where the announced value of $g(x_{3}(1))$ comes from.
\end{proof}

Propositions~\ref{singularity_h_ln}
and~\ref{singularity_h_square} allow to derive easily the
asymptotic of the absorption probabilities~:

\begin{prop}\label{singularity_h_square_asymptotic}
Suppose that $M_{y}=p_{01}-p_{0-1}=0$.
The probability of being absorbed at $\left(i,0\right)$ admits the
following asymptotic as $i\to +\infty $~:
     \begin{equation*}
          \mathbb{P}_{\left(n_{0},m_{0}\right)}
          \left(\textnormal{to be absorbed at}\left(i,0\right)\right)
          \sim \frac{4}{\pi}\sqrt{\frac{p_{10}}{p_{01}}}n_{0}m_{0}\frac{1}{i^{3}}.
     \end{equation*}
\end{prop}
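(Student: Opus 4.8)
The plan is to extract the asymptotics of $h_i = [x^i]\,h(x,1)$ by singularity analysis, using the description of the dominant singularity of $h(x,1)$ provided by Proposition~\ref{singularity_h_ln}. First I would recall that $h_i = \mathbb{P}_{(n_0,m_0)}(\text{to be absorbed at }(i,0))$ is the $i$-th Taylor coefficient at $0$ of $x\mapsto h(x,1)$, and locate its singularities: since $M_y = p_{01}-p_{0-1}=0$, Lemma~\ref{lemma_branched_points} gives $x_2(1)=x_3(1)=1<x_4(1)$, so by Proposition~\ref{proposition_continuation} the function $h(x,1)$ continues analytically to $\mathbb{C}\setminus[1,x_4(1)]$. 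Hence the only singularity on the closed unit disc is at $x=1$, the radius of convergence of the Taylor series is exactly $1$, and the asymptotics are entirely governed by the behaviour near $x=1$.

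Next I would invoke Proposition~\ref{singularity_h_ln} (applicable because $M_y=0$): near $x=1$,
\begin{equation*}
     h(x,1) = h(1,1) + n_0(x-1)\bigl(1+(x-1)f(x)\bigr) - \frac{2n_0 m_0}{\pi}\sqrt{\frac{p_{10}}{p_{01}}}\,(x-1)^2\ln(1-x)\bigl(1+(x-1)g(x)\bigr),
\end{equation*}
with $f,g$ holomorphic at $1$. The summand $h(1,1)+n_0(x-1)(1+(x-1)f(x))$ is holomorphic at $x=1$, hence extends holomorphically through the whole unit circle (the remaining candidate singularities, on $[1,x_4(1)]$, lying strictly outside the closed disc), so its Taylor coefficients decay geometrically and are irrelevant to the power-law asymptotics. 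Therefore $h_i$ has the same asymptotics as the $i$-th coefficient of $F_2(x)\ln(1-x)$, where $F_2(x) = -\frac{2n_0m_0}{\pi}\sqrt{p_{10}/p_{01}}\,(x-1)^2\bigl(1+(x-1)g(x)\bigr)$ vanishes to order exactly $2$ at $x=1$.

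Finally I would apply the transfer principle recalled in the proof of Proposition~\ref{main_result_simple_walk_drift_zero}, specialised to a logarithmic singularity whose coefficient vanishes to order $2$: the elementary expansion $\ln(1-x) = -\sum_{j\geq 1} x^j/j$ together with $(x-1)^2 = 1-2x+x^2$ gives, for $i\geq 3$,
\begin{equation*}
     [x^i]\bigl((x-1)^2\ln(1-x)\bigr) = -\frac{1}{i}+\frac{2}{i-1}-\frac{1}{i-2} = \frac{-2}{i(i-1)(i-2)} \sim -\frac{2}{i^3},
\end{equation*}
and the holomorphic factor $1+(x-1)g(x)$, equal to $1$ at $x=1$, perturbs this only by terms of order $(x-1)^3\ln(1-x)$, i.e. of coefficient order $1/i^4$. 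Putting everything together yields
\begin{equation*}
     h_i \sim -\frac{2n_0 m_0}{\pi}\sqrt{\frac{p_{10}}{p_{01}}}\cdot\left(-\frac{2}{i^3}\right) = \frac{4}{\pi}\sqrt{\frac{p_{10}}{p_{01}}}\,n_0 m_0\,\frac{1}{i^3},
\end{equation*}
as claimed. The main (though still elementary) obstacle will be the bookkeeping in this last coefficient extraction — checking that the $1/i$ and $1/i^2$ contributions genuinely cancel, so that the surviving term is precisely $-2/i^3$, and that the sign works out to give a positive equivalent — while everything else is a direct application of results already established. As an alternative route, the same equivalent could be obtained from the integral representation~(\ref{explicit_probability_absorb_each_site_simple_walk}) by Laplace's method near $t=x_3(1)=1$, but passing through Proposition~\ref{singularity_h_ln} is the cleaner path.
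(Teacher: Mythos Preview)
Your proof is correct and follows exactly the route the paper takes: invoke Proposition~\ref{singularity_h_ln} for the local structure of $h(x,1)$ at its unique dominant singularity $x=1$, then apply the logarithmic transfer principle recalled in the proof of Proposition~\ref{main_result_simple_walk_drift_zero}. Your explicit computation $[x^i]((x-1)^2\ln(1-x))=-2/(i(i-1)(i-2))$ is in fact more detailed than the paper's own argument; the only cosmetic slip is that the ``analytic summand'' is defined only via the local function $f$ and need not extend through the whole circle as you claim, but the standard remedy---subtract the global function $c(x-1)^2\ln(1-x)$ from $h(x,1)$ and transfer the resulting $O((x-1)^3\ln(1-x))$ error---is routine and does not affect your conclusion.
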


%\begin{proof}
%
%We recall that $t_{2}\left(-1,1\right)=x_{3}\left(1\right)$
%(if the drift $M_{x}$ is zero then $x_{3}\left(1\right)=x_{2}\left(1\right)=1$) and
%$t_{2}\left(1,1\right)=x_{4}\left(1\right)$. Moreover, the equality
%$t_{2}'\left(u,1\right)/t_{2}\left(u,1\right)=
%2\sqrt{p_{01}p_{0-1}}/\sqrt{\left(1-k_{1}\left(u\right)\right)
%\left(1-k_{2}\left(u\right)\right)}$ shows that $t_{2}$ is in fact increasing
%from $-1$ to $1$ and that the derivative at $u=-1$ is infinite.
%This is why we can not apply at once Laplace method
%in~(\ref{explicit_probability_absorb_each_site_simple_walk}) ; we begin by
%the change of variable $t_{2}\left(u,1\right)=t$, in other words
%$u=u\left(t\right)=\left(p_{10}\left(t+1/t\right)-1\right)/\left(2p_{01}\right)$~:
%
%     \begin{equation*}
%          h_{i}=\frac{1}{\pi}\int_{1}^{x_{4}\left(1\right)}
%          \frac{t^{n_{0}}-t^{-n_{0}}}{t-t^{-1}}\frac{U_{m_{0}-1}
%          \left(-u\left(t\right)\right)}{t^{i+2}}
%          \left(t^2-1\right)\sqrt{1-u\left(t\right)^2}\text{d}t
%     \end{equation*}
%
%Then, the Laplace method applies and leads to the result claimed.
%
%\end{proof}

\begin{prop}\label{singularity_h_ln_asymptotic}
Suppose that $M_{y}=p_{01}-p_{0-1}>0$.
The probability of being absorbed at $\left(i,0\right)$ admits the
following asymptotic as $i\to +\infty $~:
     \begin{equation*}
          \mathbb{P}_{\left(n_{0},m_{0}\right)}
          \left(\textnormal{to be absorbed at}\left(i,0\right)\right)
          \sim \frac{\sqrt{p_{10}\left(x_{3}\left(1\right)-
          x_{2}\left(1\right)\right)}}
          {2\sqrt{\pi}\left(p_{01}p_{0-1}\right)^{1/4}}
          m_{0}\left( \frac{p_{0-1}}{p_{01}} \right)^{m_{0}/2}
          \left(x_{3}^{n_{0}}-x_{2}^{n_{0}}\right)
          \frac{1}{i^{3/2}x_{3}\left(1\right)^{i}}.
     \end{equation*}
\end{prop}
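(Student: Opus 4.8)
The strategy is to extract the asymptotics of $h_i=\mathbb{P}_{(n_0,m_0)}(\textnormal{to be absorbed at }(i,0))$, which are the Taylor coefficients of $h(\cdot,1)=\sum_{i\geq 1}h_i x^i$, from the behaviour of $h(\cdot,1)$ at its dominant singularity, following verbatim the singularity-analysis argument already used in the proof of Proposition~\ref{zek}. First I would locate that singularity. By Proposition~\ref{proposition_continuation}, $h(\cdot,1)$ extends to a single-valued holomorphic function on $\mathbb{C}\setminus[x_3(1),x_4(1)]$, and by Lemma~\ref{lemma_branched_points} the hypothesis $M_y>0$ gives $1<x_3(1)<x_4(1)$. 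Consequently $h(\cdot,1)$ is holomorphic in the open disc $\{|x|<x_3(1)\}$, continuable through every point of the circle $\{|x|=x_3(1)\}$ except the single point $x_3(1)$ (the slit $[x_3(1),x_4(1)]$ meets that circle only at $x_3(1)$), so $x_3(1)$ is the unique dominant singularity, and moreover $h(\cdot,1)$ is holomorphic in a slit neighbourhood of the closed disc of radius $x_3(1)$, which is exactly the setting in which the transfer theorem applies.

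Next I would read off the type of the singularity from Proposition~\ref{singularity_h_square}: in a neighbourhood of $x_3(1)$,
\[
h(x,1)=f(x)+\sqrt{1-x/x_3(1)}\,g(x),
\]
with $f,g$ holomorphic at $x_3(1)$ and
\[
g(x_3(1))=-\left(x_3(1)^{n_0}-x_2(1)^{n_0}\right)\left(p_{10}\left(x_3(1)-x_2(1)\right)\right)^{1/2}m_0\left(\frac{p_{0-1}}{p_{01}}\right)^{m_0/2}\frac{1}{(p_{01}p_{0-1})^{1/4}}.
\]
Thus $h(\cdot,1)$ is of the form to which the Pringsheim-type transfer result quoted in the proof of Proposition~\ref{zek} applies, with radius $r=x_3(1)$, singular exponent $\theta_1=1/2$ and singular coefficient $F_1(r)=g(x_3(1))$ (the holomorphic part $f$, as well as the polynomial $xP_{\infty}(x\mapsto x^{n_0-1}Y_0(x,1)^{m_0})(x)$ coming from Proposition~\ref{explicit_h(x,z)_third}, is holomorphic near $x_3(1)$ and contributes only exponentially smaller terms). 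That result yields
\[
h_i\sim\frac{g(x_3(1))}{\Gamma(-1/2)}\,\frac{x_3(1)^{-i}}{i^{3/2}},\qquad i\to\infty.
\]

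Finally I would substitute $\Gamma(-1/2)=-2\sqrt{\pi}$ together with the explicit value of $g(x_3(1))$: the two minus signs cancel and one obtains exactly
\[
h_i\sim\frac{\sqrt{p_{10}\left(x_3(1)-x_2(1)\right)}}{2\sqrt{\pi}(p_{01}p_{0-1})^{1/4}}\,m_0\left(\frac{p_{0-1}}{p_{01}}\right)^{m_0/2}\left(x_3(1)^{n_0}-x_2(1)^{n_0}\right)\frac{1}{i^{3/2}x_3(1)^i},
\]
which is the claimed asymptotic. There is no real obstacle beyond this bookkeeping; the only point deserving a word of care is the verification that $x_3(1)$ is the only singularity on the circle of convergence and that $h(\cdot,1)$ is $\Delta$-analytic there, both of which are immediate from Proposition~\ref{proposition_continuation} combined with $1<x_3(1)<x_4(1)$.
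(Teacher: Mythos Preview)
Your proof is correct and follows exactly the route the paper takes: it deduces the asymptotics from Proposition~\ref{singularity_h_square} via the Pringsheim-type transfer principle spelled out in the proof of Proposition~\ref{zek}, with $r=x_3(1)$, $\theta_1=1/2$ and $F_1(r)=g(x_3(1))$. You have merely made explicit the check (via Proposition~\ref{proposition_continuation} and Lemma~\ref{lemma_branched_points}) that $x_3(1)$ is the unique singularity on the circle of convergence, which the paper leaves implicit.
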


%\begin{proof}
%
%We start by an expansion of the integrand
%of~(\ref{explicit_probability_absorb_each_site_simple_walk}) in the
%neighborhood of $u=-1$. Also, Laplace method gives that~:
%
%     \begin{equation*}
%          \int_{-1}^{1}\frac{1}{t_{2}\left(u,1\right)^{i}}\sqrt{1-u^{2}}\text{d}u
%          \sim \frac{1}{t_{2}\left(-1,1\right)^{i}}\sqrt{\frac{\pi}{2}}
%          \left(\frac{t_{2}\left(-1,1\right)}{t_{2}'\left(-1,1\right)}\right)^{3/2}
%          \frac{1}{k^{3/2}}
%     \end{equation*}
%
%Since $t_{2}\left(-1,1\right)=x_{3}\left(1\right)$, the proof is concluded.
%
%\end{proof}

\begin{proof}
These two propositions are corollaries from 
Propositions~\ref{singularity_h_ln} and~\ref{singularity_h_square},
following the principles giving
the way to obtain the asymptotic
of the coefficients of a Taylor series at zero
starting from the knowledge of
the first singularity,
principles explained in the proof of
Proposition~\ref{main_result_simple_walk_drift_zero}
for a logarithmic
singularity, in the one of
Proposition~\ref{zek}
for an algebraic singularity.
\end{proof}

\subsection{Green functions associated to some sets in the case of a drift zero}

In this part, we suppose that the two drifts $M_{x}$ and $M_{y}$ are
equal to zero. Define, for $a,k\in \mathbb{Z}_{+}$,
$\Gamma_{a,k}=\{(i,j)\in (\mathbb{Z}_{+})^{2} :
i-1+a(j-1)=k\}$ and denote by $G_{\Gamma_{a,k}}$ the Green function
associated to $\Gamma_{a,k}$, in other words the mean number of
visits of the walk in $\Gamma_{a,k}$. Note that $G_{\Gamma_{a,k}}$
is connected with the Green functions $G_{i,j}$ via
$G_{\Gamma_{a,k}}=\sum_{i-1+a(j-1)=k}G_{i,j}$
\begin{prop}\label{asymptotic_G_Gamma}
The following asymptotic holds as $k\to +\infty $~:
     \begin{equation*}
          G_{\Gamma_{a,k}}\sim \frac{2n_{0}m_{0}}
          {\sqrt{p_{10}p_{01}}k}.
     \end{equation*}
\end{prop}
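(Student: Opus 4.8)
The plan is to express the quantities $G_{\Gamma_{a,k}}$ as the Taylor coefficients of a single function of one complex variable and then to run the same singularity analysis as in the proofs of Propositions~\ref{main_result_simple_walk_drift_zero} and~\ref{zek}. Put $z=1$ in the functional equation~(\ref{functional_equation}) and substitute $x=s$, $y=s^{a}$. Since $x^{i-1}y^{j-1}=s^{(i-1)+a(j-1)}$ and all coefficients $G_{i,j}$ are non-negative, one may regroup $G(s,s^{a},1)$ according to the value of $(i-1)+a(j-1)$ and obtain, for $|s|<1$,
\begin{equation*}
\Phi(s):=\sum_{k\geq 0}G_{\Gamma_{a,k}}\,s^{k}=G(s,s^{a},1)=\frac{h(s,1)+\widetilde{h}(s^{a},1)-s^{n_{0}+am_{0}}}{Q(s,s^{a},1)}
\end{equation*}
(for $a=0$ one treats $G(s,1,1)$ as a limit, using that $\sum_{j}G_{i,j}<\infty$ for each fixed $i$). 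Everything then reduces to locating the dominant singularity of $\Phi$ and identifying its type.

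First I would treat the denominator. With $s+s^{-1}-2=(s-1)^{2}/s$ and, in the zero-drift case, $2p_{10}+2p_{01}=1$ from (H2'), one gets the identity
\begin{equation*}
Q(s,s^{a},1)=s^{a}p_{10}(s-1)^{2}+s\,p_{01}(s^{a}-1)^{2},
\end{equation*}
so $Q(s,s^{a},1)$ has a \emph{double} zero at $s=1$ with leading coefficient $p_{10}+a^{2}p_{01}\neq 0$; writing $s=e^{i\theta}$ one finds $Q(e^{i\theta},e^{ia\theta},1)=-4e^{i(a+1)\theta}\bigl(p_{10}\sin^{2}(\theta/2)+p_{01}\sin^{2}(a\theta/2)\bigr)$, which vanishes on the unit circle only at $s=1$, and by continuity $Q(s,s^{a},1)$ has no other zero in a thin annulus around the unit circle. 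Next, the numerator: in the zero-drift case the walk is absorbed almost surely, so $A(n_{0},m_{0})=0$ in~(\ref{zzz}) and $h(1,1)+\widetilde{h}(1,1)=1$; moreover Proposition~\ref{singularity_h_ln} (for $h(\cdot,1)$, with $M_{y}=0$) and its analogue obtained by exchanging the parameters (for $\widetilde{h}(\cdot,1)$, with $M_{x}=0$) give $h(s,1)=h(1,1)+n_{0}(s-1)+O((s-1)^{2})$ near $1$, the quadratic remainder being of the form $[\text{holomorphic}]+(s-1)^{2}\ln(1-s)\cdot\bigl(-\tfrac{2n_{0}m_{0}}{\pi}\sqrt{p_{10}/p_{01}}+O(s-1)\bigr)$, and similarly for $\widetilde{h}(s^{a},1)$ after substituting $y=s^{a}$ (here $\ln(1-s^{a})=\ln(1-s)+\ln\tfrac{1-s^{a}}{1-s}$ with the last term holomorphic and equal to $a$ at $s=1$, and $(s^{a}-1)^{2}=a^{2}(s-1)^{2}+O((s-1)^{3})$). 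Using in addition that $\partial_{x}h(1,1)=n_{0}$ and $\partial_{y}\widetilde{h}(1,1)=m_{0}$, the numerator of $\Phi$ vanishes together with its first derivative at $s=1$ and reads $[\text{holomorphic},\ O((s-1)^{2})]+(s-1)^{2}\ln(1-s)\cdot\bigl(\text{const}+O(s-1)\bigr)$ near $s=1$, the constant combining the two logarithmic coefficients and the factor $a^{2}$.

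Dividing numerator by denominator, the two double zeros cancel: near $s=1$ one gets $\Phi(s)=F_{1}(s)+F_{2}(s)\ln(1-s)$ with $F_{1},F_{2}$ holomorphic at $1$ and $F_{2}(1)\neq 0$ equal to the above logarithmic constant divided by $p_{10}+a^{2}p_{01}$ — in particular the $a$-dependence cancels, as it must. I would then invoke the same transfer principle used in the proof of Proposition~\ref{main_result_simple_walk_drift_zero} (now with index $q=0$), which gives $G_{\Gamma_{a,k}}\sim -F_{2}(1)/k$, i.e. the asymptotic of the statement.

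The part requiring care — and what I expect to be the only real obstacle — is the justification that this transfer principle is applicable, that is, continuing $\Phi$ to a suitable Delta-domain around $s=1$. For this I would use Proposition~\ref{proposition_continuation} and its $\widetilde{h}$-analogue to continue $h(s,1)$ and $\widetilde{h}(s^{a},1)$ across $|s|=1$ away from the slits $[1,x_{4}(1)]$ and $\{s:s^{a}\in[1,y_{4}(1)]\}$, together with the absence of zeros of $Q(s,s^{a},1)$ near the circle established above. One must also dispose of the further singularities of $\Phi$ lying on $|s|=1$ at the non-trivial $a$-th roots of unity $\zeta$ (present only when $a\geq 2$, and coming solely from the singularity of $\widetilde{h}(s^{a},1)$ at $s^{a}=1$, since there $Q(s,s^{a},1)\neq 0$): at such $\zeta$ the singularity is of type $(s-\zeta)^{2}\ln(1-s/\zeta)$, hence contributes only $O(k^{-3})$ to the coefficients of $\Phi$, which is negligible against the $k^{-1}$ term produced at $s=1$.
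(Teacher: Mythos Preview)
Your proof is correct and follows essentially the same route as the paper's own argument: both pass to the one-variable generating function $G(x,x^{a},1)$, factor out the double zero of $Q(x,x^{a},1)$ at $x=1$, apply Proposition~\ref{singularity_h_ln} (and its $\widetilde{h}$-analogue) to extract the $(x-1)^{2}\ln(1-x)$ behavior of the numerator, and read off the $1/k$ asymptotic from the resulting logarithmic singularity of the quotient. You are in fact more careful than the paper in one respect: you explicitly dispose of the extra singularities of $\widetilde{h}(s^{a},1)$ at the non-trivial $a$-th roots of unity (where the singularity is of type $(s-\zeta)^{2}\ln$, hence contributes only $O(k^{-3})$), a point the paper's proof glosses over.
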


\begin{proof}
We start by remarking that for $a\in \mathbb{Z}_{+}$,
     \begin{equation*}
          G\left(x,x^{a},1\right)=\sum_{i,j\geq 1}G_{i,j}x^{i-1+a\left(j-1\right)}
          =\sum_{k=0}^{+\infty}x^{k}\sum_{\Gamma_{a,k}}G_{i,j}.
     \end{equation*}
Besides, Equation~(\ref{functional_equation}) gives $G(x,x^{a},1)=
(h(x,1)+\tilde{h}(x^{a},1)-x^{n_{0}+am_{0}})/Q(x,x^{a},1)$.
Also, applied to $h$ and $\tilde{h}$, Proposition~\ref{singularity_h_ln} leads to~:
     \begin{equation*}
          h\left(x,1\right)+\widetilde{h}\left(x^{a},1\right)-x^{n_{0}+am_{0}}=
          \frac{-2n_{0}m_{0}}{\pi}\left(\sqrt{\frac{p_{10}}{p_{01}}}+a^{2}
          \sqrt{\frac{p_{01}}{p_{10}}}\right)\ln\left(1-x\right)
          \left(1+\left(x-1\right)l_{1}\left(x\right)\right),
     \end{equation*}
where $l_{1}$ is holomorphic at $x=1$.
Moreover, an easy calculation yields
$Q\left(x,x,1\right)=x\left(x-1\right)^{2}/2$~; more generally, for any $a>0$,
$Q\left(x,{x}^{a},1\right)=\left(x-1\right)^{2}P_{a}\left(x\right)$ where
     \begin{equation*}
          P_{a}\left(x\right)=p_{01}x\left(\sum_{k=1}^{a-1}k
          \left(x^{k-1}+x^{2a-1-k}\right)+
          \left(a+\frac{p_{10}}{p_{01}}\right)x^{a-1}\right),
     \end{equation*}
In particular, $P_{a}\left(1\right)=p_{01}a^{2}+p_{10}$. Thus, we obtain that
$G(x,x^{a},1)=-c\ln(1-x)(1+(x-1)l_{2}(x))$ where
$l_{1}$ is holomorphic at $x=1$ and~:
     \begin{equation*}
          c=\frac{2n_{0}m_{0}}{\pi P_{a}\left(1\right)}
          \left(\sqrt{\frac{p_{10}}{p_{01}}}+a^{2}
          \sqrt{\frac{p_{01}}{p_{10}}}\right)=
          \frac{2n_{0}m_{0}}{\sqrt{p_{10}p_{01}}}.
     \end{equation*}
Then, Proposition~\ref{asymptotic_G_Gamma} follows
from the from now on usual way
to obtain the asymptotic of coefficients of
a function
starting from the study of its singularities,
see the proof of
Proposition~\ref{main_result_simple_walk_drift_zero}
for more details.
\end{proof}

We can remark two facts about this asymptotic. First,
as the result of Proposition~\ref{asymptotic_G_Gamma} shows,
the asymptotic of $G_{\Gamma_{a,k}}$ does not depend on $a>0$.
Secondly, this result is in fact also true for $a=0$. To
show this fact, we have
to adapt a little the proof since the explicit expression
of $P_{a}\left(x\right)$ is no more valid~; to
overcome that, we just have to use the equality
$Q(x,1,1)=p_{10}(x-1)^{2}$, the proof
is then the same as before.

\subsection{Probability of being absorbed}\label{Probability_being_absorbed}

In this subsection, we give a nice explicit expression of the
probability for the walk to be absorbed on the boundary.
This explicit formulation, obtained in
Proposition~\ref{probability_being_absorb},
allows us to find again the well known fact
%--well known, see e.g.~\cite{FMM}--
that when at least one of the two drifts~(\ref{drift})
is zero, then the walk is almost surely
absorbed.

%In fact, we state a more general result since we write how the
%generating functions $h$ and $\tilde{h}$ are tied together, see
%proposition~\ref{prop_link}.

\begin{prop}
\label{probability_being_absorb}
The probability of being absorbed is equal to~:
     \begin{equation*}
          h\left(1,1\right)+\widetilde{h}\left(1,1\right)=
          1-\left(1-\left(\frac{p_{-10}}{p_{10}}\right)^{n_{0}}\right)
          \left(1-\left(\frac{p_{0-1}}{p_{01}}\right)^{m_{0}}\right).
     \end{equation*}
\end{prop}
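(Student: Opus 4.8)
The plan is to pin down $h(1,1)+\widetilde h(1,1)$ by evaluating the functional equation~(\ref{functional_equation}) at three well-chosen points. Throughout write $r^{2}=p_{-10}/p_{10}$ and $\widetilde r^{2}=p_{0-1}/p_{01}$; by (H4) both lie in $(0,1]$, and $r^{2}=1$ (resp.\ $\widetilde r^{2}=1$) exactly when $M_{x}=0$ (resp.\ $M_{y}=0$). Note also that $h(1,1)+\widetilde h(1,1)=\mathbb{P}_{(n_{0},m_{0})}(\tau<\infty)$, since $h(1,1)$ and $\widetilde h(1,1)$ are the probabilities of absorption on the $x$- and on the $y$-axis, two disjoint events whose union is $\{\tau<\infty\}$.

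First I would record two ``boundary'' identities. Taking $|y|\le1$ and $x=X_{0}(y,z)$ in~(\ref{functional_equation}) yields $h(X_{0}(y,z),z)+\widetilde h(y,z)=X_{0}(y,z)^{n_{0}}y^{m_{0}}$, and symmetrically (after the parameter change of Subsection~\ref{The_algebraic_curve_Q}) $h(x,z)+\widetilde h(Y_{0}(x,z),z)=x^{n_{0}}Y_{0}(x,z)^{m_{0}}$ for $|x|\le1$. Evaluating the former at $y=z=1$, with $X_{0}(1,1)=p_{-10}/p_{10}$ (the $X$-analogue of $Y_{0}(1,1)=p_{0-1}/p_{01}$ from Lemma~\ref{properties_X_Y}), and the latter at $x=z=1$, one gets
\[
h\bigl(r^{2},1\bigr)+\widetilde h(1,1)=r^{2n_{0}},\qquad h(1,1)+\widetilde h\bigl(\widetilde r^{2},1\bigr)=\widetilde r^{2m_{0}}.
\]

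The heart of the argument is then to evaluate~(\ref{functional_equation}) \emph{itself} at $(x,y,z)=(r^{2},\widetilde r^{2},1)$. Using~(\ref{def_Q}), at $z=1$ and $x=r^{2}$ one has $p_{10}x+p_{-10}x^{-1}=p_{10}+p_{-10}$, and at $y=\widetilde r^{2}$ similarly $p_{01}y+p_{0-1}y^{-1}=p_{01}+p_{0-1}$, so the bracket equals $p_{10}+p_{-10}+p_{01}+p_{0-1}-1=0$ by (H2'); hence $Q(r^{2},\widetilde r^{2},1)=0$. When $M_{x}>0$ and $M_{y}>0$ we have $r^{2}<1$ and $\widetilde r^{2}<1$, so this point lies in the polydisc $\{|x|<1,|y|<1,|z|\le1\}$ where~(\ref{functional_equation}) holds with $G$ finite; consequently its right-hand side vanishes as well, i.e.\ $h(r^{2},1)+\widetilde h(\widetilde r^{2},1)=r^{2n_{0}}\widetilde r^{2m_{0}}$. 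Adding the two boundary identities and subtracting this one cancels $h(r^{2},1)$ and $\widetilde h(\widetilde r^{2},1)$, leaving $h(1,1)+\widetilde h(1,1)=r^{2n_{0}}+\widetilde r^{2m_{0}}-r^{2n_{0}}\widetilde r^{2m_{0}}=1-(1-r^{2n_{0}})(1-\widetilde r^{2m_{0}})$, which is the claimed formula.

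It remains to treat the degenerate cases $M_{x}=0$ or $M_{y}=0$, where the formula collapses to $h(1,1)+\widetilde h(1,1)=1$, that is, to a.s.\ absorption, and where the evaluation above fails because $(r^{2},\widetilde r^{2},1)$ then lies on the boundary of that polydisc. Here I would argue directly: if $M_{x}=0$ then $(X(n))_{n\ge0}$ is a nonnegative martingale (zero conditional drift while in the interior, since $p_{10}=p_{-10}$, and constant once absorbed), hence converges a.s.\ to a finite limit; but on $\{\tau=\infty\}$ one has $|X(n+1)-X(n)|=1$ for infinitely many $n$ almost surely (L\'evy's conditional Borel--Cantelli lemma, each interior step being an $x$-move with probability $p_{10}+p_{-10}>0$ by (H2'')), which contradicts convergence, so $\mathbb{P}_{(n_{0},m_{0})}(\tau=\infty)=0$; the case $M_{y}=0$ is symmetric. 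Alternatively one could recover these cases from the nondegenerate one by a continuity argument in the $p_{ij}$. The only real subtlety in the whole proof is precisely this passage to the boundary of the parameter region: the clean ``$Q=0$'' evaluation only covers $M_{x},M_{y}>0$.
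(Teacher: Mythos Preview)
Your proof is correct and takes a genuinely different route from the paper's.

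The paper proves Proposition~\ref{probability_being_absorb} as a corollary of Lemma~\ref{prop_link}, more precisely of identity~(\ref{link_explicit_constant}). That identity is obtained by a nontrivial change of variable $t=Y_{0}(u,z)$ in the integral representation of $\widetilde h$ (Proposition~\ref{explicit_h(x,z)_second}), which transforms it into the integral representation of $h$ evaluated at $X_{1}(y,z)$; one then simplifies the composed functions $Y_{0}\circ X_{i}$ via Lemma~\ref{properties_X_Y} and evaluates at $y=1$, $z=1$. In short, the paper goes through the full analytic machinery of Section~\ref{h_x_z} and the link formula~(\ref{link_explicit_constant_applied}).

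You bypass all of this: your argument uses nothing beyond the functional equation~(\ref{functional_equation}) itself, evaluated at the three zeros $(r^{2},1)$, $(1,\widetilde r^{2})$, $(r^{2},\widetilde r^{2})$ of $Q(\,\cdot\,,\,\cdot\,,1)$ lying in the (closed) bidisc. The first two give the identities already recorded in Subsection~\ref{Riemann_Carleman_problem}; the third is new and is the crux. The linear combination eliminates $h(r^{2},1)$ and $\widetilde h(\widetilde r^{2},1)$. This is considerably more elementary and shows that the absorption probability is a purely algebraic consequence of~(\ref{functional_equation}), independent of the integral representations.

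What each approach buys: yours is shorter and self-contained for this proposition, and it makes transparent why the product form $(1-r^{2n_{0}})(1-\widetilde r^{2m_{0}})$ appears. The paper's route, on the other hand, establishes the identity~(\ref{link_explicit_constant_applied}) along the way, which is reused later (notably in Section~\ref{Martin_boundary} to compute the Green-function asymptotics and the Martin boundary); so from the paper's perspective the cost is amortised. Your separate treatment of the degenerate cases $M_{x}=0$ or $M_{y}=0$ via the martingale/Borel--Cantelli argument is perfectly fine; the paper's formula~(\ref{link_explicit_constant_applied}) absorbs those cases without splitting, but at the price of the heavier Lemma~\ref{prop_link}.
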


\begin{proof}
We will use the
equality~(\ref{link_explicit_constant})
of Lemma~\ref{prop_link} below.
In fact, the right member
of~(\ref{link_explicit_constant}) can be simplified,
but according to the location of $y$ in the complex plane,
the simplification will not be the same. Suppose for instance
that $|y|>(p_{0-1}/p_{01})^{1/2}(=\tilde{r})$, then
Lemma~\ref{properties_X_Y} gives that
$Y_{0}(X_{0}(y,z),z)=Y_{0}(X_{1}(y,z),z)=p_{0-1}/(p_{01}y)$,
so that~(\ref{link_explicit_constant}) becomes~:
     \begin{equation}\label{link_explicit_constant_applied}
          h\left(X_{1}\left(y,z\right),z\right)+\widetilde{h}\left(y,z\right)
          -X_{1}\left(y,z\right)^{n_{0}}y^{m_{0}}=
          \left(y^{m_{0}}-\left(\frac{\widetilde{r}^{2}}{y}\right)^{m_{0}}\right)
          \left(X_{0}\left(y,z\right)^{n_{0}}-\left(\frac{r^{2}}
          {X_{0}\left(y,z\right)}\right)^{n_{0}}\right).
     \end{equation}
Then, Proposition~\ref{probability_being_absorb} follows
immediately from~(\ref{link_explicit_constant_applied}),
taking $y=1\geq \tilde{r}$ and using the facts
that $X_{0}(1,1)=r^{2}$ and $X_{1}(1,1)=1$,
seen in Lemma~\ref{properties_X_Y}.
\end{proof}

\begin{lem}\label{prop_link}
Suppose that $z\in ]0,z_{1}]$ and
$y\in \mathbb{C}\setminus [x_{1}(z),x_{2}(z)]\cup [x_{3}(z),x_{4}(z)]$.
The functions $h$ and $\tilde{h}$
are connected by~:
     \begin{eqnarray}
          \widetilde{h}\left(y,z\right)&=&X_{0}\left(y,z\right)^{n_{0}}
          y^{m_{0}}-h\left(X_{0}\left(y,z\right),z\right),\label{link_continuation}\\
          \widetilde{h}\left(y,z\right)&=&X_{0}\left(y,z\right)^{n_{0}}
          y^{m_{0}}+X_{1}\left(y,z\right)^{n_{0}}
          Y_{0}\left(X_{1}\left(y,z\right),z\right)^{m_{0}}\label{link_explicit_constant}\\
          &-&X_{0}\left(y,z\right)^{n_{0}}
          Y_{0}\left(X_{0}\left(y,z\right),z\right)^{n_{0}}
          -h\left(X_{1}\left(y,z\right),z\right).\nonumber
     \end{eqnarray}
\end{lem}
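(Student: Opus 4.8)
\emph{Step 1: proof of (link_continuation).} The plan is to read (link_continuation) off the functional equation for $|y|\le 1$ and then continue analytically; throughout, $z$ is first fixed in $]0,z_1[$, the endpoint $z=z_1$ being recovered at the end by continuity of all the quantities in $z$. For $|y|\le 1$, equality (link_continuation) is exactly the relation already displayed in Subsection~\ref{Riemann_Carleman_problem}: substituting $x=X_0(y,z)$ in (functional_equation), using $Q(X_0(y,z),y,z)=0$ together with the finiteness of $G(X_0(y,z),y,z)$ --- which holds because $|X_0(y,z)|\le r\le 1$ (the analogue of Lemma~\ref{properties_X_Y}(i) for the branches $X_0,X_1$, equivalently Vieta's formula for $Q(\cdot,y,z)$) and $|y|\le 1$ --- gives $h(X_0(y,z),z)+\widetilde{h}(y,z)-X_0(y,z)^{n_0}y^{m_0}=0$. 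To propagate this I would invoke the identity principle: $\widetilde{h}(\cdot,z)$ is single-valued and holomorphic on the domain of the lemma by the analogue of Proposition~\ref{proposition_continuation}, $X_0(\cdot,z)$ is single-valued and holomorphic off its branch cuts, and $h(X_0(y,z),z)$ is meaningful because $|X_0(y,z)|\le r<x_3(z)$, so the value $X_0(y,z)$ never lies on $[x_3(z),x_4(z)]$, the only slit across which $h(\cdot,z)$ branches (Proposition~\ref{proposition_continuation}). Since $\{\,|y|\le 1\,\}$ has nonempty interior, (link_continuation) follows.

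\emph{Step 2: a reflection identity for $h$.} The key intermediate fact is the identity
\[ h(x,z)-h\!\Bigl(\frac{r^{2}}{x},z\Bigr)=\Bigl(x^{n_0}-\Bigl(\frac{r^{2}}{x}\Bigr)^{n_0}\Bigr)Y_0(x,z)^{m_0}, \]
valid for $x\notin\{0\}\cup[x_1(z),x_2(z)]\cup[x_3(z),x_4(z)]$. On the circle $\mathcal{M}_z=\mathcal{C}(0,r)$ one has $\overline{t}=r^{2}/t$, and it was established right after (SR_problem_h) that $Y_0(\overline{t},z)=Y_0(t,z)$ for $t\in\mathcal{M}_z$; hence the boundary condition (SR_problem_h) is precisely the displayed identity for $t\in\mathcal{C}(0,r)$. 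By Lemma~\ref{lemma_branched_points}, $[x_1(z),x_2(z)]\subset(0,r)$ and $[x_3(z),x_4(z)]\subset(r,+\infty)$ when $z\in]0,z_1[$, so $\mathcal{C}(0,r)$ lies at positive distance from $0$ and from both slits; therefore $h(\cdot,z)$, $x\mapsto h(r^{2}/x,z)$ and $Y_0(\cdot,z)$ are all holomorphic on a common annular neighbourhood of $\mathcal{C}(0,r)$. The two sides of the display thus agree on that annulus, hence by analytic continuation on all of $\mathbb{C}\setminus(\{0\}\cup[x_1(z),x_2(z)]\cup[x_3(z),x_4(z)])$.

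\emph{Step 3: conclusion.} Putting $x=X_0(y,z)$ in Step 2: by the analogue of Lemma~\ref{properties_X_Y}(i), $X_0(y,z)X_1(y,z)=p_{-10}/p_{10}=r^{2}$, so $r^{2}/X_0(y,z)=X_1(y,z)$, and by the analogue of Lemma~\ref{properties_X_Y}(ii)--(iii), $Y_0(X_0(y,z),z)=Y_0(X_1(y,z),z)$. Hence Step 2 becomes
\[ h(X_0(y,z),z)-h(X_1(y,z),z)=X_0(y,z)^{n_0}Y_0(X_0(y,z),z)^{m_0}-X_1(y,z)^{n_0}Y_0(X_1(y,z),z)^{m_0}, \]
and substituting this into (link_continuation) yields
\[ \widetilde{h}(y,z)=X_0(y,z)^{n_0}y^{m_0}+X_1(y,z)^{n_0}Y_0(X_1(y,z),z)^{m_0}-X_0(y,z)^{n_0}Y_0(X_0(y,z),z)^{m_0}-h(X_1(y,z),z), \]
which is (link_explicit_constant) (with exponent $m_0$ on $Y_0(X_0(y,z),z)$, as dictated by $Y_0(X_0(y,z),z)=Y_0(X_1(y,z),z)$). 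As in Step 1, the substitution is legitimate on a nonempty open subset of the domain and both members are holomorphic on all of it, so the identity extends by analytic continuation, and finally to $z=z_1$ by continuity in $z$. I expect the only genuinely delicate part to be this domain bookkeeping --- making sure $h\circ X_0$, $x\mapsto h(r^{2}/x,z)$ and $Y_0\circ X_i$ are single-valued and holomorphic on sets large enough to apply the identity principle --- the decisive geometric facts being $|X_0(y,z)|\le r<x_3(z)$ and the fact that $t\mapsto r^{2}/t$ interchanges the slits $[x_1(z),x_2(z)]$ and $[x_3(z),x_4(z)]$ while fixing $\mathcal{C}(0,r)$ setwise.
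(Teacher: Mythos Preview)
Your proof is correct and takes a genuinely different route from the paper's. The paper chooses to derive both identities from the explicit integral representations: it starts from the formula for $\widetilde{h}$ in Proposition~\ref{explicit_h(x,z)_second}, performs the change of variable $t=Y_0(u,z)$ (equivalently $u=X_0(t,z)$) to turn the integral over $[y_1(z),y_2(z)]$ into one over $\mathcal{C}(0,r)$, rewrites the kernel using $Q(u,y,z)=a(y)(u-X_0(y,z))(u-X_1(y,z))$ and $Y_0Y_1=\widetilde r^{\,2}$, and then applies the residue theorem on the ``pacman'' contour of Figure~\ref{pacman}; two choices of how to decompose the kernel yield (\ref{link_continuation}) and (\ref{link_explicit_constant}) respectively. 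You instead read (\ref{link_continuation}) directly off the functional equation~(\ref{functional_equation}) and obtain (\ref{link_explicit_constant}) by analytically continuing the boundary relation~(\ref{SR_problem_h}) into a reflection identity $h(x,z)-h(r^2/x,z)=(x^{n_0}-(r^2/x)^{n_0})Y_0(x,z)^{m_0}$, then specializing $x=X_0(y,z)$.

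The paper is aware of your line of argument: its Remark after the lemma notes that subtracting (\ref{link_continuation}) and (\ref{link_explicit_constant}) recovers (\ref{SR_problem_h}), and that the identities ``could be obtained with the procedure of continuation \dots\ explained in~\cite{FIM}'' --- essentially your Step~1 plus Step~2 --- but deliberately opts for the integral-representation proof. Your approach is more elementary in that it never invokes Propositions~\ref{explicit_h(x,z)} or~\ref{explicit_h(x,z)_second}; the price is the domain bookkeeping you flagged (holomorphy of $h(r^2/\cdot,z)$ and $Y_0$ near $\mathcal{C}(0,r)$, and the identity principle on the circle), which you handled correctly. The paper's approach, conversely, yields both identities by a single computation and makes the connection with the explicit formulas transparent, which is useful downstream since Proposition~\ref{probability_being_absorb} is derived from (\ref{link_explicit_constant}).
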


\begin{rem}
{\rm (i) The equalities~(\ref{link_continuation})
and~(\ref{link_explicit_constant})
could be obtained with the
procedure of continuation of the functions
$h$ and $\tilde{h}$
explained in~\cite{FIM},
procedure briefly recalled in the proof of 
Proposition~\ref{prop_continuation_Delta_zero}.
Here, we have chosen to not firm up all
the details of this procedure, and we show
how we can find again these equalities
using only the explicit expressions of $h$ and
$\tilde{h}$. (ii) Equality~(\ref{link_continuation})
could be used to
continue $\tilde{h}$, since, as it is proved
in Lemma~\ref{properties_X_Y},
$|X_{0}(y,z)|\leq (p_{-10}/p_{10})^{1/2}\leq 1$,
and thus $h(X_{0}(y,z),z)$ is well defined.
(iii) On the other hand,
it is quite possible that $|X_{1}(y,z)|\geq 1$, see
once again Lemma~\ref{properties_X_Y}, so that
in~(\ref{link_explicit_constant}), $h(X_{1}(y,z),z)$
has to be defined using its analytic continuation,
established here in Proposition~\ref{proposition_continuation}.
(iv) If we make the difference of
equations~(\ref{link_continuation})
and~(\ref{link_explicit_constant}),
we find again the boundary condition
that verifies $h$,
see~(\ref{SR_problem_h}) of
Subsection~\ref{Riemann_Carleman_problem}.}
\end{rem}

\noindent{\it Proof of Lemma~\ref{prop_link}.}
Lemma~\ref{prop_link} will follow from
a suitable change of variable
in the integral expressions of
$h$ and $\tilde{h}$
obtained in Proposition~\ref{explicit_h(x,z)_second}.
The change
of variable $t=Y_{0}(u,z)$, or,
equivalently here, $u=X_{0}(t,z)$, gives
     \begin{eqnarray}
          & &\frac{y}{\pi}\int_{y_{1}\left(z\right)}^{y_{2}\left(z\right)}
          t^{m_{0}}\widetilde{\mu}_{n_{0}}\left(t,z\right)\left(
          \frac{1}{t\left(t-y\right)}+\frac{1}{t y-\widetilde{r}^{2}}\right)
          \sqrt{-\widetilde{d}\left(t,z\right)}\text{d}t=\nonumber \\
          & &\frac{y}{2\pi i}\int_{\mathcal{C}\left(0,r\right)}
          u^{n_{0}}Y_{0}\left(u,z\right)^{m_{0}-1}\left(
          \frac{1}{Y_{0}\left(u,z\right)-y}+\frac{Y_{0}\left(u,z\right)}
          {Y_{0}\left(u,z\right) y-\widetilde{r}^{2}}\right)
          \partial_{u}Y_{0}\left(u,z\right)\text{d}u\label{equality_change_t=Y0},
     \end{eqnarray}
since $X_{0}\big(\big[\overrightarrow{\underleftarrow{y_{1}(z),y_{2}(z)}}\big],z\big)
=\mathcal{C}(0,r)$, as proved
in Subsection~\ref{Riemann_Carleman_problem}.
In addition, an immediate
consequence of the definition of $Q(u,y,z)$
is the equality
$(Y_{0}(u,z)-y)
(Y_{1}(u,z)-y)=(u-X_{0}(y,z))
(u-X_{1}(y,z))p_{10}y/(p_{01}u)$~; also, since
$Y_{0}(u,z)Y_{1}(u,z)=\tilde{r}^{2}$,
$(Y_{0}(u,z) y -\tilde{r}^2)(Y_{1}(u,z) y -\tilde{r}^2)
=\tilde{r}^2(Y_{0}(u,z)-y)
(Y_{1}(u,z)-y)$. Then, using that
$\partial_{u}Y_{0}(u,z) = z p_{10}(r^{2}-u^{2})
Y_{0}(u,z)/ (u d(u,z)^{1/2})$, we find
that~(\ref{equality_change_t=Y0}) is equal to~:
\begin{equation*}
\frac{y}{2\pi i}\int_{\mathcal{C}\left(0,r\right)}
          \frac{u^{n_{0}}Y_{0}\left(u,z\right)^{m_{0}}\left(r^{2}-u^{2}\right)}
          {u\left(u-X_{0}\left(y,z\right)\right)
          \left(u-X_{1}\left(y,z\right)\right)}\text{d}u,
\end{equation*}
which, in turn,
thanks to the equality
$(r^{2}-u^{2})/(u(u-x)(u-r^{2}/x))=-x(1/(u x-r^{2})+1/(u(u-x)))$,
is equal to~:
     \begin{eqnarray*}
          & &\frac{-X_{0}\left(y,z\right)}{2\pi i}\int_{\mathcal{C}\left(0,r\right)}
          u^{n_{0}}Y_{0}\left(u,z\right)^{m_{0}}\left(\frac{1}{uX_{0}\left(y,z\right)-r^{2}}
          +\frac{1}{u\left(u-X_{0}\left(y,z\right)\right)}\right)\text{d}u\\
          &=&\frac{-X_{1}\left(y,z\right)}{2\pi i}\int_{\mathcal{C}\left(0,r\right)}
          u^{n_{0}}Y_{0}\left(u,z\right)^{m_{0}}\left(\frac{1}{u X_{1}\left(y,z\right)-r^{2}}
          +\frac{1}{u\left(u-X_{1}\left(y,z\right)\right)}\right)\text{d}u.
     \end{eqnarray*}
The first equality above
will lead to~(\ref{link_continuation}),
the second
to~(\ref{link_explicit_constant}). Indeed,
we use the residue theorem on the contour drawn
in Figure~\ref{pacman} for both
integrals above, we obtain
a residue part, equal in both cases
to $X_{0}(y,z)^{n_{0}}y^{m_{0}}$
and also a algebraic part,
that is to say an integral
on $[x_{1}(z),x_{2}(z)]$.
Then, using the explicit expressions
of $h$ and $\tilde{h}$ written
in Proposition~\ref{explicit_h(x,z)_second}, we conclude.
We have omitted some details because they are similar to
those present in the proofs of
the Propositions~\ref{explicit_h(x,z)}
and~\ref{explicit_h(x,z)_second}.
\hfill $\square $

\section{Asymptotic of Green functions. Martin boundary.}\label{Martin_boundary}

In this section, we will be interested in the asymptotic of
Green functions $G_{i,j}^{n_{0},m_{0}}$~; we recall that
     \begin{equation*}
          G_{i,j}^{n_{0},m_{0}}=\mathbb{E}_{\left(n_{0} , m_{0}\right)}
          \left[\sum_{k=0}^{+\infty }1_{\left\{\left(X\left(k\right),
          Y\left(k\right)\right)=\left(i,j \right)\right\}}\right].
     \end{equation*}

\begin{prop}\label{Martin_boundary_drift_zero}
Suppose that $M_{x}=M_{y}=0$.
The Green functions $G_{i,j}^{n_{0},m_{0}}$ admit the following
asymptotic when $i,j\to +\infty$, $j/i\to \tan(\gamma )\in
[0,+\infty ]$~:
     \begin{equation*}
          G_{i,j}^{n_{0},m_{0}}\sim
          \frac{4\sqrt{p_{01}p_{10}}}{\pi}
          n_{0}m_{0}\frac{i j}{\left(p_{01}i^{2}
          +p_{10}j^{2}\right)^{2}}.
     \end{equation*}
\end{prop}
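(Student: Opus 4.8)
The plan is to read off the generating function of the coefficients $G_{i,j}^{n_0,m_0}$ from the functional equation~(\ref{functional_equation}). At $z=1$ one has $G(x,y,1)=\sum_{i,j\ge1}G_{i,j}^{n_0,m_0}x^{i-1}y^{j-1}$, so~(\ref{functional_equation}) reads
\[
G(x,y,1)=\frac{h(x,1)+\widetilde h(y,1)-x^{n_0}y^{m_0}}{Q(x,y,1)},\qquad
Q(x,y,1)=p_{10}\,y\,(x-1)^2+p_{01}\,x\,(y-1)^2,
\]
the second identity being the zero-drift specialisation of~(\ref{def_Q}) (use $p_{-10}=p_{10}$, $p_{0-1}=p_{01}$ and $2p_{10}+2p_{01}=1$). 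From this I would write the double Cauchy integral $G_{i,j}^{n_0,m_0}=(2\pi\mathrm{i})^{-2}\oint\!\!\oint G(x,y,1)\,x^{-i}y^{-j}\,\text{d}x\,\text{d}y$ over small circles, then deform the two contours out onto the torus $\{|x|=|y|=1\}$. A direct computation shows that $(1,1)$ is the \emph{unique} common zero of $Q(\cdot,\cdot,1)$ on that torus, and by Section~\ref{h_1_x} it is also the only singularity of $h(\cdot,1)$, resp.\ $\widetilde h(\cdot,1)$, on the unit circle; hence the asymptotics of $G_{i,j}^{n_0,m_0}$ as $i,j\to\infty$ are governed entirely by a shrinking neighbourhood of $(1,1)$.

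The crucial local input is the behaviour of the numerator near $(1,1)$. Combining Proposition~\ref{singularity_h_ln} and its analogue for $\widetilde h$ (obtained by the usual exchange of the parameters, which replaces the factor $\sqrt{p_{10}/p_{01}}$ by $\sqrt{p_{01}/p_{10}}$) with the identity $h(1,1)+\widetilde h(1,1)=1$ of Proposition~\ref{probability_being_absorb}, the constant and linear parts of $h(x,1)+\widetilde h(y,1)$ cancel those of $x^{n_0}y^{m_0}$, and with $u=x-1$, $v=y-1$ one gets
\[
h(x,1)+\widetilde h(y,1)-x^{n_0}y^{m_0}=-\frac{2n_0m_0}{\pi}\Bigl(\sqrt{p_{10}/p_{01}}\,u^2\ln(1-x)+\sqrt{p_{01}/p_{10}}\,v^2\ln(1-y)\Bigr)+R(u,v),
\]
where $R$ collects an analytic term that is $O(u^2+v^2+|uv|)$ together with strictly higher-order logarithmic terms. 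Dividing by $Q(x,y,1)=p_{10}u^2+p_{01}v^2+O(u^3+v^3)$ gives, near $(1,1)$,
\[
G(x,y,1)=-\frac{2n_0m_0}{\pi}\,\frac{\sqrt{p_{10}/p_{01}}\,u^2\ln(1-x)+\sqrt{p_{01}/p_{10}}\,v^2\ln(1-y)}{p_{10}u^2+p_{01}v^2}+(\text{lower order}).
\]
Here one has to check that the remainder contributes only subdominant terms: since $h(x,1)+\widetilde h(y,1)-x^{n_0}y^{m_0}$ vanishes along both branches of $\{Q(\cdot,\cdot,1)=0\}$ through $(1,1)$ (a consequence of Lemma~\ref{properties_X_Y}, exactly as in the derivation of the boundary condition~(\ref{SR_problem_h})), the ``analytic-part-over-$Q$'' piece is in fact analytic at $(1,1)$ up to a degree-$0$ homogeneous term, and this bookkeeping is the delicate step.

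Finally I would insert this local form into the Cauchy integral, parametrise $x=e^{\mathrm{i}\theta}$, $y=e^{\mathrm{i}\phi}$ near $(1,1)$ and rescale $\theta=\sigma/\varrho$, $\phi=\tau/\varrho$ with $\varrho=(i^2+j^2)^{1/2}$, so that $x^{-i}y^{-j}\to\exp\bigl(-\mathrm{i}(\sigma\cos\gamma+\tau\sin\gamma)\bigr)$ (for $\gamma$ in the open interval; the endpoints $\tan\gamma\in\{0,\infty\}$ need the analogous one-variable rescaling by $j$, resp.\ $i$, but give the same formula). Because $\sqrt{p_{10}/p_{01}}\,\sigma^2+\sqrt{p_{01}/p_{10}}\,\tau^2=(p_{10}p_{01})^{-1/2}(p_{10}\sigma^2+p_{01}\tau^2)$, the $\ln\varrho$ produced by $\ln(1-x),\ln(1-y)$ contributes only a constant times a degree-$0$ homogeneous function, whose Fourier transform is a Dirac mass at the origin and hence is invisible for $(i,j)\neq(0,0)$; what survives is a scale-covariant integral
\[
G_{i,j}^{n_0,m_0}\sim\frac{-n_0m_0}{2\pi^3\varrho^2}\int_{\mathbb R^2}\frac{\sqrt{p_{10}/p_{01}}\,\sigma^2\ln\sigma+\sqrt{p_{01}/p_{10}}\,\tau^2\ln\tau}{p_{10}\sigma^2+p_{01}\tau^2}\,\exp\bigl(-\mathrm{i}(\sigma\cos\gamma+\tau\sin\gamma)\bigr)\,\text{d}\sigma\,\text{d}\tau
\]
(the logarithm being suitably interpreted off the positive real axis), which one evaluates explicitly --- e.g.\ by recognising the numerator as a second derivative and reducing to one-dimensional Fourier integrals --- to obtain $\frac{4\sqrt{p_{01}p_{10}}}{\pi}n_0m_0(\cos\gamma\sin\gamma)/(p_{01}\cos^2\gamma+p_{10}\sin^2\gamma)^2\cdot\varrho^{-2}$; since $(i,j)\approx\varrho(\cos\gamma,\sin\gamma)$ this equals $\frac{4\sqrt{p_{01}p_{10}}}{\pi}n_0m_0\,ij/(p_{01}i^2+p_{10}j^2)^2$, and a sanity check against Proposition~\ref{asymptotic_G_Gamma} (summing $G_{i,j}^{n_0,m_0}$ along the line $\Gamma_{a,k}$) confirms the constant. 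The main obstacle is the rigorous localisation and the control of $R$: the singularity at $(1,1)$ is not isolated --- it sits at a corner of the bidisc and $Q$ vanishes there to second order --- so the classical saddle-point method does not apply directly; a cleaner alternative, following~\cite{MA1} and~\cite{KV}, is to perform one of the two integrations by residues on the curve $\{Q(\cdot,\cdot,1)=0\}$ and then run a one-dimensional steepest-descent argument, and either route leads to the same formula.
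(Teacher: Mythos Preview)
Your setup is correct: the double Cauchy integral, the identification of $(1,1)$ as the only singular point on the distinguished boundary, and the local expansions of $h(x,1)$, $\widetilde h(y,1)$ with the cancellation of constant and linear terms are exactly the right ingredients. However, the direct two-variable route you propose has a real gap that you yourself flag but do not close. The remainder $R(u,v)$ contains an analytic piece of order $O(u^2+v^2+uv)$; divided by $Q\sim p_{10}u^2+p_{01}v^2$ this is bounded but \emph{not} analytic at $(1,1)$, and a bounded degree-zero homogeneous term on the torus contributes at the same order $\varrho^{-2}$ as your leading term after rescaling. Your appeal to ``the numerator vanishes on both branches of $\{Q=0\}$'' is the right instinct, but that vanishing holds \emph{a priori} only inside the open bidisc, and the logarithmic singularities of $h,\widetilde h$ at $1$ prevent you from invoking a clean divisibility argument at the corner. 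Likewise, the two-dimensional oscillatory integral you write down is divergent as stated and its evaluation is only asserted.

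The paper takes precisely the ``cleaner alternative'' you mention at the end --- one residue integration followed by a one-dimensional steepest descent --- but the execution relies on devices you do not anticipate. After the residue step and the limit $\epsilon\to0$, the integral is split as $G_{i,j,1}+G_{i,j,2}+G_{i,j,3}$; the piece $G_{i,j,2}$ vanishes in the limit. The steepest-descent contours for $\chi_{j/i}(x)=\ln(xY_1(x,1)^{j/i})$ and $\widetilde\chi_{j/i}(y)=\ln(X_1(y,1)y^{j/i})$ are constructed on each side of the real axis (they are not holomorphic at $1$), and a key observation is the intertwining $Y_1(x_{j/i}(t),1)=y_{j/i}(-t)$, which makes the contribution of $G_{i,j,1}$ on the steepest-descent path \emph{identically zero} --- this is what disposes of the problematic ``analytic remainder over $Q$'' that blocks your direct approach. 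For the surviving term $G_{i,j,3}$, the paper does not work with the raw combination $h(X_1(y))+\widetilde h(y)-X_1(y)^{n_0}y^{m_0}$ but replaces it, via the identity~(\ref{link_explicit_constant_applied}) of Lemma~\ref{prop_link}, by the elementary product $-(y^{m_0}-y^{-m_0})(X_1(y)^{n_0}-X_1(y)^{-n_0})$; this turns the integrand into an explicit algebraic function whose local expansion at $1$ is immediate, and a standard Laplace argument then yields the constant. The uniformity in $j/i\in[0,M]$ (needed to cover $\gamma=0$) is obtained by bounding the B\"urmann--Lagrange coefficients of the inverse steepest-descent parametrisations. None of this is visible from the two-variable local expansion you wrote down.
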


\begin{proof}
We will prove Proposition~\ref{Martin_boundary_drift_zero}
in the case of $\gamma \in [0,\pi/2[$. The result
remains the same if we exchange $i$ in $j$ and 
simultaneously $p_{01}$ in $p_{10}$,
so that from the result corresponding to
$j/i\to 0$ we easily deduce
the one for $j/i\to \infty $.

In Subsection~\ref{Introduction} we have already
seen that $(x,y)\mapsto G(x,y,1)$ is holomorphic
in $\mathcal{D}(0,1)^{2}$~;
as a consequence the Cauchy formulas allow to write
$G_{i,j}^{n_{0},m_{0}}$ as the following double integrals~:
     \begin{equation*}
          G_{i,j}^{n_{0},m_{0}}=\frac{1}{\left(2\pi \imath \right)^{2}}
          \iint_{\substack{\left|x\right|=1-\epsilon  \\\left|y\right|=1-\epsilon }}
          \frac{G\left(x,y,1\right)}{x^{i}y^{j}}\text{d}x\text{d}y
          = \frac{1}{\left(2\pi \imath \right)^{2}}
          \iint_{\substack{\left|x\right|=1-\epsilon  \\\left|y\right|=1-\epsilon }}
          \frac{h\left(x,1\right)+\widetilde{h}
          \left(y,1\right)-x^{n_{0}}y^{m_{0}}}
          {Q\left(x,y,1\right)x^{i}y^{j}}\text{d}x\text{d}y,
     \end{equation*}
where $\epsilon \in ]0,1[$ and $\imath ^{2}=-1$. The second equality above comes from
Equation~(\ref{functional_equation}) where we have taken $z=1$.
In this way, we can write $G_{i,j}$ as the sum
$G_{i,j}=G_{i,j,1}(\epsilon)+G_{i,j,2}(\epsilon)+G_{i,j,3}(\epsilon)$ where~:
     \begin{eqnarray*}
          G_{i,j,1}(\epsilon) &=& \frac{1}{\left(2\pi \imath \right)^{2}}
          \iint_{\left|x\right|=\left|y\right|=1-\epsilon}
          \frac{h\left(x,1\right)-h\left(X_{1}\left(y,1\right),1\right)}
          {Q\left(x,y,1\right)x^{i}y^{j}}\text{d}x\text{d}y, \\
          G_{i,j,2}(\epsilon) &=& \frac{1}{\left(2\pi \imath \right)^{2}}
          \iint_{\left|x\right|=\left|y\right|=1-\epsilon}
          \frac{X_{1}\left(y,1\right)^{n_{0}}
          y^{m_{0}}-x^{n_{0}}y^{m_{0}}}
          {Q\left(x,y,1\right)x^{i}y^{j}}\text{d}x\text{d}y, \\
          G_{i,j,3}(\epsilon)  &=& \frac{1}{\left(2\pi \imath \right)^{2}}
          \int_{\left|y\right|=1-\epsilon}
          \frac{h\left(X_{1}\left(y,1\right),1\right)+
          \widetilde{h}\left(y,1\right)-X_{1}\left(y,1\right)^{n_{0}}
          y^{m_{0}}}{y^{j}}\int_{\left|x\right|=1-\epsilon}
          \frac{1}{x^{i}Q\left(x,y,1\right)}\text{d}x\text{d}y,
     \end{eqnarray*}
and we will successively study these three integrals.
But before going into details, let us explain
the ideas of the proof and also make two technical remarks.

We will first transform the double integrals
defining each of the $G_{i,j,k}(\epsilon)$ into single integrals,
to that purpose we will apply at the $G_{i,j,k}(\epsilon)$
the residue theorem and use the remark~{\rm (i)}
below. Then, we will take the limit of these quantities
as $\epsilon$ goes to zero~; in this way we will obtain that for all
$i$ and $j$, $G_{i,j,1}(\epsilon) \to G_{i,j,1}$,
$G_{i,j,2}(\epsilon) \to 0$ and 
$G_{i,j,3}(\epsilon) \to G_{i,j,3}$ as $\epsilon\to 0$,
$G_{i,j,1}$ and $G_{i,j,3}$ being defined 
in~(\ref{def_G_i_j_1}) and~(\ref{def_G_i_j_3}).

It will therefore remain to study
$G_{i,j,1}$ and $G_{i,j,3}$, that are
integrals on the unit circle.
It can be easily shown that the 
modulus of functions $x^{i}Y_{1}(x,1)^{j}$ and
$X_{1}(y,1)^{i}y^{j}$, that appear 
in~(\ref{def_G_i_j_1}) and~(\ref{def_G_i_j_3}),
have a strict maximum value, equal to $1$ and reached for $x=1$ and $y=1$ respectively. 
Moreover, the exponents $i$ and $j$ will go to infinity, 
so that applying a steepest descent method could be interesting~; 
not exactly the usual steepest descent method --that can be 
found e.g.\ in~\cite{FED}-- for the notable reason
that the functions considered in
integrals~(\ref{def_G_i_j_1}) and~(\ref{def_G_i_j_3}) 
are not holomorphic at 
the ``saddle point'' $1$ but have there singularities. 
Indeed, we recall from 
Subsections~\ref{The_algebraic_curve_Q}
and~\ref{Explicit_form_and_asymptotic} that
$X$, $Y$, $h$ and $\tilde{h}$ are not holomorphic
at $1$~: $X$ (resp.\ $Y$) is not holomorphic
in the neighborhood of $[y_{1}(1),y_{4}(1)]$
(resp.\ $[x_{1}(1),x_{4}(1)]$) and 
$h$, $\tilde{h}$ have a logarithmic singularity at $1$.

Nevertheless, we will be able to find
a steepest descent path for the function $\chi_{j/i}(x)=\ln(xY_{1}(x,1)^{j/i})$
on both sides of the $x$-axis, which means that we will construct
a function $x_{j/i}$, defined on a neighborhood of
$0$ and having a singularity at $0$, such that
$\chi_{j/i}(x_{j/i}(t))=|t|$~;
likewise we will find a steepest descent path for the function 
$\tilde{\chi}_{j/i}(y)= \ln(X_{1}(y,1)y^{j/i})$ on both sides 
of the $y$-axis.

Next, using Cauchy theorem,
we will move the contours of 
integrals~(\ref{def_G_i_j_1}) and~(\ref{def_G_i_j_3}),
equal initially to the unit circle, into contours
that coincide in the neighborhood of $1$
with the steepest descent path and that elsewhere
remain outside some proper level lines
of $\chi_{j/i}$ and $\tilde{\chi}_{j/i}$.

The way to find the asymptotic of $G_{i,j,1}$
and $G_{i,j,3}$ is then classical.
We start by splitting the integrals defining them into two
parts, one on the steepest descent path, the other one
on the remaining part of the contour and next we show 
that the contribution of this remaining part is,
for $G_{i,j,1}$ and for $G_{i,j,3}$, exponentially
negligible, which means that we can find two positive constants,
say $c$ and $\rho $,
such that the quantity $c\exp(-\rho i)$ is an upper bound
of these integrals~;
we will even show that we can take $c$ and $\rho $ independent
of $j/i$, provided that $j/i$ remains in some compact.
As for the integral on the steepest descent path,
we will show that it is identically zero in case of
$G_{i,j,1}$, thanks to an interesting 
relationship between the steepest descent contours 
proved in remark~{\rm (ii)}~;
in case of $G_{i,j,3}$ we will prove that its contribution
leads to the result announced in Proposition~\ref{Martin_boundary_drift_zero},
making a precise study of the behavior of the integrand in
the neighborhood of $1$.
   % Valeur principale

Before beginning the study of the $G_{i,j,k}(\epsilon)$, $k=1,2,3$,
we make the two remarks mentioned above and 
that will be useful there.
\medskip

{\rm (i)}  For any $\epsilon >0$ (and $<1-x_{1}\left(1\right)$)
           and any $\left|y\right|=1-\epsilon $,
           $\left|X_{1}\left(y,1\right)\right|\geq 1$ and
           $\left|X_{0}\left(y,1\right)\right|\leq 1$, as we already know
           from Lemma~\ref{properties_X_Y}. Moreover, there exists
           a function $\theta_{0}=\theta_{0}\left(\epsilon\right)$,
           continuous and going to zero as $\epsilon $ goes to zero, such that~:
           \begin{itemize}
                \item if $y=\left(1-\epsilon\right)\exp\left(\imath \theta\right)$ with
                      $-\theta_{0}\left(\epsilon\right)< \theta <
                      \theta_{0}\left(\epsilon\right)$ then
                      $\left|X_{0}\left(y,1\right)\right|> 1-\epsilon $,
                \item if $y=\left(1-\epsilon\right)\exp\left(\imath \theta\right)$ with
                      $\theta \in \left]\theta_{0}\left(\epsilon\right),
                      2\pi -\theta_{0}\left(\epsilon\right)\right[$ then
                      $\left|X_{0}\left(y,1\right)\right|< 1-\epsilon $.
           \end{itemize}
           Of course, we can also define a function $\tilde{\theta}(\epsilon)$
           that is the analogous of $\theta(\epsilon)$ for the function $Y$.

\medskip
{\rm (ii)}
          Consider the two functions $\chi_{j/i}(x)=
          \ln(xY_{1}(x,1)^{j/i})$ and $\tilde{\chi}_{j/i}(y)=
          \ln(X_{1}(y)y^{j/i})$. One can find $\eta>0$, independent of $j/i\in [0,M]$, 
          $M>0$ being fixed but as large as wished, such that
          $\chi_{j/i}$ and $\tilde{\chi}_{j/i}$ are holomorphic in
          $\mathcal{D}(1,\eta)\cap \{s\in \mathbb{C} : \text{Im}(s)>0\}$
          and in $\mathcal{D}(1,\eta)\cap \{s\in \mathbb{C} : \text{Im}(s)<0\}$~;
          moreover, they are continuable at $1$, where they take the value $0$.
          These continuations are, at $1$, continuous, but not differentiable, let alone
          holomorphic. Consider now the functions $x_{j/i}(t)$ and $y_{j/i}(t)$
          defined by
               \begin{equation}\label{def_chi_chi_tilde}
                     \chi_{j/i}\left(x_{j/i}\left(t\right)\right)=\left|t\right|,\hspace{5mm}
                     \widetilde{\chi}_{j/i}\left(y_{j/i}\left(t\right)\right)=\left|t\right|,
               \end{equation}
          and by $\text{sign}(\text{Im}(x_{j/i}(t)))=\text{sign}(t)$
          and $\text{sign}(\text{Im}(y_{j/i}(t)))=\text{sign}(t)$.
          These last conditions concerning the sign of the imaginary parts allow in fact to define
          $x_{j/i}$ and $y_{j/i}$ not ambiguously, indeed, Equation~(\ref{def_chi_chi_tilde})
          doesn't suffice to determine them, since $|t|=|-t|$.

          We will now be interested in the properties of $x_{j/i}$ and $y_{j/i}$.

          As a function of a real variable, $t\mapsto |t|$ has clearly
          no series expansion in the neighborhood of $0$,
          but is equal, on both sides of $0$, to functions which have a series expansion at $0$,
          namely $t\mapsto t$ for $t>0$ and $t\mapsto -t$ for $t<0$. The same happens
          for $x_{j/i}$ and $y_{j/i}$~: they are not holomorphic at $0$ but are equal to
          holomorphic functions on either side of $0$. We will write, for
          $t>0$ (resp.\ $t<0$), $x_{j/i}(t)=x_{j/i}^{+}(t)=x_{j/i}(0^{+})+x_{j/i}'(0^{+})t+\cdots $ and
          $y_{j/i}(t)=y_{j/i}^{+}(t)=y_{j/i}(0^{+})+y_{j/i}'(0^{+})t+\cdots $
          (resp.\ $x_{j/i}(t)=x_{j/i}^{-}(t)=x_{j/i}(0^{-})+x_{j/i}'(0^{-})t+\cdots $ and
          $y_{j/i}(t)=y_{j/i}^{-}(t)=y_{j/i}(0^{-})+y_{j/i}'(0^{-})t+\cdots $) to emphasize
          the fact that the equalities are true only on the right of $0$ 
          (resp.\ on the left of $0$). Of course, we find the coefficients of these expansions
          $x_{j/i}^{(k)}(0^{\pm})$ and $y_{j/i}^{(k)}(0^{\pm})$ by inverting the
          relationships~(\ref{def_chi_chi_tilde}). For instance,
               \begin{equation}\label{diff_y_j/i_0}
                    x_{j/i}'\left(0^{\pm}\right)=\frac{\pm 1+\frac{j}{i}
                    \sqrt{\frac{p_{10}}{p_{01}}}\imath}{1+\left(\frac{j}{i}
                    \right)^{2}\frac{p_{10}}{p_{01}}},\hspace{5mm}
                    y_{j/i}'\left(0^{\pm}\right)=\frac{\pm \frac{j}{i}+
                    \sqrt{\frac{p_{01}}{p_{10}}}\imath}{\left(\frac{j}{i}
                    \right)^{2}+\frac{p_{10}}{p_{01}}}.
               \end{equation}
          \textit{A priori},~(\ref{def_chi_chi_tilde}) is true on
          $]-\delta_{j/i},\delta_{j/i}[$, where $\delta_{j/i}>0$
          depends on $j/i$. An important fact is that we can chose
          $\delta$ sufficiently small such that~(\ref{def_chi_chi_tilde})
          is true for all $t\in ]-\delta,\delta[$ and for all
          $j/i\in [0,M]$.
          %They are many ways to prove this fact. For instance,
          This uniformity property is quite important since it allows
          to make $j/i\to \tan(\gamma)$ without requiring $j/i=\tan(\gamma)$, what 
          would be too much restrictive~; in particular this is partly that property
          that allows the calculation of the asymptotic of the Green functions
          in case $j/i\to 0$.
          To prove it, we remark that it suffices to find a lower bound,
          positive and independent of $j/i\in [0,M]$,
          of the radius of convergence of functions $x_{j/i}^{\pm}$ and $y_{j/i}^{\pm}$~;
          this can be done by finding an upper bound independent of $j/i\in [0,M]$
          of the coefficients of the Taylor series at $0$ of $x_{j/i}^{\pm}$ and $y_{j/i}^{\pm}$.
          To find this upper bound,
          we can use the so called Bürman-Lagrange formula
          (see e.g.\ \cite{Chat}), which allows to write the coefficients of the Taylor
          series of an inverse function as integrals in terms of the direct function.
          Then a tedious calculation allows to find $\delta>0$ such that
          $\sup_{k\in \mathbb{N}}\sup_{j/i\in[0,M]}|x_{j/i}^{(k)}(0^{\pm})|\delta^{k}<\infty $ and
          $\sup_{k\in \mathbb{N}}\sup_{j/i\in[0,M]}|y_{j/i}^{(k)}(0^{\pm})|\delta^{k}<\infty $,
          from which the uniformity property comes.

          In addition, $x_{j/i}$ and $y_{j/i}$ are strongly
          connected since
          these two parameterizations are tied together by
          $Y_{1}(x_{j/i}(t),1)=y_{j/i}(-t)$ and $X_{1}(y_{j/i}(t),1)=x_{j/i}(-t)$.
          To prove this fact, note that a consequence of the definition~(\ref{def_chi_chi_tilde})
          and also of the relationships concerning
          the composed functions $X_{k}\circ Y_{l}$, where $k,l\in \{0,1\}$, given in
          Lemma~\ref{properties_X_Y}
          is that $Y_{1}(x_{j/i}(t),1)\in\{y_{j/i}(-t),y_{j/i}(t)\}$. Then, it suffices
          to calculate the sign of imaginary part of the previous quantities to identify
          which of $y_{j/i}(-t),y_{j/i}(t)$ is equal to $Y_{1}(x_{j/i}(t),1)$~;
          that is, in this case, $y_{j/i}(-t)$.

          To sum up, we have built, for all $j/i\in [0,M]$, 
          two contours that coincide in the neighborhood of $1$ with
          the steepest descent paths $x_{j/i}(]-\delta,\delta[)$ and
          $y_{j/i}(]-\delta,\delta[)$, that elsewhere remain outside some
          suitable level lines, that are symmetrical w.r.t.\ the $x$-axis and that are smooths, except at $1$.
          As an example we have represented in Figure~\ref{Four_contours} four of these contours.
          The first two correspond to $j/i\to 0$, the last two are associated to 
          some $j/i$ fixed in $]0,\infty[$. Note that to draw the first two contours, we have used the
          equalities (which are consequences of~(\ref{def_chi_chi_tilde}))~:
          \begin{equation*}
               x_{0}(]-\delta,\delta[) =\left[\overleftarrow{
               \underrightarrow{1,\exp(\delta)}}\right[,
               \hspace{5mm}y_{0 }(]-\delta,\delta[)=
               Y_{1}\left(\left[\overrightarrow{
               \underleftarrow{1,\exp(\delta)}}\right[,1\right),
          \end{equation*}
          $y_{0}(]-\delta,\delta[)$ is thus an arc of
          circle (recall the definition~(\ref{def_curves_L_M}) of the curve
          $\mathcal{L}_{z}$ and the fact that it is just a circle).

\begin{figure}[!h]
\begin{center} 
\begin{picture}(420.00,80.00)
\includegraphics{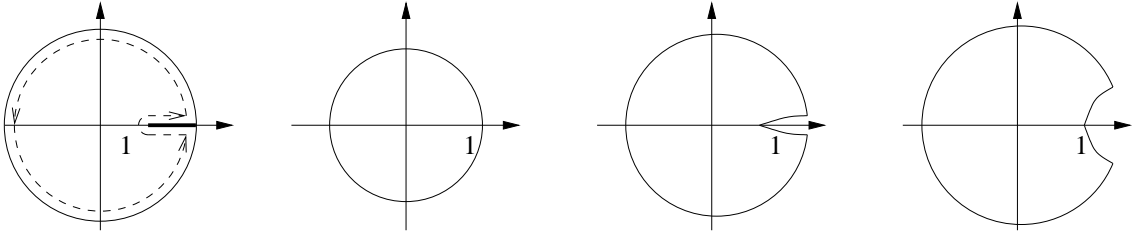}
\end{picture}
\end{center}
\caption{From left to right, the contours 
associated to $\chi_{0}$, $\tilde{\chi}_{0}$, $\chi_{j/i}$ and 
$\tilde{\chi}_{j/i}$ for some $j/i$ in $]0,+\infty [$.}
\label{Four_contours}
\end{figure}

\paragraph{Study of $G_{i,j,1}(\epsilon)$.}
The definition of $G_{i,j,1}(\epsilon)$ immediately leads to~:
     \begin{eqnarray*}
          G_{i,j,1}\left(\epsilon \right)&=&\frac{1}{\left(2\pi \imath \right)^{2}}
          \int_{\left|x\right|=1-\epsilon}
          \frac{h\left(x,1\right)-h\left(1,1\right)}
          {x^{i}}\left(\int_{\left|y\right|=1-\epsilon}
          \frac{\text{d}y}{Q\left(x,y,1\right)y^{j}}\right)\text{d}x\\
          &-&\frac{1}{\left(2\pi \imath \right)^{2}}
          \int_{\left|y\right|=1-\epsilon}
          \frac{h\left(X_{1}\left(y,1\right),1\right)-h\left(1,1\right)}
          {y^{j}}\left(\int_{\left|x\right|=1-\epsilon}
          \frac{\text{d}x}{Q\left(x,y,1\right)x^{i}}\right)\text{d}y.
     \end{eqnarray*}
Then, we apply the residue theorem at infinity to the two integrals
$\int \text{d}x/(Q(x,y,1)x^{i})$ and $\int \text{d}y/(Q(x,y,1)y^{j})$
on the contours
$|x|=1-\epsilon$  and $|y|=1-\epsilon$.
Since $Q(x,y,1)=\tilde{a}(y,1)(x-X_{0}(y,1))(x-X_{1}(y,1))
=a(x,1)(y-Y_{0}(x,1))(y-Y_{1}(x,1))$ we have to know the
position of $X_{i}(y,1)$ and $Y_{i}(x,1)$ w.r.t.\ the circle
$\mathcal{C}(0,1-\epsilon )$, but this is exactly the object
of the remark~{\rm (i)}, so we can write~:
     \begin{eqnarray*}
          G_{i,j,1}\left(\epsilon \right)=&-&
          \frac{1}{2\pi \imath}\int_{\substack{ \left|\theta\right|<
          \widetilde{\theta}_{0}\left(\epsilon\right) \\ x=\left(1-\epsilon\right)
          \exp\left(\imath\theta\right)}}
          \frac{h\left(x,1\right)-h\left(1,1\right)}
          {a\left(x,1\right)\left(Y_{0}\left(x,1\right)-Y_{1}\left(x,1\right)
          \right)x^{i}Y_{0}\left(x,1\right)^{j}}\text{d}x\\
          &+&\frac{1}{2\pi \imath}\int_{\substack{\left|\theta\right|<{\theta}_{0}
          \left(\epsilon\right)\\y=\left(1-\epsilon\right)
          \exp\left(\imath\theta\right)}}
          \frac{h\left(X_{1}\left(y,1\right),1\right)-h\left(1,1\right)}
          {\widetilde{a}\left(y,1\right)\left(X_{0}\left(y,1\right)-
          X_{1}\left(y,1\right)\right)X_{0}\left(y,1\right)^{i}y^{j}}\text{d}y\\
          &-&\frac{1}{2\pi \imath}\int_{\left|x\right|=1-\epsilon}
          \frac{h\left(x,1\right)-h\left(1,1\right)}
          {a\left(x,1\right)\left(Y_{0}\left(x,1\right)-Y_{1}\left(x,1\right)
          \right)x^{i}Y_{1}\left(x,1\right)^{j}}\text{d}x\\
          &+&\frac{1}{2\pi \imath}\int_{\left|y\right|=1-\epsilon}
          \frac{h\left(X_{1}\left(y,1\right),1\right)-h\left(1,1\right)}
          {\widetilde{a}\left(y,1\right)\left(X_{0}\left(y,1\right)-
          X_{1}\left(y,1\right)\right)X_{1}\left(y,1\right)^{i}y^{j}}\text{d}y.
     \end{eqnarray*}
Due to the occurrence of $h(1,1)$,
all the integrands above are integrable
in the neighborhood of $1$. Moreover,
$\theta_{0}(0)=\tilde{\theta}_{0}(0)=0$
so that the first two integrals
have a limit equal to zero as $\epsilon$ goes to zero. Thus,
after $\epsilon$ has gone to zero, we obtain that 
$G_{i,j,1}=\lim_{\epsilon \to 0}G_{i,j,1}(\epsilon)$ is equal to~:
     \begin{equation}\label{def_G_i_j_1}
          \frac{1}{2\pi \imath }\int_{\left|y\right|=1}
          \frac{\left(h\left(X_{1}\left(y,1\right),1\right)-h\left(1,1\right)\right)\text{d}y}
          {\widetilde{a}\left(y,1\right)\left(X_{0}\left(y,1\right)-
          X_{1}\left(y,1\right)\right)X_{1}\left(y,1\right)^{i}y^{j}}-
          \frac{1}{2\pi \imath }\int_{\left|x\right|=1}
          \frac{\left(h\left(x,1\right)-h\left(1,1\right)\right)\text{d}x}
          {a\left(x,1\right)\left(Y_{0}\left(x,1\right)-Y_{1}\left(x,1\right)
          \right)x^{i}Y_{1}\left(x,1\right)^{j}}.
     \end{equation}
We will now use the Cauchy theorem
%(If $U$ is an open subset of $\mathbb{C}$
%which is simply connected, and $f : U \mapsto \mathbb{C}$ be a holomorphic function,
%$\mathcal{C}$ be a rectifiable path in $U$ whose start point is equal to its end point. Then,
%$\oint_{\gamma} f(z)\,\text{d}z = 0$)
to move the contours
$|x|=1$ and $|y|=1$. Since $x\mapsto Y_{i}(x,1)$ and $y\mapsto X_{i}(y,1)$, 
$i=0,1$, are holomorphic on $\mathbb{C}\setminus [x_{1}(1),x_{4}(1)]$
and $\mathbb{C}\setminus [y_{1}(1),y_{4}(1)]$
and $x\mapsto h(x,1)$ on $\mathbb{C}\setminus [1,x_{4}(1)]$,
we can move the contours $\mathcal{C}(0,1)$ into new ones,
that coincide in the neighborhood of $1$
with the steepest descent paths constructed previously,
and whose remaining part
lies in $|\chi_{j/i}(x)|>\rho>0$.
In other words, these new contours
--that we call $\mathcal{C}_{j/i,x}$ and $\mathcal{C}_{j/i,y}$-- verify~:
     \begin{itemize}
          \item $\mathcal{C}_{j/i,x}$ (resp.\ $\mathcal{C}_{j/i,y}$)
                contains $x_{j/i}(]-\delta,\delta [)$ (resp.\ $y_{j/i}(]-\delta,\delta [)$)
                where $x_{j/i}$, $y_{j/i}$ and $\delta $ are defined in {\rm (ii)},
          \item for all $x\in \mathcal{C}_{j/i,x}\setminus x_{j/i}(]-\delta,\delta [)$
                (resp.\ $y\in \mathcal{C}_{j/i,y}\setminus y_{j/i}(]-\delta,\delta [)$),
                $|\chi_{j/i}(x)|>\rho(\delta)$
                (resp.\ $|\tilde{\chi}_{j/i}(x)|>\tilde{\rho}(\delta)$)
                where $\rho$ and $\tilde{\rho}$ are two positive functions.
     \end{itemize}
The fact that $\rho$ and $\tilde{\rho}$
can be chosen independently of
$j/i\in [0,M]$ follows from the continuity
of the modulus of $\chi_{j/i}$ and $\tilde{\chi}_{j/i}$
relatively to $j/i \in [0,M]$.

So, by a standard argument,
the integrals on $\mathcal{C}_{j/i,x}\setminus x_{j/i}(]-\delta,\delta[)$ and
$\mathcal{C}_{j/i,y}\setminus y_{j/i}(]-\delta,\delta[)$ are
exponentially negligible
% w.r.t.\ the integrals
%on $x_{j/i}(]-\delta,\delta[)$ and $y_{j/i}(]-\delta,\delta[)$
. Indeed, setting $E_{1}=\{x\in \mathbb{C} : |x-x_{1}(1)|>\alpha , |x-x_{4}(1)|>\alpha , |x|<1/\alpha \}$ and
$E_{2}=\{y\in \mathbb{C} : |y-y_{1}(1)|>\alpha , |y-y_{4}(1)|>\alpha , |y|<1/\alpha \}$,
a consequence of expressions of $h$ and $\tilde{h}$
obtained in Proposition~\ref{explicit_h(x,z)_third} and
of the expressions of $Y_{i}$ and $X_{i}$ (see Subsection~\ref{The_algebraic_curve_Q}) 
is that
\begin{equation*}
     K=\sup_{x\in E_{1}} \left|\frac{h\left(x,1\right)-h\left(1,1\right)}
     {a\left(x,1\right)\left(Y_{0}\left(x,1\right)-Y_{1}\left(x,1\right)
     \right)}\right|+\sup_{y\in E_{2}} \left|\frac{h\left(X_{1}\left(y,1\right),1\right)-
     h\left(1,1\right)}{\widetilde{a}\left(y,1\right)\left(X_{0}\left(y,1\right)-
     X_{1}\left(y,1\right)\right)}\right|<\infty .
\end{equation*}
In addition, the contours $\mathcal{C}_{j/i,x}$ and
$\mathcal{C}_{j/i,y}$ can clearly be chosen such that
$\cup_{j/i\in [0,M]} \mathcal{C}_{j/i,x}\subset E_{1}$
and $\cup_{j/i\in [0,M]} \mathcal{C}_{j/i,y}\subset E_{2}$
and also such that
$l=\sup_{j/i\in [0,M]} 
(\text{length}(\mathcal{C}_{j/i,x})+\text{length}(\mathcal{C}_{j/i,y}))<\infty $. So,
\begin{eqnarray*}
          &&\left|\frac{1}{2\pi \imath }\int_{\mathcal{C}_{j/i,y}\setminus y_{j/i}(]-\delta,\delta[)}
          \frac{h\left(X_{1}\left(y,1\right),1\right)-h\left(1,1\right)}
          {\widetilde{a}\left(y,1\right)\left(X_{0}\left(y,1\right)-
          X_{1}\left(y,1\right)\right)X_{1}\left(y,1\right)^{i}y^{j}}\text{d}y\right.\\
          &&-\left.\frac{1}{2\pi \imath }\int_{\mathcal{C}_{j/i,x}\setminus x_{j/i}(]-\delta,\delta[)}
          \frac{h\left(x,1\right)-h\left(1,1\right)}
          {a\left(x,1\right)\left(Y_{0}\left(x,1\right)-Y_{1}\left(x,1\right)
          \right)x^{i}Y_{1}\left(x,1\right)^{j}}\text{d}x\right|\leq \frac{K l}{2\pi} 
          e^{-i \min \left(\rho\left(\delta\right),\widetilde{\rho}\left(\delta\right) \right)}.
     \end{eqnarray*}
It remains to consider~:
     \begin{eqnarray*}
          & &\frac{1}{2\pi \imath }\int_{-\delta}^{\delta}
          \frac{h\left(X_{1}\left(y_{j/i}\left(t\right),1\right),1\right)
          -h\left(1,1\right)}
          {\left.\left\{\widetilde{a}\left(y,1\right)\left(X_{0}\left(y,1\right)-
          X_{1}\left(y,1\right)\right)\right\}\right|_{y=y_{j/i}\left(t\right)}}
          \exp\left(-i\left|t\right|\right)y_{j/i}'\left(t\right)\text{d}t\\
          &-&\frac{1}{2\pi \imath }\int_{-\delta}^{\delta}
          \frac{h\left(x_{j/i}\left(t\right),1\right)-h\left(1,1\right)}
          {\left.\left\{a\left(x,1\right)\left(Y_{0}\left(x,1\right)-
          Y_{1}\left(x,1\right)\right)\right\}\right|_{x=x_{j/i}\left(t\right)}}
          \exp\left(-i\left|t\right|\right)x_{j/i}'\left(t\right)\text{d}t.
     \end{eqnarray*}
Actually, this quantity is equal to zero. To prove that,
we proceed to the three following manipulations
in the first integral above~: (1)~we do the change of variable
$t\mapsto -t$ (2)~we use the fact (mentioned in~{\rm (ii)})
that $X_{1}(y_{j/i}(-t),1)=x_{j/i}(t)$ (3)~we use the
equality $\{\tilde{a}(y,1)(X_{0}(y,1)-X_{1}(y,1))\}|_{y=y_{j/i}t)}
=-Y_{1}'(x_{j/i}(t),1)\{a(x,1)(Y_{0}(x,1)-Y_{1}(x,1))\}|_{x=x_{j/i}(t)}$.
Then, we immediately obtain that the difference
of the two integrals is equal to zero.
It remains to prove~(3)~: if we differentiate the
equality $Q(x,Y_{1}(x),1)=0$, we obtain~:
     \begin{equation*}
          \left.\left(2\widetilde{a}\left(y,1\right)x+\widetilde{b}\left(y,1\right)
          \right)\right|_{y=Y_{1}\left(x,1\right)}+Y_{1}'\left(x,1\right)
          \left(2a\left(x,1\right)Y_{1}\left(x,1\right)+b\left(x,1\right)\right)=0.
     \end{equation*}
Then, taking $x=x_{j/i}(t)$ and using that $X_{1}(y_{j/i}(-t),1)=x_{j/i}(t)$ and
$Y_{1}(x_{j/i}(-t),1)=y_{j/i}(t)$, we obtain~(3).

\paragraph{Study of $G_{i,j,2}(\epsilon)$.}
As for $G_{i,j,1}(\epsilon)$ we start by splitting $G_{i,j,2}(\epsilon)$ in two terms~:
     \begin{eqnarray*}
          G_{i,j,2}\left(\epsilon\right)&=&\frac{1}{\left(2\pi \imath \right)^{2}}
          \int_{\left|y\right|=1-\epsilon}
          \frac{X_{1}\left(y,1\right)^{n_{0}}}
          {y^{j-m_{0}}}\left(\int_{\left|x\right|=1-\epsilon}
          \frac{\text{d}x}{Q\left(x,y,1\right)x^{i\phantom{-n_{0}}}}\right)\text{d}y\\
          &-&\frac{1}{\left(2\pi \imath \right)^{2}}
          \int_{\left|y\right|=1-\epsilon}
          \frac{\phantom{X_{1}}1\phantom{\left(y,1\right)^{n}}}
          {y^{j-m_{0}}}\left(\int_{\left|x\right|=1-\epsilon}
          \frac{\text{d}x}{Q\left(x,y,1\right)x^{i-n_{0}}}\right)\text{d}y.
     \end{eqnarray*}
Then, we use that
$Q(x,y,1)=\tilde{a}(y,1)(x-X_{0}(y,1))(x-X_{1}(y,1))$ and
we apply the residue theorem at infinity to the integrals
$\int \text{d}x/(Q(x,y,1)x^{i})$ and $\int \text{d}x/(Q(x,y,1)x^{i-n_{0}})$.
Using the properties of the modulus of $X_{0}$ and $X_{1}$ described
in~{\rm (i)}, we find~:
     \begin{equation*}
          G_{i,j,2}\left(\epsilon\right)=\frac{1}{2\pi \imath }
          \int_{\substack{\left|\theta\right|<{\theta}_{0}
          \left(\epsilon\right)\\y=\left(1-\epsilon\right)
          \exp\left(\imath \theta\right)}}
          \frac{1}{\widetilde{a}\left(y,1\right)X_{0}\left(y,1\right)^{i}{y}^{j-m_{0}}}
          \frac{X_{1}\left(y,1\right)^{n_{0}}-X_{0}\left(y,1\right)^{n_{0}}}
          {X_{1}\left(y,1\right)-X_{0}\left(y,1\right)}\text{d}y.
     \end{equation*}
First, the integrand is integrable on the contour considered, and secondly,
once again thanks~{\rm (i)}, $\theta_{0}(\epsilon)\to 0$
when $\epsilon \to 0$~; so 
$G_{i,j,2}(\epsilon)\to 0$ as $\epsilon \to 0$.

\paragraph{Study of $G_{i,j,3}(\epsilon)$.}

As for $G_{i,j,1}(\epsilon)$ and $G_{i,j,2}(\epsilon)$ we write
$Q(x,y,1)=\tilde{a}(y,1)(x-X_{0}(y,1))(x-X_{1}(y,1))$ and
we apply the residue theorem at infinity. Next, we let $\epsilon \to 0$~;
we obtain~:
     \begin{equation}\label{def_G_i_j_3}
          G_{i,j,3}=\lim_{\epsilon \to 0}G_{i,j,3}(\epsilon)=\frac{1}{2\pi \imath }
          \int_{\left|y\right|=1}
          \frac{h\left(X_{1}\left(y,1\right),1\right)+
          \widetilde{h}\left(y,1\right)-X_{1}\left(y,1\right)^{n_{0}}
          y^{m_{0}}}{\widetilde{a}\left(y,1\right)
          \left(X_{0}\left(y,1\right)-X_{1}\left(y,1\right)\right)
          X_{1}\left(y,1\right)^{i}y^{j}}\text{d}y.
     \end{equation}
We now move the contour from the unit circle
into $\mathcal{C}_{j/i,y}$, using
the same arguments as in the paragraph concerning
the study of $G_{i,j,1}$. For the same
reasons as before, the integral
on $\mathcal{C}_{j/i,y}\setminus y_{j/i}(]-\delta,\delta[)$
is exponentially negligible,
%w.r.t.\ the integral on
%$y_{j/i}(]-\delta,\delta[)$
so that from now we consider that the integral defining
$G_{i,j,3}$ is on the contour $y_{j/i}(]-\delta,\delta[)$.

Now, we notice that from remark~{\rm (ii)} we can deduce that
$y_{j/i}(]-\delta,\delta[)\subset \{s\in \mathbb{C} : |s|>1\}$,
so that in accordance with~(\ref{link_explicit_constant_applied}) we can write,
for all $y\in y_{j/i}(]-\delta,\delta[)$~:
     \begin{equation*}
          h\left(X_{1}\left(y,1\right),1\right)+
          \widetilde{h}\left(y,1\right)-X_{1}\left(y,1\right)^{n_{0}}
          y^{m_{0}}=-\left(y^{m_{0}}-y^{-m_{0}}\right)
          \left(X_{1}\left(y,1\right)^{n_{0}}-X_{1}\left(y,1\right)^{-n_{0}}\right).
     \end{equation*}
For this reason,
     \begin{equation*}
          \frac{h\left(X_{1}\left(y,1\right),1\right)+
          \widetilde{h}\left(y,1\right)-X_{1}\left(y,1\right)^{n_{0}}
          y^{m_{0}}}{\widetilde{a}\left(y,1\right)\left(X_{0}\left(y,1\right)
          -X_{1}\left(y,1\right)\right)}=\frac{\left(y-1\right)
          \left(X_{1}\left(y,1\right)-1\right)}{\sqrt{\widetilde{d}\left(y,1\right)}
          X_{1}\left(y,1\right)^{n_{0}}y^{m_{0}}}\sum_{i=0}^{2m_{0}-1}y^{i}
          \sum_{i=0}^{2n_{0}-1}X_{1}\left(y,1\right)^{i}.
     \end{equation*}
Since $X_{1}(y,1)-1=\tilde{d}(y,1)^{1/2}/(2\tilde{a}(y,1))-p_{01}(y-1)^{2}/(2\tilde{a}(y,1))$,
we have the following expansion~:
     \begin{equation*}
          \frac{\left(y-1\right)
          \left(X_{1}\left(y,1\right)-1\right)}{\sqrt{\widetilde{d}\left(y,1\right)}
          X_{1}\left(y,1\right)^{n_{0}}y^{m_{0}}}\sum_{i=0}^{2m_{0}-1}y^{i}
          \sum_{i=0}^{2m_{0}-1}X_{1}\left(y,1\right)^{i}=\frac{2n_{0}m_{0}}{p_{10}}
          \left(y-1\right)+c_{2}\left(y-1\right)^{2}+c_{3}\left(y-1\right)^{3}+\cdots ,
     \end{equation*}
where the coefficients $c_{2},c_{3},\ldots $ depend on the half
plane (upper or lower) where the 
previous expansion is written.
We could of course male explicit these coefficients $c_{2},c_{3},\ldots $ but we don't need it.
Indeed, using the expansion of $y_{j/i}$ at the left and the right of $0$,
and Laplace's method, we see that the integrals
     \begin{equation*}
          \int_{-\delta}^{\delta }\left(y_{j/i}\left(t\right)-1\right)^{k}
          \exp\left(-i \left|t\right|\right)y_{j/i}'\left(t\right)\text{d}t
     \end{equation*}
are for $k\geq 2$  polynomially negligible with respect to the same
integral where $k=1$. Therefore, to find the asymptotic, we have
only to consider the above integral for $k=1$. We find that
$\int_{-\delta}^{\delta }(y_{j/i}(t)-1) \exp(-i
|t|)y_{j/i}'(t)\text{d}t\sim (y_{j/i}'(0+)^{2}-y_{j/i}'(0-)^{2})
\int_{0}^{\delta}t\exp(-i t)\text{d}t$ as $i$ goes to infinity~; then
with~(\ref{diff_y_j/i_0}) we get~:
     \begin{equation*}
          y_{j/i}'\left(0+\right)^{2}-y_{j/i}'\left(0-\right)^{2}=
          4\imath p_{01}^{1/2}p_{10}^{3/2}\frac{i j}
          {\left(p_{10}j^{2}+p_{01}i^{2}\right)^{2}},
     \end{equation*}
from which Proposition~\ref{Martin_boundary_drift_zero}
follows immediately.
\end{proof}

Let us now turn to the case $M_{x}>0$ and $M_{y}>0$.
In~\cite{KV}, the authors obtain the Green
functions' asymptotic
for the random walks in $(\mathbb{Z}_{+})^2$
with the same jump probabilities in the interior of the quadrant
as ours but with non zero jumps from the axes to the interior.
The analysis of this problem in our case can be carried out
by the same methods. Therefore we just claim

\begin{prop}\label{Martin_boundary_drift_non_zero}
Denote by $\left(s_{3}\left(\gamma\right),
t_{3}\left(\gamma\right)\right)$
the unique critical point of $\left(x,y\right)\mapsto x
y^{\tan\left(\gamma\right)}$ on
$\{\left(x,y\right)\in \mathbb{C}^{2},Q\left(x,y,1\right)=0,x
\in \mathbb{R}_{+}, y\in \mathbb{R}_{+}\}$.
The Green functions admit the following asymptotic when 
$i,j\to \infty $ and $j/i\to
\tan\left(\gamma\right)\in \left]0,+\infty\right[$~:
     \begin{equation}\label{Martin_boundary_drift_non_zero_ij>0}
          G_{i,j}^{n_{0},m_{0}}\sim \frac{\sqrt{s_{3}\left(\gamma\right)^{
          \frac{1}{\tan\left(\gamma\right)}}
          t_{3}\left(\gamma\right)}\left(s_{3}\left(\gamma\right)^{n_{0}}t_{3}\left(\gamma\right)^{m_{0}}
          -h\left(s_{3}\left(\gamma\right),1\right)-\widetilde{h}\left(t_{3}\left(\gamma\right),1\right)\right)}
          {\sqrt{2\pi}\big(2 a\left(s_{3}\left(\gamma\right)\right)
          t_{3}\left(\gamma\right)+b\left(s_{3}\left(\gamma\right)
          \right)\big)\sqrt{j\frac{\textnormal{d}}{\textnormal{d}x^{2}}
          \left(x^{1/\tan\left(\gamma\right)}Y_1\left(x,1\right)\right)
          \Big|_{x=s_{3}\left(\gamma\right)}}s_{3}\left(\gamma\right)^{i}
          t_{3}\left(\gamma\right)^{j}}.
     \end{equation}
In cases $j/i\to 0$ or $+\infty$, the Green functions admit the asymptotic~:
     \begin{equation}\label{Martin_boundary_drift_non_zero_ij=0}
          \begin{array}{ccc}
          G_{i,j}^{n_{0},m_{0}}&\sim &  \displaystyle
          \left(\left(1-2\sqrt{p_{0-1}p_{01}}\right)^{2}-4p_{-10}p_{10}\right)^{1/2}
          \frac{m_{0}\widetilde{r}^{m_{0}-1}\left(x_{3}\left(1\right)^{n_{0}}-
          x_{2}\left(1\right)^{n_{0}}\right)j}{\sqrt{\pi p_{01}
          \sqrt{\widetilde{d}\left(\widetilde{r},1\right)}}
          i^{3/2}x_{3}\left(1\right)^{i}\widetilde{r}^{j}}  , \ j/i\to 0,\\
          G_{i,j}^{n_{0},m_{0}}&\sim &    \displaystyle
          \left(\left(1-2\sqrt{p_{-10}p_{10}}\right)^{2}-4p_{0-1}p_{01}\right)^{1/2}
          \frac{n_{0}r^{n_{0}-1}\left(y_{3}\left(1\right)^{m_{0}}-
          y_{2}\left(1\right)^{m_{0}}\right)i}{\sqrt{\pi p_{10}
          \sqrt{d\left(r,1\right)}}j^{3/2}r^{i}y_{3}\left(1\right)^{j}}, \ j/i\to +\infty .
          \end{array}
     \end{equation}
\end{prop}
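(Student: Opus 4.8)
The plan is to follow \emph{mutatis mutandis} the scheme of the proof of Proposition~\ref{Martin_boundary_drift_zero}, exploiting the following simplification peculiar to the case $M_x>0$, $M_y>0$: by Proposition~\ref{proposition_continuation} the first real positive singularities of $h(\cdot,1)$ and $\tilde h(\cdot,1)$ are $x_3(1)>1$ and $y_3(1)>1$, so both functions are holomorphic in a neighbourhood of the closed unit disc, and consequently the integrands appearing below will be holomorphic at their saddle points. This means the \emph{classical} steepest descent method (see e.g.\ \cite{FED}) applies, rather than the singular variant used for Proposition~\ref{Martin_boundary_drift_zero}; this is precisely the point that makes the analysis ``even easier'' than in \cite{KV,MA1}. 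First I would, exactly as in Proposition~\ref{Martin_boundary_drift_zero}, write for small $\epsilon>0$
\[
G_{i,j}^{n_0,m_0}=\frac{1}{(2\pi\imath)^2}\iint_{|x|=|y|=1-\epsilon}\frac{h(x,1)+\tilde h(y,1)-x^{n_0}y^{m_0}}{Q(x,y,1)\,x^i y^j}\,\mathrm dx\,\mathrm dy
\]
using~\eqref{functional_equation} at $z=1$, insert $h(X_1(y,1),1)$ to split $G_{i,j}=G_{i,j,1}(\epsilon)+G_{i,j,2}(\epsilon)+G_{i,j,3}(\epsilon)$, and then, using $Q(x,y,1)=\tilde a(y,1)(x-X_0(y,1))(x-X_1(y,1))$, the residue theorem at infinity and Lemma~\ref{properties_X_Y}, let $\epsilon\to0$ to obtain $G_{i,j,2}(\epsilon)\to0$ and $G_{i,j,1}$, $G_{i,j,3}$ as single contour integrals over $|x|=1$, $|y|=1$ of the form~\eqref{def_G_i_j_1} and~\eqref{def_G_i_j_3}. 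Nothing is different from the zero-drift case up to here.

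\textbf{Case $\gamma\in(0,\pi/2)$.} The function $x\mapsto xY_1(x,1)^{\tan\gamma}$ (the restriction of $xy^{\tan\gamma}$ to the real positive branch of $\{Q(x,y,1)=0\}$) has a unique critical point $s_3(\gamma)\in\,]0,x_3(1)[$, with nonzero second derivative there; put $t_3(\gamma)=Y_1(s_3(\gamma),1)\in\,]0,y_3(1)[$. Since $s_3(\gamma)<x_3(1)$ and $t_3(\gamma)<y_3(1)$, all the functions $X_i,Y_i,h,\tilde h$ occurring in~\eqref{def_G_i_j_1} and~\eqref{def_G_i_j_3} are holomorphic near $s_3(\gamma)$, resp.\ $t_3(\gamma)$, so by Cauchy's theorem one may deform the unit circles onto the classical steepest descent paths of $x^{1/\tan\gamma}Y_1(x,1)$ and $X_1(y,1)y^{1/\tan\gamma}$ through the saddle, with uniform control over $j/i$ in any compact subinterval of $]0,+\infty[$; the residual pieces of the contours are exponentially negligible by the very boundedness argument already used for $G_{i,j,1}$ in Proposition~\ref{Martin_boundary_drift_zero}. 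The contribution of $G_{i,j,1}$ vanishes exactly as there, via the symmetry of the two parametrizations and the identity $\{\tilde a(y,1)(X_0-X_1)\}|_{y=Y_1(x,1)}=-Y_1'(x,1)\{a(x,1)(Y_0-Y_1)\}|_x$. Applying the standard Laplace/saddle estimate to $G_{i,j,3}$, and using that on the curve the numerator equals $-(x^{n_0}y^{m_0}-h(x,1)-\tilde h(y,1))$, yields~\eqref{Martin_boundary_drift_non_zero_ij>0}: the Gaussian integral produces $\sqrt{2\pi}$ and $\sqrt{j\,\frac{\mathrm d^2}{\mathrm dx^2}(x^{1/\tan\gamma}Y_1(x,1))|_{x=s_3(\gamma)}}$, while the factor $2a(s_3(\gamma))t_3(\gamma)+b(s_3(\gamma))$ comes from the derivative of $\tilde a(Y_1)(X_0-X_1)$, i.e.\ from $\sqrt{d(s_3(\gamma),1)}$.

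\textbf{Cases $j/i\to0$ and $j/i\to+\infty$.} Here the saddle degenerates: as $\gamma\to0$, $s_3(\gamma)\to x_3(1)$, where by Proposition~\ref{singularity_h_square} the function $h(\cdot,1)$ has an algebraic singularity $\propto\sqrt{1-x/x_3(1)}$. Since now $i\gg j$, the $x$-direction dominates and is governed by that singularity, producing — exactly as in Proposition~\ref{singularity_h_ln_asymptotic} — the factor $1/(i^{3/2}x_3(1)^i)$ (with constant $\big((1-2\sqrt{p_{0-1}p_{01}})^2-4p_{-10}p_{10}\big)^{1/2}$ coming from $\sqrt{\widetilde d(\tilde r,1)}$), while the $y$-integral in~\eqref{def_G_i_j_3} localises near $y=\tilde r$ and a plain Laplace estimate there gives the factor $\tilde r^{\,j}$ and the remaining constants in~\eqref{Martin_boundary_drift_non_zero_ij=0}. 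The $j/i\to+\infty$ asymptotic follows by the symmetry $x\leftrightarrow y$, $p_{10}\leftrightarrow p_{01}$, $p_{-10}\leftrightarrow p_{0-1}$. Legitimating $j/i\to0$ without imposing $j/i=0$ rests on the uniformity property of the steepest descent contours constructed in remark~{\rm (ii)} of the proof of Proposition~\ref{Martin_boundary_drift_zero}, now for $j/i$ in a right neighbourhood of $0$.

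The main obstacle is precisely this passage to the extreme directions: one must show $s_3(\gamma)\to x_3(1)$ as $\gamma\to0$, quantify how the interior-saddle estimate~\eqref{Martin_boundary_drift_non_zero_ij>0} degenerates into the singularity-dominated regime~\eqref{Martin_boundary_drift_non_zero_ij=0}, and keep the exponential negligibility of the contour tails uniform across the whole range $j/i\in[0,M]$. Proving uniqueness and non-degeneracy of the critical point $s_3(\gamma)$ on the real positive branch of $Q(x,y,1)=0$, and matching the contour families at $\gamma=0$, are the only genuinely new verifications; the remainder is a transcription of the machinery already in place for Proposition~\ref{Martin_boundary_drift_zero} and in \cite{KV,MA1}.
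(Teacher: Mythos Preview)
The paper does not actually give a proof of this proposition: it only remarks that the methods of \cite{KV} and \cite{MA1} apply verbatim (``and even easier'') to the present setting of an absorbing boundary, and then simply states the result. Your proposal is therefore not being compared to a proof but to a bare citation; what you wrote is essentially a fleshed-out sketch of the argument the paper merely points to, organised in the framework of the proof of Proposition~\ref{Martin_boundary_drift_zero} and noting (correctly) that the positive-drift case is simpler because $s_3(\gamma)<x_3(1)$ and $t_3(\gamma)<y_3(1)$ for $\gamma\in(0,\pi/2)$, so the saddle is an interior point where all integrands are holomorphic and the classical Laplace method suffices. This is exactly the approach the paper has in mind.

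One small clarification: you say ``the contribution of $G_{i,j,1}$ vanishes''. As in the zero-drift proof, what happens is that the integral over the steepest descent portion vanishes identically by the symmetry $Y_1(x_{j/i}(t),1)=y_{j/i}(-t)$, while the remainder of the contour contributes only an exponentially small term; so $G_{i,j,1}$ is negligible, not zero. Also, in the boundary cases $j/i\to 0$ and $j/i\to\infty$, the mechanism producing the extra factor $j$ (resp.\ $i$) in~\eqref{Martin_boundary_drift_non_zero_ij=0} deserves a word: it arises because at $\gamma=0$ the numerator $x^{n_0}y^{m_0}-h(x,1)-\tilde h(y,1)$ vanishes to first order in $y-\tilde r$ (by~\eqref{link_explicit_constant_applied}), so the Laplace expansion in the $y$-variable starts at the linear term, exactly as the factor $(y-1)$ did in the zero-drift proof. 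Apart from these points your outline is sound and coincides with the method the paper invokes.
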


Our previous remarks allow us to be more specific about this result
concerning two things. First, as it was already remarked
in~\cite{MA1} and in~\cite{KV}, $t_{3}(\gamma)\in
[(p_{0-1}/p_{01})^{1/2},y_{3}(1)]$
and $s_{3}(\gamma)=X_{1}(t_{3}(\gamma),1)$. So
with~(\ref{link_explicit_constant_applied}) we obtain that the
constant
$s_{3}(\gamma)^{n_{0}}
t_{3}(\gamma)^{m_{0}}-h(s_{3}(\gamma),1)-\tilde{h}(t_{3}(\gamma),1)$ is equal to $(t_{3}(\gamma)^{m_{0}}-
(\tilde{r}^{2}/t_{3}(\gamma))^{m_{0}})
(s_{3}(\gamma)^{n_{0}}-(r^{2}/s_{3}(\gamma))^{n_{0}})$, which is a
simpler expression. In fact, we can simplify longer, and that is the
second thing that we wanted to add, the critical point
$\left(s_{3}\left(\gamma\right),t_{3}\left(\gamma\right)\right)$ has
the following explicit expression~:
     \begin{equation*}
          s_{3}\left(\gamma\right)=-B\left(\gamma\right)/2+
          \sqrt{\left(B\left(\gamma\right)/2\right)^{2}-1},
          \hspace{5mm}t_{3}\left(\gamma\right)=-\widetilde{B}
          \left(\gamma\right)/2+\sqrt{\left(\widetilde{B}
          \left(\gamma\right)/2\right)^{2}-1},
     \end{equation*}
where $B\left(\gamma\right)=(1-(1-(1-\tan(\gamma)^{2})
(1-4p_{0-1}p_{01}+4p_{-10}p_{10}\tan(\gamma)^{2}))^{1/2})/
(\tan(\gamma)^{2}-1)$ and
$\tilde{B}\left(\gamma\right)=(1-(1-(1-\tan(\gamma)^{-2})
(1-4p_{-10}p_{10}+4p_{0-1}p_{01}\tan(\gamma)^{-2}))^{1/2})/
(\tan(\gamma)^{-2}-1)$.
In particular, note that $s_{3}\left(0\right)=x_{3}\left(1\right)$,
$s_{3}\left(\pi/2\right)=r$, $t_{3}\left(0\right)=\tilde{r}$ and
$t_{3}\left(\pi/2\right)=y_{3}\left(1\right)$.
Note that we obtain the explicit expressions of $s_{3}(\gamma)$
and $t_{3}(\gamma)$ by solving
$(xY_{1}(x,1)^{\tan(\gamma)})'=0$ and
$(X_{1}(y,1)y^{\tan(\gamma)})'=0$. We will discuss
this fact again in a next work.
\medskip
\newline Results of Sections~\ref{h_1_x} and~\ref{Martin_boundary}
allow to describe completely the
Martin boundary of the process,
both in case of a positive drift and in case
of a zero drift.
%Let us first recall briefly 
%the definition of the Martin boundary~:
%for an irreducible Markov chain $Z=(Z(n))_{n\in \mathbb{N}}$ on a countable set 
%$X$ with Green's function $G(z, z')$, the Martin compactification 
%$X^{*}$ is the smallest compactification
%of the set $X$ for which the Martin kernels
%$k(z, z') = G(z, z')/G(z_{0}, z')=
%\mathbb{P}_{z}(Z\ \text{hits}\ z')/\mathbb{P}_{z_{0}}(Z\ \text{hits}\ z')$
%extend continuously with respect to the second variable 
%$z'$ for every $z\in X$.
%Then the Martin boundary $\partial X$ is defined
%by $\partial X=X^{*}\setminus X$.
For definitions, details and applications
of Martin boundary theory, we
refer to~\cite{Dynkin} and~\cite{REV}.
%One of the main purposes of Martin
%boundary theory is to give an
%integral representation
%of superharmonic functions.

\begin{rem}\label{rem_Martin_boundary}
{\rm Let us now fix any reference state in the interior of the quadrant,
e.g.\ $(1,1)$, and consider the Martin kernel
$k_{(n_{0},m_{0})}(i,j)$.
   It equals to $G_{(i,j)}^{(n_{0},m_{0})}/
G_{(i,j)}^{(1,1)}$ and as well to
$\mathbb{P}_{(n_{0},m_{0})}(\textnormal{to hit}
\,(i,j))/\mathbb{P}_{(1,1)}(\textnormal{to hit} \,(i,j))$.
  We will use the first form in the case
  $i,j>0$ and the second one in the case
  $i=0$ or $j=0$.
 At last we set, for $\gamma \in [0,\pi/2]$,
$k_{(n_{0},m_{0})}(\gamma)= \lim_{i,j\to +\infty, j/i\to
\tan(\gamma)}k_{(n_{0},m_{0})}(i,j)$.

It follows from Proposition~\ref{Martin_boundary_drift_zero} that in
the case $M_{x}=M_{y}=0$, for any sequence $(i,j)$ where both
coordinates are positive and $j/i$ converges to
$\tan(\gamma)\in[0,+\infty]$, the Martin kernel
$k_{(n_{0},m_{0})}(i,j)$ converges to $n_{0}m_{0}$. Moreover, we can
deduce from Proposition~\ref{singularity_h_ln_asymptotic} that in
the case $M_{x}=M_{y}=0$, for any sequence of pairs $(i,j)$ where
one of the coordinates $i$ or $j$ goes to infinity, the other being
$0$, the Martin kernel $k_{(n_{0},m_{0})}(i,j)$ converges also to
$n_{0}m_{0}$. So if the two drifts are equal to zero, $n_{0}m_{0}$
is the unique point of the Martin boundary.

Suppose now that the drifts $M_{x}$ and $M_{y}$ are positive and
show that in this case, the Martin boundary is homeomorphic to $[0,
\pi/2]$. It follows from~(\ref{Martin_boundary_drift_non_zero_ij>0})
of Proposition~\ref{Martin_boundary_drift_non_zero} that for any
sequence $(i,j)$ where both coordinates are positive and $j/i$
converges to $\tan(\gamma)\in]0,+\infty[$, the Martin kernel
$k_{(n_{0},m_{0})}(i,j)$ converges to
$k_{(n_{0},m_{0})}(\gamma)=(t_{3}(\gamma)^{m_{0}}-
(\tilde{r}^{2}/t_{3}(\gamma))^{m_{0}})
(s_{3}(\gamma)^{n_{0}}-(r^{2}/s_{3}(\gamma))^{n_{0}})/((t_{3}(\gamma)-
\tilde{r}^{2}/t_{3}(\gamma))
(s_{3}(\gamma)-r^{2}/s_{3}(\gamma)))$. If now $(i,j)$ is a
sequence whose both coordinates are positive and such that $j/i$
goes to $0$ or $\infty$,
Proposition~\ref{Martin_boundary_drift_non_zero} gives that the
Martin kernel $k_{(n_{0},m_{0})}(i,j)$ converges to
$k_{(n_{0},m_{0})}(0)=m_{0}\tilde{r}^{m_{0}/2-1}
(x_{3}(1)^{n_{0}}-x_{2}(1)^{n_{0}})/(x_{3}(1)-x_{2}(1))$ and
$k_{(n_{0},m_{0})}(\pi/2)= n_{0}r^{n_{0}/2-1}
(y_{3}(1)^{m_{0}}-y_{2}(1)^{m_{0}})/(y_{3}(1)-y_{2}(1))$
respectively. A consequence of
Proposition~\ref{singularity_h_square_asymptotic} is that the Martin
kernels $k_{(n_{0},m_{0})}(i,0)$ and $k_{(n_{0},m_{0})}(0,j)$
converge too, respectively to $m_{0}\tilde{r}^{m_{0}/2-1}
(x_{3}(1)^{n_{0}}-x_{2}(1)^{n_{0}})/(x_{3}(1)-x_{2}(1))$ and
$n_{0}r^{n_{0}/2-1}
(y_{3}(1)^{m_{0}}-y_{2}(1)^{m_{0}})/(y_{3}(1)-y_{2}(1))$ as $i$ and
$j$ go to infinity, respectively. In particular, the Martin kernel
is the same depending on whether $j/i\to 0$ with $j>0$ or $j/i\to 0$
with $j=0$~; also on whether $i/j\to 0$ with $i>0$ or $i/j\to 0$
with $i=0$. At last, using the explicit expression of the critical
point $(s_{3}(\gamma),t_{3}(\gamma))$ given just above
Remark~\ref{rem_Martin_boundary}, we notice that the function
$k_{(n_{0},m_{0})}(\gamma)$ is continuous on $[0, \pi/2]$.
Therefore the Martin boundary is homeomorphic to a segment
$[0, \pi/2]$.}
\end{rem}

\section{Extension of the results}\label{Extension_results}

Suppose now that in addition to $p_{10}$, $p_{-10}$, $p_{01}$,
$p_{0-1}$, we permit the transition probabilities $p_{11}$,
$p_{-11}$, $p_{-1-1}$, $p_{1-1}$ to be non zero as in the hypothesis
(H2) of Section~\ref{Intro}, and suppose that the two drifts
$M_{x}=\sum_{i,j}i p_{i j}$ and $M_{y}=\sum_{i,j}j p_{i j}$ are non
negative. We make the following additional hypothesis~: in the list
$p_{-1-1},p_{-10},p_{-11},p_{01},p_{11},p_{10},p_{1-1},p_{0-1},p_{-1-1},p_{-10}$,
there are no three consecutive zeros. This technical assumption
allows to avoid studying degenerated random walks.

We ask us the same questions as before~: can we still find the
asymptotic of $\mathbb{P}_{(n_{0},m_{0})}($to be absorbed at $(i,0))$,
that of $\mathbb{P}_{(n_{0},m_{0})}(\tau =k)$, that of $G_{i,j}^{n_{0},m_{0}}$~?

To answer these questions we take back the analytic aspects
considered at the beginning of this article and we try to
generalize them~: we can  define an analogous of the polynomial $Q$
presented in~(\ref{def_Q})~: $Q(x,y,z)= x y z( \sum_{i,j}p_{i
j}x^{i} y^{j} -z^{-1} )$, and Equation~(\ref{functional_equation})
is still true, if we add to the right member
the function $h_{00}(z)$, equal to the
generating function of the probabilities of being absorbed at
$(0,0)$ at time $n$~: $h_{00}(z)=\sum_{n=0}^{+\infty}
\mathbb{P}_{(n_{0},m_{0})} (\text{to hit}\,(0,0)\,\text{at time}\,
n) z^{n}$. Next, we can also define the functions $X(y,z)$ and
$Y(x,z)$, that verify properties closed to those described in
Lemma~\ref{properties_X_Y}. We can too, as
in~(\ref{def_curves_L_M}), define the curves $\mathcal{L}_{z}$ and
$\mathcal{M}_{z}$, on which the functions $h$ and $\tilde{h}$ verify
again boundary conditions, like~(\ref{SR_problem_h}). There is
however here a striking difference between the walks studied in the
previous sections and the more general walks~: for the first, the
curves $\mathcal{L}_{z}$ and $\mathcal{M}_{z}$ are included in the
closed unit disc (indeed, they are circles centered at zero and with
radius less than one, as we have seen in
Subsection~\ref{Riemann_Carleman_problem}), what is \textit{prima facie} no more
true for the second. In addition, in both cases, the functions $h$
and $\tilde{h}$ are holomorphic in the open unit disc, continuous 
up to its boundary. So, for general walks, 
$h$ and $\tilde{h}$ could be not defined on the curves
on which they satisfy formally the boundary
condition~(\ref{SR_problem_h})~; this is why
for such walks, we have first to
continue $h$ and $\tilde{h}$ into functions holomorphic on sets
whose adherence contains respectively $\mathcal{L}_{z}$ and
$\mathcal{M}_{z}$. We have already mentioned in
Subsection~\ref{Probability_being_absorbed} 
that there exists in~\cite{FIM} a nice
method to construct this continuation elaborated by using
Galois automorphisms, notion that we will explain briefly in
the proof of Proposition~\ref{prop_continuation_Delta_zero}.

But suppose now that we did this continuation and also that somehow
or other we have found the CGF $w_{z}$~; then it is possible,
following the method developed in
Subsection~\ref{Integral_representation}, to find an integral
representation of $h$~:
     \begin{equation*}
          h\left(x,z\right)=x^{n_{0}}Y_{0}\left(x,z\right)^{m_{0}}
          +\frac{1}{\pi}\int_{x_{1}\left(z\right)}^{x_{2}\left(z\right)}
          t^{n_{0}}\mu_{m_{0}}\left(t,z\right)\phi\left(t,x,z\right)
          \sqrt{-d\left(t,z\right)}\text{d}t ,
     \end{equation*}
where $\phi(t,x,z)=w_{z}'(t)/(w_{z}(t)-w_{z}(x))-w_{z}'(t)/(w_{z}(t)-w_{z}(0))$.
Therefore, it is definitely the search and the study of the CGF
that constitute the key points of the generalization,
and that is here that there is an other
fundamental difference between the walks
verifying $p_{10}+p_{-10}+p_{01}+p_{0-1}=1$ and the more general
walks~: as we will see in a
next work, it is still possible, adapting an idea present
in~\cite{FIM}, to prove in the general case 
the existence of the CGF, we will even obtain the explicit
expression of the CGF for the curves $\mathcal{L}_{z}$ and
$\mathcal{M}_{z}$~; but these explicit expressions are strongly
complicated and hardly usable. For all that, in this next work, we will be able to
find the asymptotic of $\mathbb{P}_{(n_{0},m_{0})}(\text{to be absorbed at} \,
(i,0))$ for any walk, as for the one 
$\mathbb{P}_{(n_{0},m_{0})}(\tau = k)$,
we will have to concentrate us on a few particular cases.

But go back to our study and search how to generalize our results
relatively easily. For the walks such that
$p_{10}+p_{-10}+p_{01}+p_{0-1}=1$,
Proposition~\ref{explicit_form_CGF} was quite advantageous, since
the expression of the CGF could not really be more simple~; one can
think that for the other walks for which the curves
$\mathcal{L}_{z}$ and $\mathcal{M}_{z}$ are equal to circles, we
will be able to generalize our main results without real
difficulties. So we ask us which are the walks such that
$\mathcal{L}_{z}$ and $\mathcal{M}_{z}$ are certainly circles. To answer
this question, we have to introduce the quantity $\Delta(z)$,
equal to~:
     \begin{equation*}
          \Delta\left(z\right)=
          \left|\begin{array}{ccc}
          p_{11}&p_{10}&p_{1-1}\\
          p_{01}&-1/z&p_{0-1}\\
          p_{-11}&p_{-10}&p_{-1-1}
          \end{array}\right|.
     \end{equation*}
The answer is then given by the following result, whose proof is
postponed to Subsection~\ref{Proof_lemma}.

\begin{lem}\label{Delta_zero_iff_curves_circles}
Define $z_{1}=\inf\{z\geq 0 : x_{2}(z)=x_{3}(z)\}$ and
let $z$ be in $]0,z_{1}]$. Suppose that $\Delta(z)=0$~; then the curves
$\mathcal{L}_{z}$ and $\mathcal{M}_{z}$, defined
in~(\ref{def_curves_L_M}), are circles, eventually degenerated in straight lines.
\end{lem}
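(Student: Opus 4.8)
The plan is to translate the condition $\Delta(z)=0$ into a linear relation between the coefficients of $Q$ viewed as a quadratic in $y$, and then to read off from that relation the equation of a circle (or of a line) satisfied by $\mathcal{L}_z$; the same argument, with $x$ and $y$ exchanged, handles $\mathcal{M}_z$. As in Subsection~\ref{The_algebraic_curve_Q}, but now for the general walk, write $Q(x,y,z)=a(x,z)y^{2}+b(x,z)y+c(x,z)$ with
\[
a(x,z)=z\bigl(p_{11}x^{2}+p_{01}x+p_{-11}\bigr),\quad
b(x,z)=zp_{10}x^{2}-x+zp_{-10},\quad
c(x,z)=z\bigl(p_{1-1}x^{2}+p_{0-1}x+p_{-1-1}\bigr).
\]
By the ``no three consecutive zeros'' assumption, neither $a(\cdot,z)$ nor $c(\cdot,z)$ is the zero polynomial (the triples $p_{-11},p_{01},p_{11}$ and $p_{1-1},p_{0-1},p_{-1-1}$ are consecutive in the given cyclic list). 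For $z\in\,]0,z_{1}]$ the branched points still have the configuration of Lemma~\ref{lemma_branched_points} (this carries over under (H2), as recalled in Section~\ref{Extension_results}): on the slit $[x_{1}(z),x_{2}(z)]$ one has $d(t,z)\leq 0$ and the two branches $Y_{0}(t,z)$, $Y_{1}(t,z)$ are complex conjugate. Hence, writing $w=Y_{0}(t,z)=u+\imath v$ for $t$ ranging over $[x_{1}(z),x_{2}(z)]$ and using $Y_{0}Y_{1}=c/a$, $Y_{0}+Y_{1}=-b/a$, every point of $\mathcal{L}_{z}$ obeys
\[
u=\operatorname{Re}w=-\frac{b(t,z)}{2a(t,z)},\qquad u^{2}+v^{2}=|w|^{2}=\frac{c(t,z)}{a(t,z)},
\]
and in particular $\mathcal{L}_{z}$ is symmetric with respect to the real axis.

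Next I would show that $\Delta(z)=0$ is equivalent to the linear dependence over $\mathbb{C}$ of $a(\cdot,z),b(\cdot,z),c(\cdot,z)$. Collecting the coefficients of these three polynomials in the basis $(x^{2},x,1)$ into the rows of a $3\times3$ matrix $M(z)$, a direct computation --- pull a factor $z$ out of the $a$-row and of the $c$-row, write the middle entry $-1$ of the $b$-row as $z\cdot(-1/z)$, then transpose --- gives $\det M(z)=z^{3}\Delta(z)$. Since $z\neq 0$, $\Delta(z)=0$ holds iff the rows of $M(z)$ are linearly dependent, i.e.\ iff there exist $\lambda,\mu,\nu$, not all zero, with $\lambda\,a(x,z)+\mu\,b(x,z)+\nu\,c(x,z)\equiv 0$.

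Now assume $\Delta(z)=0$ and fix such a triple $(\lambda,\mu,\nu)\neq 0$. Dividing the polynomial identity by $a(\cdot,z)$ --- legitimate off the finitely many zeros of $a(\cdot,z)$, and extended to all of $\mathcal{L}_{z}$ by continuity --- and substituting $b/a=-2u$, $c/a=u^{2}+v^{2}$, we get that every $w=u+\imath v\in\mathcal{L}_{z}$ satisfies
\[
\nu\,(u^{2}+v^{2})-2\mu\,u+\lambda=0 .
\]
If $\nu\neq 0$ this is the equation of a circle centred on the real axis, and since $\mathcal{L}_{z}$ is a non-degenerate closed curve lying on it, $\mathcal{L}_{z}$ is exactly that circle. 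If $\nu=0$, then $\mu\neq 0$ (otherwise $\lambda\,a\equiv 0$ forces $\lambda=0$, as $a\not\equiv 0$), so $b(\cdot,z)/a(\cdot,z)\equiv-\lambda/\mu$ is constant, hence $u\equiv\lambda/(2\mu)$ and $\mathcal{L}_{z}$ lies on a vertical straight line: the degenerate case. Writing $Q$ instead as a quadratic $\widetilde a(y,z)x^{2}+\widetilde b(y,z)x+\widetilde c(y,z)$ and repeating the argument gives the statement for $\mathcal{M}_{z}$; the associated coefficient matrix is, up to the parameter exchange $p_{10}\leftrightarrow p_{01}$, $p_{-10}\leftrightarrow p_{0-1}$ (etc.), the transpose of $M(z)$, so its determinant is again $z^{3}\Delta(z)$ up to sign, and $\Delta(z)=0$ is precisely the hypothesis needed.

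The only step demanding real care is the determinant identity $\det M(z)=z^{3}\Delta(z)$, together with the verification that under (H2) the branched points keep the configuration of Lemma~\ref{lemma_branched_points} --- so that on $[x_{1}(z),x_{2}(z)]$ one genuinely has $d\leq 0$, $Y_{1}=\overline{Y_{0}}$ and $a(\cdot,z)\not\equiv 0$; beyond that, the proof is just the elementary observation that $u^{2}+v^{2}=c/a$, $u=-b/(2a)$ turn any linear relation among $a,b,c$ into the equation of a circle or of a vertical line.
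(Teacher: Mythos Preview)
Your proof is correct and essentially self-contained. The route you take, however, differs from the paper's. You argue parametrically: on the slit $[x_1(z),x_2(z)]$ the two branches are conjugate, so Vieta's relations $Y_0+Y_1=-b/a$ and $Y_0Y_1=c/a$ express $2u$ and $u^2+v^2$ as $-b/a$ and $c/a$; the linear dependence of $a,b,c$ (which you correctly identify with $\det M(z)=z^3\Delta(z)=0$) then collapses to a single equation $\nu(u^2+v^2)-2\mu u+\lambda=0$. The paper instead works with an implicit equation of $\mathcal{L}_z$, borrowed from~\cite{FIM}: it writes $\mathcal{L}_z=\{q_z(u,v)^2-q_{1,z}(u,v)q_{2,z}(u,v)=0\}$ for three explicit $3\times 3$ determinants $q_z,q_{1,z},q_{2,z}$ that are affine in $(u^2+v^2,u,1)$, checks the identities $\alpha_1\gamma_2-\alpha_2\gamma_1=-\Delta(z)/z$, $\alpha_1\beta_2-\alpha_2\beta_1=-p_{0-1}\Delta(z)$, $\gamma_1\beta_2-\gamma_2\beta_1=p_{01}\Delta(z)$ among their coefficients, and uses a Cramer relation $z^{-1}q_z=p_{10}q_{1,z}+p_{-10}q_{2,z}+2u\Delta(z)$ to conclude that when $\Delta(z)=0$ all three forms are proportional, so $\mathcal{L}_z$ is the zero set of a single affine combination of $u^2+v^2$, $u$, $1$.

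What each approach buys: yours is shorter and needs nothing beyond Vieta and a $3\times3$ determinant computation. The paper's approach, while heavier, produces an explicit Cartesian equation of $\mathcal{L}_z$ valid for \emph{all} $z$ (not only when $\Delta(z)=0$), which is what they later exploit to read off centre and radius in Subsection~\ref{generalization_x=1} and to classify the degenerate (straight-line) cases at the end of their proof. One small point worth tightening in your write-up: your argument shows $\mathcal{L}_z$ \emph{lies on} a circle (or line); to conclude it \emph{is} the whole circle you invoke that $\mathcal{L}_z$ is a closed curve, which is fine, but in the degenerate (line) case it is of course only a bounded arc together with the point at infinity --- exactly the situation the paper flags when $a(\cdot,z)$ vanishes on the slit.
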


This lemma will allow us to make the suitable hypothesis in
Subsections~\ref{generalization_z=1} and~\ref{generalization_x=1}.

We will first of all, in Part~\ref{generalization_z=1}, be
interested in the asymptotic of $\mathbb{P}_{(n_{0},m_{0})}($to
be absorbed at $\,(i,0))$~; to begin with a drift zero, then with a positive
drift. The study of these quantities is based on the analysis of the
singularities of $h(x,1)$ and $\tilde h(y,1)$,
as in Subsection~\ref{Explicit_form_and_asymptotic}. It makes not
appear the time $z$, which is in fact equal there to $1$, so that we
will suppose in all Part~\ref{generalization_z=1} that
$\Delta(1)=0$. In concrete terms,
this hypothesis means that the three
polynomials $a(x,1)=p_{11}x^{2}+p_{01}x+p_{-11}$,
$b(x,1)=p_{10}x^{2}-x+p_{-10}$ and
$c(x,1)=p_{1-1}x^{2}+p_{0-1}x+p_{-1-1}$ are linearly dependent.

Then, in Subsection~\ref{generalization_x=1}, we will be interested in the
asymptotic of
$\mathbb{P}_{(n_{0},m_{0})}(S=k)$ and
$\mathbb{P}_{(n_{0},m_{0})}(T=k)$ in case of a zero drift,
$S$ and $T$ being the hitting times~(\ref{def_hitting_times})~;
that will be derived from
the study of the functions $h(1,z)$ and $\tilde h(1,z)$
of the variable $z$.
Therefore, we will there suppose that for all $z\in ]0,1]$
--if the drift is zero then $z_{1}=1$--,
$\Delta(z)=0$, or equivalently
that $\Delta(1)=\Delta'(1)=0$.
Here is an interpretation of this hypothesis~:

\begin{lem}\label{Delta_zero_drift_zero_a=c}
Suppose that $M_{x}=M_{y}=0$. Then $\Delta(1)=0$ is
equivalent to $p_{11}+p_{-1-1}=p_{1-1}+p_{-11}$,
which means that
the process has a covariance equal to zero.
Suppose still $M_{x}=M_{y}=0$ and that
$\Delta(1)=0$ and make
the additional assumption
$\Delta'(1)=0$. Then
$a=c$ or $\tilde{a}=\tilde{c}$, which
means that either $p_{i j}=p_{i -j}$ for
all $i,j$ or $p_{i j}=p_{-i j}$ for
all $i,j$.
\end{lem}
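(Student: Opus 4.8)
The plan is to work directly with the $3\times 3$ determinant defining $\Delta(z)$, using the two facts that only its central entry, $-1/z$, depends on $z$, and that the coefficients $p_{ij}$ are constrained by $\sum_{i,j}p_{ij}=1$ (here $p_{00}=0$), $M_x=\sum_{i,j}ip_{ij}=0$ and $M_y=\sum_{i,j}jp_{ij}=0$. Recall $a(x,z)=z(p_{11}x^2+p_{01}x+p_{-11})$, $c(x,z)=z(p_{1-1}x^2+p_{0-1}x+p_{-1-1})$, $\widetilde a(y,z)=z(p_{11}y^2+p_{10}y+p_{1-1})$ and $\widetilde c(y,z)=z(p_{-11}y^2+p_{-10}y+p_{-1-1})$, so that $a=c$ means $p_{ij}=p_{i,-j}$ for all $i,j$, and $\widetilde a=\widetilde c$ means $p_{ij}=p_{-i,j}$ for all $i,j$.

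First I would prove the equivalence $\Delta(1)=0\iff p_{11}+p_{-1-1}=p_{1-1}+p_{-11}$. Starting from
\[
\Delta(1)=\det\begin{pmatrix} p_{11}&p_{10}&p_{1-1}\\ p_{01}&-1&p_{0-1}\\ p_{-11}&p_{-10}&p_{-1-1}\end{pmatrix},
\]
replace the middle row by the sum of all three rows. Writing $s_{\pm1}=\sum_i p_{i,\pm1}$, its entries become $s_1$, $\,p_{10}+p_{-10}-1$, $\,s_{-1}$; by $\sum_{i,j}p_{ij}=1$ the middle one equals $-(s_1+s_{-1})$, and since $M_y=0$ forces $s_1=s_{-1}=:s$, the new middle row is $s\,(1,-2,1)$. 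Then replace the middle column by the sum of the three columns; with $t_{\pm1}=\sum_j p_{\pm1,j}$ its entries become $t_1$, $\,0$, $\,t_{-1}$, and $M_x=0$ gives $t_1=t_{-1}=:t$. Expanding the resulting determinant along its middle row $(1,0,1)$ yields
\[
\Delta(1)=-st\,\bigl(p_{11}+p_{-1-1}-p_{1-1}-p_{-11}\bigr).
\]
The standing hypothesis that the list $p_{-1-1},p_{-10},p_{-11},p_{01},p_{11},p_{10},p_{1-1},p_{0-1},\dots$ has no three consecutive zeros forbids $s=0$ and $t=0$ (either would force three cyclically consecutive $p_{ij}$ to vanish), so $\Delta(1)=0$ iff $p_{11}+p_{-1-1}=p_{1-1}+p_{-11}$. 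Finally, since $M_x=M_y=0$, the covariance of one step is $\mathbb{E}[XY]=\sum_{i,j}ij\,p_{ij}=p_{11}-p_{1-1}-p_{-11}+p_{-1-1}$, so $\Delta(1)=0$ is exactly the vanishing of the covariance, which is the first assertion.

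Next I would compute $\Delta'(1)$. Because only the $(2,2)$ entry of the matrix depends on $z$, cofactor expansion along the middle column gives $\Delta(z)=\alpha-M/z$ with $\alpha$ independent of $z$ and $M=\det\begin{pmatrix}p_{11}&p_{1-1}\\ p_{-11}&p_{-1-1}\end{pmatrix}=p_{11}p_{-1-1}-p_{1-1}p_{-11}$; hence $\Delta'(z)=M/z^2$ and $\Delta'(1)=0\iff p_{11}p_{-1-1}=p_{1-1}p_{-11}$ (incidentally this also shows $\Delta\equiv0$ on $\,]0,1]$ iff $\alpha=M=0$ iff $\Delta(1)=\Delta'(1)=0$, matching the remark preceding the lemma). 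Now assume $M_x=M_y=0$ and $\Delta(1)=\Delta'(1)=0$: the two displayed identities say that $(p_{11},p_{-1-1})$ and $(p_{1-1},p_{-11})$ have the same sum and the same product, hence agree up to order. If $p_{11}=p_{1-1}$ and $p_{-1-1}=p_{-11}$, then $M_y=(p_{11}+p_{01}+p_{-11})-(p_{1-1}+p_{0-1}+p_{-1-1})=0$ collapses to $p_{01}=p_{0-1}$, so $p_{ij}=p_{i,-j}$ for all $i,j$, i.e. $a=c$. If instead $p_{11}=p_{-11}$ and $p_{-1-1}=p_{1-1}$, then $M_x=(p_{11}+p_{10}+p_{1-1})-(p_{-11}+p_{-10}+p_{-1-1})=0$ collapses to $p_{10}=p_{-10}$, so $p_{ij}=p_{-i,j}$ for all $i,j$, i.e. $\widetilde a=\widetilde c$. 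This gives the second assertion.

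There is no conceptual obstruction: the only step requiring genuine care is the row/column reduction of $\Delta(1)$ in the second paragraph and the bookkeeping of which of the three constraints $\sum p_{ij}=1$, $M_x=0$, $M_y=0$ is invoked at each stage, together with the remark that the non-degeneracy hypothesis excludes the prefactors $s$ and $t$ from vanishing.
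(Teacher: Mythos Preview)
Your proof is correct and follows essentially the same route as the paper: identify the two scalar conditions $p_{11}+p_{-1-1}=p_{1-1}+p_{-11}$ (from $\Delta(1)=0$) and $p_{11}p_{-1-1}=p_{1-1}p_{-11}$ (from $\Delta'(1)=0$), then combine them with $M_x=M_y=0$ to obtain the case split $a=c$ or $\widetilde a=\widetilde c$. Your execution is considerably more explicit than the paper's, which simply asserts ``by a direct calculation''; in particular, your row/column reduction yielding the factorisation $\Delta(1)=-st\,(p_{11}+p_{-1-1}-p_{1-1}-p_{-11})$ is a clean way to make the equivalence visible and to see exactly where the non-degeneracy hypothesis (ruling out $s=0$ or $t=0$) enters.
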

This lemma and the forthcoming
Proposition~\ref{prop_continuation_Delta_zero} will be proved in
Subsection~\ref{Proof_lemma}.

We close this introductory part by stating
%using arguments already
%used in~\cite{KV} and~\cite{KS},
a result generalizing Proposition~\ref{proposition_continuation}~:
suppose that the drifts $M_{x}$ and $M_{y}$
are non negative and that for some $z\in ]0,z_{1}]$,
$\Delta(z)=0$. Then
it is still possible
to continue $x\mapsto h(x,z)$
and $y\mapsto \tilde{h}(y,z)$
on $\mathbb{C}\setminus
[x_{3}(z),x_{4}(z)]$ and $\mathbb{C}\setminus [y_{3}(z),y_{4}(z)]$.

\begin{prop}\label{prop_continuation_Delta_zero}
Suppose that for some $z\in ]0,z_{1}]$, $\Delta(z)=0$. Then the
functions $x\mapsto h(x,z)$ and $y\mapsto \tilde{h}(y,z)$ are
continuable into functions holomorphic on $\mathbb{C}\setminus
[x_{3}(z),x_{4}(z)]$ and $\mathbb{C}\setminus [y_{3}(z),y_{4}(z)]$
respectively.
\end{prop}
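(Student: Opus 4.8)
The plan is to obtain the continuation not from an integral representation (which would be circular, since that representation presupposes the continuation) but by lifting $h$ and $\widetilde{h}$ along the algebraic curve $\mathcal{Q}_{z}=\{(x,y)\in\mathbb{C}^{2}:Q(x,y,z)=0\}$, bootstrapping from the domains $\{|x|\le 1\}$ and $\{|y|\le 1\}$ on which these functions are \emph{a priori} holomorphic, exactly in the spirit of the continuation procedure of~\cite{FIM}. By the symmetry exchanging the two coordinates (replace $p_{-10},p_{10},p_{0-1},p_{01}$ by $p_{0-1},p_{01},p_{-10},p_{10}$, which interchanges $h$ with $\widetilde{h}$ and $[x_{3}(z),x_{4}(z)]$ with $[y_{3}(z),y_{4}(z)]$) it is enough to continue $h$. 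The two maps that will drive the continuation are the \emph{Galois automorphisms} of $\mathcal{Q}_{z}$: the involution $\xi$ fixing the $y$--coordinate and exchanging the two roots $X_{0}(y,z),X_{1}(y,z)$ of $x\mapsto Q(x,y,z)$, and the involution $\eta$ fixing the $x$--coordinate and exchanging the two roots $Y_{0}(x,z),Y_{1}(x,z)$ of $y\mapsto Q(x,y,z)$. I would begin the write-up by recording this elementary notion and noting that, when $\Delta(z)=0$, Lemma~\ref{Delta_zero_iff_curves_circles} turns $\mathcal{L}_{z}$ and $\mathcal{M}_{z}$ into circles and $\xi,\eta$ into the corresponding inversions, so that their orbits can be followed explicitly.

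First I would extract the continuation identities from the generalized functional equation $Q(x,y,z)G(x,y,z)=h(x,z)+\widetilde{h}(y,z)+h_{00}(z)-x^{n_{0}}y^{m_{0}}$, which on $\mathcal{Q}_{z}$ reads $h(x,z)+\widetilde{h}(y,z)+h_{00}(z)=x^{n_{0}}y^{m_{0}}$. Substituting $y=Y_{0}(x,z)$, whose modulus does not exceed $1$ for $|x|\le 1$ by the analogue of Lemma~\ref{properties_X_Y}, and $x=X_{0}(y,z)$, whose modulus does not exceed $1$ for $|y|\le 1$, gives $h(x,z)+\widetilde{h}(Y_{0}(x,z),z)+h_{00}(z)=x^{n_{0}}Y_{0}(x,z)^{m_{0}}$ and $h(X_{0}(y,z),z)+\widetilde{h}(y,z)+h_{00}(z)=X_{0}(y,z)^{n_{0}}y^{m_{0}}$. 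Hence $\widetilde{h}(y,z)=X_{0}(y,z)^{n_{0}}y^{m_{0}}-h(X_{0}(y,z),z)-h_{00}(z)$ continues $\widetilde{h}$ holomorphically to every $y$ where $X_{0}(\cdot,z)$ is holomorphic and takes a value in the current domain of $h$; feeding the outcome back into $h(x,z)=x^{n_{0}}Y_{0}(x,z)^{m_{0}}-\widetilde{h}(Y_{0}(x,z),z)-h_{00}(z)$ continues $h$ further, and so on. Applying $\xi$ and $\eta$ to the relation on $\mathcal{Q}_{z}$ produces the sheet--exchange identities $h(X_{0}(y,z),z)-h(X_{1}(y,z),z)=(X_{0}(y,z)^{n_{0}}-X_{1}(y,z)^{n_{0}})y^{m_{0}}$ and $\widetilde{h}(Y_{0}(x,z),z)-\widetilde{h}(Y_{1}(x,z),z)=x^{n_{0}}(Y_{0}(x,z)^{m_{0}}-Y_{1}(x,z)^{m_{0}})$, which are precisely the compatibility data needed to glue the successive continuations across the lifted branch cuts. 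The term $h_{00}(z)$ depends on $z$ only and is inert throughout.

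Then I would run the iteration and identify the limiting domain. Starting from $h,\widetilde{h}$ holomorphic on the open unit discs (after a first, harmless, extension slightly past the unit circle obtained directly from the functional equation), repeated use of the two identities above, with the branch cuts $[x_{1}(z),x_{2}(z)]$ of $Y_{0}$ and $[y_{1}(z),y_{2}(z)]$ of $X_{0}$ followed through $\xi,\eta$, produces a holomorphic continuation whose domain turns out to be exactly $\mathbb{C}\setminus[x_{3}(z),x_{4}(z)]$ for $h$ and $\mathbb{C}\setminus[y_{3}(z),y_{4}(z)]$ for $\widetilde{h}$. Two things have to be checked. That the iterates of the cut $[x_{1}(z),x_{2}(z)]$ never obstruct single valuedness: they remain in the region already covered, where the two determinations of $Y_{0}$ across the cut yield the same value of $h$ thanks to the sheet--exchange identity. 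And that $[x_{3}(z),x_{4}(z)]$ cannot be crossed: there $Y_{0}$ and $Y_{1}$ coalesce, so the relation $h(x,z)+\widetilde{h}(Y_{1}(x,z),z)+h_{00}(z)=x^{n_{0}}Y_{1}(x,z)^{m_{0}}$ forces the continued $h$ to carry along that segment the same square--root type singularity as $Y_{1}$; this gives maximality and matches Proposition~\ref{proposition_continuation}.

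The step I expect to be the real obstacle is this last bookkeeping: tracking exactly the images of the branch cuts under the group $\langle\xi,\eta\rangle$ and proving that the function produced by the iteration is genuinely single valued on $\mathbb{C}\setminus[x_{3}(z),x_{4}(z)]$, i.e.\ that every loop there lifts to an element of the subgroup acting nontrivially only on the cut $[x_{1}(z),x_{2}(z)]$. This is exactly where the hypothesis $\Delta(z)=0$ is decisive: by Lemma~\ref{Delta_zero_iff_curves_circles} it makes $\mathcal{L}_{z}$ and $\mathcal{M}_{z}$ circles and $\xi,\eta$ the associated inversions, so the relevant orbits are finite, explicit circular patterns and the continuation closes up after finitely many steps; without $\Delta(z)=0$ the images of the cuts would accumulate and the iteration would not terminate.
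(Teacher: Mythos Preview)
Your approach is essentially that of the paper: lift $h$ and $\widetilde{h}$ to the algebraic curve $Q(x,y,z)=0$, use the functional equation together with the Galois automorphisms $\xi,\eta$ to propagate the definitions beyond $\{|x|\le 1\}$ and $\{|y|\le 1\}$, and invoke $\Delta(z)=0$ to guarantee that the procedure closes up consistently.

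The difference is in how the closing-up step is packaged. The paper's key observation is purely algebraic: when $\Delta(z)=0$ the group $\langle\xi,\eta\rangle$ generated by the two involutions has order exactly four. The curve then splits into the four regions $\mathcal{D}_{z,\pm,\pm}$ determined by $|x|\le 1$ or $|x|\ge 1$ and $|y|\le 1$ or $|y|\ge 1$, which are permuted by $\xi$ and $\eta$, and the continuation is written down in one stroke on each region (via the functional equation on $\mathcal{D}_{z,+,-}$ and $\mathcal{D}_{z,-,+}$, and via $g\circ\xi$, $\widetilde{g}\circ\eta$ on $\mathcal{D}_{z,+,+}$), with no iteration and no bookkeeping of cut images required. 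Your route through Lemma~\ref{Delta_zero_iff_curves_circles} (circles, hence inversions, hence finite orbits) lands in the same place but is less direct, and your closing remark that ``without $\Delta(z)=0$ the iteration would not terminate'' is slightly off: the paper explicitly notes that the hypothesis $\Delta(z)=0$ is \emph{not} necessary for the continuation, only convenient because it forces the group to be of order four and makes the proof short. Finally, your labelling of $\xi$ and $\eta$ is swapped relative to the paper's convention (there $\xi$ fixes $x$ and $\eta$ fixes $y$), though this is of course harmless.
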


In fact the hypothesis $\Delta(z)=0$ is not necessary but we do
it for two reasons~: first because all the walks we are studying here
verify this assumption for at least one $z\in ]0,z_{1}]$,
second because the proof (done in Subsection~\ref{Proof_lemma})
is quite simpler in this case.

\subsection{Asymptotic in the case
$\Delta\left(1\right)=0$}\label{generalization_z=1}

We have already defined, in the discussion beginning the
Section~\ref{Extension_results}, the polynomials
$a(x,1)=p_{11}x^{2}+p_{01}x+p_{-11}$, $b(x,1)=p_{10}x^{2}-x+p_{-10}$
and $c(x,1)=p_{1-1}x^{2}+p_{0-1}x+p_{-1-1}$. Likewise, we define
$d(x,1)=b(x,1)^{2}-4a(x,1)c(x,1)$ and $x_{i}(1)$, $i\in \{1,\cdots
,4\}$, to be the (real) roots of $d(x,1)$, say that they are
enumerated such that $|x_{1}(1)|<|x_{2}(1)|\leq 1\leq
|x_{3}(1)|<|x_{4}(1)|$. Note that in~\cite{FIM}
the authors show that if $M_{y}=0$ then $x_{2}(1)=1=x_{3}(1)$
whereas if $M_{y}>0$ then $0<x_{2}(1)<1<x_{3}(1)$. We recall that
$h_{i}=\mathbb{P}_{(n_{0},m_{0})}(\textnormal{to be absorbed at}\,(i,0))$.

\begin{prop}\label{singularity_h_ln_asymptotic_generalization}
We suppose here that $M_{y}=\sum_{i,j}j p_{i j}=0$.
The probability of being absorbed at $(i,0)$
admits the following asymptotic~:
     \begin{equation*}
          h_{i} \sim \frac{2n_{0}m_{0}}{\pi}
          \sqrt{\frac{\left(p_{10}^2-4p_{11}p_{1-1}\right)
          \left(x_{4}\left(1\right)-1\right)
          \left(1-x_{1}\left(1\right)\right)}
          {a\left(1,1\right)c\left(1,1\right)}}
          \frac{1}{i^{3}},\hspace{5mm} i\to \infty .
     \end{equation*}
\end{prop}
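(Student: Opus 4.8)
The strategy is the one used for Proposition~\ref{singularity_h_square_asymptotic}: the asymptotics of the Taylor coefficients $h_i$ of $x\mapsto h(x,1)$ are governed by the singularity of this function closest to the origin, which under the hypothesis $M_y=0$ (together with $\Delta(1)=0$, assumed throughout Subsection~\ref{generalization_z=1}) is a logarithmic singularity located at $x=x_3(1)=1$. The first task is to produce the integral representation of $h(x,1)$ generalizing~(\ref{explicit_h(x,z)_fourth}). Since $\Delta(1)=0$, Proposition~\ref{prop_continuation_Delta_zero} continues $x\mapsto h(x,1)$ up to the curve $\mathcal M_1$, and Lemma~\ref{Delta_zero_iff_curves_circles} tells us that $\mathcal M_1$ is a circle (or, in a degenerate subcase to be treated apart, a straight line), so on it $h$ satisfies a boundary condition of type~(\ref{SR_problem_h}) for which the conformal gluing function is explicit. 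Carrying out the reduction of Subsection~\ref{Integral_representation} then yields
\begin{equation*}
h(x,1)=\frac{1}{\pi}\int_{x_3(1)}^{x_4(1)} N(t)\,\frac{\mu_{m_0}(t,1)\sqrt{-d(t,1)}}{t\,(t-x)}\,\textnormal{d}t+P(x),
\end{equation*}
where $\mu_{m_0}$ is as in~(\ref{def_mu}), $d(t,1)=b(t,1)^2-4a(t,1)c(t,1)$ with the quadratics $a,b,c$ of Subsection~\ref{generalization_z=1}, $P$ is a polynomial (affecting only finitely many $h_i$, hence irrelevant for the asymptotics, just as in the $n_0\le m_0$ versus $n_0>m_0$ discussion of Proposition~\ref{singularity_h_ln}), and $N$ is holomorphic near $[x_3(1),x_4(1)]$ and vanishes simply at $x_3(1)=1$ with $N'(1)=2n_0$, precisely like the factor $t^{n_0}-(r^2/t)^{n_0}$ of~(\ref{explicit_h(x,z)_fourth}) once one notices that $M_y=0$ forces $x_2(1)=x_3(1)=1$.

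Next I would extract the singularity at $x=1$ following the proof of Proposition~\ref{singularity_h_ln}. Applying $1/(t-x)=1/(t-1)+(x-1)/((t-x)(t-1))$ twice writes $h(x,1)=P_0(x)+x(x-1)^2H_2(x)$ with $P_0$ a polynomial (its linear coefficient being $\partial_xh(1,1)=n_0$, obtainable by the same orthogonality argument for $U_{m_0-1}$ as in Proposition~\ref{singularity_h_ln}, but immaterial here) and $H_2(x)=\frac{1}{\pi}\int_1^{x_4(1)}l(t)/(t-x)\,\textnormal{d}t$, where $l(t)=N(t)\mu_{m_0}(t,1)\sqrt{-d(t,1)}/(t(t-1)^2)$. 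Since $x_2(1)=x_3(1)=1$ one has $-d(t,1)=(p_{10}^2-4p_{11}p_{1-1})(t-x_1(1))(t-1)^2(x_4(1)-t)$ near $[1,x_4(1)]$, so $\sqrt{-d(t,1)}$ has a simple zero with a smooth factor at $t=1$; combined with the simple zero of $N$ this makes $l$ holomorphic and, generically, nonzero at $t=1$. Splitting $H_2$ at $1+\epsilon$ and invoking~(\ref{equality_log_binom}) exactly as in~(\ref{recap}) shows that $H_2(x)$ is the sum of a function holomorphic at $1$ and of $-\frac{1}{\pi}l(x)\ln(1-x)$, hence $h(x,1)=f(x)-\frac{l(1)}{\pi}(x-1)^2\ln(1-x)(1+(x-1)g(x))$ with $f,g$ holomorphic at $1$ — the analogue of Proposition~\ref{singularity_h_ln}.

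It then remains to compute $l(1)=\lim_{t\to1}\frac{N(t)}{t-1}\cdot\frac{\mu_{m_0}(t,1)}{t}\cdot\frac{\sqrt{-d(t,1)}}{t-1}$: the first factor is $N'(1)=2n_0$, the last is $\sqrt{(p_{10}^2-4p_{11}p_{1-1})(1-x_1(1))(x_4(1)-1)}$, and for the middle one uses~(\ref{natural_expression_mu}) together with $-d=4ac\,(1-\widehat b^{\,2})$ to get $\mu_{m_0}(t,1)=(c(t,1)/a(t,1))^{m_0/2}U_{m_0-1}(-\widehat b(t,1))/(2\sqrt{a(t,1)c(t,1)})$; at $t=1$, $\widehat b(1,1)=-1$ so $U_{m_0-1}(1)=m_0$, while $c(1,1)=a(1,1)$ because $M_y=a(1,1)-c(1,1)=0$, whence $\mu_{m_0}(1,1)=m_0/(2\sqrt{a(1,1)c(1,1)})$. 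Collecting the three factors gives $l(1)=n_0m_0\sqrt{(p_{10}^2-4p_{11}p_{1-1})(x_4(1)-1)(1-x_1(1))/(a(1,1)c(1,1))}$, and feeding the expansion of $h(x,1)$ into the transfer principle for logarithmic singularities recalled in the proof of Proposition~\ref{main_result_simple_walk_drift_zero} (here $q=2$, and $-l(1)/\pi$ is the coefficient of $(x-1)^2\ln(1-x)$) yields $h_i\sim\frac{2l(1)}{\pi}\,i^{-3}$, which is the announced asymptotics.

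The real work is concentrated in the first step: carrying out Proposition~\ref{prop_continuation_Delta_zero} in the present generality and identifying the explicit conformal gluing function of the circle $\mathcal M_1$ when the diagonal jumps $p_{11},p_{-11},p_{1-1},p_{-1-1}$ are allowed, then checking that the density on $[x_3(1),x_4(1)]$ has, near the branch point $t=x_3(1)=1$, exactly the same local structure as in the four-jump case~(\ref{explicit_h(x,z)_fourth}) — a simple zero of $N$ with $N'(1)=2n_0$ and a simple zero of $\sqrt{-d(t,1)}$ — and, separately, disposing of the degenerate subcase where $\mathcal M_1$ is a straight line. Once this representation is in hand, the singularity analysis and the transfer step are routine transcriptions of the proofs of Propositions~\ref{singularity_h_ln} and~\ref{singularity_h_square_asymptotic}.
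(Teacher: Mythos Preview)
Your proposal is correct and follows essentially the same route as the paper: obtain the integral representation of $h(x,1)$ via the explicit CGF of the circle $\mathcal{M}_1$, then reproduce the singularity analysis of Proposition~\ref{singularity_h_ln} and the transfer argument of Proposition~\ref{singularity_h_square_asymptotic}. The paper's only addition is that it makes your function $N$ explicit as $N(t)=t^{n_0}-\sigma(t)^{n_0}$ with $\sigma(t)=\gamma+\rho^2/(t-\gamma)$ the inversion in $\mathcal{M}_1=\mathcal{C}(\gamma,\rho)$ (formula~(\ref{explicit_h(x,1)_fifth})); since $\sigma(x_3(1))=x_2(1)$ and $x_2(1)=x_3(1)=1$ when $M_y=0$, one has $\sigma(1)=1$ and $\sigma'(1)=-1$, which is precisely what justifies your claimed $N'(1)=2n_0$.
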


%\begin{rem}
%If we denote by $((X_{n},Y_{n})_{n\in \mathbb{N}}$ the coordinates
%of the random walk, it is straightforward that the process
%$\left(X_{n,1}X_{n,2}\right)_{n\in \mathbb{N}}$ is a martingale.
%From that matter, the process $(X_{n}Y_{n})_{n\in \mathbb{N}}$ is a
%martingale if and only if $p_{11}+p_{-1-1}=p_{1-1}+p_{-11}$, in
%other words, with lemma~\ref{Delta_zero_drift_zero_a=c}, if and only
%if $\Delta(1)=0$.
%\end{rem}

\begin{prop}\label{singularity_h_square_asymptotic_generalization}
We suppose here that $M_{y}=\sum_{i,j}i p_{i j}>0$. The probability
of being absorbed at $(i,0)$
admits the following asymptotic~:
     \begin{equation*}
          h_{i}\sim
          \sqrt{\frac{-x_{3}\left(1\right)d'\left(x_{3}\left(1\right),1\right)}
          {a\left(x_{3}\left(1\right),1\right)c\left(x_{3}\left(1\right),1\right)}}
          \frac{\left(x_{3}\left(1\right)^{n_{0}}-
          x_{2}\left(1\right)^{n_{0}}\right)}{4\sqrt{\pi }}
          m_{0}\left(\frac{c\left(x_{3}\left(1\right),1\right)}
          {a\left(x_{3}\left(1\right),1\right)}\right)^{\frac{m_{0}}{2}}
          \frac{1}{x_{3}\left(1\right)^{i}}\frac{1}{i^{3/2}},\hspace{5mm} i\to \infty .
     \end{equation*}
\end{prop}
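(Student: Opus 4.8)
The plan is to transpose to the general setting $\Delta(1)=0$ the two–stage argument already used, under (H2'), for Proposition~\ref{singularity_h_ln_asymptotic}: first determine the dominant singularity of $x\mapsto h(x,1)$, then read off the asymptotics of its Taylor coefficients $h_{i}$. Concretely, the first goal is to establish the exact analogue of Proposition~\ref{singularity_h_square}, namely that $h(x,1)$, extended by Proposition~\ref{prop_continuation_Delta_zero} to a holomorphic function on $\mathbb{C}\setminus[x_{3}(1),x_{4}(1)]$, admits at $x=x_{3}(1)$ an algebraic singularity $h(x,1)=f(x)+\sqrt{1-x/x_{3}(1)}\,g(x)$ with $f,g$ holomorphic near $x_{3}(1)$, and that $g(x_{3}(1))$ equals, up to the factor $-1/(2\sqrt{\pi})$, the constant appearing in Proposition~\ref{singularity_h_square_asymptotic_generalization}.

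For this first stage I would start from the integral representation of $h(x,1)$ that is available precisely when $\Delta(1)=0$: by Lemma~\ref{Delta_zero_iff_curves_circles} the curve $\mathcal{M}_{1}$ is then a circle (possibly degenerate to a straight line), so an explicit conformal gluing function $w_{1}$ is at hand and the construction of Subsection~\ref{Integral_representation} applies verbatim, giving $h(x,1)=x^{n_{0}}Y_{0}(x,1)^{m_{0}}+\frac{1}{\pi}\int_{x_{1}(1)}^{x_{2}(1)}t^{n_{0}}\mu_{m_{0}}(t,1)\,\phi(t,x,1)\sqrt{-d(t,1)}\,\text{d}t$ with $\phi$ built from $w_{1}$ as in Section~\ref{Extension_results}. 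Then, mimicking the passage from Proposition~\ref{explicit_h(x,z)_second} to Proposition~\ref{explicit_h(x,z)_third} — apply the residue theorem at infinity to $t\mapsto t^{n_{0}-1}Y_{0}(t,1)^{m_{0}}$ as in Lemma~\ref{simplification_principal_part}, and use that the Möbius involution attached to the circle $\mathcal{M}_{1}$ still exchanges the slit $[x_{1}(1),x_{2}(1)]$ with $[x_{3}(1),x_{4}(1)]$ — one rewrites $h(x,1)=\frac{x}{\pi}\int_{x_{3}(1)}^{x_{4}(1)}N(t)\,\frac{\mu_{m_{0}}(t,1)\sqrt{-d(t,1)}}{t(t-x)}\,\text{d}t+P(x)$, where $P$ is a polynomial and $N(t)$ is the relevant numerator, whose value at $t=x_{3}(1)$ equals $x_{3}(1)^{n_{0}}-x_{2}(1)^{n_{0}}$ because the involution sends $x_{3}(1)$ to $x_{2}(1)$.

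From here the computation is the one in the proof of Proposition~\ref{singularity_h_square}, with $p_{10}^{2}$ replaced by the leading coefficient $p_{10}^{2}-4p_{11}p_{1-1}$ of $d(x,1)=b(x,1)^{2}-4a(x,1)c(x,1)$: write $1/(t-x)=1/(t-x_{3}(1))+(x-x_{3}(1))/((t-x)(t-x_{3}(1)))$, peel off the value at $x_{3}(1)$ of the holomorphic part of the integrand, and use the elementary singularity of the Cauchy-type integral $\int_{x_{3}(1)}^{x_{4}(1)}\text{d}t/((t-x)\sqrt{t-x_{3}(1)})=\pi\bigl(1+(x-x_{3}(1))u(x)\bigr)/\sqrt{x_{3}(1)-x}$ (see~\cite{LU}) to reach $h(x,1)=f(x)+\sqrt{1-x/x_{3}(1)}\,g(x)$. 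The value $g(x_{3}(1))$ is then a routine evaluation using $d'(x_{3}(1),1)=(p_{10}^{2}-4p_{11}p_{1-1})(x_{3}(1)-x_{1}(1))(x_{3}(1)-x_{2}(1))(x_{3}(1)-x_{4}(1))$ together with the value of $\mu_{m_{0}}$ at a branch point, namely $\mu_{m_{0}}(x_{3}(1),1)=\frac{m_{0}}{2a(x_{3}(1),1)}\bigl(c(x_{3}(1),1)/a(x_{3}(1),1)\bigr)^{(m_{0}-1)/2}$, which comes straight from the definition~(\ref{def_mu}) and $d(x_{3}(1),1)=0$; the factor $1/2$ here is what turns the $1/(2\sqrt{\pi})$ of the transfer step into the $1/(4\sqrt{\pi})$ of the statement. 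For the second stage I would then invoke the transfer (Pringsheim) argument spelled out in the proof of Proposition~\ref{zek}: $h(x,1)$ is holomorphic for $|x|<x_{3}(1)$, continuable across every point of the circle $|x|=x_{3}(1)$ except $x_{3}(1)$ — using Proposition~\ref{prop_continuation_Delta_zero} and the fact, analogous to Lemma~\ref{lemma_branched_points}, that $0<x_{3}(1)<x_{4}(1)$ are the only real branch points of $d(\cdot,1)$ beyond the unit circle, so no competing singularity has modulus $x_{3}(1)$ — with an algebraic singularity of exponent $1/2$ and leading coefficient $g(x_{3}(1))$; hence $h_{i}\sim g(x_{3}(1))\,x_{3}(1)^{-i}/(\Gamma(-1/2)\,i^{3/2})=-g(x_{3}(1))\,x_{3}(1)^{-i}/(2\sqrt{\pi}\,i^{3/2})$, which after substituting the computed $g(x_{3}(1))$ is precisely the claimed asymptotic. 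When $n_{0}>m_{0}$ one disposes of the extra polynomial $P$ exactly as in the proof of Proposition~\ref{singularity_h_ln}, by an induction on $n_{0}-m_{0}$.

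The main obstacle is the first stage: securing the single-integral representation over $[x_{3}(1),x_{4}(1)]$ in the general $\Delta(1)=0$ case. One must check that the involution of the (possibly degenerate) circle $\mathcal{M}_{1}$ does exchange the two slits and fixes the curve, treat the limiting straight-line case of Lemma~\ref{Delta_zero_iff_curves_circles} in which $w_{1}$ takes a different Möbius form, and verify that the correction term $P$ and all the "holomorphic parts" really are analytic at $x_{3}(1)$. Once this representation is in place, everything else is a transcription of the computations already carried out for Propositions~\ref{singularity_h_square} and~\ref{zek}.
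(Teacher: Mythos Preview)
Your proposal is correct and follows essentially the same route as the paper: obtain the generalized single-integral representation of $h(x,1)$ over $[x_{3}(1),x_{4}(1)]$ using the M\"obius involution $\sigma(t)=\gamma+\rho^{2}/(t-\gamma)$ of the circle $\mathcal{M}_{1}$ (this is exactly the paper's equation~(\ref{explicit_h(x,1)_fifth}), with $N(t)=t^{n_{0}}-\sigma(t)^{n_{0}}$), then rerun the local analysis of Proposition~\ref{singularity_h_square} at $x_{3}(1)$ and apply the Pringsheim transfer. The paper states the same three-step scheme and omits the very details you filled in (the evaluation of $\mu_{m_{0}}(x_{3}(1),1)$, the leading coefficient $p_{10}^{2}-4p_{11}p_{1-1}$ of $d$, and the check that $\sigma$ swaps the slits so that $\sigma(x_{3}(1))=x_{2}(1)$); your identification of the main obstacle---justifying the representation~(\ref{explicit_h(x,1)_fifth}) including the degenerate-line and $x_{4}(1)<0$ cases---matches the caveat the paper records after that formula.
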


\begin{proof}
Propositions~\ref{singularity_h_ln_asymptotic_generalization}
and~\ref{singularity_h_square_asymptotic_generalization}
are generalizations of
Propositions~\ref{singularity_h_square_asymptotic}
and~\ref{singularity_h_ln_asymptotic},
which are themselves corollaries
of the writing of $h(x,1)$ in the
neighborhood of $x_{3}(1)$,
its first singularity.
This expansion of $h(x,1)$ at $x_{3}(1)$
was the object of
Propositions~\ref{singularity_h_ln}
and~\ref{singularity_h_square}, which
are consequences of the explicit expression of $h$,
written in Proposition~\ref{explicit_h(x,z)_third}.
We could follow the same development here~:
\emph{first} finding a generalization
of the explicit expression of $h(x,1)$,
\emph{then} studying this integral function
and its singularity at $x_{3}(1)$,
\emph{at last} deducing the asymptotic of the coefficients of
the Taylor series at $0$.
The technical details look like
those already outlined, notably
in the proofs of
Propositions~\ref{singularity_h_ln}--\ref{singularity_h_ln_asymptotic},
so we omit them~; except the generalization of
the integral representation of $h$, which is quite interesting.
According to Lemma~\ref{Delta_zero_iff_curves_circles}
and since $\Delta(1)=0$, $\mathcal{M}_{1}$
is a circle --suppose non degenerated--,
of center $\gamma$ and radius $\rho$ say.
With these notations, define $\sigma(t)=\gamma+\rho^{2}/(t-\gamma)$ and
suppose that $x_{4}(1)>0$. Then, the following equality holds~:
     \begin{equation}\label{explicit_h(x,1)_fifth}
          h\left(x,1\right) = \frac{x}{\pi}\int_{x_{3}\left(1\right)}
          ^{x_{4}\left(1\right)}\left(t^{n_{0}}-
          \sigma\left(t\right)^{n_{0}}\right)
          \frac{\mu_{m_{0}}\left(t,1\right)}{t\left(t-x\right)}
          \sqrt{-d\left(t,1\right)}\text{d}t+
          xP_{\infty}\left(x\mapsto
          x^{n_{0}-1}Y_{0}\left(x,z\right)^{m_{0}}\right)\left(x\right).
     \end{equation}
If $x_{4}(1)<0$ or if the circle $\mathcal{M}_{1}$
is degenerated, we could even so find
an explicit formulation like~(\ref{explicit_h(x,1)_fifth}).
In any case, it would be useful next, starting
from~(\ref{explicit_h(x,1)_fifth})
or an equivalent, to study the singularity
of $h(x,1)$ at $x_{3}(1)$~; as already said we
don't write the details and refer to the proofs of
Propositions~\ref{singularity_h_ln}--\ref{singularity_h_ln_asymptotic}.
\end{proof}

\subsection{Asymptotic in the case
$\Delta\left(1\right)=\Delta'\left(1\right)=0$}\label{generalization_x=1}

%We suppose here that $\Delta\left(z\right)=0$ for all $z$ and that
%the two drifts $M_{x}$ and $M_{y}$are zero, and we are interested in
%the asymptotic of $\mathbb{P}_{(n_{0},m_{0})}(S = k)$ and
%$\mathbb{P}_{(n_{0},m_{0})}(T = k)$, $S$ and $T$ being the hitting
%times defined in~(\ref{def_hitting_times}).

We have already explained in Lemma~\ref{Delta_zero_drift_zero_a=c}
that the hypothesis $\Delta(1)=\Delta'(1)=0$ for all $z$ in
$]0,z_{1}]$ is equivalent, in
case of two zero drifts, to the fact that $a=c$ or
$\tilde{a}=\tilde{c}$. A particular case of
random walks verifying these assumptions is the set of walks such
that $p_{10}+p_{-10}+p_{01}+p_{0-1}=1$, $p_{-10}=p_{10}$,
$p_{0-1}=p_{01}$, studied in the previous sections. The next
proposition consists in generalizing
Proposition~\ref{main_result_simple_walk_drift_zero} in the case of
all walks with drifts zero and verifying
in addition $\Delta(1)=\Delta'(1)=0$.
We recall from~(\ref{def_hitting_times})
that we denote by $S$ and $T$ the hitting times
of the $x$ and $y$-axis.

\begin{prop}\label{prop_asymptotic_hitting_times_real_axis_drift_zero_Delta_zero}
We suppose here that $M_x=M_y=0$ and that $\Delta(1)=\Delta'(1)=0$.
Then the probability of being absorbed at time $k$ on
the $x$-axis admits the following asymptotic~:
     \begin{equation}\label{asymptotic_hitting_times_real_axis_drift_zero_Delta_zero}
          \mathbb{P}_{\left(n_{0},m_{0}\right)}\left(S = k \right)
          \sim
          \frac{n_{0}m_{0}}
          {2 \pi \left(\left(p_{11}+p_{10}+p_{1-1}\right)
          \left(p_{11}+p_{01}+p_{-11}\right)\right)^{1/2}k^{2}}
     \end{equation}
The same asymptotic holds for $\mathbb{P}_{(n_{0},m_{0})}(T = k )$,
the probability of being absorbed at time $k$ on the $y$-axis.
\end{prop}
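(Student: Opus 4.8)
The plan is to follow the proof of Proposition~\ref{main_result_simple_walk_drift_zero} step by step, the only genuinely new ingredient being the integral representation of $z\mapsto h(1,z)$ in the present more general situation. Since $M_x=M_y=0$ we have $z_1=1$ (see the discussion preceding Proposition~\ref{prop_continuation_Delta_zero}), so the coefficient of $z^k$ in $h(1,z)=\sum_{i\ge 1,\,n\ge 0}h_{i,n}^{n_0,m_0}z^n$ equals $\mathbb{P}_{(n_0,m_0)}(S=k)$, and it suffices to locate the first singularity of $h(1,z)$ on the unit circle and describe its type. First I would record that $\Delta(z)$, expanded along its middle entry $-1/z$, has the form $\alpha-\beta/z$; hence $\Delta(1)=\Delta'(1)=0$ forces $\alpha=\beta=0$, i.e.\ $\Delta(z)\equiv 0$ on $]0,1]$, so Lemma~\ref{Delta_zero_iff_curves_circles} applies for every $z$ and both $\mathcal{M}_z$ and $\mathcal{L}_z$ are circles (possibly degenerate to straight lines, a case handled separately by a limiting argument exactly as indicated in the proof of Proposition~\ref{singularity_h_ln_asymptotic_generalization}). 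Moreover, by Lemma~\ref{Delta_zero_drift_zero_a=c}, and after possibly applying the parameter transposition $p_{ij}\mapsto p_{ji}$ — which leaves both drifts and $\Delta$ invariant, exchanges $S$ with $T$, exchanges $n_0$ with $m_0$, and exchanges $p_{11}+p_{10}+p_{1-1}$ with $p_{11}+p_{01}+p_{-11}$ — I may assume $\widetilde a=\widetilde c$. Then, since $|X_0|^2=|X_1|^2=\widetilde c/\widetilde a\equiv 1$ on the slit $[y_1(z),y_2(z)]$, the curve $\mathcal{M}_z=X_0([\,y_1(z),y_2(z)\,],z)$ is the unit circle $\mathcal{C}(0,1)$ for all $z\in\,]0,1]$, with conformal gluing function $w_z(t)=\tfrac12(t+t^{-1})$.

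Granting this, the whole chain of manipulations of Subsection~\ref{Integral_representation} — reduction of the Cauchy integral on $\mathcal{C}(0,1)$ to the slit $[x_1(z),x_2(z)]$ via the residue theorem and the function $\mu_{m_0}$, transfer to $[x_3(z),x_4(z)]$ through $t\mapsto t^{-1}$, and the Chebyshev change of variables $\widehat b(t,z)=u$ — goes through verbatim with $a,b,c,d$ now the general polynomials of Section~\ref{Extension_results} and $x_1,\dots,x_4$ their roots. This yields, exactly as in~(\ref{final_form_SRW}) (equivalently, the $z$-analogue of~(\ref{explicit_h(x,1)_fifth}) with $\sigma(t)=t^{-1}$), a representation $h(1,z)=P(z)+\tfrac1\pi\int_{-1}^1 F(u,z)\,\big((1-k_2(u)z)/(1-k_1(u)z)\big)^{1/2}\sqrt{1-u^2}\,\text{d}u$, in which $P$ is a polynomial, $k_1,k_2$ are the $u$-dependent branch values with $k_1(-1)=1$ (so that the singularity sits at $z=z_1=1$), and $F$ is — after extracting the factor $\sqrt{(1-k_1(u)z)(1-k_2(u)z)}$ — a polynomial in $(u,z)$.

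Next I would run the singularity analysis exactly as in the proof of Proposition~\ref{main_result_simple_walk_drift_zero} together with Lemma~\ref{lem_log_singularity}: writing $G(u,z)=F(u,z)\sqrt{1-k_2(u)z}$ and expanding around $(u,z)=(-1,1)$, the singular part of $h(1,z)$ at $z=1$ is carried by integrals of the shape $\int_{-1}^1(1-u)^k(1-u^2)^{1/2}(1-k_1(-u)z)^{-1/2}\,\text{d}u$, which express through the complete elliptic integrals $K$ and $E$ and therefore have at $z=1$ a logarithmic singularity governed by $\sigma_K(1)=-\tfrac12$, $\sigma_E(1)=\tfrac14$. One concludes $h(1,z)=f(z)(z-1)\ln(1-z)+g(z)$ with $f,g$ holomorphic near $1$, and, tracking the leading coefficient through the analogues (with $p_{01}$ replaced by $a(1,1)=p_{11}+p_{01}+p_{-11}$ and $p_{10}$ by $\widetilde a(1,1)=p_{11}+p_{10}+p_{1-1}$) of the identities $\mu_{m_0}(1,1)=m_0/(2p_{01})$ and $(x_4(1)-1)(1-x_1(1))=4p_{01}/p_{10}$ used in the simple-walk case — identities whose general form is forced by $d(1,1)=0$, $x_2(1)=x_3(1)=1$ and $a(1,1)=c(1,1)$ — one gets $f(1)=-n_0m_0/\big(2\pi\sqrt{a(1,1)\,\widetilde a(1,1)}\big)$. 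Applying the singularity-to-coefficient transfer recalled in that same proof with $q=1$ gives $\mathbb{P}_{(n_0,m_0)}(S=k)\sim -f(1)/k^2$, which is~(\ref{asymptotic_hitting_times_real_axis_drift_zero_Delta_zero}); the statement for $T$ then follows by applying the transposition $p_{ij}\mapsto p_{ji}$ noted in the first paragraph, under which this constant is invariant.

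I expect the main obstacle to be the second step: re-establishing the $z$-dependent integral representation of $h(1,z)$ for walks with $\Delta\equiv 0$ — i.e.\ pushing the conformal-gluing construction of Subsection~\ref{Integral_representation} through with a $z$-dependent circle (and, in the sub-case $\widetilde a\neq\widetilde c$ that one reduces away by transposition, a non-origin-centered circle with gluing function $\gamma(z)+\rho(z)^2/(t-\gamma(z))$), together with the elliptic-integral bookkeeping needed to pin down the exact value of $f(1)$. Once the representation and the value $f(1)$ are in place, the passage to the coefficient asymptotics is entirely routine.
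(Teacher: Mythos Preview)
Your overall strategy --- observing that $\Delta(z)\equiv 0$, invoking Lemma~\ref{Delta_zero_drift_zero_a=c} to reduce to one of the symmetry classes, and then rerunning the singularity analysis of Proposition~\ref{main_result_simple_walk_drift_zero} with $\mathcal{M}_z=\mathcal{C}(0,1)$ --- matches the paper's approach for the easy half of the proof. The problem is the symmetry bookkeeping at the end.

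The transposition $p_{ij}\mapsto p_{ji}$ simultaneously exchanges $S\leftrightarrow T$ \emph{and} $\{\tilde a=\tilde c\}\leftrightarrow\{a=c\}$. Your reduction ``WLOG $\tilde a=\tilde c$'' is therefore legitimate only if you then establish \emph{both} the $S$- and the $T$-asymptotic under that single hypothesis. You establish $S$. But your closing claim, ``the statement for $T$ then follows by applying the transposition'', does not hold: transposing the statement ``$S$ holds when $\tilde a=\tilde c$'' yields ``$T$ holds when $a=c$'', a statement about a \emph{different} walk. The case ``$T$ when $\tilde a=\tilde c$'' (equivalently ``$S$ when $a=c$'') remains open and cannot be reached by transposition alone --- this is exactly the sub-case with the non-origin-centred, $z$-dependent circle that you believed was ``reduced away'' in your last paragraph.

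That missing case is the genuinely new difficulty. The paper treats it directly: under $\tilde a=\tilde c$ the curve $\mathcal{L}_z$ is a circle of centre $\tilde\gamma(z)$ and radius $\tilde\rho(z)$, one must first continue $\tilde h$ up to $\mathcal{L}_z$ via Proposition~\ref{prop_continuation_Delta_zero}, then rederive the analogue of Proposition~\ref{explicit_h(x,z)_third} for $\tilde h(y,z)$ with gluing function $\tilde w_z(t)=t+\tilde\rho(z)^2/(t-\tilde\gamma(z))$ and involution $\tilde\sigma(t,z)=\tilde\gamma(z)+\tilde\rho(z)^2/(t-\tilde\gamma(z))$ in place of $t\mapsto r^2/t$, and finally redo the Chebyshev/elliptic-integral analysis in that setting. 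Once you add this step, the rest of your outline is correct.
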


\begin{proof}
We will now give a sketch of the proof of
Proposition~\ref{prop_asymptotic_hitting_times_real_axis_drift_zero_Delta_zero},
in the case $\tilde{a}=\tilde{c}$,
the case $a=c$ being of course symmetrical.

First, we are interested in the asymptotic of
$\mathbb{P}_{(n_{0},m_{0})}(S = k )$. Since $\Delta(z)=0$ for all
$z\in ]0,z_{1}]$, Lemma~\ref{Delta_zero_iff_curves_circles} gives that
for all $z\in ]0,z_{1}]$ the curve
$\mathcal{M}_{z}$ is a circle~; in fact simply the unit circle
$\mathcal{C}(0,1)$, because $\tilde{a}=\tilde{c}$. The CGF
associated to this curve is thus equal to $t\mapsto
t+1/t$ and the --fundamental-- Proposition~\ref{explicit_h(x,z)_third},
which gives the explicit expression of $h(x,z)$, is still
valid. Then, we can adapt the change of variable $t=t_{2}(u,z)$,
see~(\ref{tttt}), made in Part~\ref{Chebychev_polynomials}, and, with
some additional technical details, we can follow the
Subsection~\ref{Simple_walk_drift_zero} and finally obtain the
asymptotic~(\ref{asymptotic_hitting_times_real_axis_drift_zero_Delta_zero}).

Now, we are interested in the asymptotic of
$\mathbb{P}_{(n_{0},m_{0})}(T = k )$. Once again thanks to
Lemma~\ref{Delta_zero_iff_curves_circles}, we find that
$\mathcal{L}_{z}$ is also a circle --suppose non degenerated--, but this time with a center
$\tilde{\gamma}(z)$ and a radius
$\tilde{\rho}(z)$ that depend on $z$ and that can
be not equal to zero and one respectively.
In particular, we will perhaps have first to continue
$\tilde{h}$ into a holomorphic function up to $\mathcal{L}_{z}$,
using Proposition~\ref{prop_continuation_Delta_zero}.
$\tilde{\gamma}$ and $\tilde{\rho}$ are
defined in substance in the proof of
Lemma~\ref{Delta_zero_iff_curves_circles}~: indeed, we give there an
explicit expression of the circle $\mathcal{L}_{z}$.
% from which we
%easily deduce the center and the radius. 
In particular, the CGF
associated to $\mathcal{L}_{z}$ is now equal to
$\tilde{w}_{z}(t)=t+\tilde{\rho}(z)^{2}/(t-\tilde{\gamma}(z))$.
A consequence of
these facts is that we have to adapt the
Proposition~\ref{explicit_h(x,z)_third}, giving the explicit
expression of $\tilde{h}(y,z)$, which, as it is, is no more true.
Skipping over the details, we obtain the following integral
formulation for $\tilde{h}(y,z)$~:
     \begin{equation*}
          \widetilde{h}\left(y,z\right) = \frac{y}{\pi}\int_{y_{3}\left(z\right)}
          ^{y_{4}\left(z\right)}\left(t^{m_{0}}-
          \widetilde{\sigma}\left(t,z\right)^{m_{0}}\right)
          \frac{\widetilde{\mu}_{n_{0}}\left(t,z\right)}{t\left(t-y\right)}
          \sqrt{-\widetilde{d}\left(t,z\right)}\text{d}t+
          yP_{\infty}\left(y\mapsto X_{0}\left(y,z\right)^{n_{0}}
          y^{m_{0}-1}\right)\left(y\right) ,
     \end{equation*}
where $P_{\infty}$ is the principal part, defined in
Lemma~\ref{simplification_principal_part} and
$\tilde{\sigma}(t,z)=\tilde{\gamma}(z)+
\tilde{\rho}(z)^{2}/(t-\tilde{\gamma}(z))$. The function
$\tilde{\sigma}$ satisfies many noteworthy relationships~; for instance,
$\tilde{\sigma}$
%,which is an automorphism of $\mathbb{C}\cup \{\infty \}$
leaves $\tilde{w}_{z}$ invariant~:
$\tilde{w}_{z}(\tilde{\sigma}(t,z))=\tilde{w}_{z}(t)$. Then, we can one more
time adapt the change of variable $t=t_{2}(u,z)$, see~(\ref{tttt}),
proposed in Part~\ref{Chebychev_polynomials}, and we finally get the
asymptotic of $\mathbb{P}_{(n_{0},m_{0})}(T = k)$.
\end{proof}

%\begin{prop}\label{Pringsheim}
%
%Let $h$ be a complex function analytic at $z=0$.
%Suppose that in a neighborhood of $0$,
%$h(z)=\sum_{n=0}^{+\infty }h_{n} z^{n}$ where
%$h_n\geq 0$. Let $x_0>0$ be the first singular point
%of $h$. Assume that in the neighborhood of $x_0$,
%
%     \begin{equation*}
%          h\left(z\right)=\sum_{i=1}^{d}g_{i}\left(z\right)
%          \left(1-\frac{z}{x_{0}}\right)^{\theta_{i}}+g_{0}\left(z\right)
%     \end{equation*}
%
%where $g_{i}$, $i=0,\ldots ,d$ are analytic functions not
%vanishing at $z=x_{0}$ and $\theta_{1}< \cdots <\theta_{d}$ are
%rational but not integer. Then:
%
%     \begin{equation*}
%          h_{n}\sim \frac{g_{1}\left(x_{0}\right)}{\Gamma\left(-\theta_{1}\right)}
%          n^{-\theta_{1}-1}x_{0}^{-n}
%     \end{equation*}
%
%\end{prop}

\subsection{Proofs of lemmas}\label{Proof_lemma}

\noindent{\it Proof of Lemma~\ref{Delta_zero_iff_curves_circles}.}
The proof is based on the explicit expressions of $\mathcal{L}_{z}$
and $\mathcal{M}_{z}$~: in~\cite{FIM}, the authors give the way to
find the explicit expressions of these curves in the particular case
$z=1$. We can adapt this argument and obtain the expressions of the
curves for all $z\in ]0,z_{1}]$~; what is new here is the possibility of
expressing the curves $\mathcal{L}_{z}$ and $\mathcal{M}_{z}$ in terms
of three determinants~: $\mathcal{L}_{z}$ is equal to
$\{u+i v \in \mathbb{C} :
q_{z}(u,v)^{2}-q_{1,z}(u,v)q_{2,z}(u,v)=0\}$ where $q_{z}$,
$q_{1,z}$ and $q_{2,z}$ are respectively equal to~:
\begin{equation*}
          \left|\begin{array}{ccc}
          p_{11}&p_{10}&p_{1-1}\\
          1&-2u&u^{2}+v^{2}\\
          p_{-11}&p_{-10}&p_{-1-1}
          \end{array}\right|,\hspace{5mm}
          \left|\begin{array}{ccc}
          1&-2u&u^{2}+v^{2}\\
          p_{01}&-1/z&p_{0-1}\\
          p_{-11}&p_{-10}&p_{-1-1}
          \end{array}\right|,\hspace{5mm}
          \left|\begin{array}{ccc}
          p_{11}&p_{10}&p_{1-1}\\
          p_{01}&-1/z&p_{0-1}\\
          1&-2u&u^{2}+v^{2}
          \end{array}\right|,
     \end{equation*}
and, of course, we could write a similar expression for $\mathcal{M}_{z}$.
Now that we have the expression of these polynomials, we will
establish some relationships between their coefficients~; but before
take the notations~: for $i=1,2$, $\alpha_{i}$, $\beta_{i}$ and
$\gamma_{i}$ will stand for the coefficients
--depending on $z$-- of $q_{i,z}$~:
$q_{i,z}(u,v)=\alpha_{i}-2\beta_{i}u+\gamma_{i}(u^{2}+v^{2})$~; one
obtains obviously the explicit expression of these coefficients by
expanding the determinants above. Then, by a simple calculation, we
verify the three following facts~: first
$\alpha_{1}\gamma_{2}-\alpha_{2}\gamma_{1}=-\Delta(z)/z$, then
$\alpha_{1}\beta_{2}-\alpha_{2}\beta_{1}=-p_{0-1}\Delta(z)$ and at
last $\gamma_{1}\beta_{2}-\gamma_{2}\beta_{1}=p_{01}\Delta(z)$. In
particular, if $\Delta\left(z\right)=0$, then the polynomials
$q_{1,z}$ and $q_{2,z}$ are proportional. In addition to that,
Cramer's formulas give $z^{-1}q_{z}(u,v)=p_{10}q_{1,z}(u,v)
+p_{-10}q_{2,z}(u,v)+2u\Delta(z)$~; so that if $\Delta(z)=0$,
$q_{z}$, $q_{1,z}$ and $q_{2,z}$ are multiple of a same polynomial.
\textit{A priori}, it may quite happen that one or even several of $q_{z}$,
$q_{1,z}$, $q_{2,z}$ are zero~; in fact, we could show that at most
one of these three polynomials can be equal to zero, otherwise the walk
would be degenerated (we recall from the very beginning of
Section~\ref{Extension_results} that we say that a walk is
degenerated if there are three consecutive zeros in the list
$p_{-1-1},p_{-10},p_{-11},p_{01},p_{11},p_{10},p_{1-1},p_{0-1},p_{-1-1},p_{-10}$).

So in any non degenerated case, we can write that $\mathcal{L}_{z}=
\{u+i v \in \mathbb{C} : r_{z}(u,v)=0\}$, where $r_{z}$ stands for
one of the non zero polynomials in the list $q_{z}$, $q_{1,z}$,
$q_{2,z}$. But the curve $\{u+i v \in \mathbb{C} : r_{z}(u,v)=0\}$
is clearly a circle, eventually degenerated in
a straight line, for which we could easily write the center and
the radius, the proof of Lemma~\ref{Delta_zero_iff_curves_circles}
is completed.

To be exhaustive, we give here
the single possibilities for $\mathcal{L}_{z}$ and $\mathcal{M}_{z}$
to be straight lines~:
{\rm (i)}  $\mathcal{L}_{z}$
           is a straight line if and only if
           $p_{01}+p_{1-1}+p_{0-1}+p_{-1-1}=1$,
           %(birds paw walk)
           in that case $\mathcal{L}_{z}=\{u+i v : 2p_{01}z u =1\}$
           and $\mathcal{M}_{z}=\mathcal{C}(0,(p_{-1-1}/p_{1-1})^{1/2})$,
{\rm (ii)} $\mathcal{M}_{z}$ is a straight line if and only if
           $p_{10}+p_{-11}+p_{-10}+p_{-1-1}=1$,
           in that case $\mathcal{M}_{z}=\{u+i v : 2p_{10}z u =1\}$
           and $\mathcal{L}_{z}=\mathcal{C}(0,(p_{-1-1}/p_{-11})^{1/2})$.
           The proof of these facts consists simply in a
           play with the parameters, so we omit it.
\hfill $\square $

\bigskip

\noindent{\it Proof of Lemma~\ref{Delta_zero_drift_zero_a=c}.}
Start by showing that $\Delta(1)=0$ is equivalent to
$p_{11}+p_{-1-1}=p_{1-1}+p_{-11}$, and suppose first that
$p_{11}+p_{-1-1}=p_{1-1}+p_{-11}$. We have the system of equations (1)
$M_{x}=0$ (2) $M_{y}=0$ (3) $p_{11}+p_{-1-1}=p_{1-1}+p_{-11}$ (4)
$\sum p_{i j}=1$ (5) $p_{i j} \geq 0$. This system has no
single solutions but implies some
relationships between the parameters, which in turn imply, by a
direct calculation, that $\Delta(1)=1$. Likewise, we prove
that $\Delta(1)$ implies that
$p_{11}+p_{-1-1}=p_{1-1}+p_{-11}$.

Suppose now that $\Delta(1)=\Delta'(1)=0$
and consider the system composed from the equations~: (1)
$M_{x}=0$ (2) $M_{y}=0$ (3) $p_{11}p_{-1-1}-p_{1-1}p_{-11}=0$ (4)
$p_{11}+p_{-1-1}=p_{1-1}+p_{-11}$ (5) $\sum p_{i j}=1$ (6) $p_{i j}
\geq 0$. It turns out that this system implies either
$p_{11}=p_{1-1}$, $p_{01}=p_{0-1}$ and $p_{-11}=p_{-1-1}$ (in other
words $a=c$) or $p_{11}=p_{-11}$, $p_{10}=p_{-10}$ and
$p_{1-1}=p_{-1-1}$ (that means $\tilde{a}=\tilde{c}$).
\hfill $\square $

\bigskip

\noindent{\it Proof of Proposition~\ref{prop_continuation_Delta_zero}.}
First, we lift $x\mapsto h(x,z)$ and $y\mapsto \tilde{h}(y,z)$,
initially defined on the unit disc, on the algebraic curve
$\{(x,y)\in \mathbb{C}^{2} : Q(x,y,z)=0\}$~: we obtain so two functions, say $g$ and $\tilde{g}$,
defined on $\{(x,y)\in \mathbb{C}^{2} : Q(x,y,z)=0, |x|<1\}$ and
$\{(x,y)\in \mathbb{C}^{2} : Q(x,y,z)=0, |y|<1\}$ respectively.
Then, we will continue $g$ and $\tilde{g}$ on the whole $Q(x,y,z)=0$,
into functions again denoted by $g$ and $\tilde{g}$, and verifying
$g(\xi(x,y),z)=g(x,y,z)$ and
$\tilde{g}(\eta(x,y),z)=\tilde{g}(x,y,z)$~; we will explain how to
get this continuation in a few lines. Suppose before that we did
successfully this continuation, and see how to conclude~: we will
continue $h$ and $\tilde{h}$ by setting $h(x,z)=g(x,y,z)$ and
$\tilde{h}(x,z)=\tilde{g}(x,y,z)$. The two relationships
$g(\xi(x,y),z)=g(x,y)$ and $\tilde{g}(\eta(x,y),z)=\tilde{g}(x,y,z)$
allow to the continuation to be not ambiguous.
%even if
%there exists two for any $x\in \mathbb{C}$, there are two $(x,y)$ associated
%on the curve $Q(x,y)$.

It remains to see how continue $g$ and $\tilde{g}$ from $\{(x,y)\in
\mathbb{C}^{2} : Q(x,y,z)=0, |x|<1\}$ and $\{(x,y)\in \mathbb{C}^{2}
: Q(x,y,z)=0, |y|<1\}$ to the whole $\{(x,y)\in \mathbb{C}^{2} : Q(x,y,z)=0\}$.
Define $\xi
(x,y)=(x,c(x,z)/(a(x,z)y))$, $\eta
(x,y)=(\tilde{c}(y,z)/(\tilde{a}(y,z)x),y)$ ~; they are the Galois
automorphisms attached to the algebraic curve $Q(x,y,z)$,
see~\cite{MA2} and~\cite{FIM}. They are such that if  $Q\left(x,y,z\right)=0$ then
$Q(\xi(x,y),z)=0$ and $Q( \eta(x,y),z)=0$.

The key step is the following~: a worthwhile fact of having supposed
$\Delta(z)=0$ is that the group $\mathcal{H}$, generated by $\xi$
and $\eta$ is of order four~; we can prove this fact by a direct
calculation.

Consider next the following sub-domains of
$\{(x,y)\in \mathbb{C}^{2} : Q(x,y,z)=0\}$~:
$\mathcal{D}_{z,-,-}=\{(x,y)\in \mathbb{C} : Q(x,y,z)=0,|x|\leq
1,|y|\leq 1\}$, $\mathcal{D}_{z,-,+}=\{(x,y)\in \mathbb{C} :
Q(x,y,z)=0,|x|\leq 1,|y|\geq 1\}$, $\mathcal{D}_{z,+,-}=\{(x,y)\in
\mathbb{C} : Q(x,y,z)=0,|x|\geq 1,|y|\leq 1\}$ and
$\mathcal{D}_{z,+,+}=\{(x,y)\in \mathbb{C} : Q(x,y,z)=0,|x|\geq
1,|y|\geq 1\}$, whose the union 
equals $\{(x,y)\in \mathbb{C}^{2} : Q(x,y,z)=0\}$.
\textit{A priori}, $g$ and $\tilde{g}$ are well defined only
in $\mathcal{D}_{z,-,-}\cup \mathcal{D}_{z,-,+}$ and
$\mathcal{D}_{z,-,-}\cup \mathcal{D}_{z,+,-}$ respectively. Then, we
continue $g$ in $\mathcal{D}_{z,+,-}$ using the functional
equation~(\ref{functional_equation})~: we set
$g(x,y)=x^{n_{0}}y^{m_{0}}-\tilde{g}(x,y)-h_{00}(z)$. Likewise, in
$\mathcal{D}_{z,-,+}$, we continue $\tilde{g}$ by setting
$\tilde{g}(x,y)=x^{n_{0}}y^{m_{0}}-g(x,y)-h_{00}(z)$. In
$\mathcal{D}_{z,+,+}$, we set $g(x,y)=g \circ \xi(x,y)$ and
$\tilde{g}(x,y)=\tilde{g}\circ \eta(x,y)$.

Thanks to the properties of the automorphisms $\xi$ and $\eta$
described above, we get the sub-domains $\mathcal{D}_{z,\pm ,\pm }$
as the successive ranges of $\mathcal{D}_{z,- ,- }$ by the
automorphisms $\xi$ and $\eta$~: for instance
$\xi(\mathcal{D}_{z,+,+})=\mathcal{D}_{z,+,-}$ and
$\eta(\mathcal{D}_{z,+,+})=\mathcal{D}_{z,-,+}$, so that
the continuation is done on the whole
$\{(x,y)\in \mathbb{C}^{2} : Q(x,y,z)=0\}$ and is not ambiguous. \hfill $\square $

\footnotesize

\bibliographystyle{alpha}
\bibliography{RW_RASCHEL}

\end{document}